\theoremstyle{plain}
\newtheorem{theorem}{Theorem}[section]
\newtheorem{corollary}[theorem]{Corollary}
\newtheorem{lemma}[theorem]{Lemma}
\newtheorem{proposition}[theorem]{Proposition}
\theoremstyle{definition}
\newtheorem{definition}[theorem]{Definition}
\newtheorem{problem}[theorem]{Problem}
\theoremstyle{remark}
\newtheorem{remark}[theorem]{Remark}
\numberwithin{figure}{section}
\numberwithin{equation}{section}
\DeclareMathOperator{\ad}{ad}
\DeclareMathOperator{\Real}{Re}
\DeclareMathOperator{\Imag}{Im}
\DeclareMathOperator{\Res}{Res}
\DeclareMathOperator{\sech}{sech}
\newenvironment{doublecases}
{
	\left\{ 
			\begin{array}{lllll}
}
{			
			\end{array} 
			\right.
}
\begin{document}

\title[Soliton resolution for  mKdV]{Soliton resolution for the modified KdV equation}
\author{Gong Chen}
\author{Jiaqi Liu}
\address[Chen]{Department of Mathematics, University of Toronto, Toronto, Ontario M5S 2E4, Canada }
\email{gc@math.toronto.edu}
\address[Liu]{Department of Mathematics, University of Toronto, Toronto, Ontario M5S 2E4, Canada }
\email{jliu@math.utoronto.ca}

\date{\today}

\begin{abstract}
The soliton resolution for  the focusing modified Korteweg-de vries (mKdV) equation is established for initial 
conditions in  some weighted Sobolev spaces. Our approach is based on the nonlinear steepest descent method and its reformulation through $\overline{\partial}$-derivatives. From the view of stationary points, we give precise asymptotic formulas along trajectory $x=\textrm{v}t$ for any fixed $ \textrm{v} $. To extend the asymptotics to solutions with initial data in low regularity spaces, we apply a global approximation via PDE techniques. As byproducts of our long-time asymptotics, we also obtain the asymptotic stability of nonlinear structures involving solitons and breathers.

\medskip

\end{abstract}

\maketitle
\tableofcontents

%
%

\newcommand{\eps}{\varepsilon}
\newcommand{\lam}{\lambda}

\newcommand{\bfN}{\mathbf{N}}
\newcommand{\calbR}{\mathcal{ \breve{R}}}
\newcommand{\rhobar}{\overline{\rho}}
\newcommand{\zetabar}{\overline{\zeta}}

\newcommand{\rarr}{\rightarrow}
\newcommand{\darr}{\downarrow}

\newcommand{\dee}{\partial}
\newcommand{\dbar}{\overline{\partial}}

\newcommand{\dint}{\displaystyle{\int}}

\newcommand{\dotarg}{\, \cdot \, }

%
%

\newcommand{\RHP}{\mathrm{LC}}			
\newcommand{\PC}{\mathrm{PC}}
\newcommand{\w}{w^{(2)}}
%
%

\newcommand{\zbar}{\overline{z}}

\newcommand{\bbC}{\mathbb{C}}
\newcommand{\bbR}{\mathbb{R}}

\newcommand{\calB}{\mathcal{B}}
\newcommand{\calC}{\mathcal{C}}
\newcommand{\calR}{\mathcal{R}}
\newcommand{\calS}{\mathcal{S}}
\newcommand{\calZ}{\mathcal{Z}}

\newcommand{\ba}{\breve{a}}
\newcommand{\bb}{\breve{b}}

\newcommand{\balpha}{\breve{\alpha}}
\newcommand{\brho}{\breve{\rho}}

\newcommand{\tPhi}{{\widetilde{\Phi}}}

\newcommand{\tp}{\text{p}}
\newcommand{\tq}{\text{q}}
\newcommand{\tr}{\text{r}}

\newcommand{\bfe}{\mathbf{e}}
\newcommand{\bfn}{\mathbf{n}}

\newcommand{\tA}{\tilde{A}}
\newcommand{\tB}{\tilde{B}}
\newcommand{\tomega}{\tilde{\omega}}
\newcommand{\tc}{\tilde{c}}

\newcommand{\mhat}{\hat{m}}

\newcommand{\bphi}{\breve{\Phi}}
\newcommand{\bN}{\breve{N}}
\newcommand{\bV}{\breve{V}}
\newcommand{\bR}{\breve{R}}
\newcommand{\bdelta}{\breve{\delta}}
\newcommand{\bzeta}{\breve{\zeta}}
\newcommand{\bbeta}{\breve{\beta}}
\newcommand{\bm}{\breve{m}}
\newcommand{\br}{\breve{r}}
\newcommand{\bnu}{\breve{\nu}}
\newcommand{\bbfN}{\breve{\mathbf{N}}}
\newcommand{\rbar}{\overline{r}}

\newcommand{\One}{\mathbf{1}}

%
%

\newcommand{\bigO}[2][ ]
{
\mathcal{O}_{#1}
\left(
{#2}
\right)
}

\newcommand{\littleO}[1]{{o}\left( {#1} \right)}

\newcommand{\norm}[2]
{
\left\Vert		{#1}	\right\Vert_{#2}
}

%
%

\newcommand{\rowvec}[2]
{
\left(
	\begin{array}{cc}
		{#1}	&	{#2}	
	\end{array}
\right)
}

\newcommand{\uppermat}[1]
{
\left(
	\begin{array}{cc}
	0		&	{#1}	\\
	0		&	0
	\end{array}
\right)
}

\newcommand{\lowermat}[1]
{
\left(
	\begin{array}{cc}
	0		&	0	\\
	{#1}	&	0
	\end{array}
\right)
}

\newcommand{\offdiagmat}[2]
{
\left(
	\begin{array}{cc}
	0		&	{#1}	\\
	{#2}	&	0
	\end{array}
\right)
}

\newcommand{\diagmat}[2]
{
\left(
	\begin{array}{cc}
		{#1}	&	0	\\
		0		&	{#2}
		\end{array}
\right)
}

\newcommand{\Offdiagmat}[2]
{
\left(
	\begin{array}{cc}
		0			&		{#1} 	\\
		\\
		{#2}		&		0
		\end{array}
\right)
}

\newcommand{\twomat}[4]
{
\left(
	\begin{array}{cc}
		{#1}	&	{#2}	\\
		{#3}	&	{#4}
		\end{array}
\right)
}

\newcommand{\unitupper}[1]
{	
	\twomat{1}{#1}{0}{1}
}

\newcommand{\unitlower}[1]
{
	\twomat{1}{0}{#1}{1}
}

\newcommand{\Twomat}[4]
{
\left(
	\begin{array}{cc}
		{#1}	&	{#2}	\\[10pt]
		{#3}	&	{#4}
		\end{array}
\right)
}

%
%
%

\newcommand{\JumpMatrixFactors}[6]
{
	\begin{equation}
	\label{#2}
	{#1} =	\begin{cases}
					{#3} {#4}, 	&	z \in (-\infty,\xi) \\
					\\
					{#5}{#6},	&	z \in (\xi,\infty)
				\end{cases}
	\end{equation}
}


%
%
%

\newcommand{\RMatrix}[9]
{
\begin{equation}
\label{#1}
\begin{aligned}
\left. R_1 \right|_{(\xi,\infty)} 	&= {#2} &	\qquad\qquad		
\left. R_1 \right|_{\Sigma_1}		&= {#3} 
\\[5pt]
\left. R_3 \right|_{(-\infty,\xi)} 	&= {#4} 	&	
\left. R_3 \right|_{\Sigma_2} 	&= {#5}
\\[5pt]
\left. R_4 \right|_{(-\infty,\xi)} 	&= {#6} &	
\left. R_4 \right|_{\Sigma_3} 	&= {#7} 
\\[5pt]
\left. R_6 \right|_{(\xi,\infty)}  	&= {#8} &	
\left. R_6 \right|_{\Sigma_4} 	&= {#9}
\end{aligned}
\end{equation}
}

%
%

%
%
%
%
%
%

\newcommand{\SixMatrix}[6]
{
\begin{figure}
\centering
\caption{#1}
\vskip 15pt
\begin{tikzpicture}
[scale=0.75]
%
%
\draw[thick]	 (-4,0) -- (4,0);
\draw[thick] 	(-4,4) -- (4,-4);
\draw[thick] 	(-4,-4) -- (4,4);
%
%
\draw	[fill]		(0,0)						circle[radius=0.075];
\node[below] at (0,-0.1) 				{$z_0$};
%
%
\node[above] at (3.5,2.5)				{$\Omega_1$};
\node[below]  at (3.5,-2.5)			{$\Omega_6$};
\node[above] at (0,3.25)				{$\Omega_2$};
\node[below] at (0,-3.25)				{$\Omega_5$};
\node[above] at (-3.5,2.5)			{$\Omega_7^+$};
\node[below] at (-3.5,-2.5)			{$\Omega_8^+$};
%
%
\node[above] at (0,1.25)				{$\twomat{1}{0}{0}{1}$};
\node[below] at (0,-1.25)				{$\twomat{1}{0}{0}{1}$};
%
%
\node[right] at (1.20,0.70)			{$#3$};
\node[left]   at (-1.20,0.70)			{$#4$};
\node[left]   at (-1.20,-0.70)			{$#5$};
\node[right] at (1.20,-0.70)			{$#6$};
\end{tikzpicture}
\label{#2}
\end{figure}
}

\newcommand{\sixmatrix}[6]
{
\begin{figure}
\centering
\caption{#1}
\vskip 15pt
\begin{tikzpicture}
[scale=0.75]
%
%
\draw[thick]	 (-4,0) -- (4,0);
\draw[thick] 	(-4,4) -- (4,-4);
\draw[thick] 	(-4,-4) -- (4,4);
%
%
\draw	[fill]		(0,0)						circle[radius=0.075];
\node[below] at (0,-0.1) 				{$-z_0$};
%
%
\node[above] at (3.5,2.5)				{$\Omega_7^-$};
\node[below]  at (3.5,-2.5)			{$\Omega_8^-$};
\node[above] at (0,3.25)				{$\Omega_2$};
\node[below] at (0,-3.25)				{$\Omega_5$};
\node[above] at (-3.5,2.5)			{$\Omega_3$};
\node[below] at (-3.5,-2.5)			{$\Omega_4$};
%
%
\node[above] at (0,1.25)				{$\twomat{1}{0}{0}{1}$};
\node[below] at (0,-1.25)				{$\twomat{1}{0}{0}{1}$};
%
%
\node[right] at (1.20,0.70)			{$#3$};
\node[left]   at (-1.20,0.70)			{$#4$};
\node[left]   at (-1.20,-0.70)			{$#5$};
\node[right] at (1.20,-0.70)			{$#6$};
\end{tikzpicture}
\label{#2}
\end{figure}
}

%
%
%
%

%
%
%
%

\newcommand{\JumpMatrixRightCut}[6]
{
\begin{figure}
\centering
\caption{#1}
\vskip 15pt
\begin{tikzpicture}[scale=0.85]
%
%
\draw [fill] (4,4) circle [radius=0.075];						
\node at (4.0,3.65) {$\xi$};										
%
%
\draw 	[->, thick]  	(4,4) -- (5,5) ;								
\draw		[thick] 		(5,5) -- (6,6) ;
\draw		[->, thick] 	(2,6) -- (3,5) ;								
\draw		[thick]		(3,5) -- (4,4);	
\draw		[->, thick]	(2,2) -- (3,3);								
\draw		[thick]		(3,3) -- (4,4);
\draw		[->,thick]	(4,4) -- (5,3);								
\draw		[thick]  		(5,3) -- (6,2);
%
%
\draw [  thick, blue, decorate, decoration={snake,amplitude=0.5mm}] (4,4)  -- (8,4);				
\node at (1.5,4) {$0 < \arg (\zeta-\xi) < 2\pi$};
%
%
\node at (8.5,8.5)  	{$\Sigma_1$};
\node at (-0.5,8.5) 	{$\Sigma_2$};
\node at (-0.5,-0.5)	{$\Sigma_3$};
\node at (8.5,-0.5) 	{$\Sigma_4$};
%
%
\node at (7,7) {${#3}$};						
\node at (1,7) {${#4}$};						
\node at (1,1) {${#5}$};						
\node at (7,1) {${#6}$};						
\end{tikzpicture}
\label{#2}
\end{figure}
}

%
%

\section{Introduction}

%
%
In this paper, we study the long-time dynamics of the focusing modified
Korteweg-de vries equation (mKdV)
\begin{equation}
\partial_{t}u+\partial_{xxx}u+6u^{2}\partial_{x}u=0,\ \left(x,t\right)\in\mathbb{R}\times\mathbb{R}^{+}.\label{eq:fMKdV}
\end{equation}
There is a vast body of literature regarding the mKdV equation, in
particular with the local and global well-posedness of the Cauchy
problem. For a summary of known results we refer the reader to Linares-Ponce
\cite{LP}. For the focusing mKdV  equation on the line, we mention the works on the local and global
well-posedness by Kato \cite{Kat}, Kenig-Ponce-Vega \cite{KPV},
Colliander-Keel-Staffilani-Takaoka-Tao \cite{CKSTT}, Guo \cite{Guo}
and Kishimoto \cite{Kis}. In particular,  it is proven by see Kenig, Ponce and Vega in \cite{KPV} that the equation is locally well-posed. 
And global well-posedness is proven by  Colliander, Keel, Staffilani, Takaoka and Tao in
\cite{CKSTT}. Finally Guo \cite{Guo} and Kishimoto \cite{Kis} established well-posedness in  $H^{s}\left(\mathbb{R}\right)$for
$s\ge\frac{1}{4}$. These results are complemented by several ill-posedness
results; see Kenig-Ponce-Vega \cite{KPV2}, Christ-Colliander-Tao
\cite{CCT} and references therein. 

\smallskip

Besides global regularity, another fundamental question for dispersive
PDEs concerns the asymptotic behavior for large time. For small data,
the defocusing and the focusing mKdV have similar asymptotics. For
example, using the complete integrability of the mKdV, in the seminal
work of Deift-Zhou \cite{DZ93}, the global existence and asymptotic
behavior can be studied for the defocusing case using inverse scattering
transforms and the nonlinear steepest descent approach to oscillatory
Riemann-Hilbert problems. Our recent work \cite{CL19} extends
these analysis to low regularity data. For small data, one can also
study the asymptotics without using completely integrability.
A proof of global existence and a (partial) derivation of the asymptotic
behavior for small localized solutions, without making use of complete
integrability, was later given by Hayashi and Naumkin \cite{HN1,HN2}
using the method of factorization of operators. Recently, Germain-Pusateri-Rousset
\cite{GPR} use ideas based on the space-time resonance to study
the long-time asymptotics of small data and the stability of solitary
waves. A precise derivation of asymptotics and a proof of asymptotic
completeness, was given by Harrop-Griffiths \cite{HGB} using wave
packets analysis.

\smallskip
 Compared with the defocusing modified Korteweg-de
vries equation
\[
\partial_{t}u+\partial_{xxx}u-6u^{2}\partial_{x}u=0,\ \left(x,t\right)\in\mathbb{R}\times\mathbb{R}^{+}
\]
studied in Deift-Zhou \cite{DZ93} and our earlier work \cite{CL19},
the first striking feature is the existence of solitons and breathers which do not decay in time (up to translations).
This is a remarkable consequence of the focusing interaction between the
nonlinearity and the dispersion.

Equation \eqref{eq:fMKdV} admits a solution of the
following form: for $c>0$
\begin{equation}
u=Q_{c}\left(x-ct\right)\label{eq:soliton1}
\end{equation}
where
\[
Q_{c}\left(x\right):=\sqrt{c}Q\left(\sqrt{c}x\right).
\]
solves
\[
Q_{c}''-cQ_{c}+Q_{c}^{3}=0,\ Q_{c}\in H^{1}\left(\mathbb{R}\right).
\]
and one can write down the solution explicitly
\[
Q\left(x\right):=2\sqrt{2}\partial_{x}\left(\arctan\left(e^{x}\right)\right).
\]
With these solitons, more complicated solutions are present,
such as multi-soliton solutions [ \cite{Hir}
\cite{WaO}  \cite{Schu} ].

In the context of the focusing mKdV, there exist other nonlinear structures
which do not decay in time. These nonlinear structures, of oscillatory
character,  are known as breathers [ \cite{Lamb} \cite{Wa} ].  They are periodic in time but spatially localized (after a suitable
space shift) real-valued functions. They are of the following
form: For $\alpha,\,\beta\in\mathbb{R}\backslash\left\{ 0\right\} $,

\begin{align}
B_{\alpha,\beta}\left(x,t\right) & =2\sqrt{2}\partial_{x}\left[\arctan\left(\frac{\beta\sin\left(\alpha\left(x+\delta t\right)\right)}{\alpha\cosh\left(\beta\left(x+\gamma t\right)\right)}\right)\right]\label{eq:breather}
\end{align}
where
\[
\delta:=\alpha^{2}-3\beta^{2},\ \gamma:=3\alpha^{2}-\beta^{2}.
\]
Notice that $-\gamma$ plays the role of velocity, which
can be positive or negative. Therefore, compared with
a soliton which only moves to the right, a breather can travel
in both directions. (We will use slightly different
notations later on to be consistent with the inverse scattering
literature.)

\smallskip

If we assume there are no breathers nor solitons, the pure radiation
will behave similarly to the defocusing mKdV. In \cite{DZ93} and \cite{CL19} it has been shown that the defocusing mKdV has asymptotic behaviors in different space-time regions. These include the soliton region,  the self-similar region and the oscillatory region. 

\smallskip

From our brief discussion above, one should realize that general solution to the focusing mKdV will consist of solitons moving to the right, breathers traveling
to both directions and a radiation term. Our goal in this paper is to give
detailed asymptotic analysis  for the focusing mKdV with generic data.
To achieve this, we need to understand the interaction among solitons,
breathers and radiation in different regions precisely. In the generic
setting, finitely many breathers and solitons can appear and they
interact with the radiation. One might expect that a consequence of
the integrability, these nonlinear modes interact elastically but the way they influence the radiation are remarkably
different. To illustrate the complicated behaivor of the solution to the mKdV, we compare the dynamics here with the cubic NLS and the KdV equation.
\begin{enumerate}
	\item[(i)] The KdV  equation has solitons but no breathers.  Like  the KdV solitons,  mKdV solitons travel in the opposite direction of the radiation. So one might expect the interactions between them are weak. Interactions among solitons cause the shift of centers of solitons and the soliton influence on the radiation can be seen from matrix conjugation.  Meanwhile, mKdV breathers can travel in both directions. For those traveling in the same direction with solitons, again, the results of interactions
are similar to that of solitons. More importantly, there are breathers traveling in the same direction with radiation.  As for the behavior of the radiation, both the KdV and the mKdV have soliton region, Painlev\'e region and the oscillatory region but the KdV has one extra part, the collisionless shock region.

\item[(ii)] For those breathers traveling in the same direction with the radiation,  the interactions are strong. They are always coupled with the radiations like the NLS.
If the stationary phase point we choose is close to
the velocity of some breather, the model Riemann-Hilbert problem is
significantly different from the defocusing problem. In particular, there
are be eigenvalues of the direct scattering transform located on
the critical curve with respect to the stationary point. This is the place where the interactions among breathers and radiation is seen from matrix conjugation. More explicitly, we are conjugating matrices obtained from solving a one-breather Riemann-Hilbert problem with matrices resulting from solving a parabolic cylinder model problem. 
\end{enumerate}

\smallskip

Our long-time asymptotics will also provide a proof for
soliton resolution conjecture for the mKdV with generic data. This
conjecture asserts, roughly speaking, that any reasonable solution
 eventually resolves into a superposition of a radiation
component plus a finite number of \textquotedblleft nonlinear bound
states\textquotedblright{} or \textquotedblleft solitons\textquotedblright .
Without using integrability, in Duyckaerts-Kenig-Merle \cite{DKM} and Duyckaerts-Jia-Kenig-Merle
\cite{DJKM} establish this conjecture for the energy critical wave
equation in high dimensions (along a sequence of time for the non-radial
case). For integrable systems, this resolution phenomenon
is studied by Borghese, Jenkins and McLaughlin in \cite{BJM16} for the cubic NLS
and by Jenkins, Liu, Perry and Sulem in \cite{JLPS18} for the  the  derivative NLS and more recently by Saalmann \cite{SA18} for the massive Thirring model.  For the mKdV equation  we mention that
in \cite{Schu}, the author only allows solitons to appear and the
analysis is also restricted to the soliton region for smooth initial condition.
Our analysis include \emph{all kinds} of solitons and breathers and establish
the long-time asymptotics on the \emph{full line}. We also lower the regularity
condition to be \emph{almost optimal}.  We will show that for any generic data in the sense of Definition \ref{genericity}, the solution $u$ to the focusing mKdV \eqref{eq:fMKdV}, can be written as  a superposition of solitons, breathers and a radiation term:
\[
u(x,t)=\sum_{\ell=1}^{N_{2}}u_{\ell}^{\left(br\right)}\left(x,t\right)+\sum_{\ell=1}^{N_{1}}u_{\ell}^{\left(so\right)}\left(x,t\right)+R\left(x,t\right).
\]
For the detailed description of the formula above,  we refer the reader to Theorem \ref{main1}.

\smallskip

As a byproduct of our analysis, we obtain the asymptotic stability
of some nonlinear structures. More precisely, we obtain the \emph{full asymptotic
	stability} of soliton, muti-soliton, breather, multi-breathers and
the combination of them.

\smallskip

Without trying to be exhaustive, we discuss the historical
progress of the stability analysis. Indeed, $H^{1}$-stability of
mKdV solitons and multi-solitons have been considered
e.g. in Bona-Souganidis-Strauss \cite{BSS}, Pego-Weinstein \cite{PW},
Martel-Merle-Tsai \cite{MMT} and Martel-Merle \cite{MM1,MM2}. For
the stability of breathers, see Alejo-Mu\~noz \cite{AM1,AM2}. To understand
the (asymptotic) stability of soliton or breathers, for those traveling
in different direction with radiation, one can use the energy method
with the Lyapunov functional as in, for example Martel-Merle \cite{MM1,MM2}
and Alejo-Mu\~noz \cite{AM1,AM2} after restricting to the soliton region.
Understanding the radiation requires more refined analysis,
see Germain-Pusateri-Rousset \cite{GPR} and Mizumachi \cite{Miz1}.
In particular, in \cite{GPR}, for the perturbation
of the mKdV soliton, the authors give detailed descriptions of the
radiation in terms of Painlev\'e function and modified scattering. Here,
we illustrate explicitly the influence of solitons/breathers on the
radiation. More importantly, in the context of mKdV, there are breathers
traveling alongside the radiation to the \emph{left}. As we point out above, the interaction
here behaves like the interaction between solitons and radiation in
NLS. To understand the asymptotic stability of them, one can always attempt
to linearize the equation near breathers. But the spectral analysis
here is much more involved compared with the NLS equation since breathers
oscillates periodically in time. Moreover, even for the integrable
cubic NLS, to the best of our knowledge, there is no PDE proof the
asymptotic stability of the soliton without invoking the inverse scattering
transform.

\smallskip

To study the long-time asymptotics of integrable system, in the pioneering work of Deift-Zhou \cite{DZ93}, a key step in the nonlinear steepest descent method consists of  deforming the contour
associated to  the RHP in such a way that  the phase function  with oscillatory 
dependence on parameters become exponential decay.
In general the entries of the jump matrix are not analytic, so direct analytic extension off the real axis is not possible. Instead
they must be approximated by rational functions and this results in some error term in the recovered solution. Therefore, in the context of nonlinear steepest descent, most works are carried out
under the assumptions that the initial data belong to the Schwartz
space.

\smallskip

In \cite{Zhou98},  Xin Zhou developed a rigorous analysis of the direct and inverse scattering transform of the AKNS system
for a class of initial conditions $u_0(x)=u(x,t=0)$ belonging to the space  $H^{i,j}(\bbR)$.
Here,  $H^{i,j}(\bbR)$  denotes  
the completion of $C_0^\infty(\bbR)$ in the norm
$$
\norm{u}{H^{i,j}(\bbR)}
= \left( \norm{(1+|x|^j)u}{2}^2 + \norm{u^{(i)}}{2}^2 \right)^{1/2}. 
$$
Recently, much effort has been devoted to relax the regularities
of the initial data. In particular, among the most celebrated results
concerning nonlinear Schr\"odinger equations, we point out the work
of Deift-Zhou \cite{DZ03} where they provide the asymptotics for
the NLS in the weighted space $H^{1,1}$. Dieng and McLaughlin in \cite{DM08} (see also an extended version \cite{DMM18}) developed a variant of Deift-Zhou method. In their approach 
rational approximation of the reflection coefficient is replaced by some 
non-analytic extension of the jump matrices off the real axis, which  leads to a $\bar{\partial}$-problem to 
be solved in some regions of the complex plane. The
new  $\bar{\partial}$-problem can be reduced to an integral equation and is solvable through
Neumann series.   
These ideas were originally implemented by Miller and McLaughlin \cite{MM08} to the 
study the  asymptotic stability of orthogonal polynomials. This method has shown its robustness in its application to other integrable models. Notably, for focusing NLS and derivative NLS, they were 
successfully applied to address the soliton resolution in \cite{BJM16} and \cite{JLPS18} respectively. In this paper, we incorporate this approach into the framework of \cite{DZ93}  to calculate the long time behavior of the focusing mKdV equation in weighted Sobolev spaces. 

\smallskip

Also in Deift-Zhou \cite{DZ03}, they apply an approximation argument to extend the long-time asymptotics of the cubic NLS to the weighted space $L^{2,1}$.  This topology is more or
less optimal from the views of PDE and inverse scattering transformations.
The global $L^{2}$ existence of the cubic NLS can be carried out
by the $L_{t}^{4}L_{x}^{\infty}$ Strichartz estimate and the conservation
of the $L^{2}$ norm. But in order to obtain the precise asymptotics,
one needs to \textquotedblleft pay the price of weights\textquotedblright{}, i.e. working with the weighted space $L^{2,1}$. Recently, in our earlier work Chen-Liu \cite{CL19}, we establish the long-time asymptotics for the defocusing mKdV in $H^{1/4,s},\,s>1/2$ using a global approximation argument based on contractions in the spirit of Kenig-Ponce-Vega \cite{KPV}. In Deift-Zhou \cite{DZ03}, due to the $L_{t}^{4}L_{x}^{\infty}$
Strichartz estimates for the linear Schr\"odinger equation and the
conservation of the $L^{2}$ norm, the authors can globally approximate
the solution to the nonlinear Schr\"odinger equation with data in
$L^{2,1}$ using the Beals-Coifman representation of solutions
directly. Unlike the Schr\"odinger equation, the smoothing estimates and
Strichartz estimates for the Airy equation and the mKdV are much more
involved. For example,  one needs $L_{x}^{4}L_{t}^{\infty}$ which behaves
like a maximal operator. To directly work on the Beals-Coifman
solution to the mKdV to establish the smoothing
estimates and Strichartz estimates, one needs estimates for pesudo-differential
operators with very rough symbols. To avoid these technicalities, we first identify the Beals-Coifman solution with the solution
given by the Duhamel formula  which we call a strong solution for smooth data. Since the strong solutions by construction enjoy Strichartz estimates and
smoothing estimates, by our identification, the Beals-Coifman solutions
also satisfy these estimates. Then we combine Strichartz estimates
and smoothing estimates with the $L^2$ Sobolev bijectivity result by Zhou \cite{Zhou98} to pass limits of Beals-Coifman solutions
to obtain the asymptotics for rougher initial data in  $H^{\frac s,1}\left(\mathbb{R}\right)$ with $s\geq 1/4$. In contrast to our earlier work \cite{CL19}, in this paper, we use the recent work on low regularity conservation law due to Kilip-Visan-Zhang \cite{KiViZh} and Koch-Tataru \cite{KoTa}  to perform the approximation argument for $H^{\frac s,1}\left(\mathbb{R}\right)$ with $s\geq 1/4$ in the unified manner.
To deal with the focusing problem here, we need some refined analysis on the discrete scattering data since the Beals-Coifman representation is more complicated. Then again, via the approximation argument adapted to the focusing problem, we extend the soliton resolution to generic data in $H^{1/4,s},\,s>1/2$.

\smallskip

Finally, we would like point out that similar to Deift-Zhou \cite{DZ93}, our
method is general and algorithmic and does not require an a priori
ansatz for the form of the solution of the asymptotic problem. We
only assume the number of zeros of $a\left(z\right)$ and $\breve{a}\left(z\right)$
are finite, see Section \ref{subsec:eigenvales}  for the definition. This condition is
generic which means that the initial data satisfying this condition
is an open dense set in the space of the initial data. For the KdV
problem, if certain norms of the initial data is bounded, then automatically,
this spectral condition holds, see Deift-Trubowitz \cite{DT}. If
the reflection coefficient is zero, these finite number of zeros will correspond to a
pure muli-soliton solution. When the radiation appears, with the interaction
of reflection coefficients, the resulting phenomenon is more delicate
and complicated. A-priori, just knowing there are finitely many number
of zeros, it is not clear at all that under the influence of the
radiation, the initial data will evolve into a sequence of solitons. To establish
the soliton resolution, we go through reductions step by step via
$\overline{\partial}$-derivatives analysis and nonlinear steepest
descent  to reduce our Riemann-Hilbert
problems (RHPs) to some solvable models. We make sure that only controllable error terms are introduced through these reduction. It is from these exactly solvable model problems that we are going to illustrate the interaction between solitary waves and radiation and the leading asymptotics of the solution.

We begin with some notations:

\subsection{Notations}
Let $\sigma_3$ be the third Pauli matrix:
$$\sigma_3=\twomat{1}{0}{0}{-1}$$
and define the matrix operation 
$$e^{\ad\sigma_3}A=\twomat{a}{e^{2} b}{e^{-2}c}{d}.$$
Given any contour $\Sigma$, $C^\pm$ is the Cauchy projection:
\begin{equation}
(C^\pm f)(z)= \lim_{z\to \Sigma_\pm}\dfrac{1}{2\pi i} \int_{\Sigma} \dfrac{f(s)}{s-z}ds.
\end{equation}
Here $+(-)$ denotes taking limit from the positive (negative) side of the oriented contour.\\
\smallskip
We define Fourier transform as 
	\begin{equation}
	\hat{h}\left(\xi\right)=\mathcal{F}\left[h\right]\left(\xi\right)=\frac{1}{2\pi}\int_\bbR e^{-ix\xi}h\left(x\right)\,dx.\label{eq:FT}
	\end{equation}
	Using the Fourier transform, one can define the fractional weighted
	Sobolev spaces:
	\begin{equation}
	H^{k,s}\left(\mathbb{R}\right):=\left\{ h:\,\left\langle 1+\left|\xi\right|^{2}\right\rangle ^{\frac{k}{2}}\hat{h}\left(\xi\right)\in L^{2}\left(\mathbb{R}\right),\:\left\langle 1+x^{2}\right\rangle ^{\frac{s}{2}}h\in L^{2}\left(\mathbb{R}\right)\right\} .\label{eq:weight}
	\end{equation}
	
As usual, $"A:=B"$
or $"B=:A"$
is the definition of $A$ by means of the expression $B$. We use
the notation $\langle x\rangle=\left(1+|x|^{2}\right)^{\frac{1}{2}}$.
For positive quantities $a$ and $b$, we write $a\lesssim b$ for
$a\leq Cb$ where $C$ is some prescribed constant. Also $a\simeq b$
for $a\lesssim b$ and $b\lesssim a$. Throughout, we use $u_{t}:=\frac{\partial}{\partial_{t}}u$,
$u_{x}:=\frac{\partial}{\partial x}u$.

\subsection{The Riemann--Hilbert problem and inverse scattering} 

To describe our approach, we recall that \eqref{eq:fMKdV} generates an isospectral flow for the problem
\begin{equation}
\label{L}
\frac{d}{dx} \Psi = -iz \sigma_3 \Psi + U(x) \Psi
\end{equation}
where
$$ U(x) = \offdiagmat{iu(x)}{i{u(x)}}.$$
This is a standard AKNS system\cite{AKNS}. If $u \in L^1(\bbR) \cap L^2(\bbR)$, equation \eqref{L} admits bounded 
solutions for $z \in \mathbb{R}$.   There exist unique solutions $\Psi^\pm$ of \eqref{L} obeying the the following space asymptotic conditions
$$\lim_{x \rarr \pm \infty} \Psi^\pm(x,z) e^{-ix z \sigma_3} = \diagmat{1}{1},$$
and there is a matrix $T(z)$, the transition matrix, with 
\begin{equation}
\label{Psi-T}
\Psi^+(x,z)=\Psi^-(x,z) T(z).
\end{equation}
The matrix $T(z)$ takes the form
\begin{equation} \label{matrixT}
 T(z) = \twomat{a(z)}{\bb(z)}{b(z)}{\ba(z)}
 \end{equation}
and  the determinant relation gives
$$ a(z)\ba(z) - b(z)\bb(z) = 1. $$
 By uniqueness we have 
\begin{equation}
\label{symmetry-1}
\psi^\pm_{11}(z)=\overline{\psi^\pm_{22}(\overline{z})}, \quad \psi^\pm_{12}(z)=-\overline{\psi^\pm_{21}(\overline{z})},
\end{equation}
\begin{equation}
\label{symmetry-2}
\psi^\pm_{11}(z)={\psi^\pm_{22}(-{z})}, \quad \psi^\pm_{12}(z)=\psi^\pm_{21}(-z).
\end{equation}
This leads to the symmetry relation of  the entries of $T$:
\begin{align} \label{symmetry}
\ba(z)=\overline{a( \zbar )}, \quad \bb(z) = -\overline{ b(z)}. 
\end{align}
On $\mathbb{R}$, the determinant of $T(z)$ is given by
$$|a(z)|^2+ |b(z)|^2=1.$$
Making the change of variable 
$$\Psi=m e^{ixz\sigma_3}$$
the system \eqref{L} then becomes 
\begin{equation}
\label{AKNS-m}
m_x=-iz\ad \sigma_3~ m+Um.
\end{equation}
The standard AKNS method starts with the following two Volterra integral equations for real $z$:
\begin{equation}
\label{IE-m-pm}
m^{(\pm)}(x, z)=I + \int_{\pm\infty}^x e^{i(y-x)z \ad\sigma_3} U(y)m^{(\pm)}(y,z)dy.
\end{equation}

By the standard inverse scattering theory, we formulate the reflection coefficient:
\begin{equation}
\label{reflection}
r(z)=-b(z)/\ba(z), \quad z\in\bbR.
\end{equation}
Also from the symmetry conditions \eqref{symmetry-1}-\eqref{symmetry-2} we deduce that
\begin{equation}
\label{minus}
r(-z)=-\overline{r( z )}.
\end{equation}
\subsubsection{Eigenvalues}\label{subsec:eigenvales}
It is important to notice that $\ba(z)$ and $a(z)$ has analytic continuation into the $\bbC^+$ and $\bbC^-$ half planes respectively.
From \eqref{Psi-T} we deduce that 
\begin{equation}
\label{ba-det}
\breve{a}(z)=\det\twomat{\psi^-_{11}(x, z )}{\psi^+_{12}(x, z)}{\psi^-_{21}(x, z )}{\psi^+_{22}(x,z)}.
\end{equation}
\begin{equation}
\label{a-det}
{a}(z)=\det\twomat{\psi^+_{11}(x,z)}{\psi^-_{12}(x,z)}{\psi^+_{21}(x,z)}{\psi^-_{22}(x,z)}.
\end{equation}
From \eqref{symmetry-2}-\eqref{symmetry} we read off directly that if $\ba(z_i)=0$ for some $z_i\in \bbC^+$, then $\overline{\ba(-  \overline{ z_i } )}=0$ by symmetry. Thus if $\ba(z_i)=0$, then either
\begin{itemize}
\item[(i)] $z_i$ is purely imaginary;\\
or
\item[(ii)] $-\overline{z_i}$ is also a zero $\ba$.
\end{itemize}
When $r\equiv 0$, Case (i) above corresponds to solitons while Case (ii) introduces breathers.

\begin{remark}
\label{remark-generic}
It is proven in \cite{BC84} that there is  an  open and dense subset $U_0 \subset L^1(\bbR)$ such that if $u \in U_0$ , then the zeros of $\ba$ ($a$) are finite and simple and off the real axis. We restrict the initial data to such set in this paper.
\end{remark}
\begin{figure}[h!]
\caption{Zeros of $\ba$ and $a$ }
\vskip 0.4cm
\begin{center}
\begin{tabular}{ccc}


\setlength{\unitlength}{5.0cm}
\begin{picture}(1,1)

\put(0.5,0.5){\vector(1,0){0.25}}
\put(0.75,0.5){\line(1,0){0.25}}

\put(0.5,1){\line(0,-1){0.25}}
\put(0.5,0.5){\line(0,1){0.25}}

\put(0.5,0.5){\line(-1,0){0.25}}
\put(0.25,0.5){\line(-1,0){0.25}}

\put(0.5,0.25){\line(0,1){0.25}}
\put(0.5,0){\line(0,1){0.25}}

\put(0.5,0.5){\circle{0.025}}


\put(0.9,0.55){$\bbC^+$}
\put(0.9, 0.4){$\bbC^-$}
\put(0.55,0.9){$i\bbR$}


\put(0.472,0.65){\color{red}$\times$}
\put(0.472,0.35){\color{blue}$\times$}
\put(0.472,0.4){\color{blue}$\times$}
\put(0.472,0.6){\color{red}$\times$}


\put(0.7,0.8){{\color{red}\circle*{0.025}}}
\put(0.7,0.2){{\color{blue}\circle*{0.025}}}
\put(0.3,0.8){{\color{red}\circle*{0.025}}}
\put(0.3,0.2){{\color{blue}\circle*{0.025}}}

\put(0.6,0.7){{\color{red}\circle*{0.025}}}
\put(0.6,0.3){{\color{blue}\circle*{0.025}}}
\put(0.4,0.7){{\color{red}\circle*{0.025}}}
\put(0.4,0.3){{\color{blue}\circle*{0.025}}}

\end{picture}

 \\[0.2cm]
\end{tabular}

\vskip 0.2cm

\begin{tabular}{ccc}
Origin ({\color{black} $\circ$}) &
zeros of $\ba$ ({\color{red} $\times$} {\color{red} $\bullet$})	&	
zeros of $a$  ({\color{blue} $\times$} {\color{blue} $\bullet$}) 
\end{tabular}
\end{center}

\label{fig:spectra}
\end{figure}
Suppose that $\ba(z_i)=0$ for some $z_i\in\bbC^+$,  $i=1,2,..., N$, then we have the linear dependence of the columns :
\begin{align}
\label{b_i}
 \begin{bmatrix}
           \psi^-_{11}(x,z_i) \\
           \psi^-_{21}(x,z_i) \\
        \end{bmatrix}=b_i\begin{bmatrix}
           \psi^+_{12}(x, z_i) \\
           \psi^+_{22}(x,z_i) \\
        \end{bmatrix}
    \end{align}
  \begin{align} 
\label{b_0}
   \begin{bmatrix}
           m^-_{11}(x, z_i) \\
           m^-_{21}(x, z_i) \\
        \end{bmatrix}=b_i\begin{bmatrix}
           m^+_{12}(x, z_i) \\
           m^+_{22}(x, z_i) \\
        \end{bmatrix}e^{2ix{z_i}}.
  \end{align}
\begin{remark}
As the zeros of $\ba$ are of order one, $\ba'(z_i)\neq 0$.
\end{remark}  

\subsubsection{Inverse Problem}
\label{subsec:inverse}
In this subsection we construct the Beals-Coifman solutions needed for the RHP.  We need to find certain piecewise analytic matrix functions. An obvious choice is 
\begin{equation}
 \begin{cases}
(m^{(-)}_1, m^{(+)}_2), \qquad \Imag z>0\\
(m^{(+)}_1, m^{(-)}_2), \qquad \Imag z<0.
\end{cases}
\end{equation}
We want the solution to the RHP normalized as $x\rightarrow +\infty$, so we set
\begin{equation}
\label{BC}
M(x,z)= \begin{cases}
(m^{(-)}_1, m^{(+)}_2)\twomat{\ba^{-1}}{0}{0}{1}, \qquad \Imag z>0\\
(m^{(+)}_1, m^{(-)}_2)\twomat{1}{0}{0}{a^{-1}}, \qquad \Imag z<0.
\end{cases}
\end{equation}
We assume $a(z)\neq 0$ for all $z\in\mathbb{R}$ and recall
\begin{equation}
{r}(z)=-\dfrac{b(z)}{\ba(z)}
\end{equation}
and by symmetry
$$\dfrac{\bb(z)}{a(z)}=\overline{r(z)}.$$
Using the asymptotic condition of $m^\pm$ and  \eqref{matrixT}, we conclude that for $z\in \bbR$
\begin{subequations}
\begin{equation}
\label{M+}
\lim_{x\to +\infty}(m^{(-)}_1, m^{(+)}_2)\twomat{\ba^{-1}}{0}{0}{1}= \twomat{1}{0}{-e^{2ixz}  \dfrac{b(z)}{\ba(z)} }{1},
\end{equation}
\begin{equation}
\label{M-}
\lim_{x\to +\infty}(m^{(+)}_1, m^{(-)}_2)\twomat{1}{0}{0}{a^{-1}}=\twomat{1}{-e^{-2ixz}\dfrac{\bb(z)}{a(z)} }{0}{1}.
\end{equation}
\end{subequations}
Setting $M_{\pm}(x,z)=\lim_{\epsilon\to 0}M(x, z\pm i\epsilon)$, then $M_{\pm}$ satisfy the following jump condition on $\bbR$:
$$M_+(x, z)=M_-(x,z)\twomat{1+|r(z)|^2}{e^{-2ixz} \overline{r(z)} }{e^{2ixz} r(z) }{1}.$$

We now calculate the residue at the pole $z_i$:
\begin{align}
\label{residue1}
\textrm{Res}_{z =z_i}M_{+,1}(x, z)&=\frac{1}{\breve{a}'(z_i)}\Twomat{m^-_{11}(x,z_i)}{0}{m^-_{21}(x,z_i)}{0}\\
\nonumber
                                    &=\frac{e^{2ix{z_i}}b_i}{\breve{a}'(z_i)}\Twomat{m^+_{12}(x,z_i)}{0}{m^+_{22}(x,z_i)}{0}.
\end{align}

Similarly we can calculate the residues at the pole $\overline{z_i}$:
\begin{align}
\label{residue2}
\textrm{Res}_{z =\overline{z_i}}\textrm{M}_{-,2}(x,z)
=-\frac{e^{-2ix\overline{z_i}}\overline{b_i}}{{a}'(\overline{z_i})}\Twomat{0}{m^+_{11}(x,\overline{z_i} ) }{0}{m^+_{21}(x,\overline{z_i})}.
\end{align}
If $z_i$ is not purely imaginary, we also have 
\begin{align}
\label{residue3}
\textrm{Res}_{z =-\overline{z_i} }M_{+,1}(x, z)&=\frac{1}{\breve{a}'(-\overline{z_i} )}\Twomat{m^-_{11}(x,-\overline{z_i})}{0}{m^-_{21}(x,-\overline{z_i})}{0}\\
\nonumber
                                    &=-\frac{e^{-2ix{\overline{z_i}}}  \overline{b_i}   }{\breve{a}'(-\overline{z_i} )}\Twomat{m^+_{12}(x,-\overline{z_i})}{0}{m^+_{22}(x,-\overline{z_i})}{0}
\end{align}
and
\begin{align}
\label{residue4}
\textrm{Res}_{z =- {z_i}}\textrm{M}_{-,2}(x,z)
= \frac{e^{2ix {z_i}} {b_i}}{{a}'(-{z_i})}\Twomat{0}{m^+_{11}(x, -{z_i} ) }{0}{m^+_{21}(x, -{z_i})}.
\end{align}
Using symmetry reduction we have that $\breve{a}'(z_i)=\overline{a'(\overline{z_i})}$
so we can define norming constant
$$c_i=\frac{b_i}{\breve{a}'(z_i)}.$$
The following result is proven in \cite{Zhou98}:
\begin{proposition}
\label{D1}
If $u_0 \in H^{2,1}(\bbR)$,  then $r(z)\in H^{1,2}(\bbR)$.
\end{proposition}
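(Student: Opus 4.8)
### Proof proposal for Proposition \ref{D1}

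The plan is to establish the mapping $u_0 \mapsto r$ as a bounded nonlinear map from $H^{2,1}(\bbR)$ into $H^{1,2}(\bbR)$ by analyzing separately the two defining norms: the weighted-$L^2$ bound $\norm{\langle z \rangle^2 r}{2} \lesssim 1$ and the Sobolev bound $\norm{r'}{2} \lesssim 1$. Since $r(z) = -b(z)/\ba(z)$ and $\ba$ is bounded away from zero on $\bbR$ (as $a(z) \neq 0$ there and $|a|^2 + |b|^2 = 1$ forces $|\ba| = |a|$ to stay positive; one uses that $\ba \to 1$ as $|z| \to \infty$), it suffices to control $b(z)$ in $H^{1,2}(\bbR)$ and $\ba(z)$ in an appropriate space, then use that $H^{1,2}$ is an algebra-type space under the relevant operations (products and reciprocals of functions bounded below). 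So the heart of the matter is extracting decay and smoothness of the scattering entries $a,b$ from the Volterra integral equations \eqref{IE-m-pm}.

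First I would set up the Neumann series for $m^{(\pm)}(x,z)$ from \eqref{IE-m-pm} and read off $a(z), b(z)$ as $x \to \mp\infty$ limits, obtaining integral representations of the form $b(z) = \int_\bbR e^{2iyz} \, iu(y)\, m^{(+)}_{11}(y,z)\, dy$ and similarly for $\ba$. The weighted decay $\langle z\rangle^2 b(z) \in L^2$ then comes from integration by parts in $y$: each factor of $z$ is traded for a $\dee_y$ landing on $u(y) m^{(+)}_{11}(y,z)$, which costs derivatives of $u$ (we have two, matching $H^{2,1}$) and derivatives of $m^{(+)}$ in $x$, the latter controlled via \eqref{AKNS-m} again in terms of $u$ and lower-order data. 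The boundary terms vanish because $u \in H^{2,1} \subset L^1$ decays. This is essentially the statement that the Fourier-type transform defining $b$ gains two powers of decay from two derivatives of $u_0$; the $L^2$ (rather than pointwise) nature of the weight is exactly matched to the $L^2$ nature of $\dee^2 u_0$, via Plancherel. For the $z$-derivative bound $r' \in L^2$: differentiating the integral representation in $z$ brings down a factor of $y$, i.e. one needs $\langle y \rangle u(y) \in L^2$, which is the $j=1$ weight in $H^{2,1}$; one also differentiates $m^{(+)}$ in $z$, and $\dee_z m^{(+)}$ satisfies an inhomogeneous version of \eqref{IE-m-pm} that is again solvable by Neumann series with the same regularity inputs.

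The key analytic engine throughout is uniform control of the Neumann series for $m^{(\pm)}$ and its $x$- and $z$-derivatives, uniformly in $z \in \bbR$, in terms of $\norm{u_0}{H^{2,1}}$; this is standard Volterra theory (the series converges absolutely since the kernel is Volterra and $u \in L^1$), but bookkeeping the derivatives is where the regularity thresholds $i=2$, $j=1$ get pinned down. I expect the main obstacle to be the low-frequency/near-$z=0$ behavior: $\ba(z)$ could in principle vanish on $\bbR$, which would destroy the reciprocal, but this is excluded by the standing assumption $a(z) \neq 0$ for $z \in \bbR$ together with the generic-data hypothesis of Remark \ref{remark-generic} (zeros of $\ba$ lie off the real axis); granting that, $1/\ba$ inherits the needed smoothness and decay-to-a-constant, and the product and quotient estimates closing the argument are routine. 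The other bookkeeping subtlety — making sure the two derivatives used for the $\langle z\rangle^2$ weight and the one derivative used for $r'$ don't "compete" for the same regularity of $u_0$ — is handled by noting the weight estimate uses $\dee^2 u_0 \in L^2$ and $\langle x\rangle u_0$ only marginally, while the derivative estimate uses $\langle x\rangle u_0 \in L^2$ and fewer $x$-derivatives, so $H^{2,1}$ comfortably supplies both. This is exactly the content of the $L^2$-Sobolev bijectivity theory of Zhou \cite{Zhou98}, from which the proposition is quoted.
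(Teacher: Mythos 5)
Your sketch is a correct outline of the standard direct-scattering argument: the paper itself gives no proof of Proposition \ref{D1} and simply quotes it from Zhou's $L^2$-Sobolev bijectivity theory \cite{Zhou98}, which proceeds exactly as you describe (Volterra/Neumann series for the Jost functions, smoothness of $u_0$ traded for $\langle z\rangle$-decay of $b$ and $\ba$ via integration by parts and Plancherel, the $\langle x\rangle$-weight controlling $\partial_z$ of the scattering entries, and division by $\ba$ justified by the generic no-real-zeros assumption together with $\ba\to 1$ at infinity). Your identification of the $H^{2,1}\mapsto H^{1,2}$ index swap as the heart of the matter is precisely the content of the cited result, so there is nothing to add beyond the reference.
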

Thus we arrive at the following set of  scattering data
\begin{equation}
\label{scattering}
\mathcal{S}=\lbrace r(z),\lbrace z_k, c_k \rbrace_{k=1}^{N_1}, \lbrace z_j, c_j \rbrace_{j=1}^{N_2} \rbrace \subset H^{1,2}(\bbR) \oplus \mathbb{C}^{2 N_1} \oplus \mathbb{C}^{ 2 N_2} .
\end{equation}
Here $ z_k=i\zeta_k$ for $\zeta_k>0$  while $z_j=\xi_j+i\eta_j$ with $\xi_j> 0$ and $\eta_j>0$.

It is well-known that $r(z)$, $c_j$ and $c_k$ have linear time evolution:
$$r(z, t)=e^{8itz^3}r(z), \quad c_j(t)=e^{8itz_j^3} c_j, \quad c_k(t)=e^{8itz_k^3} c_k .$$
In Appendix \ref{discrete}, we will show that the maps $u_0 \mapsto \mathcal{S}$ is Lipschitz continuous from $H^{2,1}(\bbR)$ 
into a subset of 
 $H^{1,2}(\bbR) \oplus \mathbb{C}^{2 N_1} \oplus \mathbb{C}^{ 2 N_2} $.   The long time asymptotics of mKdV is obtained through a sequence of transformations of the following RHP:
\begin{problem}
\label{RHP-1}
For fixed $x\in\bbR$ and  $r(z)\in H^{1,2}(\bbR)$, find a meromorphic matrix $M(x,t; z)$ satisfying the following conditions:
\begin{enumerate}
\item[(i)] (Normalization) $M(x,t; z)\to I+\mathcal{O}(z^{-1})$ as $z\to \infty$.
\item[(ii)] (Jump relation) For each $z \in \bbR$, $M(x,t; z)$ has continuous non-tangential boundary value $M_\pm(x, t; z)$ as $z$ approaches $\bbR$ from $\bbC^\pm$ and the following jump relation holds
\begin{align}
\label{jump}
M_+(x,t; z) &=M_-(x,t; z)e^{-i\theta(x,t; z)\ad\sigma_3}v(z)\\
                   &=M_-(x,t; z)v_\theta(z)
\end{align}
where
$$v(z)=\twomat{1+|r(z)|^2}{\overline{r(z)}} {r(z) }{1}$$
and 
\begin{equation}
\label{theta}
\theta(x,t; z)=4t(z^3-3z_0^2 z)=4tz^3+xz
\end{equation}
where 
\begin{equation}
\label{stationary}
\pm z_0=\pm\sqrt{\dfrac{-x}{12t}}
\end{equation}
are the two stationary points.
\item[(iii)] (Residue condition) For $k=1,2..., N_1$, $M(x, t; z)$ has simple poles at each $z_k, \overline{z_k}$ with
\begin{equation}
\label{res-1}
\Res_{z_i}M=\lim_{z\to z_k}M\twomat{0}{0}{e^{2i\theta} c_k}{0}
\end{equation}
\begin{equation}
\label{res-2}
\Res_{\overline{z_k} }M=\lim_{z\to \overline{z_k} }M\twomat{0}{-e^{-2i\theta} \overline{c_k}}{0}{0}.
\end{equation}
For $j=1,2,..., N_2$, $M(x, t; z)$ has simple poles at each $\pm z_j, \pm\overline{z_j}$ with
\begin{equation}
\label{res-3}
\Res_{z_j}M=\lim_{z\to z_j}M\twomat{0}{0}{e^{2i\theta} c_j}{0},
\end{equation}
\begin{equation}
\label{res-4}
\Res_{\overline{z_j} }M=\lim_{z\to \overline{z_j} }M\twomat{0}{-e^{-2i\theta} \overline{c_j}}{0}{0},
\end{equation}
\begin{equation}
\label{res-5}
\Res_{-z_j}M=\lim_{z\to -z_j}M\twomat{0}{e^{-2i\theta} c_j}{0} {0},
\end{equation}
\begin{equation}
\label{res-6}
\Res_{-\overline{z_j} }M=\lim_{z\to -\overline{z_j} }M\twomat{0}{0}{-e^{2i\theta} \overline{c_j}}{0}.
\end{equation}
\end{enumerate}
\end{problem}
\begin{definition}
\label{genericity}
We say that the initial condition $u_0$ is \emph{generic} if 
\begin{enumerate}
\item[1.] $\ba(z)$ ($a(z)$) associated to $u_0$ satisfies the simpleness and finiteness  assumptions stated in Remark \ref{remark-generic}.
\item[2.] For all  $\lbrace z_k \rbrace_{k=1}^{N_1}$ and  $\lbrace z_j \rbrace_{j=1}^{N_2}$ where $z_k=i\zeta_k$ and $z_j=\xi_j+i\eta_j$, 
$$4\zeta_k^2\neq 4\eta_j^2-12\xi_j^2, \quad 4\eta_{j_1}^2-12\xi_{j_2}^2\neq 4\eta_{j_2}^2-12\xi_{j_2}^2 $$
for all $j$, $k$ and $z_{j_1}\neq z_{j_2}$.
\end{enumerate}
\end{definition}
\begin{remark}
\label{re-arrange}
We arrange eigenvalues $\lbrace z_k \rbrace_{k=1}^{N_1}$ and  $\lbrace z_j \rbrace_{j=1}^{N_2}$ in the following way:
\begin{enumerate}
\item For $z_k=i\zeta_k$, $\zeta_k>0$, we have $\zeta_1<\zeta_2<...<\zeta_k<...<\zeta_{N_1}$.
\item For $z_j=\xi_j+i\eta_j$, $\xi_j, \eta_j>0$, we have 
$$4\eta_1^2-12\xi_1^2<...<4\eta_j^2-12\xi_j^2<...<4\eta_{\footnotesize{ N_2}}^2-12\xi_{N_2}^2.$$
\end{enumerate}
\end{remark}

\begin{figure}[h]
\caption{solitons and breathers}
\begin{tikzpicture}[scale=0.8]
\draw [->] (-4,0)--(3,0);
\draw (4,0)--(3,0);
\draw [->] (0,-4)--(0,3);
\draw (0,3)--(0,4);
 \pgfmathsetmacro{\a}{1}
    \pgfmathsetmacro{\b}{1.7320} 
   \draw plot[domain=-1.2:1.2] ({\a*cosh(\x)},{\b*sinh(\x)});
    \draw plot[domain=-1.2:1.2] ({-\a*cosh(\x)},{\b*sinh(\x)});
     \draw[dashed] (-2 , -3.464)--(2 , 3.464);
   \draw[dashed] (-2 , 3.464)--(2 , -3.464);
    \pgfmathsetmacro{\c}{1.414}
    \pgfmathsetmacro{\d}{2.449} 
   \draw plot[red, domain=-1:1] ({\c*cosh(\x)},{\d*sinh(\x)});
    \draw plot[domain=-1:1] ({-\c*cosh(\x)},{\d*sinh(\x)});
    \pgfmathsetmacro{\e}{0.5}
    \pgfmathsetmacro{\f}{0.866} 
   \draw plot[domain=-1.8:1.8] ({\e*cosh(\x)},{\f*sinh(\x)});
   \draw plot[domain=-1.8:1.8] ({-\e*cosh(\x)},{\f*sinh(\x)});
    \draw plot[domain=-1.2:1.2] ( {\a*sinh(\x)}, {\b*cosh(\x)} );
     \draw plot[red, domain=-1:1] ({\c*sinh(\x)},{\d*cosh(\x)});
     \draw plot[domain=-1.8:1.8] ({\e*sinh(\x)},{\f*cosh(\x)});
   \draw plot[domain=-1.2:1.2] ( {-\a*sinh(\x)}, {-\b*cosh(\x)} );
     \draw plot[red, domain=-1:1] ({-\c*sinh(\x)},{-\d*cosh(\x)});
     \draw plot[domain=-1.8:1.8] ({-\e*sinh(\x)},{-\f*cosh(\x)});
 \draw	[fill, green]  (0, 1.1)		circle[radius=0.05];	    
 \draw	[fill, green]  (0, 2)		circle[radius=0.05];	    
 \draw	[fill, green]  (0, 2.6)		circle[radius=0.05];	    
  \draw	[fill, green]  (0, -1.1)		circle[radius=0.05];	    
 \draw	[fill, green]  (0, -2)		circle[radius=0.05];	    
 \draw	[fill, green]  (0, -2.6)		circle[radius=0.05];	    
  \draw	[fill, blue]  (0.5211, 1.953)		circle[radius=0.05];	  
   \draw	[fill, blue]  (-0.5211, 1.953)		circle[radius=0.05];	   
    \draw[fill, blue]  (0.5211, 1.953)		circle[radius=0.05];	  
   \draw	[fill, blue]  (-0.5211, 1.953)		circle[radius=0.05];	   
    \draw[fill, blue]  (0.379, 1.087)		circle[radius=0.05];	  
   \draw	[fill, blue]  (-0.379, 1.087)		circle[radius=0.05];	   
    \draw[fill, blue]  (1.073, 3.074)		circle[radius=0.05];	  
   \draw	[fill, blue]  (-1.073, 3.074)		circle[radius=0.05];	   
   \draw	[fill, blue]  (0.5211, -1.953)		circle[radius=0.05];	  
   \draw	[fill, blue]  (-0.5211, -1.953)		circle[radius=0.05];	   
    \draw[fill, blue]  (0.5211, -1.953)		circle[radius=0.05];	  
   \draw	[fill, blue]  (-0.5211, -1.953)		circle[radius=0.05];	   
    \draw[fill, blue]  (0.379, -1.087)		circle[radius=0.05];	  
   \draw	[fill, blue]  (-0.379, -1.087)		circle[radius=0.05];	   
    \draw[fill, blue]  (1.073, -3.074)		circle[radius=0.05];	  
   \draw	[fill, blue]  (-1.073, -3.074)		circle[radius=0.05];	   
   \draw	[fill, red]  (1.1855, 1.10268)		circle[radius=0.05];	  
   \draw	[fill, red]  (-1.1855, 1.10268)	circle[radius=0.05];	   
    \draw	[fill, red]  (1.529, 1.006)		circle[radius=0.05];	  
   \draw	[fill, red]  (-1.529, 1.006)	circle[radius=0.05];	   
   \draw	[fill, red]  (0.523, 0.2637)		circle[radius=0.05];	  
   \draw	[fill, red]  (-0.523, 0.2637)	circle[radius=0.05];	   
    \draw	[fill, red]  (1.1855, -1.10268)		circle[radius=0.05];	  
   \draw	[fill, red]  (-1.1855, -1.10268)	circle[radius=0.05];	   
    \draw	[fill, red]  (1.529, -1.006)		circle[radius=0.05];	  
   \draw	[fill, red]  (-1.529, -1.006)	circle[radius=0.05];	   
   \draw	[fill, red]  (0.523, -0.2637)		circle[radius=0.05];	  
   \draw	[fill, red]  (-0.523, -0.2637)	circle[radius=0.05];	   
    \end{tikzpicture}
 \begin{center}
  \begin{tabular}{ccc}
Soliton ({\color{green} $\bullet$})	&	
Breather ({\color{red} $\bullet$} {\color{blue} $\bullet$} ) 
\end{tabular}
 \end{center}
\end{figure}
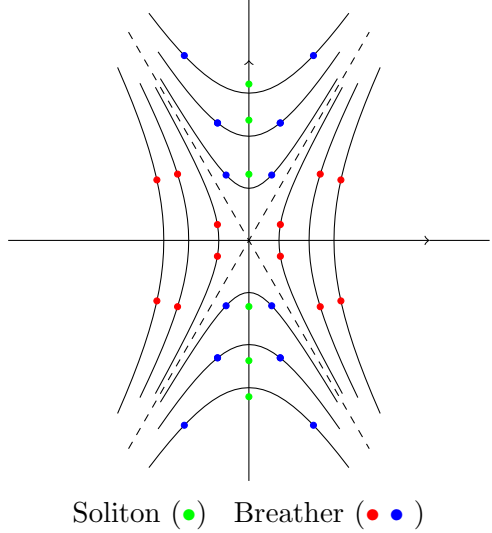

\begin{remark}
For each pole $z_k (z_j) \in \mathbb{C}^+$, let $\gamma_k, (\gamma_j)$ be a circle centered at $z_k (z_j)$ of sufficiently small radius to be lie in the open upper half-plane and to be disjoint from all other circles. By doing so we replace the residue conditions \eqref{res-1}-\eqref{res-6} of the Riemann-Hilbert problem with Schwarz invariant jump conditions across closed contours (see Figure \ref{figure-zeta}). The equivalence of this new RHP on augmented contours with the original one is a well-established result (see \cite{Zhou89} Sec 6). The purpose of this replacement is to
\begin{enumerate}
\item make use of the \textit{vanishing lemma} from \cite[Theorem 9.3]{Zhou89} .
\item Formulate the Beals-Coifman representation of the solution of \eqref{eq:fMKdV}.
\end{enumerate}
We now rewrite the the jump conditions of Problem \ref{RHP-1}:
$M(x,z)$ is analytic in $\mathbb{C}\setminus \Sigma$ and has continuous boundary values $M_\pm$ on $\Sigma$
and  $M_\pm$ satisfy
$$ M_+(x,t; z)=M_-(x,t; z)e^{-i\theta(x,t; z)\ad\sigma_3}v(z)$$
where 
\begin{align*}
v(z)=\twomat{1+|r(z)|^2}{\overline{r(z)}} {r(z) }{1}, \quad z\in \bbR
\end{align*} 
and 
$$
v(z) = 	\begin{cases}
						\twomat{1}{0}{\dfrac{c_k }{z-z_k}}{1}	&	z\in \gamma_k, \\
						\\
						\twomat{1}{\dfrac{\overline{c_k}}{z -\overline{z_k }}}{0}{1}
							&	z \in \gamma_k^*
					\end{cases}
$$
and
$$
v(z) = 	\begin{cases}
						\twomat{1}{0}{\dfrac{c_j}{z-z_j}}{1}	&	z\in \gamma_j, \\
						\\
						\twomat{1}{\dfrac{\overline{c_j}}{z -\overline{z_j}}}{0}{1}
							&	z \in \gamma_j^*\\
							\\
							\twomat{1}{\dfrac{-c_j}{z + z_j}}{0}{1}	&	z\in -\gamma_j, \\
						\\
						\twomat{1}{0}{\dfrac{-\overline{c_j}}{z +\overline{z_j }}}{1}
							&	z \in -\gamma_j^*
					\end{cases}
$$					
\begin{figure}
\caption{The Augmented Contour $\Sigma$}
\vspace{.5cm}
\label{figure-zeta}
\begin{tikzpicture}[scale=0.75]
\draw[ thick] (0,0) -- (-3,0);
\draw[ thick] (-3,0) -- (-5,0);
\draw[thick,->,>=stealth] (0,0) -- (3,0);
\draw[ thick] (3,0) -- (5,0);
\node[above] at 		(2.5,0) {$+$};
\node[below] at 		(2.5,0) {$-$};
\node[right] at (3.5 , 2) {$\gamma_j$};
\node[right] at (3.5 , -2) {$\gamma_j^*$};
\node[left] at (-3.5 , 2) {$-\gamma_j^*$};
\node[left] at (-3.5 , -2) {$-\gamma_j$};
\draw[->,>=stealth] (-2.6,2) arc(360:0:0.4);
\draw[->,>=stealth] (3.4,2) arc(360:0:0.4);
\draw[->,>=stealth] (-2.6,-2) arc(0:360:0.4);
\draw[->,>=stealth] (3.4,-2) arc(0:360:0.4);
\draw [red, fill=red] (-3,2) circle [radius=0.05];
\draw [red, fill=red] (3,2) circle [radius=0.05];
\draw [red, fill=red] (-3,-2) circle [radius=0.05];
\draw [red, fill=red] (3,-2) circle [radius=0.05];
\node[right] at (5 , 0) {$\bbR$};
\draw [green, fill=green] (0, 1) circle [radius=0.05];
\draw[->,>=stealth] (0.3, 1) arc(360:0:0.3);
\draw [green, fill=green] (0, -1) circle [radius=0.05];
\draw[->,>=stealth] (0.3, -1) arc(0:360:0.3);
 \draw[dashed] (-2 , -3.464)--(2 , 3.464);
   \draw[dashed] (-2 , 3.464)--(2 , -3.464);
\draw [blue, fill=blue] (1,3) circle [radius=0.05];
\draw[->,>=stealth] (1.2, 3) arc(360:0:0.2);
\draw [blue, fill=blue] (-1,3) circle [radius=0.05];
\draw[->,>=stealth] (-0.8, 3) arc(360:0:0.2);
\draw [blue, fill=blue] (1,-3) circle [radius=0.05];
\draw[->,>=stealth] (1.2, -3) arc(0:360:0.2);
\draw [blue, fill=blue] (-1,-3) circle [radius=0.05];
\draw[->,>=stealth] (-0.8, -3) arc(0:360:0.2);
\draw  (0.5, 0) arc(0:60:0.5);
\node[right] at (0.5 , 0.2) {\footnotesize ${\pi}/{3}$};
\node[above] at 		(0, 1.3) {$\gamma_k$};
\node[below] at 		(0, -1.3) {$\gamma_k^*$};
\end{tikzpicture}
\begin{center}
\begin{tabular}{ccc}
Soliton ({\color{green} $\bullet$})	&	
Breather ({\color{red} $\bullet$} {\color{blue} $\bullet$} ) 
\end{tabular}
\end{center}
\end{figure}
It is well-known that $v_\theta$ admits triangular factorization:
$$v_\theta=(1-w_{\theta-})^{-1}(1+w_{\theta+}). $$
We define
\begin{align*}
\mu= m_+(1-w_\theta^- )^{-1}=m_-(1+w_\theta^+)
\end{align*}
 then the solvability of the RHP above is equivalent to the solvability of the following Beals-Coifman integral equation:
\begin{align}
\label{BC-int}
\mu(z; x,t) &= I+C^+_\Sigma\mu w_\theta^- +C^-_\Sigma\mu w_\theta^+\\
                \nonumber
        &=I+C^+_\bbR\mu w_\theta^- +C^-_\bbR\mu w_\theta^+\\
        \nonumber
        &\qquad+ 
        \Twomat{\sum_{k=1}^{N_1} \dfrac{\mu_{12}(z_k )c_k e^{2i\theta(z_k)} }{z-z_k}  }
        { -\sum_{k=1}^{N_1} \dfrac{\mu_{11}( \overline{z_k}) {\overline{c_k}} e^{-2i\theta(   \overline{z_k }  )} }{z- \overline{z_k }}   }
        {\sum_{k=1}^{N_1} \dfrac{\mu_{22}(z_k )c_k e^{2i\theta(z_k )} }{z-z_k}  }
        {-\sum_{k=1}^{N_1} \dfrac{\mu_{21}( \overline{z_k }) {\overline{c_k}} e^{-2i\theta(   \overline{z_k}  )} }{z- \overline{z_i}} }\\
         \nonumber
        &\qquad + \Twomat{\sum_{j=1}^{N_2} \dfrac{\mu_{12}(z_j)c_i e^{2i\theta(z_j)} }{z-z_j}  }
        { -\sum_{j=1}^{N_2} \dfrac{\mu_{11}( \overline{z_j }) {\overline{c_j }} e^{-2i\theta(   \overline{z_j }  )} }{z- \overline{z_j }}   }
        {\sum_{j=1}^{N_2} \dfrac{\mu_{22}(z_j)c_j e^{2i\theta(z_j )} }{z-z_j }  }
        {-\sum_{j=1}^{N_2} \dfrac{\mu_{21}( \overline{z_j }) {\overline{c_j}} e^{-2i\theta(   \overline{z_j}  )} }{z- \overline{z_j}} }\\
         \nonumber
         &\qquad +\Twomat
        { -\sum_{j=1}^{N_2} \dfrac {\mu_{12}( -\overline{z_j }) {\overline{c_j }} e^{2i\theta(  - \overline{z_j }  )} }{z+ \overline{z_j }}   }
        {\sum_{j=1}^{N_2} \dfrac{\mu_{11}(-z_j)c_j e^{-2i\theta(-z_j)} }{z+z_j}  }
       {-\sum_{j=1}^{N_2} \dfrac{\mu_{22}( -\overline{z_j }) {\overline{c_j}} e^{2i\theta(  - \overline{z_j}  )} }{z + \overline{z_j}} }
        {\sum_{j=1}^{N_2} \dfrac{\mu_{21}(-z_j)c_j e^{-2i\theta(-z_j )} }{z+z_j }  }.
\end{align}
From the solution of Problem \ref{RHP-1}, we recover
\begin{align}
\label{mkdv.u}
u(x,t) &= \lim_{z \rarr \infty} 2 z m_{12}(x,t,z)\\
\label{mkdv.BC}
        &= \left( \dfrac{1}{\pi} \int_\Sigma \mu (w_\theta^-+w_\theta^+) \right)_{12}\\
        &=\dfrac{1}{\pi} \int_\bbR \mu_{11}(x,t;z) \overline{r}(z) e^{-2i\theta} dz +\sum_{k=1}^{N_1} {\mu_{11}( \overline{z_k}) {\overline{c_k }} e^{-2i\theta(   \overline{z_k }  )} }  \\
        & \quad -\sum_{j=1}^{N_2}{\mu_{11}( \overline{z_j }) {\overline{c_j }} e^{-2i\theta(   \overline{z_j }  )} } +\sum_{j=1}^{N_2} {\mu_{11}(-z_j)c_j e^{-2i\theta(-z_j)} } 
\end{align}
where the limit is taken in $\bbC\setminus \Sigma$ along any direction not tangent to $\Sigma$.  
\end{remark}

\subsection{Single soliton and single breather solution}
If we assume $r=0$ and $\ba$ has exactly one simple zero at $z=i\zeta$, $\zeta>0$ and let $c$ be the norming constant. Notice that $c$ is purely imaginary, then we let 
\begin{align*}
\eps_\pm =\begin{cases}
1, &\quad \Imag c>0\\
-1, & \quad  \Imag c<0
\end{cases}
\end{align*}
then equation \eqref{eq:fMKdV} admits the following single-soliton solution \cite{Wa} :
\begin{equation}
\label{1-soliton}
u(x,t)=2 \zeta \eps_\pm \sech(-2\zeta(x-4\zeta^2 t)+\omega ).
\end{equation}
where 
$$\omega=\log \left(  \dfrac{ \left\vert c \right\vert  }{2\zeta} \right) $$
If we assume $r=(0)$ and $\ba$ has exactly two simple zeros at $z=\pm\xi+i\eta$, $\eta>0$ and let $c=A+iB$ be the norming constant, then Equation \eqref{eq:fMKdV} admits the following one-breather solution \cite{Wa} :
\begin{equation}
\label{1-breather}
u(x,t)=-4\dfrac{\eta}{\xi}\dfrac{ \xi \cosh(\nu_2+\omega_2 ) \sin(\nu_1+\omega_1 )+\eta \sinh(\nu_2+\omega_2  ) \cos(\nu_1+\omega_1 )}{\cosh^2(\nu_2+\omega_2  ) + (\eta/\xi)^2 \cos^2(\nu_1+\omega_1 )}
\end{equation}
with
\begin{align*}
\nu_1 &=2\xi (x+4(\xi^2-3\eta^2)t )\\
\nu_2 &=2\eta (x -4(\eta^2-3\xi^2)t )
\end{align*}
and
\begin{align}
\label{omega-1}
\tan \omega_1&= \dfrac{B\xi-A\eta}{A\xi+B\eta}\\
e^{-\omega_2} &= \left\vert \dfrac{\xi }{ 2\eta } \right\vert \sqrt{\dfrac{A^2+B^2}{\xi^2+\eta^2}  }.
\end{align}
From above we observe that soliton has velocity $\textrm{v}_s=4\eta^2$, always traveling in the positive direction and breather has velocity $\textrm{v}_b=4\eta^2-12\xi^2$, which means breather can travel in both directions. Also notice that 
\begin{equation}
\label{theta-re}
\text{Re}i\theta=(4\eta^2-12\xi^2+12z_0^2)\eta t
\end{equation}
If we fix the velocity $\textrm{v}_b=x/t$, then $ 4\eta^2-12\xi^2=\textrm{v}_b$ implies that the hyperbola pass through the stationary points 
\begin{equation}
\label{stationary}
\pm z_0=\pm \sqrt{\dfrac{-x}{12 t}  }
\end{equation}
Conversely, if $a(z)$ has zeros on the hyperbola  $ 4\eta^2-12\xi^2=x/t$, we expect  breathers moving with velocity $x/t$.

\begin{remark}
\label{painleve-soliton}
Rewrite \eqref{theta-re} as
$$\text{Re} i\theta(x,t; z)=t^{1/3}\left( 4(-3u^2v+v^3)t^{2/3} -\dfrac{x}{t^{1/3}} v  \right). $$
In the Painlev\'e  region where we set $|x/t^{1/3}|\leq c$, it is easy to see that for $\sqrt{3}u >v$ $( \sqrt{3}u <v)$, we have $\text{Re} i\theta< 0$ (  $\text{Re} i\theta> 0$). In the soliton region where $x>0$, $x/t=\mathcal{O}(1)$ we write 
$$ \text{Re} i\theta(x,t; z)=t\left( 4(-3u^2v+v^3)-\dfrac{x}{t} v  \right) .$$ 
It is now clear that if we set $x/t=\textrm{v}_{b_j}=4\eta_j^2-12\xi_j^2$, then $ \text{Re} i\theta(x,t; z_j)=0$.
\end{remark}

\subsection{Main results}
The central result of this paper is to describe the long-time behavior of  the  solutions $u$
 of \eqref{eq:fMKdV} in different regions respectively.
 
%
%

\begin{figure}[h!]
\caption{Three Regions}
\vskip 15pt

\begin{tikzpicture}[scale=0.7]
\draw[fill] (0,0) circle[radius=0.075];
\draw[thick] 	(-8,0) -- (-2,0);
\draw[thick]		(-2,0) -- (0,0);
\draw[->,thick,>=stealth]	(0,0) -- (6,0);
\draw[thick]		(2,0) -- (7,0);
\draw		(-7.2, -0.2) -- (-1, -0.2); 
\node[above] at (-3.5, 0.3) {I};
\node[below] at (-5, -0.7) {$x<0$\,, $|x/t|=\mathcal{O}(1)$};
\node[above] at (0,0.6){II};
\node[above] at (0,.1){0};
\node[below] at (0, -0.2){$ |x/t^{1/3}|=\mathcal{O}(1) $};
\draw	(-2, 0.2) -- (2, 0.2); 
\node[right] at (7, 0) {$x$-axis};
\draw (1,-0.2)--(7, -0.2);
\node [above] at (4, 0.3) {III};
\node[below] at (5.5, -0.4){$x>0$\,, $|x/t|=\mathcal{O}(1)$};
\end{tikzpicture}
\label{fig: regions}
\end{figure}
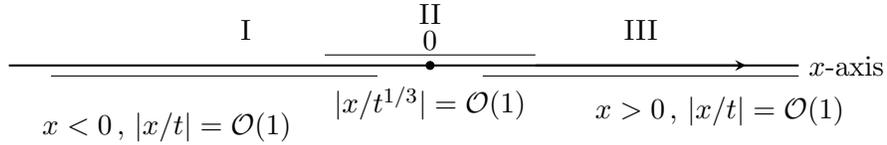

%
%
We are mainly interested in the long time asymptotics of mKdV in the following three regions:
\begin{itemize}
\item oscillatory region:  $x<0$, $|x/t|=\mathcal{O}(1)$ as $t\to \infty$. In this region, we can observe breathers traveling in the left direction.
\item self-similar region: $|x/t^{1/3}| \leq c$ as $t\to \infty$. This region does not have breathers and solitons as $t\to\infty$.
\item soliton region: $x>0$, $|x/t|=\mathcal{O}(1)$ as $t\to \infty$. In this region, we can observe breathers and solitons traveling in the right direction.
\end{itemize}

\begin{remark}
The long time asymptotics for overlaps of the regions have been studied in the previous paper \cite[Theorem 1.6]{CL19}. There are no solitons and breathers in those overlap regions.
\end{remark}

\subsubsection{Long-time asymptotics}
Our main results is the following detailed long-time asymptotics of the solution to the focusing mKdV. This also verifies the soliton resolution for  generic data.
\begin{theorem}\label{main1}
	Given initial the data $u_{0}\in H^{2,1}\left(\mathbb{R}\right)$
	and assume $u_{0}$ is generic in the sense of Definition  \ref{genericity}. Suppose the initial data produce the scattering data\[
	S=\left\{ r\left(z\right),\left\{ z_{k},c_{k}\right\} _{k=1}^{N_{1}},\left\{ z_{j},c_{j}\right\} _{j=1}^{N_{2}}\right\} \in H^{1,2}\left(\mathbb{R}\right)\oplus\mathbb{C}^{2N_{1}}\oplus\mathbb{C}^{2N_{2}}
	\] as in Subsection \ref{subsec:inverse}. We first arrange $z_{j}=\xi_{j}+i\eta_{j},\ \xi_{j},\eta_{j}>0$
	and suppose for some $1\leq\ell_{0}\leq N_{2}$, one has
	\[
	4\eta_{1}^{2}-12\xi_{1}^{2}<\ldots<4\eta_{\ell_{0}}^{2}-12\xi_{\ell_{0}}^{2}<0<4\eta_{\ell_{0}+1}^{2}-12\xi_{\ell_{0}+1}^{2}<\ldots<4\eta_{N_{2}}^{2}-12\xi_{N_{2}}^{2}.
	\]
	Secondly, we list $z_{k}=i\zeta_{k},\,\zeta_{k}>0$ as
	\[
	0 <\zeta_{1}<\ldots<\zeta_{N_{1}}.
	\]
	
	Let $u$ be the solution the focusing mKdV
	\[
	\partial_{t}u+\partial_{xxx}u+6u^{2}\partial_{x}u=0,\ \left(x,t\right)\in\mathbb{R}\times\mathbb{R}^{+}
	\]
	with initial data $u_{0}$ given by the reconstruction formula (the
	Beals-Coifman solution). Denote
	\[
	\tau=z_{0}^{3}t,\,\pm z_{0}=\pm\sqrt{\frac{-x}{12t}}.\]
	Then the solution $u$ can be written as
	the superposition of breathers, solitons and the radiation as following:
	\[
	u(x,t)=\sum_{\ell=1}^{N_{2}}u_{\ell}^{\left(br\right)}\left(x,t\right)+\sum_{\ell=1}^{N_{1}}u_{\ell}^{\left(so\right)}\left(x,t\right)+R\left(x,t\right).
	\]
	\begin{enumerate}
\item[(1).]	For the breather part,
\begin{itemize}
\item[(i)] if $\ell\leq\ell_{0}$,
	\begin{equation}
	\label{eq:thmbre}
	u_{\ell}^{\left(br\right)}\left(x,t\right)=-4\frac{\eta_{\ell}}{\xi_{\ell}}\frac{\xi_{\ell}\cosh\left(\nu_{2}+\tilde{\omega}_{2}\right)\sin\left(\nu_{1}+\tilde{\omega}_{1}\right)+\eta_{\ell}\sinh\left(\nu_{2}+\tilde{\omega}_{2}\right)\cos\left(\nu_{1}+\tilde{\omega}_{1}\right)}{\cosh^{2}\left(\nu_{2}+\tilde{\omega}_{2}\right)+\left(\eta_{\ell}/\xi_{\ell}\right)^{2}\cos^{2}\left(\nu_{1}+\tilde{\omega}_{1}\right)}
	\end{equation}
	where
	\begin{equation}
	\nu_{1}=2\xi_{\ell}\left(x+4\left(\xi_{\ell}^{2}-3\eta_{\ell}^{2}\right)t\right),\label{eq:thmbre1}
	\end{equation}
	\begin{equation}
	\nu_{2}=2\eta_{\ell}\left(x-4\left(\eta_{\ell}^{2}-3\xi_{\ell}^{2}\right)t\right),\label{eq:thmbre2}
	\end{equation}
	and
	\begin{equation}
	\tan\left(\tilde{\omega}_{1}\right)=\frac{\tilde{B}\xi_{\ell}-\tilde{A}\eta_{\ell}}{\tilde{A}\xi_{\ell}+\tilde{B}\eta_{\ell}},\ e^{-\tilde{\omega}_{2}}=\left|\frac{\xi_{\ell}}{2\eta_{\ell}}\right|\sqrt{\frac{\tilde{A}^{2}+\tilde{B}^{2}}{\xi_{\ell}^{2}+\eta_{\ell}^{2}}}\label{eq:thmbre3}
	\end{equation}
	here $\tilde{A}$ and $\tilde{B}$ are given as
	\begin{equation}
	\tilde{c}_{\ell}=c_{\ell}\delta\left(z_{\ell}\right)^{-2}=\tilde{A}+i\tilde{B}.\label{eq:thmbre4}
	\end{equation}
	where the scalar function $\delta(z)$ is given by \[
	\delta\left(z\right)=\left(\prod_{k=1}^{N_{1}}\frac{z-\overline{z_{k}}}{z-z_{k}}\right)\left(\prod_{z_{j}\in\mathcal{B}_{\ell}}\frac{z-\overline{z_{j}}}{z-z_{j}}\right)\left(\prod_{z_{j}\in\mathcal{B}_{\ell,}}\frac{z+z_{j}}{z+\overline{z_{j}}}\right)\left(\frac{z-z_{0}}{z+z_{0}}\right)^{i\kappa}e^{\chi\left(z\right)}
	\]with
	\begin{align*}
	\chi\left(z\right) &=\frac{1}{2\pi i}\int_{-z_{0}}^{z_{0}}\log\left(\frac{1+\left|r\left(\zeta\right)\right|^{2}}{1+\left|r\left(z_{0}\right)\right|^{2}}\right)\frac{d\zeta}{\zeta-z},\\
	\kappa &=-\frac{1}{2\pi}\log\left(1+\left|r\left(z_{0}\right)\right|^{2}\right),\\
	\end{align*}and
	\begin{equation}
	\mathcal{B}_{\ell} =\left\{ z_{j}=\xi_{j}+i\eta_{j}:\ 4\eta_{j}^{2}-12\xi_{j}^{2}>4\eta_{\ell}^{2}-12\xi_{\ell}^{2}\right\}.\label{eq:B_l}
	\end{equation}
	\item[(ii)] If $\ell_{0}+1\leq\ell$, we have the same expressions as \eqref{eq:thmbre},
	\eqref{eq:thmbre1}, \eqref{eq:thmbre2}, \eqref{eq:thmbre3} and \eqref{eq:thmbre4}
	but with $\tilde{A}$ and $\tilde{B}$ are given as
	\[
	\tilde{c}_{\ell}=c_{\ell}\psi\left(z_{\ell}\right)^{-2}=\tilde{A}+i\tilde{B}
	\]
	where the scalar function $\psi$ is defined by
	\[
	\psi\left(z\right)=\left(\prod_{z_{k}\in\mathcal{B}_{\ell,s}}\frac{z-\overline{z_{k}}}{z-z_{k}}\right)\left(\prod_{z_{j}\in\mathcal{B}_{\ell,b}}\frac{z-\overline{z_{j}}}{z-z_{j}}\right)\left(\prod_{z_{j}\in\mathcal{B}_{\ell,b}}\frac{z_{\ell}+z_{j}}{z_{\ell}+\overline{z_{j}}}\right)
	\]
 with
	\begin{align*}
		\mathcal{B}_{\ell} & =\left\{ z_{j}=\xi_{j}+i\eta_{j}:\ 4\eta_{j}^{2}-12\xi_{j}^{2}>4\eta_{\ell}^{2}-12\xi_{\ell}^{2}\right\} \bigcup\left\{ z_{k}=i\zeta_{k}:\ 4\zeta_{k}^{2}>4\eta_{\ell}^{2}-12\xi_{\ell}^{2}\right\} \\
		& =:\mathcal{B}_{\ell,b}\bigcup\mathcal{B}_{\ell,s}.
	\end{align*}
	\end{itemize}
\item[(2).]	For the soliton part, we have
	\begin{equation}
	u_{\ell}^{\left(so\right)}\left(x,t\right)=2\zeta_{\ell} \eps_{\pm, \ell} \sech\left(-2\zeta_{\ell}\left(x-4\zeta_{\ell}^{2}t\right)+\omega_{\ell}\right)\label{eq:soli}
	\end{equation}
	with
	\[
	\omega_{\ell}=\log\left(\frac{\left|c_{\ell}\right|}{2\zeta_{\ell}}\right)+2\sum_{z_{k}\in S_{\ell,s}}\log\left\vert \dfrac  {z_{\ell}-z_{k}}{z_{\ell}-\overline{z_{k}}}\right\vert+2\sum_{z_{j}\in\mathcal{S}_{\ell,b}}\log\left\vert \dfrac {z_{\ell}-z_{j}}{z_{\ell}-\overline{z_{j}}}\right\vert+2\sum_{z_{j}\in\mathcal{S}_{\ell,b}}\log\left\vert   \dfrac {z_{\ell}+\overline{z_{j}} }{z_{\ell}+z_{j}}\right\vert.
	\]
	where
	\begin{align*}
		\mathcal{S}_{\ell} & =\left\{ z_{j}=\xi_{j}+i\eta_{j}:\ 4\eta_{j}^{2}-12\xi_{j}^{2}>4\zeta_{\ell}^{2}\right\} \bigcup\left\{ z_{k}=i\zeta_{k}:\ 4\zeta_{k}^{2}>4\zeta_{\ell}^{2}\right\} \\
		& =:\mathcal{S}_{\ell,b}\bigcup\mathcal{S}_{\ell,s}.
	\end{align*}
\item[(3).]	Finally, the radiation term, we have the following asymptotics.
	\begin{itemize}
\item[(i)]	In the soliton region, i.e., Region $\text{III}$, we have
	\begin{equation}
	\left|R\left(x,t\right)\right|\lesssim\frac{1}{t}.\label{eq:solirad}
	\end{equation}
\item[(ii)]		In the self-similar region, i.e., Region $\text{II}$, for $4<p<\infty$,
	one has
	\begin{equation}
	R\left(x,t\right)=\frac{1}{\left(3t\right)^{\frac{1}{3}}}P\left(\frac{x}{\left(3t\right)^{\frac{1}{3}}}\right)+\mathcal{O}\left(t^{\frac{2}{3p}-\frac{1}{2}}\right)\label{eq:self}
	\end{equation}
	where $P$ is a solution to the Painlev\'e II equation
	\[
	P''\left(s\right)-sP'\left(s\right)+2P^{3}\left(s\right)=0
	\]
	determined by $r\left(0\right)$.
	
\item[(iii)]	In the oscillatory region, i.e. Region $\text{I}$, there are two separate cases.
\begin{itemize}
\item[(a)] If we choose the frame
	$x=\mathrm{v}t$ with $\mathrm{v}=4\eta_{\ell}^{2}-12\xi_{\ell}^{2}<0$,
	then one can see the influence of the breather on the radiation strongly
	and explicitly as the following:
	\begin{equation}
	R\left(x,t\right)=u_{as}\left(x,t\right)+\mathcal{O}\left(\left(z_{0}t\right)^{-\frac{3}{4}}\right)\label{eq:oscasymp2}
	\end{equation}
	where
	\begin{align*}
		u_{as}\left(x,t\right) & =\frac{1}{\sqrt{48tz_{0}}}\left(\left(m_{11}^{\left(br\right)}\left(-z_{0}\right)^{2}\left(i\delta_{A}^{0}\right)^{2}\overline{\beta}_{12}\right)+m_{12}^{\left(br\right)}\left(-z_{0}\right)^{2}\left(\left(i\delta_{A}^{0}\right)^{2}\overline{\beta}_{21}\right)\right)\\
		& +\frac{1}{\sqrt{48tz_{0}}}\left(\left(m_{11}^{\left(br\right)}\left(-z_{0}\right)^{2}\left(i\delta_{B}^{0}\right)^{2}\beta_{12}\right)-m_{12}^{\left(br\right)}\left(-z_{0}\right)^{2}\left(\left(i\delta_{B}^{0}\right)^{2}\beta_{21}\right)\right)
	\end{align*}
	with some explicit constants $m_{11}^{\left(br\right)}\left(-z_{0}\right)$,
	$m_{12}^{\left(br\right)}\left(-z_{0}\right)$, $m_{11}^{\left(br\right)}\left(z_{0}\right)$,
	$m_{12}^{\left(br\right)}\left(z_{0}\right)$ from the breather matrix, see Section \ref{sec:local},
	\[
	\beta_{12}=\frac{\sqrt{2\pi}e^{i\pi/4}e^{-\pi\kappa}}{r\left(z_{0}\right)\Gamma\left(-i\kappa\right)},\ \beta_{21}=\frac{-\sqrt{2\pi}e^{-i\pi/4}e^{-\pi\kappa}}{\overline{r\left(z_{0}\right)}\Gamma\left(i\kappa\right)},
	\]
	\[
	\delta_{A}^{0}=\left(192\tau\right)^{i\kappa/2}e^{-8i\tau}e^{\chi\left(-z_{0}\right)}\eta_{0}\left(-z_{0}\right),
	\]
	\[
	\delta_{B}^{0}=\left(192\tau\right)^{-i\kappa/2}e^{8i\tau}e^{\chi\left(z_{0}\right)}\eta_{0}\left(z_{0}\right)
	\]
	and
	\[
	\eta_{0}\left(\pm z_{0}\right)=\left(\prod_{z_{k}=1}^{N_{1}}\frac{\pm z_{0}-\overline{z_{k}}}{\pm z_{0}-z_{k}}\right)\left(\prod_{z_{j}\in\mathcal{B}_{\ell}}\frac{\pm z_{0}-\overline{z_{j}}}{\pm z_{0}-z_{j}}\right)\left(\prod_{z_{j}\in\mathcal{B}_{\ell}}\frac{\pm z_{0}+z_{j}}{\pm z_{0}+\overline{z_{j}}}\right)
	\]
	where $\mathcal{B}_{\ell}$ as \eqref{eq:B_l}.
\item[(b)]	If $\mathrm{v}\neq4\eta_{j}^{2}-12\xi_{j}^{2}$ for $1\leq j\leq N_{2}$, then
	we have
	\begin{equation}
	R\left(x,t\right)=u_{as}\left(x,t\right)+\mathcal{O}\left(\left(z_{0}t\right)^{-\frac{3}{4}}\right)
	\label{eq:Oscasymp2}
	\end{equation}
	where
	\begin{equation*}
	u_{as}\left(x,t\right)=\left(\frac{\kappa}{3tz_{0}}\right)^{\frac{1}{2}}\cos\left(16tz_{0}^{3}-\kappa\log\left(192tz_{0}^{3}\right)+\phi\left(z_{0}\right)\right)
	\end{equation*}
	with
	\begin{align*}
		\phi\left(z_{0}\right) & =\arg\Gamma\left(i\kappa\right)-\frac{\pi}{4}-\arg r\left(z_{0}\right)+\frac{1}{\pi}\int_{-z_{0}}^{z_{0}}\log\left(\frac{1+\left|r\left(\zeta\right)\right|^{2}}{1+\left|r\left(z_{0}\right)\right|^{2}}\right)\frac{d\zeta}{\zeta-z_{0}}\\
		& -4\left(\sum_{k=1}^{N_{1}}\arg\left(z_{0}-z_{k}\right)+\sum_{z_{j}\in B_{\ell}}\arg\left(z_{0}-z_{j}\right)+\sum_{z_{j}\in B_{\ell}}\arg\left(z_{0}+\overline{z_{j}}\right)\right).
	\end{align*}
	\end{itemize}
	\end{itemize}
	\end{enumerate}
\end{theorem}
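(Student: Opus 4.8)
The plan is to run the Deift--Zhou nonlinear steepest descent method, reformulated through $\overline\partial$-derivatives in the manner of Dieng--McLaughlin, on the Riemann--Hilbert problem (RHP) of Problem \ref{RHP-1}, performing a separate contour deformation in each of the three regions and then extracting $u$ from the large-$z$ expansion through \eqref{mkdv.u}--\eqref{mkdv.BC}. Since $u_0\in H^{2,1}(\bbR)$ gives $r\in H^{1,2}(\bbR)$ by Proposition \ref{D1}, the reflection coefficient has enough smoothness and decay for all the operations below. The object controlling the deformation is the phase $\theta(x,t;z)=4tz^3+xz$, whose stationary points are $\pm z_0=\pm\sqrt{-x/12t}$; the sign table of $\Real(i\theta)$ — which off the real axis equals $-v(12tu^2-4tv^2+x)$ for $z=u+iv$ — dictates which triangular factorization of $v_\theta$ to use on each arc. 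The first step, common to all regions, is a scalar conjugation $M\mapsto M\delta^{-\sigma_3}$, where $\delta$ solves a scalar RHP on $(-z_0,z_0)$ built from $1+|r|^2$ together with Blaschke factors that remove every discrete eigenvalue whose group velocity exceeds the velocity $\mathrm v$ of the frame under consideration. This conjugation is exactly what produces the scalars $e^{\chi}$, $((z-z_0)/(z+z_0))^{i\kappa}$, $\delta(z)$ and $\psi(z)$ in the statement, and it converts each ``fast'' soliton or breather into the accumulated phase/position shifts $\omega_\ell$, $\tilde\omega_1$, $\tilde\omega_2$. One then opens lenses along the steepest-descent rays through $\pm z_0$, replacing the non-analytic $r$ by an extension whose $\overline\partial$-derivative is dominated by $|r'|$ times a Gaussian in the phase, which isolates a $\overline\partial$-problem in sectors away from the contour.

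After the deformation the problem factors, up to a small remainder, as an ``outer'' model carrying the surviving discrete data, local parametrices, and a $\overline\partial$-correction. In Region III one has $z_0\in i\bbR$, so there is no real stationary point; after conjugation the only nontrivial jump is across the small circle around the one surviving pole, yielding the exact single soliton \eqref{eq:soli} or single breather \eqref{eq:thmbre}, while the residual $\overline\partial$-problem contributes the $\mathcal O(1/t)$ radiation \eqref{eq:solirad}. In Region II one rescales $z=(3t)^{-1/3}\zeta$; the rescaled RHP converges to the RHP for Painlev\'e II with data determined by $r(0)$, and the $\overline\partial$/stationary-phase error is $\mathcal O(t^{2/3p-1/2})$, giving \eqref{eq:self}. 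In Region I the two local parametrices at $\pm z_0$ are the parabolic cylinder model problems, which produce $\beta_{12},\beta_{21}$ and the rescaled norming factors $\delta^0_A,\delta^0_B$; matching them to the outer model and reading off the coefficient of $z^{-1}$ gives $u_{as}$. When the frame satisfies $\mathrm v=4\eta_\ell^2-12\xi_\ell^2<0$, the outer model is the explicit one-breather RHP, whose matrix entries $m^{(br)}_{ij}(\pm z_0)$ enter \eqref{eq:oscasymp2}; when $\mathrm v$ avoids all breather velocities, the outer model is trivial and one recovers the classical form \eqref{eq:Oscasymp2}.

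The final step is the bookkeeping: writing $M=(I+\text{error})\,M^{(\overline\partial)}\,M^{(out)}M^{(loc)}\delta^{\sigma_3}$, expanding each factor as $I+E_1/z+\mathcal O(z^{-2})$ near $z=\infty$, and summing the $(1,2)$-entries according to \eqref{mkdv.u}. Carrying this out for every relevant frame — one frame per discrete velocity $4\eta_\ell^2-12\xi_\ell^2$ and $4\zeta_k^2$, together with the generic frames — and invoking the genericity hypothesis of Definition \ref{genericity}, which forces all these velocities to be distinct so that distinct modes occupy disjoint space-time cones, assembles the superposition $u=\sum_\ell u_\ell^{(br)}+\sum_\ell u_\ell^{(so)}+R$ with the asserted components; the overlaps of the regions are already covered by \cite[Theorem 1.6]{CL19}.

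The hard part will be Region I in the resonant case $\mathrm v=4\eta_\ell^2-12\xi_\ell^2<0$: the surviving pole $z_\ell$ then lies \emph{on} the hyperbola $4\eta^2-12\xi^2=\mathrm v$, i.e.\ on the very curve through $\pm z_0$ along which $\Real(i\theta)$ vanishes, so it cannot be separated from the stationary points by a sign estimate. One must instead build a genuinely coupled local parametrix — the one-breather RHP conjugated by the parabolic-cylinder model — and prove its mismatch on the boundary of the local disk is $\mathcal O((z_0t)^{-3/4})$ uniformly in $x$; this must be balanced against the $\overline\partial$-contribution near $\pm z_0$, where the solid Cauchy operator on the $\overline\partial$-region has norm only $\mathcal O((z_0t)^{-1/4})$, so extra care is needed to reach the stated rate. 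A secondary, bookkeeping-level difficulty is partitioning the eigenvalues correctly into the Blaschke products $\mathcal B_\ell$, $\mathcal B_{\ell,b}$, $\mathcal B_{\ell,s}$, $\mathcal S_\ell$ of Remark \ref{re-arrange} and tracking the orientation of the conjugating factors so each nonlinear mode emerges with precisely the shift recorded in \eqref{eq:thmbre3}--\eqref{eq:thmbre4} and in $\omega_\ell$.
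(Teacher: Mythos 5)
Your proposal follows essentially the same route as the paper: the scalar conjugation by a $\delta$ (resp. $\psi$) carrying the Blaschke factors of the "fast" eigenvalues, lens-opening with non-analytic $\overline\partial$-extensions, the factorization into an outer one-breather model, parabolic-cylinder local parametrices at $\pm z_0$ glued by a small-norm problem, and a pure $\overline\partial$-correction bounded via the $L^1$ estimate on $W$ (which is what yields $(z_0t)^{-3/4}$ despite the $(z_0t)^{-1/4}$ operator norm), together with the separate rescaling to Painlev\'e II in Region II and the off-axis deformation in Region III. You also correctly isolate the genuinely delicate point — the resonant breather sitting on the critical hyperbola through $\pm z_0$, which the paper handles exactly as you describe, by conjugating the one-breather RHP with the parabolic-cylinder models through the matching matrix on the circles $C_A\cup C_B$.
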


\begin{remark}
	The two expressions \eqref{eq:oscasymp2} and  \eqref{eq:Oscasymp2}  above match each other since as the velocity
	of the frame moving away from the the velocity of the breather, $m_{12}^{\left(br\right)}\left(z_{0}\right)$
	provides the exponential decay in time and the remain terms combined
	together  give  the same asymptotics as the later expression
	up to terms exponential decay in time. These exponential
	decay rates depend on the gap between the velocity of the frame the
	the velocities of breathers.
\end{remark}

One can trace all the details in our analysis and notice that actually
it suffices to require the weights in $x$ to be $\left\langle x\right\rangle ^{s}$
with $s>\frac{1}{2}$. More precisely, note that first $s=1$ is used in the construction of Jost functions.
But actually, in that construction, we just need $L^{2,s}\left(\mathbb{R}\right),\,s>\frac{1}{2}$
and the potential in $L^{1}\left(\mathbb{R}\right)$. One can simply
check that $L^{1}\left(\mathbb{R}\right)\subset L^{2,s}\left(\mathbb{R}\right)$
for $s>\frac{1}{2}$. Secondly, $s=1$ is used in the analysis of
asymptotics of the Riemann-Hilbert problem but note that reflection coefficient in $H^{s}$
for $s>\frac{1}{2}$ is sufficient for us due to Sobolev's embedding
and the estimate of modulus of continuity. To estimate the $H^{s}$
norm of the reflection coefficient, by bijectivity, in terms of the
initial data, $L^{2,s}\left(\mathbb{R}\right)$ for $s>\frac{1}{2}$
is enough for us. Although in Zhou's work, he only deals with $s\in\mathbb{N}$,
the fractional results can be obtained simply by  interpolation.

\smallskip
Then using the lowest regularity for the local
well-posedness in $H^{k}\left(\mathbb{R}\right)$ with $k\ge\frac{1}{4}$ via contraction
obtained by Kenig-Ponce-Vega \cite{KPV} and the recent low regularity conservation laws due to Killip-Visan-Zhang \cite{KiViZh} and Koch-Tataru \cite{KoTa}, we can use
a global approximation argument to extend our long-time asymptotics
to $H^{k,s}$ with $k\ge\frac{1}{4}$ and $\ell>\frac{1}{2}$.

\begin{theorem}
	\label{thm:main2}Suppose that $u_{0}\in H^{k,s}\left(\mathbb{R}\right)$
	with $s>\frac{1}{2}$ and $k\geq\frac{1}{4}$ is generic in the sense of  Definition  \ref{genericity}, the long-time asymptotics
	as in Theorem \ref{main1} hold for the solution to the focusing
	mKdV
		\begin{equation}
	\partial_{t}u+\partial_{xxx}u+6u^{2}\partial_{x}u=0,\,u\left(0\right)=u_{0}\in H^{k,s}\left(\mathbb{R}\right)\label{eq:fmkdv1}
	\end{equation}
	given by the Duhemel representation
	\begin{equation}
		u=W\left(t\right)u_{0}-\int_{0}^{t}W\left(t-s\right)\left(6u^{2}\partial_{x}u\left(s\right)\right)\,ds\label{eq:mild-1}
	\end{equation}
	where
	\[
	W\left(t\right)u_{0}=e^{-t\partial_{xxx}}u_{0}\ \text{and}\ \mathcal{F}_{x}\left[W\left(t\right)u_{0}\right]\left(\xi\right)=e^{it\xi^{3}}\hat{u}_{0}\left(\xi\right)
	\]
\end{theorem}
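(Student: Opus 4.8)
The plan is to run a global-in-time approximation argument in the spirit of Deift--Zhou \cite{DZ03} and of our earlier work \cite{CL19}, replacing the role played there by $L^2$-conservation with the low-regularity conservation laws of Killip--Visan--Zhang \cite{KiViZh} and Koch--Tataru \cite{KoTa}. First I would note that for $s>\tfrac12$ the weighted bound $\langle x\rangle^{s}u_0\in L^2$ forces $u_0\in L^1(\bbR)\cap L^2(\bbR)$ by Cauchy--Schwarz, so the Jost functions and the reflection coefficient are still well defined; by the refined bijectivity of Zhou \cite{Zhou98} (see the remark after Theorem \ref{main1}) one gets $r$ in $H^{s'}$ for $s'>\tfrac12$ together with the discrete data, all depending Lipschitz-continuously on $u_0$ in the $L^1\cap L^{2,s}$ topology, the discrete part being precisely what is established in Appendix \ref{discrete}. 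Hence for $u_0\in H^{k,s}$ generic in the sense of Definition \ref{genericity} the scattering data $\mathcal S$, and therefore the right-hand sides of all the asymptotic formulas in Theorem \ref{main1}, are well defined, and one may pick Schwartz approximants $u_0^{(n)}\to u_0$ in $H^{k,s}$ that remain generic (genericity being open and dense, by Remark \ref{remark-generic} together with the nonresonance conditions, which are stable under small perturbation since the eigenvalues move continuously). For $n$ large one then has $N_1^{(n)}=N_1$, $N_2^{(n)}=N_2$, $\mathcal S^{(n)}\to\mathcal S$, and the explicit profiles (solitons, breathers, the Painlev\'e term, the functions $\delta,\chi,\kappa,\beta_{ij},\phi,\dots$) built from $\mathcal S^{(n)}$ converge uniformly to those built from $\mathcal S$, with rate controlled by $\norm{u_0-u_0^{(n)}}{H^{k,s}}$.

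Next, for each $u_0^{(n)}$ Theorem \ref{main1} applies to the Beals--Coifman reconstruction; identifying that reconstruction with the Duhamel solution \eqref{eq:mild-1} for smooth decaying data (by uniqueness of the mKdV flow in $C([0,T];H^2)$, into which both solutions fall) shows that the Duhamel solution $u^{(n)}$ enjoys the asymptotics of Theorem \ref{main1} with profile built from $\mathcal S^{(n)}$. For the limiting data $u_0$ I would invoke Kenig--Ponce--Vega \cite{KPV} for local well-posedness in $H^{k}$, $k\ge\tfrac14$, and \cite{KiViZh,KoTa} for the global a priori bound $\sup_{t\ge0}\norm{u(t)}{H^{k}}\le C(\norm{u_0}{H^{k}})$, hence a global Duhamel solution $u\in C(\bbR^+;H^{k})$; propagation of the weight $\langle x\rangle^{s}$ is then obtained by commuting the Airy vector field $L=x-3t\partial_x^2$ through \eqref{eq:mild-1} and estimating the cubic term with the local smoothing and maximal-function (Strichartz) estimates for $W(t)$, yielding $\sup_{t\ge0}\norm{u(t)}{H^{k,s}}\le C(\norm{u_0}{H^{k,s}})$. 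The same estimates applied to the difference of two Duhamel solutions give the quantitative bound
\[
\sup_{t\ge0}\ \norm{u(t)-u^{(n)}(t)}{L^\infty_x}\ \lesssim\ \norm{u_0-u_0^{(n)}}{H^{k,s}},
\]
the implied constant depending only on a uniform bound for the $H^{k,s}$ norms of $u_0$ and $u_0^{(n)}$ (so uniformity over the approximating family is automatic).

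Finally I would close the argument with a diagonal choice of $n$. Fix a frame $x=\mathrm vt$ and write $u^{(n)}=\Phi^{(n)}+E^{(n)}$ with $\Phi^{(n)}$ the profile from $\mathcal S^{(n)}$ and $|E^{(n)}(x,t)|\le C_n t^{-\alpha}$ the error from Theorem \ref{main1}, $\alpha>0$ being the exponent appropriate to the region ($\alpha=1$ in Region III, $\alpha=\tfrac12-\tfrac2{3p}$ in Region II, $\alpha=\tfrac34$ in Region I), with $C_n$ uniformly bounded because the constants in Theorem \ref{main1} depend only on norms of the uniformly bounded scattering data. Since $\norm{\Phi^{(n)}-\Phi}{L^\infty}\lesssim\norm{u_0-u_0^{(n)}}{H^{k,s}}$ and $\norm{u-u^{(n)}}{L^\infty_xL^\infty_t}\lesssim\norm{u_0-u_0^{(n)}}{H^{k,s}}$, choosing $n=n(t)\to\infty$ slowly enough that $\norm{u_0-u_0^{(n(t))}}{H^{k,s}}\le t^{-\alpha}$ gives $|u(x,t)-\Phi(x,t)|\lesssim t^{-\alpha}$, which is exactly Theorem \ref{main1} for the solution with data $u_0$, and the analogous argument handles the $L^p$ estimate in Region II.

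The hard part will be the uniform-in-time weighted estimates of the second step. The global $H^k$ control for $k\ge\tfrac14$ comes packaged from \cite{KiViZh,KoTa}, but, unlike the Schr\"odinger case in \cite{DZ03} where the $L^4_{t,x}$ Strichartz estimate plus conservation of mass suffice, the Airy flow only gives a maximal-function-type $L^4_xL^\infty_t$ bound and an $L^\infty_xL^2_t$ local smoothing bound; controlling $6u^2\partial_x u$ against the commuted weight $Lu$ at the endpoint regularity $k=\tfrac14$ requires a careful allocation of these estimates (and using $H^{k,s}\subset L^1$ for the low frequencies), in the spirit of \cite{KPV} and \cite{CL19}. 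A secondary technical issue is the behavior of the discrete spectral data under the approximation --- that no eigenvalues are created or destroyed and that the norming constants converge --- which is controlled by the Lipschitz estimates of Appendix \ref{discrete} together with the genericity hypothesis.
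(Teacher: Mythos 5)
Your overall architecture matches the paper's: approximate $u_0$ by generic Schwartz data, identify the Beals--Coifman and Duhamel solutions for smooth data, use Kenig--Ponce--Vega local theory together with the Killip--Visan--Zhang/Koch--Tataru conservation laws to globalize, and pass the asymptotics of Theorem \ref{main1} to the limit using the fact that the error constants depend only on uniformly bounded norms of the scattering data. However, the step you single out as ``the hard part'' contains a genuine gap, and in fact asks for more than is true. First, the claimed global bound $\sup_{t\ge 0}\norm{u(t)}{H^{k,s}}\le C(\norm{u_0}{H^{k,s}})$ is false: even for the linear Airy flow the commutation $x\,W(t)=W(t)(x-3t\partial_x^2)$ shows that the weighted norm grows in $t$, so no uniform-in-time propagation of $\langle x\rangle^s$ is available (nor is it needed --- the asymptotics are extracted from the RHP built from the \emph{time-zero} scattering data, so only the weight on the initial data enters). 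Second, the uniform-in-time stability estimate $\sup_{t\ge0}\norm{u(t)-u^{(n)}(t)}{L^\infty_x}\lesssim\norm{u_0-u_0^{(n)}}{H^{k,s}}$ cannot be produced by local smoothing and maximal-function estimates for $W(t)$: the local theory gives Lipschitz dependence only on a time interval of length $T\sim\norm{u_0}{H^{1/4}}^{-4}$, and iterating with the conservation laws accumulates constants in $t$. A $t$-independent difference bound of this type is exactly the kind of statement that requires the integrable structure; in the paper it is obtained for the Beals--Coifman solutions from the resolvent estimates of the RHP together with the Lipschitz continuity of the scattering map (Zhou \cite{Zhou98} and Appendix \ref{discrete}), not from the PDE.

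The good news is that your conclusion does not actually need either of these claims, and your diagonal choice $n=n(t)$ is also unnecessary. All that is required is \emph{fixed-time} convergence $u^{(n)}(t)\to u(t)$ (in $H^{k}$, hence a.e.\ along a subsequence --- which is why the conclusion holds up to null sets), which does follow from Theorem \ref{thm:KVPlocal} iterated via Theorem \ref{thm:KocTar}. Then, for each fixed $t$, writing $u^{(n)}=\Phi^{(n)}+E^{(n)}$ with $\Phi^{(n)}\to\Phi$ (from convergence of the scattering data) and $|E^{(n)}(x,t)|\le C\,t^{-\alpha}$ with $C$ uniform in $n$, one lets $n\to\infty$ at fixed $t$ and reads off $|u(x,t)-\Phi(x,t)|\le C\,t^{-\alpha}$ directly. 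If you restructure the argument this way --- using the PDE only for fixed-time identification of the limit with the strong solution, and drawing the uniformity in $t$ from the inverse-scattering side --- your proof becomes essentially the paper's.
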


The key point is that Beals-Coifman solutions have asymptotics and strong solutions in the sense of Duhamel can be used to pass to limits. For smooth data, Beals-Coifman solutions and strong solutions are the same. Our computations for Beals-Coifman solutions show that the error estimates only depend on weights but  not  the regularity of initial data. To illustrate this philosophy, we have the follow diagram:
\begin{displaymath}
    \xymatrix{ \mathcal{S}  \ni u_0(x) \ar[r]^{IST} \ar[d]_{Duhamel} & \text{asymptotics}\\
          \mathcal{S}      \ni u(x,t)  \ar[r]_{Approx}  & u(x,t)\in H^{1/4, 1} \ar[u]}
\end{displaymath}
\begin{remark}
	In order to get precise behavior of the radiation, the
	weights in the Sobolev norms are necessary. From the inverse scattering point of view, these
	weights are used to construct Jost functions and in the $\overline{\partial}$-interpolation
	argument. On the other hand, from the stationary phase point of view, to
	obtain the precise asymptotics of the oscillatory integral, we need
	the function which is multiplied by an oscillatory factor to be defined pointwise so that we can localize the leading order behavior
	to the stationary point. The weights
	precisely give us the pointwise meaning of the function which is integrated
	again an oscillatory factor via Sobolev embedding. For the linear scattering theory, one can probably conclude the long-time behavior of the nonlinear equation matches a linear flow using unweighted norms. But in our setting, the scattering behavior is
	nonlinear, so we have to carry out the precise asymptotics and hence the weights can not be avoided.
\end{remark}

Hereinafter, for the sake of simplicity, we focus on the case $s=1$.

\smallskip

\subsubsection{Asymptotic stability}

As by products of our long-time asymptotics,
the full asymptotic stability of solitons/breathers of the mKdV follows
naturally.  First of all, we state the asymptotic the stability of a breather traveling to the left separately. Recall that the stability of a breather traveling
to the right restricted to the solitary region by energy method is analyzed
in Alejo-Mu\~noz \cite{AM1,AM2}. The stability of a breather traveling to the right via our approach
is given in Corollary \ref{cor:stabN} as a special case.
\begin{corollary}
	\label{cor:stabreather}Let $u^{\left(br\right)}\left(x,t;z_{0}\right)$ be a breather
	with discrete scattering data $(z_{0},c_{0})$ such that $z_{0}=\xi_{0}+i\eta_{0},\ \xi_{0},\eta_{0}>0$ with $4\eta_{0}^{2}-12\xi_{0}^{2}<0$. Suppose $\left\Vert \mathrm{R}\left(0\right)\right\Vert _{H^{\frac{1}{4},1}\left(\mathbb{R}\right)}<\epsilon$
	for $0\leq\epsilon\ll1$ small enough, consider the solution $u$
	to the focusing mKdV \eqref{eq:fmkdv1} with the initial data
	\[
	u_{0}=u^{\left(br\right)}\left(x,0;z_{0},c_{0}\right)+\mathrm{R}\left(0\right).
	\]
	Then there exist $z_{1}=\xi_{1}+i\eta_{1}$ and the norming constant
	$c_{1}$  such that
	\begin{equation}
	\left|z_{1}-z_{0}\right|+\left|c_{1}-c_{0}\right|\lesssim\epsilon.\label{eq:param}
	\end{equation}
	Let $r\left(z\right)$ be the reflection coefficient computed from
	$u_{0}$. Then, as $t\to\infty$
	\[
	u=u^{\left(br\right)}\left(x,t;z_{1},c_{1}\right)+R\left(x,t\right)
	\]
	where the radiation term $R\left(x,t\right)$ has the following asymptotics:
	\begin{enumerate}
\item[(1).]	In the soliton region, i.e., Region $\text{III}$, we have
	\begin{equation}
		\left|R\left(x,t\right)\right|\lesssim\frac{1}{t}.\label{eq:solirad-1}
	\end{equation}
\item[(2).]	In the self-similar region, ( Region $\text{II}$ ), for $4<p<\infty$,
	one has
	\begin{equation}
		R\left(x,t\right)=\frac{1}{\left(3t\right)^{\frac{1}{3}}}P\left(\frac{x}{\left(3t\right)^{\frac{1}{3}}}\right)+\mathcal{O}\left(t^{\frac{2}{3p}-\frac{1}{2}}\right)\label{eq:self-1}
	\end{equation}
	where $P$ is a solution to the Painlev\'e II equation
	\[
	P''\left(s\right)-sP'\left(s\right)+2P^{3}\left(s\right)=0
	\]
	determined by $r\left(0\right)$.
	
\item[(3).]	In the oscillatory region (Region $\text{I}$ ), the asymptotics
	for $R\left(x,t\right)$ are more involved.
	\begin{itemize}
	\item[(i)] If we choose the frame
	$x=\mathrm{v} t$ with $\mathrm{v}=4\eta_{1}^{2}-12\xi_{1}^{2}<0$, one has
	\[
	R\left(x,t\right)=u_{as}\left(x,t\right)+\mathcal{O}\left(\left(z_{0}t\right)^{-\frac{3}{4}}\right)
	\]
	where
	\begin{align}
		u_{as}\left(x,t\right) & =\frac{1}{\sqrt{48tz_{0}}}\left(\left(m_{11}^{\left(br\right)}\left(-z_{0}\right)^{2}\left(i\delta_{A}^{0}\right)^{2}\overline{\beta}_{12}\right)+m_{12}^{\left(br\right)}\left(-z_{0}\right)^{2}\left(\left(i\delta_{A}^{0}\right)^{2}\overline{\beta}_{21}\right)\right)\nonumber \\
		& +\frac{1}{\sqrt{48tz_{0}}}\left(\left(m_{11}^{\left(br\right)}\left(z_{0}\right)^{2}\left(i\delta_{B}^{0}\right)^{2}\beta_{12}\right)-m_{12}^{\left(br\right)}\left(z_{0}\right)^{2}\left(\left(i\delta_{B}^{0}\right)^{2}\beta_{21}\right)\right)\label{eq:oscasymp1-1}
	\end{align}
	with some explicit constants $m_{11}^{\left(br\right)}\left(-z_{0}\right)$,
	$m_{12}^{\left(br\right)}\left(-z_{0}\right)$, $m_{11}^{\left(br\right)}\left(z_{0}\right)$,
	$m_{12}^{\left(br\right)}\left(z_{0}\right)$ from the breather matrix,
	\[
	\beta_{12}=\frac{\sqrt{2\pi}e^{i\pi/4}e^{-\pi\kappa}}{r\left(z_{0}\right)\Gamma\left(-i\kappa\right)},\ \beta_{21}=\frac{-\sqrt{2\pi}e^{-i\pi/4}e^{-\pi\kappa}}{\overline{r\left(z_{0}\right)}\Gamma\left(i\kappa\right)},
	\]
	\[
	\delta_{A}^{0}=\left(192\tau\right)^{i\kappa/2}e^{-8i\tau}e^{\chi\left(-z_{0}\right)},
	\]
	\[
	\delta_{B}^{0}=\left(192\tau\right)^{-i\kappa/2}e^{8i\tau}e^{\chi\left(z_{0}\right)}
	\]
\item[(ii)]	If $\mathrm{v}\neq4\eta_{1}^{2}-12\xi_{1}^{2}$, then we have
	\[
	R\left(x,t\right)=u_{as}\left(x,t\right)+\mathcal{O}\left(\left(z_{0}t\right)^{-\frac{3}{4}}\right)
	\]
	where
	\begin{equation}
		u_{as}\left(x,t\right)=\left(\frac{\kappa}{3tz_{0}}\right)^{\frac{1}{2}}\cos\left(16tz_{0}^{3}-\kappa\log\left(192tz_{0}^{3}\right)+\phi\left(z_{0}\right)\right)\label{eq:Oscasymp2-1}
	\end{equation}
	with
	\begin{align*}
		\phi\left(z_{0}\right) & =\arg\Gamma\left(i\kappa\right)-\frac{\pi}{4}-\arg r\left(z_{0}\right)+\frac{1}{\pi}\int_{-z_{0}}^{z_{0}}\log\left(\frac{1+\left|r\left(\zeta\right)\right|^{2}}{1+\left|r\left(z_{0}\right)\right|^{2}}\right)\frac{d\zeta}{\zeta-z_{0}}.
	\end{align*}
	\end{itemize}
\end{enumerate}
\end{corollary}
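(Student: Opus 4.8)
The plan is to read off Corollary \ref{cor:stabreather} from Theorem \ref{main1} (in the low-regularity form of Theorem \ref{thm:main2}) applied to a potential whose discrete spectrum consists of exactly one left-moving breather pair and no solitons; the only genuinely new ingredient is a perturbative control of the scattering data, everything else being a specialization of formulas already proved. Note first that the bare breather $u^{(br)}(\cdot,0;z_0,c_0)$ is smooth and exponentially localized, hence belongs to every $H^{k,s}(\bbR)$; its scattering data are $r\equiv 0$, the two simple zeros $z_0,-\overline{z_0}\in\bbC^+$ of $\breve a$, and the norming constant $c_0$, and one has $\breve a\to 1$ at infinity and $|a|\equiv 1$ on $\bbR$.

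\emph{Step 1 (stability of the scattering data).} First I would carry out the argument for a perturbation $\mathrm R(0)\in H^{2,1}(\bbR)$ of size $\epsilon$ and then descend to $H^{1/4,1}(\bbR)$ exactly as in the proof of Theorem \ref{thm:main2}. By the Lipschitz continuity of the direct scattering map $u_0\mapsto\calS$ from Appendix \ref{discrete}, the reflection coefficient of $u_0=u^{(br)}(\cdot,0;z_0,c_0)+\mathrm R(0)$ obeys $\norm{r}{H^{1,2}(\bbR)}\lesssim\epsilon$, hence $\norm{r}{L^\infty(\bbR)}\lesssim\epsilon$ and, on the line, $|a|^2=(1+|r|^2)^{-1}\geq 1-C\epsilon^2>0$, so $a$ stays away from zero on $\bbR$. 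Since $\breve a$ depends continuously on $u_0$, is analytic in $\bbC^+$, continuous up to $\bbR$, and tends to $1$ at infinity, the argument principle shows that for $\epsilon$ small $\breve a$ has in $\bbC^+$ exactly two simple zeros $z_1=\xi_1+i\eta_1$ and $-\overline{z_1}$, both within $O(\epsilon)$ of $z_0$ and $-\overline{z_0}$: no extra eigenvalue (in particular no soliton) is created, and since $\xi_0,\eta_0>0$ and $4\eta_0^2-12\xi_0^2<0$ strictly, continuity gives $\xi_1,\eta_1>0$ and $4\eta_1^2-12\xi_1^2<0$. With a single breather and no soliton the non-degeneracy requirements of Definition \ref{genericity} are vacuous, so $u_0$ is automatically generic. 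The norming constant, recovered as $b_1/\breve a'(z_1)$, depends continuously on the data, which yields $|z_1-z_0|+|c_1-c_0|\lesssim\epsilon$, i.e.\ \eqref{eq:param}.

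\emph{Step 2 (collapse of the asymptotic formulas).} Applying Theorem \ref{main1} with $N_1=0$, $N_2=1$, $\ell_0=1$ gives $u(x,t)=u_1^{(br)}(x,t)+R(x,t)$. For the single breather ($\ell=1\leq\ell_0$) the index sets $\calB_\ell,\calB_{\ell,b},\calB_{\ell,s}$ are empty and $N_1=0$, so the scalar factor degenerates to $\delta(z)=\big((z-z_0)/(z+z_0)\big)^{i\kappa}e^{\chi(z)}$; since $\kappa=-\tfrac1{2\pi}\log(1+|r(z_0)|^2)=O(\epsilon^2)$ and $\chi(z)=O(\epsilon^2)$ uniformly for $z$ at fixed distance from the stationary segment, $\delta(z_1)^{-2}=1+O(\epsilon^2)$. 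Comparing \eqref{eq:thmbre}--\eqref{eq:thmbre4} with \eqref{1-breather}--\eqref{omega-1} identifies $u_1^{(br)}$ as the one-breather solution with eigenvalue $z_1$ and norming constant $\tilde c_1:=c_1\,\delta(z_1)^{-2}$; renaming $c_1:=\tilde c_1$, which is still $O(\epsilon)$-close to $c_0$, gives $u=u^{(br)}(x,t;z_1,c_1)+R(x,t)$. The three regimes for $R$ then follow from Theorem \ref{main1}(3) upon setting $N_1=0$, $N_2=1$: the $t^{-1}$ bound in Region III and the Painlev\'e II profile in Region II (with $P$ determined by $r(0)$) are unchanged, and in Region I the empty products force $\eta_0(\pm z_0)=1$ in the frame $\mathrm v=4\eta_1^2-12\xi_1^2$, yielding \eqref{eq:oscasymp1-1}, while the eigenvalue sums in $\phi(z_0)$ drop out when $\mathrm v\neq 4\eta_1^2-12\xi_1^2$, yielding \eqref{eq:Oscasymp2-1}.

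I expect the main obstacle to be Step 1: tracking how the single spectral point and its norming constant move under the perturbation, ruling out the creation of new eigenvalues or of real zeros of $a$, and --- for the $H^{1/4,1}(\bbR)$ data in the statement --- threading this through the approximation scheme of Theorem \ref{thm:main2}. Once \eqref{eq:param} and genericity are secured, the remainder is bookkeeping on top of Theorem \ref{main1}.
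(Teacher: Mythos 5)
Your proposal is correct and takes essentially the same route as the paper, whose proof of this corollary simply cites Theorem \ref{thm:main2} together with the Lipschitz continuity of the direct scattering map from Appendix \ref{discrete} and Zhou's bijectivity results. Your Step 1 is precisely the content of Proposition \ref{prop2} (argument-principle tracking of the simple zeros and continuity of the norming constants), and your Step 2 is the specialization of Theorem \ref{main1} to $N_{1}=0$, $N_{2}=1$ that the paper leaves implicit.
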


\begin{proof}
	The above results follow from Theorem \ref{thm:main2} and the Lipshitiz
	continuity of the direct scattering map see the Appendix A and Zhou \cite{Zhou98}.
\end{proof}
To conclude our stability discussion, one can also consider the
full asymptotic stability of a complicated radiationless nonlinear structure. To construct the reflectionless
solution, suppose we have the following discrete scattering data
\[
S_{D}=\left\{ \left\{ z_{0,k},c_{0,k}\right\} _{k=1}^{N_{1}},\left\{ z_{0,j},c_{0,j}\right\} _{j=1}^{N_{2}}\right\} \in\mathbb{C}^{2N_{1}}\oplus\mathbb{C}^{2N_{2}}.
\]
Assume that $z_{0,j}=\xi_{0,j}+i\eta_{0,j}$, $\xi_{0,j},\eta_{0,j}>0$
and $z_{0,k}=\zeta_{0,k}i$ and for some $1\leq\ell_{0}\leq N_{2}$,
one has
\[
4\eta_{0,1}^{2}-12\xi_{0,1}^{2}<\ldots<4\eta_{0,\ell_{0}}^{2}-12\xi_{0,\ell_{0}}^{2}<0<4\eta_{0,\ell_{0}+1}^{2}-12\xi_{0,\ell_{0}+1}^{2}<\ldots<4\eta_{0,N_{2}}^{2}-12\xi_{0,N_{2}}^{2}.
\]
Secondly, we list the eigenvalues of $\breve{a}\left(z\right)$ on
the upper-half imaginary axis as $z_{0,k}=\zeta_{0,k}i$,
\[
\zeta_{0,1}<\ldots<\zeta_{0,N_{1}}.
\]
Then one can construct a reflectionless solution $u_{N}\left(x,t\right)$
using
\[
S_{D}=\left\{ \left\{ z_{0,k},c_{0,k}\right\} _{k=1}^{N_{1}},\left\{ z_{0,j},c_{0,j}\right\} _{j=1}^{N_{2}}\right\} 
\]
as

\[
u_{N}\left(x,t\right)=\sum_{\ell=1}^{N_{2}}u_{\ell}^{\left(br\right)}\left(x,t;z_{0,\ell},c_{0,\ell}\right)+\sum_{\ell=1}^{N_{1}}u_{\ell}^{\left(so\right)}\left(x,t;z_{0,\ell},c_{0,\ell}\right).
\]
where
\[
u_{\ell}^{\left(br\right)}\left(x,t;z_{0,\ell}\right)=-4\frac{\eta_{0,\ell}}{\xi_{0,\ell}}\frac{\xi_{0,\ell}\cosh\left(\nu_{2}+\tilde{\omega}_{2}\right)\sin\left(\nu_{1}+\tilde{\omega}_{1}\right)+\eta_{0,\ell}\sinh\left(\nu_{2}+\tilde{\omega}_{2}\right)\cos\left(\nu_{1}+\tilde{\omega}_{1}\right)}{\cosh^{2}\left(\nu_{2}+\tilde{\omega}_{2}\right)+\left(\eta_{0,\ell}/\xi_{0,\ell}\right)^{2}\cos^{2}\left(\nu_{1}+\tilde{\omega}_{1}\right)}
\]
where
\[
\nu_{1}=2\xi_{0,\ell}\left(x+4\left(\xi_{0,\ell}^{2}-3\eta_{0,\ell}^{2}\right)t\right),
\]
\[
\nu_{2}=2\eta_{0,\ell}\left(x-4\left(\eta_{0,\ell}^{2}-3\xi_{0,\ell}^{2}\right)t\right),
\]
and
\[
\tan\left(\tilde{\omega}_{1}\right)=\frac{\tilde{B}\xi_{0,\ell}-\tilde{A}\eta_{0,\ell}}{\tilde{A}\xi_{0,\ell}+\tilde{B}\eta_{0,\ell}},\ e^{-\tilde{\omega}_{2}}=\left|\frac{\xi_{0,\ell}}{2\eta_{0,\ell}}\right|\sqrt{\frac{\tilde{A}^{2}+\tilde{B}^{2}}{\xi_{0,\ell}^{2}+\eta_{0,\ell}^{2}}}
\]
here $\tilde{A}$ and $\tilde{B}$ are given as
\[
\tilde{c}_{0,\ell}=c_{0,\ell}\psi\left(z_{0,\ell}\right)^{-2}=\tilde{A}+i\tilde{B}
\]
where the scalar function as before is given has
\[
\psi\left(z\right)=\left(\prod_{z_{0,k}\in\mathcal{B}_{\ell,s}}\dfrac{z-\overline{z_{0,k}}}{z-z_{0,k}}\right)\left(\prod_{z_{0,j}\in\mathcal{B}_{\ell,b}}\dfrac{z-\overline{z_{0, j}}}{z-z_{0, j}}\right)\left(\prod_{z_{0,j}\in\mathcal{B}_{\ell,b}}\dfrac{z+z_{0,j}}{z+\overline{z_{0, j}}}\right).
\]
We also define
\begin{align*}
	\mathcal{B}_{\ell} & =\left\{ z_{0,j}=\xi_{0,j}+i\eta_{0,j}:\ 4\eta_{0,j}^{2}-12\xi_{0,j}^{2}>4\eta_{0,\ell}^{2}-12\xi_{0,\ell}^{2}\right\} \bigcup\left\{ z_{0,k}=i\zeta_{0,k}:\ 4\zeta_{0,k}^{2}>4\eta_{0,\ell}^{2}-12\xi_{0,\ell}^{2}\right\} \\
	& =:\mathcal{B}_{\ell,b}\bigcup\mathcal{B}_{\ell,s}.
\end{align*}
For the soliton part,
\[
u_{\ell}^{\left(so\right)}\left(x,t;z_{0,\ell}\right)=2\zeta_{0,\ell}  \eps_{\pm, \ell}  \text{sech}\left(-2\zeta_{0,\ell}\left(x-4\zeta_{0,\ell}^{2}t\right)+\omega_{0,\ell}\right)
\]
where 
\[
\omega_{0,\ell}=\log\left(\dfrac {\left|c_{0,\ell}\right|}{2\zeta_{0,\ell}}\right)+2\sum_{z_{0,k}\in S_{\ell,s}}\log\left\vert\frac{z_{0,\ell}-z_{0,k}} {z_{0,\ell}-\overline{z_{0,k}}}\right\vert+2\sum_{z_{0,j}\in\mathcal{S}_{\ell,b}}\log\left\vert\frac{z_{0,\ell}-z_{0,j}} {z_{0,\ell}-\overline{z_{0,j}}}\right\vert+2\sum_{z_{0,j}\in\mathcal{S}_{\ell,b}}\log\left\vert\frac{z_{0,\ell}+\overline{z_{0,j}}}{z_{0,\ell}+z_{0,j}}\right\vert.
\]
We also define
\begin{align*}
	\mathcal{S}_{\ell} & =\left\{ z_{0,j}=\xi_{0,j}+i\eta_{0,j}:\ 4\eta_{0,j}^{2}-12\xi_{0,j}^{2}>4\zeta_{0,\ell}^{2}\right\} \bigcup\left\{ z_{0,k}=i\zeta_{0,k}:\ 4\zeta_{0,k}^{2}>4\zeta_{0,\ell}^{2}\right\} \\
	& =:\mathcal{S}_{\ell,b}\bigcup\mathcal{S}_{\ell,s}.
\end{align*}
Finally, we state a corollary regarding the full asymptotic stability of $u_{N}\left(x,t\right).$

\begin{corollary}
	\label{cor:stabN}Consider the reflectionless solution $u_{N}\left(x,t\right)$
	to the focusing mKdV \eqref{eq:fmkdv1}. Suppose $\left\Vert \mathrm{R}\left(0\right)\right\Vert _{H^{\frac{1}{4},1}\left(\mathbb{R}\right)}<\epsilon$
	for $0\leq\epsilon\ll1$ small enough, the consider the solution $u$
	to the focusing mKdV \eqref{eq:fmkdv1} with the initial data
	\[
	u_{0}=u_{N}\left(x,t\right)+\mathrm{R}\left(0\right),
	\]
	then there exist scattering data
	\[
	S=\left\{ r\left(z\right),\left\{ z_{1,k},c_{1,k}\right\} _{k=1}^{N_{1}},\left\{ z_{1,j},c_{1,j}\right\} _{j=1}^{N_{2}}\right\} \in H^{1,\frac{1}{4}}\left(\mathbb{R}\right)\oplus\mathbb{C}^{2N_{1}}\oplus\mathbb{C}^{2N_{2}}
	\]
	computed in terms of $u_{0}$ such that
	\[
\sum_{k=1}^{N_{1}}\left(\left|z_{0,k}-z_{1,k}\right|+\left|c_{0,k}-c_{1,k}\right|\right)+\sum_{j=1}^{N_{2}}\left(\left|z_{0,j}-z_{1,j}\right|+\left|c_{0,j}-c_{1,j}\right|\right)\lesssim\epsilon.
	\]
	Then with the scattering data $S$, one can write the solution $u$
	to the focusing mKdV with the intial data $u_{0}$ as
	\[
	u=\sum_{\ell=1}^{N_{2}}u_{\ell}^{\left(br\right)}\left(x,t;z_{1,\ell},c_{1,\ell}\right)+\sum_{\ell=1}^{N_{1}}u_{\ell}^{\left(so\right)}\left(x,t;z_{1,\ell},c_{1,\ell}\right)+R\left(x,t\right)
	\]
	where the radiation term $R\left(x,t\right)$ has the asymptotics in Theorem \ref{thm:main2} and Theorem \ref{main1}
	using the scattering data $S$.
\end{corollary}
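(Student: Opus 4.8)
The plan is to deduce Corollary \ref{cor:stabN} from Theorem \ref{thm:main2} (which contains Theorem \ref{main1}) together with the Lipschitz continuity of the direct scattering transform, following the scheme used for Corollary \ref{cor:stabreather} but now tracking the entire discrete spectrum. First I would record that at $t=0$ the reflectionless profile $u_{N}(\cdot,0)$ is a finite superposition of the solitons \eqref{eq:soliton1} and breathers \eqref{eq:breather}, each of which is smooth and exponentially localized in $x$; hence $u_{N}(\cdot,0)\in\bigcap_{k,s}H^{k,s}(\mathbb{R})$, in particular $u_{N}(\cdot,0)\in H^{2,1}(\mathbb{R})\cap H^{1/4,1}(\mathbb{R})$, with scattering data exactly $S_{D}$ and reflection coefficient $r\equiv 0$. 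Consequently $u_{0}=u_{N}(\cdot,0)+\mathrm{R}(0)\in H^{1/4,1}(\mathbb{R})$, and since $H^{0,1}(\mathbb{R})\hookrightarrow L^{1}(\mathbb{R})$ by Cauchy--Schwarz ($\|f\|_{L^{1}}\le\|\langle x\rangle^{-1}\|_{L^{2}}\,\|\langle x\rangle f\|_{L^{2}}$), the direct scattering construction of Subsection \ref{subsec:inverse} applies to $u_{0}$ and produces scattering data $S=\{r,\{z_{1,k},c_{1,k}\}_{k=1}^{N_{1}},\{z_{1,j},c_{1,j}\}_{j=1}^{N_{2}}\}$.

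Next I would control $S$ in terms of $\epsilon$. By the Lipschitz continuity of the map $u_{0}\mapsto\mathcal{S}$ established in Appendix \ref{discrete} and Zhou \cite{Zhou98} (and its fractional version, obtained by interpolation), and because $r[u_{N}(\cdot,0)]\equiv 0$, one gets $\|r\|_{H^{1,1/4}(\mathbb{R})}\lesssim\|\mathrm{R}(0)\|_{H^{1/4,1}(\mathbb{R})}\lesssim\epsilon$. For the discrete part, $\breve a(z)$ (resp. $a(z)$) is analytic in $\mathbb{C}^{+}$ (resp. $\mathbb{C}^{-}$) and depends continuously on the potential in $L^{1}(\mathbb{R})$, with $\breve a,a\to 1$ at infinity and no zeros on $\mathbb{R}$; since the unperturbed zeros $z_{0,k},z_{0,j}$ are finite in number, simple, and bounded away from $\mathbb{R}$, an argument-principle (Rouch\'e) argument on small fixed circles about each of them shows that, for $\epsilon$ small, the perturbed $\breve a$ has exactly one simple zero $z_{1,k}$ (resp. $z_{1,j}$) inside each such circle and none elsewhere, with $|z_{1,k}-z_{0,k}|+|z_{1,j}-z_{0,j}|\lesssim\epsilon$; evaluating the norming constants through $c=b/\breve a'$ and the residue formulas \eqref{residue1}--\eqref{residue4} then yields $|c_{1,k}-c_{0,k}|+|c_{1,j}-c_{0,j}|\lesssim\epsilon$. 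Summing over $k$ and $j$ gives the asserted bound $\sum_{k=1}^{N_{1}}(|z_{0,k}-z_{1,k}|+|c_{0,k}-c_{1,k}|)+\sum_{j=1}^{N_{2}}(|z_{0,j}-z_{1,j}|+|c_{0,j}-c_{1,j}|)\lesssim\epsilon$, and in particular the counts $N_{1},N_{2}$ are unchanged.

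The third step is to verify that $u_{0}$ is generic in the sense of Definition \ref{genericity}. Simpleness, finiteness, and the off-axis location of the zeros are already in hand. The separation conditions of Definition \ref{genericity} together with the strict orderings organized by $\ell_{0}$ and by $\zeta_{0,1}<\ldots<\zeta_{0,N_{1}}$ hold for $S_{D}$ by hypothesis; these are finitely many \emph{strict} inequalities in the eigenvalue data, hence stable under the $O(\epsilon)$ perturbation just estimated, so for $\epsilon$ small $u_{0}$ is generic with the same splitting index $\ell_{0}$. Applying Theorem \ref{thm:main2} (hence Theorem \ref{main1}) to $u_{0}$ then produces $u=\sum_{\ell=1}^{N_{2}}u_{\ell}^{(br)}(x,t;z_{1,\ell},c_{1,\ell})+\sum_{\ell=1}^{N_{1}}u_{\ell}^{(so)}(x,t;z_{1,\ell},c_{1,\ell})+R(x,t)$ with $R$ obeying the region-by-region asymptotics of Theorem \ref{main1} computed from $S$, while the reconstruction formulas \eqref{mkdv.u}--\eqref{mkdv.BC} ensure this solution has initial data $u_{0}$; this is the claim.

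The hard part will be the spectral stability inside the second step: ruling out, for a perturbation small only in $H^{1/4,1}(\mathbb{R})$, the creation of new discrete eigenvalues --- in particular spurious ones emerging near the essential spectrum $\mathbb{R}$ or at infinity, which would corrupt $N_{1},N_{2}$ --- as well as collisions among the existing ones or with $\mathbb{R}$. This is precisely where one needs the analyticity of $\breve a,a$ together with uniform lower bounds for $|\breve a|,|a|$ near $\mathbb{R}$ and near infinity, and it is the content that Appendix \ref{discrete} and \cite{Zhou98,BC84} supply; by contrast the smallness of $r$, the $O(\epsilon)$ motion of the norming constants, and the persistence of the open genericity conditions are soft consequences of Lipschitz continuity.
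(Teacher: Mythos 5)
Your proposal is correct and follows essentially the same route as the paper: the paper's (implicit, one-line) proof is exactly that the corollary follows from Theorem \ref{thm:main2} together with the Lipschitz continuity of the direct scattering map from Appendix \ref{discrete} and Zhou \cite{Zhou98}, and the details you supply --- the argument-principle count of zeros on small circles, the Lipschitz bounds on eigenvalues and norming constants via Proposition \ref{prop2}, and the stability of the strict genericity inequalities under an $O(\epsilon)$ perturbation --- are precisely the content of that appendix. The point you flag as the hard part (excluding spurious eigenvalues near $\mathbb{R}$ or at infinity) is indeed what Proposition \ref{prop2} and the continuity of $\breve a$, $a$ in $L^{2,1}$ are there to handle.
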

\begin{remark}
	Notice that for $N_{2}=0$ and $N_{1}=1$, $u_{N}\left(x,t\right)$
	is simply a solitary wave and for $N_{2}=1$ and $N_{1}=0$, $u_{N}\left(x,t\right)$
	is a breather. Corollary \ref{cor:stabN} in particular gives the full asymptotic stability of soliton and breather. Also this corollary covers the full asymptotic stability of multi-soliton solution, multi-breather solution and the mixed structure of them.
\end{remark}

\subsection{Acknowledgement}
We thank Prof. Catherine Sulem for her detailed comments and helpful remarks.


\section{Conjugation}
\label{sec:prep}
Along a characteristic line $x=vt$ for $v<0$  we have the following signature table:
\begin{figure}[H]
\label{sig-table}
\caption{signature table}
\begin{tikzpicture}[scale=0.7]
\draw [->] (-4,0)--(3,0);
\draw (4,0)--(3,0);
\draw [->] (0,-4)--(0,3);
\draw (0,3)--(0,4);
 \pgfmathsetmacro{\a}{1}
    \pgfmathsetmacro{\b}{1.7320} 
     \draw[dashed] (-2 , -3.464)--(2 , 3.464);
   \draw[dashed] (-2 , 3.464)--(2 , -3.464);
    \pgfmathsetmacro{\c}{1.414}
    \pgfmathsetmacro{\d}{2.449} 
   \draw plot[red, domain=-1:1] ({\c*cosh(\x)},{\d*sinh(\x)});
    \draw plot[domain=-1:1] ({-\c*cosh(\x)},{\d*sinh(\x)});
    \pgfmathsetmacro{\e}{0.5}
    \pgfmathsetmacro{\f}{0.866} 
 \draw	[fill, green]  (0, 1.1)		circle[radius=0.05];	    
 \draw	[fill, green]  (0, 2)		circle[radius=0.05];	    
 \draw	[fill, green]  (0, 2.6)		circle[radius=0.05];	    
  \draw	[fill, green]  (0, -1.1)		circle[radius=0.05];	    
 \draw	[fill, green]  (0, -2)		circle[radius=0.05];	    
 \draw	[fill, green]  (0, -2.6)		circle[radius=0.05];	    
  \draw	[fill, blue]  (0.5211, 1.953)		circle[radius=0.05];	  
   \draw	[fill, blue]  (-0.5211, 1.953)		circle[radius=0.05];	   
    \draw[fill, blue]  (0.5211, 1.953)		circle[radius=0.05];	  
   \draw	[fill, blue]  (-0.5211, 1.953)		circle[radius=0.05];	   
    \draw[fill, blue]  (0.379, 1.087)		circle[radius=0.05];	  
   \draw	[fill, blue]  (-0.379, 1.087)		circle[radius=0.05];	   
    \draw[fill, blue]  (1.073, 3.074)		circle[radius=0.05];	  
   \draw	[fill, blue]  (-1.073, 3.074)		circle[radius=0.05];	   
   \draw	[fill, blue]  (0.5211, -1.953)		circle[radius=0.05];	  
   \draw	[fill, blue]  (-0.5211, -1.953)		circle[radius=0.05];	   
    \draw[fill, blue]  (0.5211, -1.953)		circle[radius=0.05];	  
   \draw	[fill, blue]  (-0.5211, -1.953)		circle[radius=0.05];	   
    \draw[fill, blue]  (0.379, -1.087)		circle[radius=0.05];	  
   \draw	[fill, blue]  (-0.379, -1.087)		circle[radius=0.05];	   
    \draw[fill, blue]  (1.073, -3.074)		circle[radius=0.05];	  
   \draw	[fill, blue]  (-1.073, -3.074)		circle[radius=0.05];	   
   \draw	[fill, red]  (1.1855, 1.10268)		circle[radius=0.05];	  
   \draw	[fill, red]  (-1.1855, 1.10268)	circle[radius=0.05];	   
    \draw	[fill, red]  (1.529, 1.006)		circle[radius=0.05];	  
   \draw	[fill, red]  (-1.529, 1.006)	circle[radius=0.05];	   
   \draw	[fill, red]  (0.523, 0.2637)		circle[radius=0.05];	  
   \draw	[fill, red]  (-0.523, 0.2637)	circle[radius=0.05];	   
    \draw	[fill, red]  (1.1855, -1.10268)		circle[radius=0.05];	  
   \draw	[fill, red]  (-1.1855, -1.10268)	circle[radius=0.05];	   
    \draw	[fill, red]  (1.529, -1.006)		circle[radius=0.05];	  
   \draw	[fill, red]  (-1.529, -1.006)	circle[radius=0.05];	   
   \draw	[fill, red]  (0.523, -0.2637)		circle[radius=0.05];	  
   \draw	[fill, red]  (-0.523, -0.2637)	circle[radius=0.05];	  
     \draw	[fill, red]  (2.5, 2.5)		circle[radius=0.05];	  
    \draw	[fill, red]  (2.5, -2.5)		circle[radius=0.05];	  
     \draw	[fill, red]  (-2.5, 2.5)		circle[radius=0.05];	  
      \draw	[fill, red]  (-2.5, -2.5)		circle[radius=0.05];	  
   \node [below] at (1.9,0) {\footnotesize $z_0$};
    \node [below] at (-1.9,0) {\footnotesize $-z_0$};
    \node[above]  at (-3, 0.5) {\footnotesize $\text{Re}(i\theta)<0$};
     \node[above]  at (3, 0.5) {\footnotesize $\text{Re}(i\theta)<0$};
      \node[below]  at (-3, -0.5) {\footnotesize $\text{Re}(i\theta)>0$};
     \node[below]  at (3, -0.5) {\footnotesize $\text{Re}(i\theta)>0$};
     \node[above]  at (0, 3) {\footnotesize $\text{Re}(i\theta)>0$};
     \node[below]  at (0, -3) {\footnotesize $\text{Re}(i\theta)<0$};
    \end{tikzpicture}
 \begin{center}
  \begin{tabular}{ccc}
Soliton ({\color{green} $\bullet$})	&	
Breather ({\color{red} $\bullet$} {\color{blue} $\bullet$} ) 
\end{tabular}
 \end{center}
\end{figure}
In the figure above, we have chosen 
$$v=\dfrac{x}{t}= 4\eta^2_\ell-12\xi^2_\ell$$
where $  \lbrace  z_j \rbrace_{j=1}^{N_2} \ni z_\ell=\xi_\ell+i\eta_\ell$ with $1\leq \ell\leq N_2$. Define the following sets:
\begin{equation}
\label{B-set}
\mathcal{B}_\ell=\lbrace z_j = \xi_j+i\eta_j: ~  4\eta^2_j-12\xi^2_j  >4\eta^2_\ell-12\xi^2_\ell\rbrace .
\end{equation}
and
\begin{equation}
\label{mathcal-z}
\mathcal{Z}_k=\lbrace z_k \rbrace_{k=1}^{N_1}, \quad \mathcal{Z}_j=\lbrace z_j \rbrace_{k=1}^{N_2}, \quad  \mathcal{Z}=\calZ_j\cup \calZ_k.
\end{equation}
Also define
\begin{equation}
\label{lambda}
\lambda=\text{min}\lbrace \text{min}_{z,z'\in \calZ} |z-z'|, \quad \text{dist}(\calZ, \bbR)   \rbrace.
\end{equation}
We observe that for all $z_k \in \mathcal{Z}_k$ and $z_j\in \mathcal{B}_\ell $, 
$$\text{Re}(i\theta(z_k))>0,\, \text{Re}(i\theta(z_j))>0 .$$
Then we introduce a new matrix-valued function
\begin{equation}
\label{m1}
m^{(1)}(z;x,t) = m(z;x,t) \delta(z)^{-\sigma_3} 
\end{equation}
where $\delta(z)$  solves 
the scalar RHP 
Problem \ref{prob:RH.delta} below:

\begin{problem}
\label{prob:RH.delta}
Given $\pm z_0 \in \bbR$ and $r \in H^{1}(\bbR)$, find a scalar function 
$\delta(z) = \delta(z; z_0)$, meromorphic for
$z \in \bbC \setminus [-z_0, z_0]$ with the following properties:
\begin{enumerate}
\item		$\delta(z) \rarr 1$ as $z \rarr \infty$,
\item		$\delta(z)$ has continuous boundary values $\delta_\pm(z) =\lim_{\eps \darr 0} \delta(z \pm i\eps)$ for $z \in (-z_0, z_0)$,
\item		$\delta_\pm$ obey the jump relation
			$$ \delta_+(z) = \begin{cases}
											\delta_-(z)  \left(1 + \left| r(z) \right|^2 \right),	&	 z\in (-z_0, z_0),\\
											\delta_-(z), &	z \in \bbR\setminus (-z_0, z_0),
										\end{cases}
			$$
\item		$\delta(z)$ has simple pole at $z_k$ for $k=1... N_1$ and at $z_j, -\overline{z_j}$ for $j\in \mathcal{B}_\ell$.
\end{enumerate}
\end{problem}

\begin{lemma}
\label{lemma:delta}
Suppose $r \in H^{1}(\bbR)$ and that $\kappa(s)$ is given by \begin{equation}
\kappa=-\frac{1}{2\pi}\log\left(1+\left|r\left(z_{0}\right)\right|^{2}\right), \label{kappa}
\end{equation}Then
\begin{itemize}
\item[(i)]		Problem \ref{prob:RH.delta} has the unique solution
\begin{equation}
\label{RH.delta.sol}
\delta(z) =\left( \prod_{k=1}^{N_1} \dfrac{z- \overline {z_k }}{ z-z_k}\right) \left(\prod_{z_j\in B_\ell} \dfrac{z-\overline{z_j}}{z-z_j}\right) \left(\prod_{z_j\in B_\ell} \dfrac{z+z_j}{z+\overline{z_j}}\right) \left( \dfrac{z-z_0}{z+z_0} \right)^{i\kappa} e^{\chi(z)}  
\end{equation}
where $\kappa$ is given by equation \eqref{kappa}
and
\begin{equation}
\label{chi}
\chi(z)=\dfrac{1}{2\pi i}\int_{-z_0}^{z_0}\log\left( \dfrac{1+|r(\zeta)|^2}{1+|r(z_0)|^2} \right)\dfrac{d\zeta}{\zeta-z}
\end{equation}
$$ \left( \dfrac{z-z_0}{z+z_0} \right)^{i\kappa}=\exp\left( i\kappa \left( \log\left\vert \dfrac{z-z_0}{z+z_0} \right\vert +i\arg(z-z_0)-i\arg(z+z_0) \right) \right).$$
Here we have chosen the branch of the logarithm with $-\pi  < \arg(z) < \pi$. 
\bigskip
\item[(ii)] For $z\in \bbC\setminus [-z_0, z_0]$
\begin{equation*}
\delta(z) =(\overline{\delta(\zbar)})^{-1}
\end{equation*}
\bigskip

\item[(iii)] As $z\to \infty$, 
$$\delta(z)=1+\dfrac{\delta_1}{z}+\mathcal{O} \left( z^{-2} \right). $$

\item[(iv)]Along any ray of the form $\pm z_0+ e^{i\phi}\bbR^+$ with $0<\phi<\pi$ or $\pi < \phi < 2\pi$, 
				
				$$ 
						 \left| \delta(z) - \left( \dfrac{z-z_0}{z+z_0} \right)^{i\kappa}  \delta_0(\pm z_0)\right| 
						 		\leq C_r
						 |z \mp z_0|^{1/2} $$
						 where
						 $$\delta_0(\pm z_0)=\left( \prod_{k=1}^{N_1} \dfrac{\pm z_0- \overline {z_k }}{ \pm z_0-z_k}\right) \left(\prod_{z_j\in B_\ell} \dfrac{\pm z_0-\overline{z_j}}{\pm z_0-z_j}\right)  \left(\prod_{z_j\in B_\ell} \dfrac{\pm z_0+z_j}{\pm z_0+\overline{z_j}}\right)e^{\chi(\pm z_0)} $$
and the implied constant depends on $r$ through its $H^{1}(\bbR)$-norm  
				and is independent of $\pm z_0\in \bbR$.
				\end{itemize}
\end{lemma}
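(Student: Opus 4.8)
The plan is to prove (i) by exhibiting the explicit solution \eqref{RH.delta.sol} and checking conditions (1)--(4) of Problem \ref{prob:RH.delta} term by term, with uniqueness by a Liouville argument; to read off (ii) and (iii) directly from the formula; and to prove the boundary estimate (iv) by factoring out the Blaschke product and the branch factor $\big(\tfrac{z-z_0}{z+z_0}\big)^{i\kappa}$ and reducing to a H\"older estimate for the Cauchy integral $\chi$.

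\textbf{Existence.} Write $B(z)$ for the product of the three finite Blaschke-type products in \eqref{RH.delta.sol}. Each factor $\tfrac{z-\overline{z_k}}{z-z_k}$, $\tfrac{z-\overline{z_j}}{z-z_j}$, $\tfrac{z+z_j}{z+\overline{z_j}}$ is analytic and nonvanishing in a neighbourhood of $\bbR$ (all poles and zeros sit off the axis) and has modulus $1$ on $\bbR$; hence $B$ contributes exactly the simple poles at $z_k$ ($k\le N_1$) and at $z_j,-\overline{z_j}$ ($z_j\in\mathcal{B}_\ell$) required in (4), satisfies $B\to1$ at $\infty$, and produces \emph{no} jump on $(-z_0,z_0)$. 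For the scalar part, since $r\in H^1(\bbR)\hookrightarrow C^{1/2}(\bbR)$ the density $g(\zeta):=\log\big(\tfrac{1+|r(\zeta)|^2}{1+|r(z_0)|^2}\big)$ is $C^{1/2}$, so by the Plemelj formula (and Privalov's theorem for continuity of boundary values) the Cauchy integral $\chi$ of \eqref{chi} satisfies (2) and has multiplicative jump $e^{g(z)}=\tfrac{1+|r(z)|^2}{1+|r(z_0)|^2}$ on $(-z_0,z_0)$. Using $\tfrac1{2\pi i}\int_{-z_0}^{z_0}\tfrac{d\zeta}{\zeta-z}=\tfrac1{2\pi i}\log\tfrac{z-z_0}{z+z_0}$ and \eqref{kappa}, the factor $\big(\tfrac{z-z_0}{z+z_0}\big)^{i\kappa}$ is analytic off $[-z_0,z_0]$, tends to $1$ at $\infty$, and carries the \emph{constant} jump $e^{-2\pi\kappa}=1+|r(z_0)|^2$; multiplying the three pieces reproduces exactly the jump $1+|r(z)|^2$. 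Thus \eqref{RH.delta.sol} satisfies (1)--(4).

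\textbf{Uniqueness, symmetry, expansion.} If $\delta_1,\delta_2$ both solve Problem \ref{prob:RH.delta}, then $\delta_1/\delta_2$ has no jump on $(-z_0,z_0)$, only removable singularities at the prescribed poles, and at worst a sub-unit power/logarithmic singularity at $\pm z_0$ (both solutions are bounded there, as \eqref{RH.delta.sol} shows), hence extends to an entire function tending to $1$, so $\delta_1\equiv\delta_2$ by Liouville. For (ii), \eqref{minus} gives $|r(-\zeta)|=|r(\zeta)|$, so $g$ is even and $\kappa$ is well defined; conjugating \eqref{RH.delta.sol} at $\overline z$ sends each Blaschke factor to its reciprocal, sends $\big(\tfrac{z-z_0}{z+z_0}\big)^{i\kappa}$ to $\big(\tfrac{z-z_0}{z+z_0}\big)^{-i\kappa}$ (for real $\kappa$, $\overline{w^{i\kappa}}=\overline w^{-i\kappa}$ in the chosen branch), and sends $e^{\chi(z)}$ to $e^{-\chi(z)}$ because $\overline{\chi(\overline z)}=-\chi(z)$ ($g$ being real); hence $\overline{\delta(\overline z)}=\delta(z)^{-1}$ (equivalently, $z\mapsto(\overline{\delta(\overline z)})^{-1}$ solves the same RHP, so one concludes by uniqueness). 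Part (iii) is the Taylor expansion of \eqref{RH.delta.sol} at $z=\infty$: each factor equals $1+O(z^{-1})$ with explicit $z^{-1}$-coefficient ($\tfrac{z-\overline{z_k}}{z-z_k}=1+\tfrac{z_k-\overline{z_k}}{z}+O(z^{-2})$, $\big(\tfrac{z-z_0}{z+z_0}\big)^{i\kappa}=1-\tfrac{2i\kappa z_0}{z}+O(z^{-2})$, $e^{\chi(z)}=1-\tfrac1{2\pi i z}\int_{-z_0}^{z_0}g\,d\zeta+O(z^{-2})$), and summing the $z^{-1}$-coefficients yields $\delta_1$ and the stated remainder.

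\textbf{The boundary estimate (iv).} Treat the $+z_0$ endpoint; the $-z_0$ case follows by (ii), or identically. Since $\delta_0(z_0)=B(z_0)e^{\chi(z_0)}$, \eqref{RH.delta.sol} gives
\[
\delta(z)-\Big(\tfrac{z-z_0}{z+z_0}\Big)^{i\kappa}\delta_0(z_0)=\Big(\tfrac{z-z_0}{z+z_0}\Big)^{i\kappa}\Big(B(z)e^{\chi(z)}-B(z_0)e^{\chi(z_0)}\Big).
\]
Along a ray $z_0+e^{i\phi}\bbR^+$ the prefactor has modulus $e^{-\kappa\arg\frac{z-z_0}{z+z_0}}$ with $\arg\tfrac{z-z_0}{z+z_0}\to\phi$ (recall $z_0>0$), hence is bounded; so it suffices to bound $B(z)e^{\chi(z)}-B(z_0)e^{\chi(z_0)}$ by $|z-z_0|^{1/2}$. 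As $B$ is analytic and nonzero near the real point $z_0$, $|B(z)-B(z_0)|\lesssim|z-z_0|$, and $\chi$ is bounded near $z_0$ (a Cauchy integral of a $C^{1/2}$ density vanishing at $z_0$), so everything reduces to $|\chi(z)-\chi(z_0)|\lesssim|z-z_0|^{1/2}$, where $\chi(z_0)$ is the finite boundary value. Since $\|g\|_{C^{1/2}(\bbR)}\lesssim\|r\|_{H^1(\bbR)}$ and $g(\pm z_0)=0$, one has $|g(\zeta)|\lesssim\|r\|_{H^1}|\zeta\mp z_0|^{1/2}$ near $\pm z_0$; by the resolvent identity
\[
\chi(z)-\chi(z_0)=\frac{z-z_0}{2\pi i}\int_{-z_0}^{z_0}\frac{g(\zeta)}{(\zeta-z)(\zeta-z_0)}\,d\zeta,
\]
and $\tfrac{g(\zeta)}{\zeta-z_0}=O(|\zeta-z_0|^{-1/2})$ is absolutely integrable, while the scaling bound $\int\tfrac{|\zeta-z_0|^{-1/2}}{|\zeta-z|}\,d\zeta\lesssim|z-z_0|^{-1/2}$ gives $|\chi(z)-\chi(z_0)|\lesssim|z-z_0|^{1/2}$. \textbf{Main obstacle.} The only delicate step is this last one: obtaining the $|z-z_0|^{1/2}$ control of $\chi$ \emph{up to} the branch point, uniformly in $z_0$. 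The mechanism that makes it work is the deliberate subtraction of $\log(1+|r(z_0)|^2)$ in \eqref{chi}, which forces $g$ to vanish to H\"older order $1/2$ at both endpoints $\pm z_0$, so that after peeling off the explicit $\big(\tfrac{z-z_0}{z+z_0}\big)^{i\kappa}$ the residual Cauchy integral is only mildly singular; the $H^1\hookrightarrow C^{1/2}$ embedding is exactly what quantifies this, and the classical estimates for Cauchy integrals near an arc endpoint let all constants be tracked through $\|r\|_{H^1(\bbR)}$ alone.
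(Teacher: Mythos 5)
Your argument is correct and follows the same overall route as the paper; the difference is that you supply the details the paper delegates to the literature. The paper cites \cite{DZ93} for (i)--(ii) outright, and for (iv) it bounds the prefactor by $e^{\pi\kappa}$ and then reduces, via $e^{\chi(z)}-e^{\chi(\pm z_0)}=e^{\chi(\pm z_0)}\int_0^1\frac{d}{ds}e^{s(\chi(z)-\chi(\pm z_0))}\,ds$, to the estimate $|\chi(z)-\chi(\pm z_0)|\lesssim|z\mp z_0|^{1/2}$, which it quotes from \cite[Lemma 23]{BDT88}. You instead prove that estimate directly from the resolvent identity together with the vanishing of $g$ at \emph{both} endpoints (using $|r(-z_0)|=|r(z_0)|$ from \eqref{minus}), which is exactly the mechanism behind the cited lemma; your verification of the jump bookkeeping (the Blaschke product jump-free, $e^{\chi}$ carrying $\tfrac{1+|r(z)|^2}{1+|r(z_0)|^2}$, the power factor carrying the constant $e^{-2\pi\kappa}=1+|r(z_0)|^2$) is also correct. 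Two small remarks: the scaling bound $\int_{-z_0}^{z_0}|\zeta-z_0|^{-1/2}|\zeta-z|^{-1}\,d\zeta\lesssim|z-z_0|^{-1/2}$ carries a constant of order $(1-|\cos\phi|)^{-1/2}$ and so degenerates as the ray approaches the real axis --- harmless here since only $\phi=\pi/4,3\pi/4,5\pi/4,7\pi/4$ are ever used, but worth noting against the literal ``any ray'' phrasing; and your explicit treatment of $B(z)-B(z_0)$ fills in a step the paper's own proof of (iv) silently omits.

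The one step that does not quite close as written is the Liouville uniqueness argument. For $\delta_1/\delta_2$ to extend to an entire function you need (a) that $\delta_2$ is nonvanishing away from its prescribed poles --- otherwise the quotient acquires poles at unknown locations --- and (b) control of both $\delta_i^{\pm1}$ at the branch points $\pm z_0$; appealing to \eqref{RH.delta.sol} for the boundedness of an \emph{arbitrary} competitor at $\pm z_0$ is circular, and Problem \ref{prob:RH.delta} as stated imposes no growth condition there. Both points are repairable by standard means (e.g.\ showing first that any solution with continuous boundary values up to the endpoints satisfies $\delta(z)\,\overline{\delta(\overline z)}\equiv1$ on the cut, hence is zero-free near $\bbR$, and imposing an $L^2$ or continuity condition at $\pm z_0$ as in \cite{DZ93}), but as written the uniqueness step assumes what it needs to prove. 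The existence part, the symmetry (ii), the expansion (iii), and the endpoint estimate (iv) are all sound.
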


\begin{proof}
The proofs of (i)-(ii) can be found in  \cite{DZ93}. For (iii), we use the fact that as $z\to\infty$
\begin{align*}
\dfrac{z- \overline{z_k}}{z-z_k} &=\dfrac{z-z_k+z_k-\overline{z_k} }{z-z_k}\\
      &=1+\dfrac{2i\Imag (z_k)}{z}+\mathcal{O} \left( z^{-2} \right)
\end{align*}
and
\begin{align*}
\exp \left( \dfrac{1}{2\pi i}\int_{-z_0}^{z_0} \dfrac{\log ( 1+|r(\zeta)|^2 )}{\zeta-z} {d\zeta} \right)=1-\dfrac{1}{2\pi i z} \int_{-z_0}^{z_0} {\log ( 1+|r(\zeta)|^2 )} {d\zeta}+\mathcal{O} \left( z^{-2} \right).
\end{align*}
 To establish (iv), we first note that 
$$ \left\vert \left( \dfrac{z-z_0}{z+z_0} \right)^{i\kappa}\right\vert \leq e^{\pi \kappa}.$$
To bound the difference $e^{\chi(z)}-e^{\chi(\pm z_0)}$, notice that
\begin{align*}
\left\vert e^{\chi(z)}-e^{\chi(\pm z_0)}\right\vert &\leq\left\vert e^{\chi(\pm z_0)}\right\vert 
\left\vert e^{\chi(z)-\chi(\pm z_0)}-1 \right\vert\\
     &\lesssim \left\vert \int_0^1 \dfrac{d}{ds} e^{s(  \chi(z)-\chi(\pm z_0) )ds } \right\vert\\
     &\lesssim  |z \mp z_0|^{1/2} \sup_{0\leq s\leq 1} \left\vert e^{s(  \chi(z)-\chi(\pm z_0) )}\right\vert\\
     &\lesssim |z \mp z_0|^{1/2}
    \end{align*}
    where the third inequality follows from \cite[Lemma 23]{BDT88}.
\end{proof}

It is straightforward to check that if $m(z;x,t)$ solves Problem \ref{RHP-1}, then the new matrix-valued function $m^{(1)}(z;x,t)=m(z;x,t)\delta(z)^{\sigma_3}$ is the solution to the  following RHP.  

\begin{problem}
\label{prob:mkdv.RHP1}
Given 
$$\mathcal{S}=\lbrace r(z),\lbrace z_k, c_k \rbrace_{k=1}^{N_1}, \lbrace z_j, c_j \rbrace_{j=1}^{N_2} \rbrace \subset H^{1}(\bbR) \oplus \mathbb{C}^{2 N_1} \oplus \mathbb{C}^{ 2 N_2}$$ 
and the augmented contour $\Sigma$ in Figure \ref{figure-Sigma}
and set
$$\dfrac{x}{t}=4\eta^2_\ell-12\xi^2_\ell$$
where $ \lbrace z_j\rbrace_{j=1}^{N_2} \ni z_\ell=\xi_\ell+i\eta_\ell$, 
find a matrix-valued function $m^{(1)}(z;x,t)$ on $\bbC \setminus\Sigma$ with the following properties:
\begin{enumerate}
\item		$m^{(1)}(z;x,t) \rarr I$ as $|z| \rarr \infty$,
\item		$m^{(1)}(z;x,t)$ is analytic for $z \in  \bbC \setminus\Sigma$
			with continuous boundary values
			$m^{(1)}_\pm(z;x,t)$.
\item		On $\bbR$, the jump relation $$m^{(1)}_+(z;x,t)=m^{(1)}_-(z;x,t)	
			e^{-i\theta\ad\sigma_3}v^{(1)}(z)$$
			holds,
			 where $$v^{(1)}(z) = \delta_-(z)^{\sigma_3} v(z) \delta_+(z)^{-\sigma_3}.$$
			 \noindent
			The jump matrix $e^{-i\theta\ad\sigma_3} v^{(1)} $ is factorized as 
			\begin{align}
			\label{mkdv.V1}
			e^{-i\theta\ad\sigma_3}v^{(1)}(z)	=
			\begin{cases}
					\Twomat{1}{0}{\dfrac{\delta_-^{-2}  r}{1+|r|^2}  e^{2i\theta}}{1}
					\Twomat{1}{\dfrac{\delta_+^2 \rbar}{1+|r|^2} e^{-2i\theta}}{0}{1},
						& z \in (-z_0, z_0),\\
						\\
						\Twomat{1}{\rbar \delta^2 e^{-2i\theta}}{0}{1}
					\Twomat{1}{0}{r \delta^{-2} e^{2i\theta}}{1},
						& z \in(-\infty, -z_0)\cup (z_0,\infty) .
			\end{cases}
			\end{align}
\item On $ \left( \bigcup_{k=1}^{N_1} \gamma_k  \right) \cup\left( \bigcup_{j=1}^{N_2} \gamma_j  \right) $, let $\delta(z)$ be the solution to Problem \ref{prob:RH.delta}  we have the following jump conditions
$m^{(1)}_+(z;x,t)=m^{(1)}_-(z;x,t)	
			e^{-i\theta\ad\sigma_3}v^{(1)}(z)$
			where
$$
e^{-i\theta\ad\sigma_3}v^{(1)}(z) = 	\begin{cases}
						\twomat{1}{\dfrac{(1/\delta)'(z_k)^{-2} }{c_k  (z-z_k)}e^{-2i\theta} }{0}{1}	&	z\in \gamma_k, \\
						\\
						\twomat{1}{0}{\dfrac{\delta'( \overline{z_k } )^{-2}}{\overline{c_k}  (z -\overline{z_k })} e^{2i\theta}}{1}
							&	z \in \gamma_k^*
					\end{cases}
$$
and for $z_j\in \mathcal{B}_\ell $
$$
e^{-i\theta\ad\sigma_3}v^{(1)}(z)= 	\begin{cases}
						\twomat{1}{\dfrac{(1/\delta)'(z_j)^{-2} }{c_j  (z-z_j)}e^{-2i\theta} }{0}{1}	&	z\in \gamma_j, \\
						\\
						\twomat{1}{0}{\dfrac{\delta'( \overline{z_j } )^{-2}}{\overline{c_j}  (z -\overline{z_j })} e^{2i\theta}}{1}
							&	z \in \gamma_j^*\\
							\\
							\twomat{1}{0}{-\dfrac{\delta'( -{z_j } )^{-2}}{ c_j  (z + z_j ) } e^{2i\theta}}{1}	&	z\in -\gamma_j, \\
						\\
						\twomat{1} {-\dfrac{  (1/\delta)'(  -\overline{z_j } )^{-2} }{ \overline{c_j}  (z +\overline{z_j } ) }  e^{-2i\theta} }{0}{1}
							&	z \in -\gamma_j^*
					\end{cases}
$$								
and for $z_j \in \lbrace z_j \rbrace_{j=1}^{N_2} \setminus \mathcal{B}_\ell $	
$$
e^{-i\theta\ad\sigma_3}v^{(1)}(z)= 	\begin{cases}
						\twomat{1}{0}{\dfrac{c_j \delta(z_j)^{-2} }{z-z_j} e^{2i\theta}}{1}	&	z\in \gamma_j, \\
						\\
						\twomat{1}{\dfrac{\overline{c_j} \delta(\overline{z_j})^2 }{z -\overline{z_j}} e^{-2i\theta} }{0}{1}
							&	z \in \gamma_j^*\\
							\\
							\twomat{1}{\dfrac{-c_j \, \delta(-{z_j})^{2} }{z + z_j} e^{-2i\theta}  }{0}{1}	&	z\in -\gamma_j, \\
						\\
						\twomat{1}{0}{\dfrac{-\overline{c_j} \, \delta(-\overline{z_j})^{-2}e^{2i\theta} }{z +\overline{z_j }}}{1}
							&	z \in -\gamma_j^*
					\end{cases}
$$					
\end{enumerate}
\end{problem}
\begin{remark}
\label{circles}
We set
\begin{equation}
\label{Gamma}
\Gamma=  \left( \bigcup_{k=1}^{N_1} \gamma_k  \right) \cup \left( \bigcup_{k=1}^{N_1} \gamma_k^*  \right) \cup \left( \bigcup_{j=1}^{N_2} \pm \gamma_j  \right)\cup \left( \bigcup_{j=1}^{N_2} \pm \gamma_j^*  \right).
\end{equation}
From the signature table Figure \ref{sig-table} and the triangularities of the jump matrices, we observe that along the characteristic line $x=\mathrm{v}t$ where $\mathrm{v}=4\eta^2_\ell-12\xi^2_\ell$,  by choosing the radius of each element of $\Gamma$ small enough, we have for $z\in \Gamma \setminus \left( \pm \gamma_\ell \cup \pm \gamma^*_\ell   \right)   $ 
$$ e^{-i\theta\ad\sigma_3}v^{(1)}(z) \lesssim e^{-ct},  \quad t\to \infty.$$
For technical purpose which will become clear later, we want that the radius of each element of $\Gamma$ less than $\lambda/3$ where $\lambda$ is given by \eqref{lambda}. Also we make each element of $\Gamma$ is invariant under Schwarz reflection.
\end{remark}

\begin{figure}
\caption{The Augmented Contour $\Sigma$}
\vspace{.5cm}
\label{figure-Sigma}
\begin{tikzpicture}[scale=0.75]
 \pgfmathsetmacro{\c}{1.414}
    \pgfmathsetmacro{\d}{2.449} 
   \draw plot[red, domain=-1:1] ({\c*cosh(\x)},{\d*sinh(\x)});
    \draw plot[domain=-1:1] ({-\c*cosh(\x)},{\d*sinh(\x)});
\draw[ thick] (0,0) -- (-3,0);
\draw[ thick] (-3,0) -- (-5,0);
\draw[thick,->,>=stealth] (0,0) -- (3,0);
\draw[ thick] (3,0) -- (5,0);
\node[above] at 		(2.5,0) {$+$};
\node[below] at 		(2.5,0) {$-$};
\node[right] at (3.5 , 2) {$\gamma_j$};
\node[right] at (3.5 , -2) {$\gamma_j^*$};
\node[left] at (-3.5 , 2) {$-\gamma_j^*$};
\node[left] at (-3.5 , -2) {$-\gamma_j$};
\draw[->,>=stealth] (-2.6,2) arc(360:0:0.4);
\draw[->,>=stealth] (3.4,2) arc(360:0:0.4);
\draw[->,>=stealth] (-2.6,-2) arc(0:360:0.4);
\draw[->,>=stealth] (3.4,-2) arc(0:360:0.4);
\draw [red, fill=red] (-3,2) circle [radius=0.05];
\draw [red, fill=red] (3,2) circle [radius=0.05];
\draw [red, fill=red] (-3,-2) circle [radius=0.05];
\draw [red, fill=red] (3,-2) circle [radius=0.05];
\node[right] at (5 , 0) {$\bbR$};
\draw [green, fill=green] (0, 1) circle [radius=0.05];
\draw[->,>=stealth] (0.3, 1) arc(360:0:0.3);
\draw [green, fill=green] (0, -1) circle [radius=0.05];
\draw[->,>=stealth] (0.3, -1) arc(0:360:0.3);
 \draw[dashed] (-2 , -3.464)--(2 , 3.464);
  \draw[dashed] (-2 , 3.464)--(2 , -3.464);
\draw [blue, fill=blue] (1,3) circle [radius=0.05];
\draw[->,>=stealth] (1.2, 3) arc(360:0:0.2);
\draw [blue, fill=blue] (-1,3) circle [radius=0.05];
\draw[->,>=stealth] (-0.8, 3) arc(360:0:0.2);
\draw [blue, fill=blue] (1,-3) circle [radius=0.05];
\draw[->,>=stealth] (1.2, -3) arc(0:360:0.2);
\draw [blue, fill=blue] (-1,-3) circle [radius=0.05];
\draw[->,>=stealth] (-0.8, -3) arc(0:360:0.2);
\draw  (0.5, 0) arc(0:60:0.5);
\node[right] at (0.5 , 0.2) {\footnotesize ${\pi}/{3}$};
\node[above] at 		(0, 1.3) {$\gamma_k$};
\node[below] at 		(0, -1.3) {$\gamma_k^*$};
 \node [below] at (1.9,0) {\footnotesize $z_0$};
    \node [below] at (-1.9,0) {\footnotesize $-z_0$};
     \draw	[fill, red]  (1.529, 1.006)		circle[radius=0.05];	  
     \draw[->,>=stealth] (1.729, 1.006) arc(360:0:0.2);
   \draw	[fill, red]  (-1.529, 1.006)	circle[radius=0.05];	  
    \draw[->,>=stealth] (-1.329, 1.006) arc(360:0:0.2);
     \draw	[fill, red]  (1.529, -1.006)		circle[radius=0.05];	  
     \draw[->,>=stealth] (1.729, -1.006) arc(0:360:0.2);
   \draw	[fill, red]  (-1.529, -1.006)	circle[radius=0.05];	  
   \draw[->,>=stealth] (-1.329, -1.006) arc(0:360:0.2); 
    \node[above]  at (-3, 0.5) {\footnotesize $\text{Re}(i\theta)<0$};
     \node[above]  at (3, 0.5) {\footnotesize $\text{Re}(i\theta)<0$};
      \node[below]  at (-3, -0.5) {\footnotesize $\text{Re}(i\theta)>0$};
     \node[below]  at (3, -0.5) {\footnotesize $\text{Re}(i\theta)>0$};
     \node[above]  at (0, 3) {\footnotesize $\text{Re}(i\theta)>0$};
     \node[below]  at (0, -3) {\footnotesize $\text{Re}(i\theta)<0$};
     \node[above] at 		(1.9, 1.1) {$\gamma_\ell$};
\node[above] at 		(-2.1, 1.1) {$-\gamma_\ell^*$};
\node[below] at 		(-2.1, -1.0) {$-\gamma_\ell$};
\node[below] at 		(2.1, -1.0) {$\gamma_\ell^*$};
\end{tikzpicture}
\begin{center}
\begin{tabular}{ccc}
Soliton ({\color{green} $\bullet$})	&	
Breather ({\color{red} $\bullet$} {\color{blue} $\bullet$} ) 
\end{tabular}
\end{center}
\end{figure}

\section{Contour deformation}
\label{sec:mixed}

We now perform contour deformation on Problem \ref{prob:mkdv.RHP1}, following the standard procedure outlined in \cite[Section 4]{LPS}.
Since the phase function \eqref{theta} has two critical points
at $\pm z_0$, our new contour is chosen to be
\begin{equation}
\label{new-contour}
\Sigma^{(2)} = \Sigma_1 \cup \Sigma_2 \cup \Sigma_3 \cup \Sigma_4\cup  \Sigma_5 \cup \Sigma_6 \cup \Sigma_7 \cup \Sigma_8
\end{equation}
shown in Figure \ref{new-contour} and consists of rays of the form $\pm z_0+ e^{i\phi}\bbR^+$
where $\phi = \pi/4, 3\pi/4,5\pi/4, 7\pi/4$. 

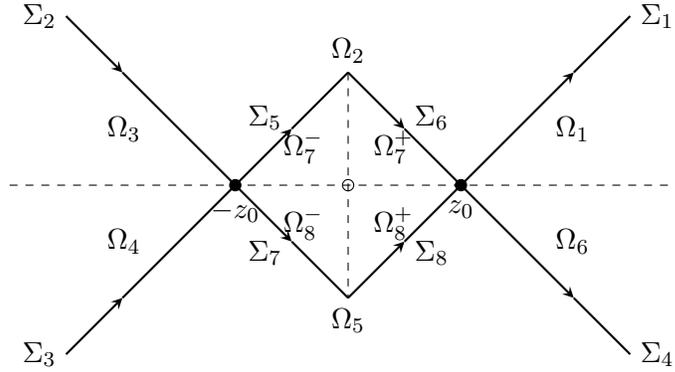
\begin{figure}[H]
\caption{Deformation from $\mathbb{R}$ to $\Sigma^{(2)}$}
\vskip 15pt
\begin{tikzpicture}[scale=0.75]
\draw[dashed] 					(0, 2) -- (0,-2);	
\draw[dashed] 					(-6,0) -- (6,0);								
\draw[->,thick,>=stealth] 			(2,0) -- (4,2);								
\draw	[thick] (5, 3) -- (4,2);
\draw [->,thick,>=stealth]  	(-5,3) -- (-4,2);							
\draw[thick]	 (-2,0) -- (-4,2);
\draw[->,thick,>=stealth]		(-5,-3) -- (-4,-2);							
\draw[thick]						(-4,-2) -- (-2,0);
\draw[thick,->,>=stealth]		(2,0) -- (4,-2);								
\draw[thick]						(4,-2) -- (5,-3);
\draw	  [thick,->,>=stealth]  (-2,0) -- (-1,1);								
\draw   [thick]  (0,2) -- (-1, 1);
\draw[thick,->,>=stealth]		(-2,0) -- (-1,-1);								
\draw[thick]						(-1,-1) -- (0, -2);
\draw[thick,->,>=stealth]		(0,2) -- (1,1);								
\draw [thick]	(2,0) -- (1, 1);
\draw[thick,->,>=stealth]		(0,-2) -- (1,-1);								
\draw[thick]						(1,-1) -- (2, -0);
\draw	[fill]							(-2,0)		circle[radius=0.1];	
\draw	[fill]							(2,0)		circle[radius=0.1];
\draw							(0,0)		circle[radius=0.1];
\node[below] at (-2,-0.1)			{$-z_0$};
\node[below] at (2,-0.1)			{$z_0$};
\node[right] at (5,3)					{$\Sigma_1$};
\node[left] at (-5,3)					{$\Sigma_2$};
\node[left] at (-5,-3)					{$\Sigma_3$};
\node[right] at (5,-3)				{$\Sigma_4$};
\node[left] at (-1,1.2)					{$\Sigma_5$};
\node[left] at (-1,-1.2)					{$\Sigma_7$};
\node[right] at (1,1.2)					{$\Sigma_6$};
\node[right] at (1,-1.2)					{$\Sigma_8$};
\node[right] at (3.5,1)				{$\Omega_1$};
\node[above] at (0,2)			{$\Omega_2$};
\node[left] at (-3.5,1)				{$\Omega_3$};
\node[left] at (-3.5,-1)				{$\Omega_4$};
\node[below] at (0,-2)			{$\Omega_5$};
\node[right] at (3.5,-1)				{$\Omega_6$};
\node[above] at (0.8, 0.2)			{$\Omega_7^+$};
\node[below] at (0.8,-0.2)				{$\Omega_8^+$};
\node[above] at (-0.8, 0.2)			{$\Omega_7^-$};
\node[below] at (-0.8,-0.2)				{$\Omega_8^-$};
\end{tikzpicture}
\label{fig:contour-2}
\end{figure}
For technical reasons (see Remark \ref{radius}), we define the following smooth cutoff function:
\begin{equation}
\label{chi-z}
\Xi_{\calZ}(z)=\begin{cases}
1 &\quad \text{dist}(z, \calZ\cup \calZ^*)\leq\lambda/3\\
0 &\quad \text{dist}(z, \calZ\cup \calZ^*)>2\lambda/3 .
\end{cases}
\end{equation}
Here recall that $\calZ$ is given by \eqref{mathcal-z} and $\lambda$ is defined in \eqref{lambda}.
We now introduce another matrix-valued function $m^{(2)}$:
$$ m^{(2)}(z) = m^{(1)}(z)  \calR^{(2)}(z). $$
Here $\calR^{(2)}$ will be chosen to remove the jump on the real axis and bring about new analytic jump matrices with the desired exponential decay 
along the contour $\Sigma^{(2)}$. Straight forward computation gives
\begin{align*}
m^{(2)}_+	&=m^{(1)}_+ \calR^{(2)}_+ \\
				&= m^{(1)}_- \left( e^{-i\theta\ad\sigma_3} v^{(1)} \right) \calR^{(2)}_+ \\
				&= m^{(2)}_- \left(\calR^{(2)}_-\right)^{-1}
						\left( e^{-i\theta\ad\sigma_3} v^{(1)} \right) \calR^{(2)}_+.
\end{align*}
We want to make sure that the following condition is satisfied
$$ 
(\calR^{(2)}_-)^{-1} \left( e^{-i\theta\ad\sigma_3} v^{(1)} \right) \calR^{(2)}_+ = I
$$
where $\calR_\pm^{(2)}$ are the boundary values of $\calR^{(2)}(z)$ as $\pm \Imag(z) \darr 0$. In this case the jump matrix associated to $m^{(2)}_\pm$ will be the identity matrix on $\bbR$ .

From the signature table \cite[Figure 0.1]{DZ93} we find that the function $e^{2i\theta}$ is exponentially decreasing on $\Sigma_3$  $\Sigma_4$, $\Sigma_5$, $\Sigma_6$ and increasing on $\Sigma_1$, $\Sigma_2$, $\Sigma_7$, $\Sigma_8$ while the reverse is true for $e^{-2i\theta}$. 
Letting
\begin{equation} \label{eta}
\eta(z; \pm z_0) = \left( \prod_{k=1}^{N_1} \dfrac{\pm z_0- \overline {z_k }}{ \pm z_0-z_k}\right) \left(\prod_{z_j\in B_\ell} \dfrac{\pm z_0-\overline{z_j}}{ \pm z_0-z_j}\right) \left(\prod_{z_j\in B_\ell} \dfrac{\pm z_0+z_j}{\pm z_0+\overline{z_j}}\right) \left( \dfrac{ z-z_0}{z+z_0} \right)^{i\kappa}
\end{equation}
\begin{equation} \label{eta-0}
\eta_0(\pm z_0) = \left( \prod_{k=1}^{N_1} \dfrac{\pm z_0- \overline {z_k }}{ \pm z_0-z_k}\right) \left(\prod_{z_j\in B_\ell} \dfrac{\pm z_0-\overline{z_j}}{ \pm z_0-z_j}\right) \left(\prod_{z_j\in B_\ell} \dfrac{\pm z_0+z_j}{\pm z_0+\overline{z_j}}\right) 
\end{equation}
and we define $\calR^{(2)}$ as follows (Figure \ref{fig R-2+}-\ref{fig R-2-}): 
 the functions $R_1$, $R_3$, $R_4$, $R_6$, $R_7^+$, $R_8^+$, $R_7^-$, $R_{8}^-$ satisfy 
\begin{align}
\label{R1}
R_1(z)	&=	\begin{cases}
							-{r(z)} \delta(z)^{-2}			
								&	z \in (z_0,\infty)\\[10pt]
						-{r(z_0 )} e^{-2\chi(z_0)} \eta(z; z_0)^{-2}(1-\Xi_\calZ)
								&	z	\in \Sigma_1,
					\end{cases}\\[10pt]
\label{R3}
R_3(z)	&=	\begin{cases}
						-{r(z)} \delta(z)^{-2}		
								&	z \in (-\infty, -z_0)\\[10pt]
						-{r(-z_0 )} e^{-2\chi(-z_0)} \eta(z; -z_0)^{-2} (1-\Xi_\calZ)
								&	z	\in \Sigma_2,
					\end{cases}\\[10pt]
\label{R4}
R_4(z)	&=	\begin{cases}
						\overline{r(z)} \delta(z)^{2}			
								&	z \in (-\infty, -z_0)\\[10pt]
						\overline{r(-z_0)} e^{2\chi(-z_0)} \eta(z; z_0)^{2} (1-\Xi_\calZ)
								&	z	\in \Sigma_3,
					\end{cases}\\[10pt]
\label{R6}
R_6(z)	&=	\begin{cases}
						\overline{r(z)} \delta(z)^{2}			
								&	z \in (z_0, \infty)\\[10pt]
						\overline{r(z_0)} e^{2\chi(z_0)} \eta(z; z_0)^{2} (1-\Xi_\calZ)
								&	z	\in \Sigma_4,
					\end{cases}
\end{align}

\begin{align}
\label{R7+}
R_7^+(z)	&=	\begin{cases}
						-\dfrac{\delta_+^{2}(z)  \overline{r(z)} }{1+ |r(z)|^2}		
								& z \in (-z_0, z_0)\\[10pt]
					{-\dfrac{e^{2\chi(z_0)} \eta(z; z_0)^{2} \overline{r(z_0)} }{1+ |r(z_0)|^2}} (1-\Xi_\calZ)
						& z \in \Sigma_6,
					\end{cases}
					\\[10pt]
\label{R8+}
R_8^+(z)	&=	\begin{cases}
						\dfrac{\delta_-^{-2}(z)r(z)}{1+|r(z)|^2}		
								& z \in (-z_0, z_0)\\[10pt]
					\dfrac{e^{-2\chi(z_0)} \eta(z; z_0)^{-2} r(z_0)}{1+|r(z_0)|^2} (1-\Xi_\calZ) \quad
                                  & z \in \Sigma_8,
					\end{cases}
					\\[10pt]
\label{R7-}
R_{7}^-(z)	&=	\begin{cases}
						-\dfrac{\delta_+^{2}(z)  \overline{r(z)} }{1+|r(z)|^2}		
								& z \in (-z_0, z_0)\\[10pt]
					{-\dfrac{e^{2\chi(-z_0)} \eta(z; z_0)^{2} \overline{r(-z_0)} }{1+|r(-z_0)|^2}} (1-\Xi_\calZ)
						& z \in \Sigma_5
						\end{cases}
						\\[10pt]
\label{R8-}
R_8^-(z)	&=	\begin{cases}
						\dfrac{\delta_-^{-2}(z)r(z)}{1+|r(z)|^2}		
								& z \in (-z_0, z_0)\\[10pt]
					\dfrac{e^{-2\chi(-z_0)} \eta(z; z_0)^{-2} r(-z_0)}{1+|r(-z_0)|^2}  (1-\Xi_\calZ)\quad
                                  & z \in \Sigma_7.
					\end{cases}
\end{align}

{
\SixMatrix{The Matrix  $\calR^{(2)}$ for Region I, near $z_0$}{fig R-2+}
	{\twomat{1}{0}{R_1 e^{2i\theta}}{1}}
	{\twomat{1}{R_7^+ e^{-2i\theta}}{0}{1}}
	{\twomat{1}{0}{R_8^+ e^{2i\theta}}{1}}
	{\twomat{1}{R_6 e^{-2i\theta}}{0}{1}}
}

{
\sixmatrix{The Matrix  $\calR^{(2)}$ for Region I, near $-z_0$}{fig R-2-}
	{\twomat{1}{R_{7}^- e^{-2i\theta}}{0}{1}}
	{\twomat{1}{0}{R_3e^{2i\theta}}{1}}
	{\twomat{1}{R_4e^{-2i\theta}}{0}{1}}
	{\twomat{1}{0} {R_{8}^- e^{2i\theta}}{1}}
}

Each $R_i(z)$ in $\Omega_i$ is constructed in such a way that the jump matrices on the contour and $\dbar R_i(z)$ enjoys the property of exponential decay as $t\to \infty$.
We formulate Problem \ref{prob:mkdv.RHP1} into a mixed RHP-$\dbar$ problem. In the following sections we will separate this mixed problem into a localized RHP and a pure $\dbar$ problem whose long-time contribution to the asymptotics of $u(x,t)$ is of higher order than the leading term.

The following lemma (\cite[Proposition 2.1]{DM08}) will be used in the error estimates of 
$\bar \partial$-problem in Section \ref{sec:dbar}.

We first denote the entries that appear in \eqref{R1}--\eqref{R8-} by
\begin{align*}
p_1(z)=p_3(z)	&=	r(z).	&
p_4(z)=p_6(z)	&=	- \overline{r(z)},&\\
p_{7^-}(z)=p_{7^+}(z)	&=- \dfrac{ \overline{r(z)}}{1 + |r(z)|^2},& p_{8^-}(z)=p_{8^+}(z)	&= \dfrac{r(z)}{1+|r(z)|^2}.
\end{align*}

\begin{lemma}
\label{lemma:dbar.Ri}
Suppose $r \in H^{1}(\bbR)$. There exist functions $R_i$ on $\Omega_i$, $i=1,3,4,6,7^\pm,8^\pm$ satisfying \eqref{R1}--\eqref{R8-}, so that
$$ 
|\dbar R_i(z)| \lesssim 
	 |p_i'(\Real(z))| + |z-\xi|^{-1/2}  +\dbar \left( \Xi_\calZ(z) \right) , 	
				z \in \Omega_i
			$$ 
where $\xi=\pm z_0$  and the implied constants are uniform for $r $ in a bounded subset of $H^{1}(\bbR)$.
\end{lemma}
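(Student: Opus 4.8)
The plan is to perform, sector by sector, the $\dbar$‑extension introduced by Dieng and McLaughlin \cite{DM08} in the form used in \cite[Section 4]{LPS}. Since the construction is local near $\pm z_0$ and all data are Schwarz‑symmetric, it suffices to produce $R_1$ on $\Omega_1$ and to estimate $\dbar R_1$; the sectors $\Omega_3,\Omega_4,\Omega_6$ are then treated by the same recipe with the boundary data of \eqref{R3}--\eqref{R6}, and the inner sectors $\Omega_7^\pm,\Omega_8^\pm$ likewise after replacing $r$ by $r/(1+|r|^2)$ and reading $\delta$ off on the appropriate side of the cut $[-z_0,z_0]$ --- which is exactly why the statement bundles the coefficients into $p_1=p_3$, $p_4=p_6$, $p_{7^\pm}$, $p_{8^\pm}$. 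Because $1+|r|^2\ge 1$, dividing by it spoils neither the Hölder regularity nor boundedness and creates no new singularity, so nothing in the inner sectors is worse than in $\Omega_1$; the matching of $R_7^\pm$ and of $R_8^\pm$ across the auxiliary segment $\{\Real z=0\}$ is forced by the symmetry $r(-z_0)=-\overline{r(z_0)}$ of \eqref{minus}, exactly as in \cite{LPS}.

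On $\Omega_1$ I would use polar coordinates centred at $z_0$, writing $z=z_0+\rho e^{i\phi}$ with $\phi=0$ the edge $(z_0,\infty)\subset\bbR$ and $\phi=\pi/4$ the edge $\Sigma_1$, fix a smooth $\mathcal K\colon[0,\pi/4]\to[0,1]$ with $\mathcal K\equiv 0$ near $0$ and $\mathcal K\equiv 1$ near $\pi/4$, and set
\[
R_1(z)=-\bigl(1-\Xi_{\calZ}(z)\bigr)\Bigl[\bigl(1-\mathcal K(\phi)\bigr)\,r(\Real z)\,\delta(z)^{-2}+\mathcal K(\phi)\,r(z_0)\,e^{-2\chi(z_0)}\,\eta(z;z_0)^{-2}\Bigr].
\]
Since $\Xi_{\calZ}\equiv 0$ on $\bbR$, this reduces to $-r(z)\delta(z)^{-2}$ on $(z_0,\infty)$ (where $\phi=0$, $\mathcal K=0$) and to $-r(z_0)e^{-2\chi(z_0)}\eta(z;z_0)^{-2}(1-\Xi_{\calZ})$ on $\Sigma_1$ (where $\phi=\pi/4$, $\mathcal K=1$), matching \eqref{R1}; it vanishes on a full neighbourhood of each eigenvalue inside $\Omega_1$ (where $\Xi_{\calZ}\equiv1$), so $\calR^{(2)}\equiv I$ there and the jumps on the circles $\Gamma$ of \eqref{Gamma} are left intact; and it is continuous and bounded on $\Omega_1\setminus\{z_0\}$ because $\delta^{\pm 2}$ is holomorphic and bounded away from those circles (the poles of $\delta$ lie inside them, and $\delta$ is bounded away from $0$ near $z_0$), $\eta(\cdot;z_0)^{-2}$ is holomorphic on $\Omega_1$ since the cut of $(\tfrac{z-z_0}{z+z_0})^{i\kappa}$ sits on $[-z_0,z_0]$, and $r\in C^{1/2}(\bbR)$ by $H^1(\bbR)\hookrightarrow C^{1/2}(\bbR)$.

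It remains to estimate $\dbar R_1$, using $\dbar=\tfrac12 e^{i\phi}\bigl(\partial_\rho+\tfrac{i}{\rho}\partial_\phi\bigr)$ and the product rule. As $\delta(z)^{-2}$ and $\eta(z;z_0)^{-2}$ are holomorphic on $\Omega_1$, the operator $\dbar$ only acts on $r(\Real z)$, on $\mathcal K(\phi)$, and on $\Xi_{\calZ}(z)$, producing three groups of terms. Terms with $\dbar$ on $r(\Real z)$ are $\lesssim |r'(\Real z)|=|p_1'(\Real z)|$ since $\dbar(\Real z)=\tfrac12$. Terms with $\dbar$ on $\Xi_{\calZ}$ are $\lesssim|\dbar\Xi_{\calZ}(z)|$, because on the support of $\dbar\Xi_{\calZ}$ the accompanying factors are bounded (indeed $\delta^{-2}$ vanishes to second order at the poles enclosed by $\Gamma$, while $\eta(\cdot;z_0)^{-2}$ is bounded on $\Omega_1$). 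Terms with $\dbar$ on $\mathcal K(\phi)$ carry a factor $\tfrac1\rho\mathcal K'(\phi)$ times the difference $D(z):=r(\Real z)\delta(z)^{-2}-r(z_0)e^{-2\chi(z_0)}\eta(z;z_0)^{-2}$.

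The one genuinely non‑routine point --- the heart of the lemma --- is to keep $\tfrac1\rho D(z)$ under control on the support of $\mathcal K'$, i.e.\ for angles near $\pi/4$: a crude interpolation would leave an $\tfrac1\rho$ singularity, which is not locally area‑integrable and would be fatal to the $\dbar$‑analysis of Section \ref{sec:dbar}. This is resolved by a gain of one half power of $\rho$ coming from two independent places: $|r(\Real z)-r(z_0)|\lesssim|z-z_0|^{1/2}$ by Hölder‑$\tfrac12$ continuity, and $\delta(z)^{-2}=e^{-2\chi(z_0)}\eta(z;z_0)^{-2}+\mathcal O\bigl(|z-z_0|^{1/2}\bigr)$ by Lemma \ref{lemma:delta}(iv) (since $(\tfrac{z-z_0}{z+z_0})^{i\kappa}\delta_0(z_0)=\eta(z;z_0)e^{\chi(z_0)}$ and $\delta$ is bounded away from $0$ near $z_0$); hence $D(z)=\mathcal O(|z-z_0|^{1/2})$ and $\tfrac1\rho|D(z)|\lesssim|z-z_0|^{-1/2}=|z-\xi|^{-1/2}$. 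Summing the three groups gives the asserted bound, with all implied constants depending on $r$ only through $\norm{r}{H^1(\bbR)}$ --- via the Hölder‑$\tfrac12$ seminorm of $r$ and the $H^1$‑dependence of the constant in Lemma \ref{lemma:delta}(iv). I would finish by noting that $|z-\xi|^{-1/2}$ is locally integrable against area measure and $|p_i'|\in L^2(\bbR)$, so the estimate is in exactly the shape needed for the later $\dbar$ error bounds.
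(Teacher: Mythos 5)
Your construction and estimate coincide with the paper's own proof: the same angular interpolation between the two boundary data (your $\mathcal K$ is simply the complement of the paper's), the same factor $(1-\Xi_{\calZ})$ to protect the jumps on $\Gamma$, and the same two sources of the $|z-\xi|^{1/2}$ gain --- H\"older-$\tfrac12$ continuity of $r$ from $H^1(\bbR)\hookrightarrow C^{1/2}(\bbR)$ together with Lemma \ref{lemma:delta}(iv) --- to tame the $\rho^{-1}\mathcal K'(\phi)$ term. If anything, you make the key cancellation in the $\mathcal K'$ term more explicit than the paper, which at that point simply cites Lemma \ref{lemma:delta}(iv).
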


\begin{proof}
We only prove the lemma for $R_1$. Define $f_1(z)$ on $\Omega_1$ by
$$ f_1(z) = p_1(z_0) e^{-2\chi(z_0)} \eta(z; z_0)^{-2} \delta(z)^{2} $$
and let
\begin{equation}
\label{interpol}
\ R_1(z) = \left( f_1(z) + \left[ p_1(\Real(z)) - f_1(z) \right] \mathcal{K}(\phi) \right) \delta(z)^{-2} 
(1-\Xi_\calZ)
\end{equation}
where $\phi = \arg (z-\xi)$ and $\mathcal{K}$ is a smooth function on $(0, \pi/4)$ with
$$ 
\mathcal{K}(\phi)=
	\begin{cases}
			1
			&	z\in [0, \pi/12], \\
			0
			&	z \in[\pi/6, \pi/4]
	\end{cases}
$$ 
 It is easy to see that $R_1$ as constructed has the boundary values \eqref{R1}.
Writing $z-z_0= \rho e^{i\phi}$, we have
$$ \dbar = \frac{1}{2}\left( \frac{\dee}{\dee x} + i \frac{\dee}{\dee y} \right)
			=	\frac{1}{2} e^{i\phi} \left( \frac{\dee}{\dee \rho} + \frac{i}{\rho} \frac{\dee}{\dee \phi} \right).
$$
We calculate
\begin{align*}
\dbar R_1 (z) & = \left( \frac{1}{2}  p_1'(\Real z) \mathcal{K}(\phi)  ~ \delta(z)^{-2} -
		\left[ p_1(\Real z) - f_1(z) \right]\delta(z)^{-2}  \frac{ie^{i\phi}}{|z-\xi|}  \mathcal{K}'(\phi) \right)\\
		 & \quad \times \left( 1-\Xi_\calZ  \right)-\left( f_1(z) + \left[ p_1(\Real(z)) - f_1(z) \right] \mathcal{K}(\phi) \right) \delta(z)^{-2} \dbar \left( \Xi_\calZ(z) \right) .
\end{align*}
Given that $\Xi(z)$ is infinitely smooth and compactly supported, it follows from Lemma \ref{lemma:delta} (iv) that
$$ 
 \left|\left( \dbar R_1 \right)(z)  \right| \lesssim
|p_1'(\Real z)| + |z-\xi|^{-1/2} +\dbar \left( \Xi_\calZ(z) \right) 
$$
where the implied constants depend on $\norm{r}{H^{1}}$ and the smooth function $\mathcal{K}$. 
The estimates in the remaining sectors are identical.
\end{proof}

The unknown $m^{(2)}$ satisfies a mixed $\dbar$-RHP. We first identify the jumps of $m^{(2)}$ along the contour $\Sigma^{(2)}$. Recall that $m^{(1)}$ is analytic along the contour,  the jumps are determined entirely by 
$\mathcal{R}^{(2)}$, see \eqref{R1}--\eqref{R8-}. Away from $\Sigma^{(2)}$, using the triangularity of $\mathcal{R}^{(2)}$, we   have that 
\begin{equation}
\label{N2.dbar}
 \dbar m^{(2)} = m^{(2)} \left( \calR^{(2)} \right)^{-1} \dbar \calR^{(2)} = m^{(2)} \dbar \calR^{(2)}. 
 \end{equation}
 
 \begin{remark}
 \label{radius}
By construction of $\mathcal{R}^{(2)}$ (see \eqref{R1}-\eqref{R8-} and \eqref{interpol}) and the choice of the radius of the circles in the set $ \Gamma$ (see Remark \ref{circles}), the right multiplication of $\mathcal{R}^{(2)}$ to $m^{(1)}$ will not change the jump conditions on circles in the set $ \Gamma$ . Thus over circles in the set $ \Gamma$, $m^{(2)}$ has the same jump matrices as given by (4) of Problem \ref{prob:mkdv.RHP1}.
\end{remark}

\begin{problem}
\label{prob:DNLS.RHP.dbar}
Given $r \in H^{1}(\bbR)$, find a matrix-valued function $m^{(2)}(z;x,t)$ on $\bbC \setminus \bbR$ with the following properties:
\begin{enumerate}
\item		$m^{(2)}(z;x,t) \rarr I$ as $ z \rarr \infty$ in $ \bbC \setminus \left( \Sigma^{(2)} \cup \Gamma \right),$
\item		$m^{(2)}(z;x,t)$ is continuous for $z \in  \bbC \setminus\left( \Sigma^{(2)} \cup \Gamma \right)$ 
			with continuous boundary values 
			$m^{(2)}_\pm(z;x,t) $
			(where $\pm$ is defined by the orientation in Figure \ref{fig:contour})
\item		The jump relation $m^{(2)}_+(z;x,t)=m^{(2)}_-(z;x,t)	
			e^{-i\theta\ad\sigma_3}v^{(2)}(z)$ holds, where
			$e^{-i\theta\ad\sigma_3}v^{(2)}(z)	$ is given in Figure \ref{fig:jumps-1}-\ref{fig:jumps-2} and part (4) of Problem \ref{prob:mkdv.RHP1}.
\item		The equation 
			$$
			\dbar m^{(2)} = m^{(2)} \, \dbar \calR^{(2)}
			$$ 
			holds in $\bbC \setminus \Sigma^{(2)}$, where
			$$
			\dbar \calR^{(2)}=
				\begin{doublecases}
					\Twomat{0}{0}{(\dbar R_1) e^{2i\theta}}{0}, 	& z \in \Omega_1	&&
					\Twomat{0}{(\dbar R_7^+)e^{-2i\theta}}{0}{0}	,	& z \in \Omega_7^+	\\
					\\
					\Twomat{0}{0}{(\dbar R_8^+)e^{2i\theta}}{0},	&	z \in \Omega_8^+	&&
					\Twomat{0}{(\dbar R_6)e^{-2i\theta}}{0}{0}	,	&	z	\in \Omega_6	 \\
					\\
					\Twomat{0}{0}{(\dbar R_3) e^{2i\theta}}{0}, 	& z \in \Omega_3	&&
					\Twomat{0}{(\dbar R_4)e^{-2i\theta}}{0}{0}	,	& z \in \Omega_4	\\
					\\
					\Twomat{0}{0}{(\dbar R_8^-)e^{2i\theta}}{0},	&	z \in \Omega_8^-	&&
					\Twomat{0}{(\dbar R_7^-)e^{-2i\theta}}{0}{0}	,	&	z	\in \Omega_7^- \\
					\\
					0	&\hspace{-5pt} z\in \Omega_2\cup\Omega_5	
				\end{doublecases}
			$$
\end{enumerate}
\end{problem}

The following picture is an illustration of the jump matrices of RHP Problem \ref{prob:DNLS.RHP.dbar}. For brevity we ignore the discrete scattering data.

\begin{figure}[H]
\caption{Jump Matrices  $v^{(2)}$  for $m^{(2)}$ near $z_0$}
\vskip 15pt
\begin{tikzpicture}[scale=0.7]
\draw[dashed] 				(-6,0) -- (6,0);							
\draw [->,thick,>=stealth] 	(0,0) -- (1.5,1.5);						
\draw  [thick]  (3,3) -- (1.5, 1.5);
\draw 	[->,thick,>=stealth]	  (-3,3) -- (-1.5,1.5);					
\draw	 [thick]	(0,0)--(-1.5,1.5);
\draw[->,thick,>=stealth]	(-3,-3) -- (-1.5,-1.5);					
\draw[thick]					(-1.5,-1.5) -- (0,0);
\draw[->,thick,>=stealth]	(0,0) -- (1.5,-1.5);					
\draw[thick]					(1.5,-1.5) -- (3,-3);
\draw[fill]						(0,0)	circle[radius=0.075];		
\node [below] at  			(0,-0.15)		{$z_0$};
\node[right] at					(3.2,3)		{$\unitlower{R_1 e^{2i\theta}}$};
\node[left] at					(-3.2,3)		{$\unitupper{R_7^+ e^{-2i\theta}}$};
\node[left] at					(-3.2,-3)		{$\unitlower{R_8^+ e^{2i\theta}}$};
\node[right] at					(3.2,-3)		{$\unitupper{R_6 e^{-2i\theta}}$};
\node[left] at					(2.5,3)		{$\Sigma_1$};
\node[right] at					(-2.5,3)		{$\Sigma_6$};
\node[right] at					(-2.5,-3)		{$\Sigma_8$};
\node[left] at					(2.5,-3)		{$\Sigma_4$};
\end{tikzpicture}
\label{fig:jumps-1}
\end{figure}

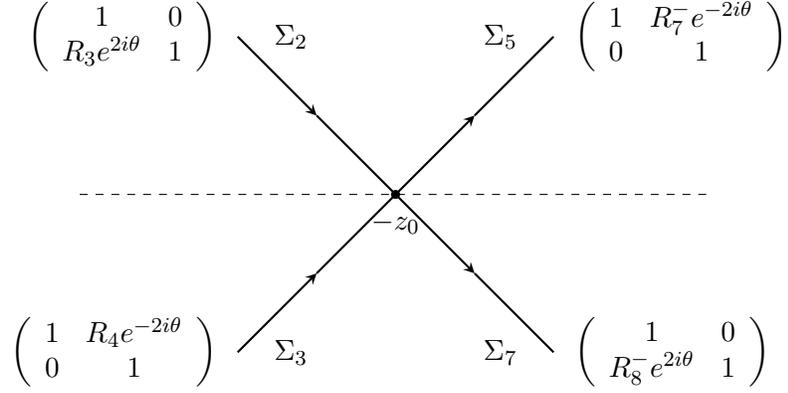
\begin{figure}[H]
\caption{Jump Matrices  $v^{(2)}$  for $m^{(2)}$ near $-z_0$}
\vskip 15pt
\begin{tikzpicture}[scale=0.7]
\draw[dashed] 				(-6,0) -- (6,0);							
\draw [->,thick,>=stealth] 	(0,0) -- (1.5,1.5);						
\draw  [thick]  (3,3) -- (1.5, 1.5);
\draw 	[->,thick,>=stealth]	  (-3,3) -- (-1.5,1.5);					
\draw	 [thick]	(0,0)--(-1.5,1.5);
\draw[->,thick,>=stealth]	(-3,-3) -- (-1.5,-1.5);					
\draw[thick]					(-1.5,-1.5) -- (0,0);
\draw[->,thick,>=stealth]	(0,0) -- (1.5,-1.5);					
\draw[thick]					(1.5,-1.5) -- (3,-3);
\draw[fill]						(0,0)	circle[radius=0.075];		
\node [below] at  			(0,-0.15)		{$-z_0$};
\node[right] at					(3.2,3)		{$\unitupper{R_7^- e^{-2i\theta}}$};
\node[left] at					(-3.2,3)		{$\unitlower{R_3 e^{2i\theta}}$};
\node[left] at					(-3.2,-3)		{$\unitupper{R_4 e^{-2i\theta}}$};
\node[right] at					(3.2,-3)		{$\unitlower{R_{8}^- e^{2i\theta}}$};
\node[left] at					(2.5,3)		{$\Sigma_5$};
\node[right] at					(-2.5,3)		{$\Sigma_2$};
\node[right] at					(-2.5,-3)		{$\Sigma_3$};
\node[left] at					(2.5,-3)		{$\Sigma_7$};
\end{tikzpicture}
\label{fig:jumps-2}
\end{figure}

%
%

\section{The Localized Riemann-Hilbert Problem}
\label{sec:local}
We perform the following factorization of $m^{(2)}$:
\begin{equation}
\label{factor-LC}
m^{(2)} = m^{(3)} m^{\RHP}.
\end{equation}
Here we require that $m^{(3)} $ to be the solution of the pure $\dbar$-problem, hence no jump, and $ m^{\RHP}$ solution of the localized  RHP Problem \ref{MKDV.RHP.local} below
with the jump matrix $v^\RHP=v^{(3)}$. The current section focuses on finding $ m^{\RHP}$.
\begin{problem}
\label{MKDV.RHP.local}
Find a $2\times 2$ matrix-valued function $m^\RHP(z; x,t)$, analytic on $\bbC \setminus \Sigma^{(3)}$,
with the following properties:
\begin{enumerate}
\item	$m^\RHP(z;x,t) \rarr I$ as $|z| \rarr \infty$ in $\bbC \setminus (\Sigma^{(3)}\cup\Gamma)$, where $I$ is the $2\times 2$ identity matrix,
\item	$m^\RHP(z; x,t)$ is analytic for $z \in \bbC \setminus  (\Sigma^{(3)}\cup\Gamma)$ with continuous boundary values $m^\RHP_\pm$
		on $\Sigma^{(3)}\cup\Gamma $,
\item	The jump relation $m^\RHP_+(z;x,t) = m^\RHP_-(z; x,t ) v^\RHP(z)$ holds on $\Sigma^{(3)}\cup\Gamma $, where
		\begin{equation*}	
		v^\RHP(z) =	v^{(3)}(z).
		\end{equation*}
\end{enumerate}
\end{problem}

\begin{remark}
Comparing the jump condition on $\Sigma^{(2)}$ and $\Sigma^{(3)}$, we note that the interpolation defined through \eqref{interpol} introduce new jump on $\Sigma^{(3)}_9$ with jump matrix given by 
\begin{equation}
v_9=\begin{cases} I, & z\in \left(-iz_0 \tan(\pi/12), iz_0 \tan(\pi/12) \right)\\
\\
        \unitupper{ (R_7^--R_7^+)e^{-2i\theta} },  & z\in \left(iz_0 \tan(\pi/12), iz_0 \right) \\
        \\
         \unitlower{ (R_8^--R_8^+)e^{2i\theta} },  & z\in \left(-iz_0 , -iz_0 \tan(\pi/12), \right). 
\end{cases}
\end{equation}
\end{remark}

\begin{figure}[H]
\caption{$\Sigma^{(3)}$}
\vskip 15pt
\begin{tikzpicture}[scale=0.65]
\draw	[->,thick,>=stealth] 	(2,0) -- (4,2);								
\draw	[thick]	(5, 3) -- (4,2);
\draw  [->,thick,>=stealth] 	(-5,3) -- (-4,2);							
\draw [thick]	(-2,0) -- (-4,2);
\draw[->,thick,>=stealth]		(-5,-3) -- (-4,-2);							
\draw[thick]						(-4,-2) -- (-2,0);
\draw[thick,->,>=stealth]		(2,0) -- (4,-2);								
\draw[thick]						(4,-2) -- (5,-3);
\draw[thick,->,>=stealth]	(-2,0) -- (-1,1);								
\draw  [thick]    (0,2) -- (-1, 1);
\draw[thick,->,>=stealth]		(-2,0) -- (-1,-1);								
\draw[thick]						(-1,-1) -- (0, -2);
\draw	[thick,->,>=stealth]		(0,2) -- (1,1);								
\draw	[thick]  (2,0) -- (1, 1);
\draw[thick,->,>=stealth]		(0,-2) -- (1,-1);								
\draw[thick]						(1,-1) -- (2, -0);
\draw	[fill]							(-2,0)		circle[radius=0.1];	
\draw	[fill]							(2,0)		circle[radius=0.1];
\draw[->,thick,>=stealth] 		(0, -2) -- (0,0);
\draw[thick]			(0,0) -- (0,2);		
\node[below] at (-2,-0.1)			{$-z_0$};
\node[below] at (2,-0.1)			{$z_0$};
\node[right] at (5,3)					{$\Sigma^{(3)}_1$};
\node[left] at (-5,3)					{$\Sigma^{(3)}_2$};
\node[left] at (-5,-3)					{$\Sigma^{(3)}_3$};
\node[right] at (5,-3)				{$\Sigma^{(3)}_4$};
\node[left] at (-1,1.2)					{$\Sigma^{(3)}_5$};
\node[left] at (-1,-1.2)					{$\Sigma^{(3)}_7$};
\node[right] at (1,1.2)					{$\Sigma^{(3)}_6$};
\node[right] at (1,-1.2)					{$\Sigma^{(3)}_8$};
\node[right] at (0,0)              {$\Sigma^{(3)}_9$};
\end{tikzpicture}
\label{fig:contour}
\end{figure}

\begin{figure}[H]
\caption{ $\Sigma^{(3)}\cup\Gamma$}
\vskip 15pt
\begin{tikzpicture}[scale=0.7]
 \draw[dashed] (-2 , -3.464)--(2 , 3.464);
   \draw[dashed] (-2 , 3.464)--(2 , -3.464);
\draw[thick]		(2,0) -- (2.5, 0.5);								
\draw[->,thick,>=stealth] [red]	 (2.5, 0.5)--(4, 2) ;
\draw [thick] [red]	 (4, 2)--(5,3) ;
\draw[thick] 		(-2,0) -- (-2.5, 0.5);					
\draw[->,thick,>=stealth]  [red]	  (-5,3) --  (-2.5, 0.5);	
\draw[thick] 		(-2,0) -- (-2.5, -0.5);					
\draw[->,thick,>=stealth]  [red]	  (-5,-3) --  (-2.5, -0.5);	
\draw[thick,->,>=stealth]		(2,0) -- (2.5,-0.5);								
\draw[thick]	[red]					(2.5,-0.5) -- (5,-3);
\draw [thick]	(-1.7, 0.3) --(-1.5, 0.5); 
\draw[thick,->,>=stealth] 	(-2, 0)--(-1.7, 0.3);						
\draw[thick] [red]	 (0,2) -- (-1.5, 0.5);
\draw[thick] 	(-1.5,- 0.5)--(-1.7, -0.3) ; 
\draw[thick,->,>=stealth] 	(-2, 0)--(-1.7, -0.3);						
\draw[thick] [red]	 (0, -2) -- (-1.5, -0.5);
\draw[thick]	[red]			(0,2) -- (1.5, 0.5);			
\draw	[thick]   (2,0) -- (1.8, 0.2);
\draw [thick,->,>=stealth]	(1.5, 0.5)-- (1.8, 0.2);
\draw[thick][red]		(0,-2) -- (1.5,-0.5);				
\draw[thick,->,>=stealth] 					(1.5, -0.5) -- (1.8, -0.2);
\draw[thick] (1.8, -0.2)--(2, 0);
\draw	[fill]							(-2,0)		circle[radius=0.1];	
\draw	[fill]							(2,0)		circle[radius=0.1];
\draw[->,thick,>=stealth] [red]		(0, -2) -- (0,0);
\draw[thick]	[red]		(0,0) -- (0,2);		
\node[below] at (-2,-0.1)			{$-z_0$};
\node[below] at (2,-0.1)			{$z_0$};
\node[right] at (5,3)					{$\Sigma^{(3)}_1$};
\node[left] at (-5,3)					{$\Sigma^{(3)}_2$};
\node[left] at (-5,-3)					{$\Sigma^{(3)}_3$};
\node[right] at (5,-3)				{$\Sigma^{(3)}_4$};
\node[left] at (-1,1.2)					{$\Sigma^{(3)}_5$};
\node[left] at (-1,-1.2)					{$\Sigma^{(3)}_7$};
\node[right] at (1,1.2)					{$\Sigma^{(3)}_6$};
\node[right] at (1,-1.2)					{$\Sigma^{(3)}_8$};
\node[right] at (0, 0)					{$\Sigma^{(3)}_9$};
 \pgfmathsetmacro{\c}{2}
    \pgfmathsetmacro{\d}{3.464} 
   \draw [blue] plot[domain=-1:1] ({\c*cosh(\x)},{\d*sinh(\x)});
    \draw  [blue]  plot[domain=-1:1] ({-\c*cosh(\x)},{\d*sinh(\x)});
    \draw [blue, fill=blue] (1,3) circle [radius=0.05];
\draw[->,>=stealth] [red] (1.2, 3) arc(360:0:0.2);
\draw [blue, fill=blue] (-1,3) circle [radius=0.05];
\draw[->,>=stealth] [red] (-0.8, 3) arc(360:0:0.2);
\draw [blue, fill=blue] (1,-3) circle [radius=0.05];
\draw[->,>=stealth] [red] (1.2, -3) arc(0:360:0.2);
\draw [blue, fill=blue] (-1,-3) circle [radius=0.05];
\draw[->,>=stealth] [red] (-0.8, -3) arc(0:360:0.2);
\draw [green, fill=green]  (0, 2.7) circle [radius=0.05];
\draw[->,>=stealth] [red] (0.3, 2.7) arc(360:0:0.3);
\draw [green, fill=green] (0, -2.7) circle [radius=0.05];
\draw[->,>=stealth] [red]  (0.3, -2.7) arc(0:360:0.3);
\draw[->,>=stealth] [red](-2.6,2) arc(360:0:0.4);
\draw[->,>=stealth] [red] (3.4,2) arc(360:0:0.4);
\draw[->,>=stealth] [red](-2.6,-2) arc(0:360:0.4);
\draw[->,>=stealth] [red] (3.4,-2) arc(0:360:0.4);
\draw [red, fill=red] (-3,2) circle [radius=0.05];
\draw [red, fill=red] (3,2) circle [radius=0.05];
\draw [red, fill=red] (-3,-2) circle [radius=0.05];
\draw [red, fill=red] (3,-2) circle [radius=0.05];
\draw [red, fill=red] (2.6749, 3.0764) circle [radius=0.05];
\draw[->,>=stealth]  (2.9749, 3.0764) arc(360:0:0.3);
\draw [red, fill=red] (-2.6749, 3.0764) circle [radius=0.05];
\draw[->,>=stealth]  (-2.3749, 3.0764) arc(360:0:0.3);
\draw [red, fill=red] (-2.6749, -3.0764) circle [radius=0.05];
\draw[->,>=stealth]  (-2.3749, -3.0764) arc(0:360:0.3);
\draw [red, fill=red] (2.6749, -3.0764) circle [radius=0.05];
\draw[->,>=stealth]  (2.9749, -3.0764) arc(0:360:0.3);
\end{tikzpicture}
\label{fig:contour-d-1}
\end{figure}

For some fixed $\eps>0$, we define 
\begin{align*}
L_\eps &=\lbrace z: z=z_0+u z_0 e^{3i\pi/4},  \eps\leq u\leq \sqrt{2} \rbrace\\
           &\cup   \lbrace z: z=z_0+u z_0 e^{i\pi/4},  \eps\leq u\leq +\infty \rbrace \\
           & \cup \lbrace z: z=-z_0+u z_0 e^{i\pi/4},  \eps\leq u\leq \sqrt{2} \rbrace\\
             &\cup   \lbrace z: z=-z_0+u z_0 e^{3i\pi/4},  \eps\leq u\leq +\infty \rbrace \\
           \Sigma'&= \left(\Sigma^{(3)}\setminus (L_\eps\cup {L_\eps^*}\cup \Sigma^{(3)}_9) \right) \cup \left( \pm \gamma_\ell  \right)  \cup \left( \pm \gamma^*_\ell\right) . 
\end{align*}

\begin{figure}[H]
\caption{ $\Sigma'$}
\vskip 15pt
\begin{tikzpicture}[scale=0.6]
 \draw[dashed] (-2 , -3.464)--(2 , 3.464);
   \draw[dashed] (-2 , 3.464)--(2 , -3.464);
\draw[thick]		(2,0) -- (2.5, 0.5);								

\draw[thick] 		(-2,0) -- (-2.5, 0.5);					

\draw[thick] 		(-2,0) -- (-2.5, -0.5);					

\draw[thick]		(2,0) -- (2.5,-0.5);								

\draw[thick] 	(-1.5, 0.5)--(-1.8, 0.2) ; 
\draw[thick]		(-1.8, 0.2)--(-2, 0);						

\draw[thick] 	(-1.5,- 0.5)--(-1.7, -0.3) ; 
\draw[thick] 	(-2, 0)--(-1.7, -0.3);						
			
\draw[thick]	(2,0) -- (1.7, 0.3);
\draw[thick]	(1.7, 0.3)--(1.5, 0.5);
				
\draw[thick] 					(1.5, -0.5) -- (1.8, -0.2);
\draw[thick] (1.8, -0.2)--(2, 0);
\draw	[fill]							(-2,0)		circle[radius=0.1];	
\draw	[fill]							(2,0)		circle[radius=0.1];
\node[below] at (-2,-0.1)			{$-z_0$};
\node[below] at (2,-0.1)			{$z_0$};
\pgfmathsetmacro{\c}{2}
    \pgfmathsetmacro{\d}{3.464} 
   \draw [blue] plot[domain=-1:1] ({\c*cosh(\x)},{\d*sinh(\x)});
    \draw  [blue]  plot[domain=-1:1] ({-\c*cosh(\x)},{\d*sinh(\x)});
\draw [red, fill=red] (2.6749, 3.0764) circle [radius=0.05];
\draw[->,>=stealth]  (2.9749, 3.0764) arc(360:0:0.3);
\draw [red, fill=red] (-2.6749, 3.0764) circle [radius=0.05];
\draw[->,>=stealth]  (-2.3749, 3.0764) arc(360:0:0.3);
\draw [red, fill=red] (-2.6749, -3.0764) circle [radius=0.05];
\draw[->,>=stealth]  (-2.3749, -3.0764) arc(0:360:0.3);
\draw [red, fill=red] (2.6749, -3.0764) circle [radius=0.05];
\draw[->,>=stealth]  (2.9749, -3.0764) arc(0:360:0.3);
\node[above]  at (-5, 0.5) {\footnotesize $\text{Re}(i\theta)<0$};
     \node[above]  at (5, 0.5) {\footnotesize $\text{Re}(i\theta)<0$};
      \node[below]  at (-5, -0.5) {\footnotesize $\text{Re}(i\theta)>0$};
     \node[below]  at (5, -0.5) {\footnotesize $\text{Re}(i\theta)>0$};
     \node[above]  at (0, 3) {\footnotesize $\text{Re}(i\theta)>0$};
     \node[below]  at (0, -3) {\footnotesize $\text{Re}(i\theta)<0$};
     \draw[ ] (0,0) -- (-3,0);
\draw[ ] (-3,0) -- (-5,0);
\draw[->,>=stealth] (0,0) -- (3,0);
 \draw[ ] (3,0) -- (5,0);
\end{tikzpicture}
\label{fig:sigma'}
\end{figure}

Here $\Sigma'$ is the black portion of the contour $\Sigma^{(3)}\cup\Gamma $ given in Figure \ref{fig:contour-d-1}. Now we decompose $w^{ ( 3 )}_\theta = v^{ ( 3 )}_\theta -I $ into two parts:
\begin{equation}
w^{ ( 3 )}_\theta=w^e+w'
\end{equation}
where $w'=w^{ ( 3 )}_\theta\restriction_{  \Sigma' }$ and $w^e=w^{ ( 3 )}_\theta\restriction_{   \left( \Sigma^{(3)}\cup\Gamma \right) \setminus\Sigma'   }$. 

Near $\pm z_0$, we write
$$i\theta(z; x, t)=4it \left( (z \mp z_0)^3\pm 3z_0 (z \mp z_0)^2 \pm 2z_0^3  \right)$$
and on $L_\eps$, away from $\pm z_0$, we estimate:
\begin{equation}
\label{R1-decay}
\left\vert R_1 e^{2i\theta} \right\vert \leq C_r e^{-24t z_0^3 u^2} \leq C_r e^{-24\eps^2\tau},
\end{equation}
\begin{equation}
\label{R3-decay}
\left\vert R_3 e^{2i\theta} \right\vert \leq C_r e^{-24t z_0^3 u^2} \leq C_r e^{-24\eps^2\tau},
\end{equation}
\begin{equation}
\label{R7-decay}
\left\vert R^\pm_7 e^{-2i\theta} \right\vert \leq C_r e^{-16t z_0^3 u^2} \leq C_r e^{-16\eps^2\tau}
\end{equation}
where the constant $C_r$ depends on the $H^1$ norm of $r$.
Similarly, on ${L_\eps^*}$
\begin{equation}
\label{R4-decay}
\left\vert R_4 e^{-2i\theta} \right\vert \leq C_r e^{-24t z_0^3 u^2} \leq C_r e^{-24\eps^2\tau},
\end{equation}
\begin{equation}
\label{R6-decay}
\left\vert R_6 e^{-2i\theta} \right\vert \leq C_r e^{-24t z_0^3 u^2} \leq C_r e^{-24\eps^2\tau},
\end{equation}
\begin{align}
\label{R8-decay}
\left\vert R^\pm_8 e^{2i\theta} \right\vert \leq C_r e^{-16t z_0^3 u^2} \leq C_r e^{-16\eps^2\tau}.
\end{align}
Also notice that
On $\Sigma^{(3)}_9$, by the construction of  $\mathcal{K}(\phi)$ and $v_9$, one obtains
\begin{equation}
\label{R9-decay}
\left\vert v_9 -I \right\vert \lesssim e^{-ct}.
\end{equation}
Combining Remark \ref{circles} with the discussion above we conclude that 
\begin{equation}
\label{expo}
|w^e|\lesssim e^{-ct}
\end{equation}

\begin{proposition}
\label{Prop:expo}
There exists a $2\times 2$ matrix $E_1(x,t; z)$ with 
$$E_1(x,t; z)=I+\mathcal{O}\left(  \dfrac{ e^{-ct}}{z} \right),$$
such that 
\begin{equation}
\label{E_1}
m^\RHP(x,t;z)=E_1(x,t; z)m^\RHP_*(x,t;z)
\end{equation}
where $m^\RHP_*(x,t;z)$ solves the RHP with jump contour $\Sigma'$ given in Figure \ref{fig:sigma'} and jump matrices
$$v'=I+w'.$$ 
\end{proposition}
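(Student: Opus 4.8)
The proof follows the standard template for removing an exponentially small jump, so I will describe the structure and flag the one nonroutine ingredient. Write $\Sigma_e := (\Sigma^{(3)}\cup\Gamma)\setminus\Sigma'$ for the part of the contour that is discarded in passing from $v^{(3)}$ to $v'$; thus $v^{(3)}$ restricts to $v'=I+w'$ on $\Sigma'$ and to $I+w^e$ on $\Sigma_e$. The first step is to record that the decomposition $w^{(3)}_\theta=w'+w^e$, together with the pointwise decay estimates \eqref{R1-decay}--\eqref{R9-decay}, Remark \ref{circles}, and \eqref{expo}, gives
$$\norm{w^e}{L^1(\Sigma_e)\cap L^2(\Sigma_e)\cap L^\infty(\Sigma_e)}\lesssim e^{-ct},\qquad t\to\infty,$$
where along the unbounded rays $\Sigma_1,\dots,\Sigma_4$ the $L^1$- and $L^2$-bounds use the Gaussian decay $|R_i e^{\pm 2i\theta}|\lesssim e^{-ctz_0^3 u^2}$ in the distance $uz_0$ from $\pm z_0$. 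I will also use, at this stage, that the Riemann--Hilbert problem on $\Sigma'$ with jump $v'$ and normalization $I$ at infinity has a unique solution $m^\RHP_*$ with $m^\RHP_*$ and $(m^\RHP_*)^{-1}$ bounded on $\bbC\setminus\Sigma'$ uniformly in $t$ along the frame $x=\mathrm{v}t$; boundedness of the inverse follows from $\det m^\RHP_*\equiv 1$ (every factor in $v'$ has unit determinant), while the solvability and the uniform bounds are exactly what the explicit construction of $m^\RHP_*$ out of a one-breather model and parabolic-cylinder models — carried out in the subsequent sections — provides.

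Granting this, the plan is to produce $E_1$ directly as the solution of a small-norm Riemann--Hilbert problem. Set $w^{E_1}:=m^\RHP_*\,w^e\,(m^\RHP_*)^{-1}$ on $\Sigma_e$; by the previous paragraph $\norm{w^{E_1}}{L^1\cap L^2\cap L^\infty(\Sigma_e)}\lesssim e^{-ct}$. For $t$ large the Beals--Coifman operator $C_{w^{E_1}}f=C^-_{\Sigma_e}\!\big(f\,w^{E_1}\big)$ is a contraction on $L^2(\Sigma_e)$ with norm $\lesssim e^{-ct}$, so $I-C_{w^{E_1}}$ is invertible, the associated density $\mu_{E_1}=I+(I-C_{w^{E_1}})^{-1}(C_{w^{E_1}}I)$ satisfies $\norm{\mu_{E_1}-I}{L^2(\Sigma_e)}\lesssim e^{-ct}$, and
$$E_1(z)=I+\frac{1}{2\pi i}\int_{\Sigma_e}\frac{\mu_{E_1}(s)\,w^{E_1}(s)}{s-z}\,ds$$
defines a function analytic off $\Sigma_e$, tending to $I$ at infinity, whose boundary values satisfy $E_{1,+}=E_{1,-}(I+w^{E_1})$ on $\Sigma_e$. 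Expanding as $z\to\infty$ and using
$$\left|\int_{\Sigma_e}\mu_{E_1}(s)\,w^{E_1}(s)\,ds\right|\le\norm{w^{E_1}}{L^1(\Sigma_e)}+\norm{\mu_{E_1}-I}{L^2(\Sigma_e)}\norm{w^{E_1}}{L^2(\Sigma_e)}\lesssim e^{-ct}$$
yields $E_1(z)=I+\mathcal{O}(e^{-ct}/z)$, which is the asserted bound.

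It then remains to check that $E_1 m^\RHP_*$ solves Problem \ref{MKDV.RHP.local}. It is analytic off $\Sigma^{(3)}\cup\Gamma$ and normalized to $I$ at infinity. Across $\Sigma'$, where $E_1$ is analytic, $(E_1 m^\RHP_*)_+=E_1\,m^\RHP_{*,-}\,v'=(E_1 m^\RHP_*)_-\,v'$. Across $\Sigma_e$, where $m^\RHP_*$ is analytic, the identity $(I+w^{E_1})m^\RHP_*=m^\RHP_*(I+w^e)$ (immediate from the definition of $w^{E_1}$) gives $(E_1 m^\RHP_*)_+=E_{1,-}(I+w^{E_1})m^\RHP_*=E_{1,-}m^\RHP_*(I+w^e)=(E_1 m^\RHP_*)_-(I+w^e)$. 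Since $v^\RHP=v^{(3)}$ equals $v'$ on $\Sigma'$ and $I+w^e$ on $\Sigma_e$, the matrix $E_1 m^\RHP_*$ carries the jump $v^\RHP$ everywhere, and by uniqueness of the solution of Problem \ref{MKDV.RHP.local} it coincides with $m^\RHP$; equivalently, this relation may be taken as the construction of $m^\RHP$ itself.

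The only place where something beyond bookkeeping is required is the uniform-in-$t$ control of $m^\RHP_*$ and $(m^\RHP_*)^{-1}$ used to transfer the exponential smallness of $w^e$ to $w^{E_1}$. This is where the analysis of the model problem on $\Sigma'$ (the one-breather model composed with the parabolic-cylinder model near each of $\pm z_0$) is genuinely needed, and it is the real content hiding behind this otherwise routine reduction; everything else is the standard small-norm Riemann--Hilbert machinery.
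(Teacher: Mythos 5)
Your proposal is correct and follows essentially the same route as the paper: the paper likewise defers the existence and uniform boundedness of $m^\RHP_*$ to the subsequent explicit construction, observes that $E_1=m^\RHP(m^\RHP_*)^{-1}$ jumps by $m^\RHP_*(I+w^e)(m^\RHP_*)^{-1}$ on the discarded contour, and invokes the small-norm machinery together with \eqref{expo}. Your write-up merely spells out the Beals--Coifman contraction and the jump verification that the paper leaves implicit.
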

\begin{proof}
We will later show the existence of $m^\RHP_*(x,t;z)$ and $ \norm{m^\RHP_*(x,t;z)}{L^\infty}$ is finite. Assuming this, it is easy to see that on $(\Sigma^{(3)} \cup \Gamma)\setminus \Sigma'$, $E_1$ satisfies the following jump condition:
$$E_{1+}=E_{1-} \left( m^\RHP_* (1+w^{e}) \left( m^{\RHP}_* \right)^{-1} \right).$$
Using \eqref{expo} the conclusion follows from solving a small norm Riemann-Hilbert problem (see the solution to Problem \ref{prob: E-2} for detail). 
\end{proof}
\subsection{Construction of parametrix}
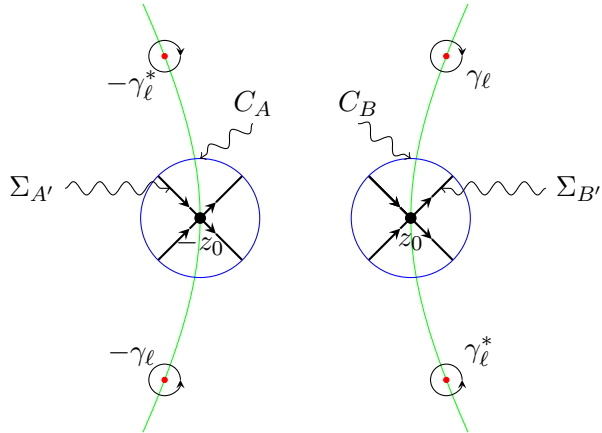
\begin{figure}[H]
\caption{ $\Sigma'=\Sigma_{A'}\cup \Sigma_{B'}\cup \pm \gamma_\ell \cup \gamma^*_\ell$}
\vskip 15pt
\begin{tikzpicture}[scale=0.7, photon/.style={decorate,decoration={snake,post length=0.8mm}} ]
\pgfmathsetmacro{\c}{2}
    \pgfmathsetmacro{\d}{3.464} 
   \draw [green] plot[domain=-1:1] ({\c*cosh(\x)},{\d*sinh(\x)});
    \draw  [green]  plot[domain=-1:1] ({-\c*cosh(\x)},{\d*sinh(\x)});
\draw[thick ,->,>=stealth]		(2,0) -- (2.4,0.4);								
\draw[thick] (2.4, 0.4)--(2.8, 0.8);
\draw[thick,->,>=stealth] 					(1.2, 0.8) -- (1.8, 0.2);
\draw[thick] (1.8, 0.2)--(2, 0);
\draw[thick ,->,>=stealth]		(2,0) -- (2.4, -0.4);							
\draw[thick] (2.4, -0.4)--(2.8, -0.8);
\draw[thick] 	(-1.2, 0.8)--(-1.7, 0.3) ; 
\draw[thick,->,>=stealth] 	(-2, 0)--(-1.7, 0.3);						
\draw[thick] 	(-1.2,- 0.8)--(-1.7, -0.3) ; 
\draw[thick,->,>=stealth] 	(-2, 0)--(-1.7, -0.3);						
\draw[thick,->,>=stealth] 					(1.2, 0.8) -- (1.8, 0.2);
\draw[thick] (1.8, 0.2)--(2, 0);
\draw[thick,->,>=stealth] 					(1.2, -0.8) -- (1.8, -0.2);
\draw[thick] (1.8, -0.2)--(2, 0);
\draw[thick,->,>=stealth] 					(-2.8, 0.8) -- (-2.2, 0.2);
\draw[thick] (-2.2, 0.2)--(-2, 0);
\draw[thick,->,>=stealth] 					(-2.8, -0.8) -- (-2.2, -0.2);
\draw[thick] (-2.2, -0.2)--(-2, 0);
\draw	[fill]						(-2,0)		circle[radius=0.1];	
\draw	[fill]					(2,0)		circle[radius=0.1];
\draw		[blue]				(-2,0)		circle[radius=1.131];	
\draw		[blue]					(2,0)		circle[radius=1.131];
\node[below] at (-2,-0.1)			{$-z_0$};
\node[below] at (2,-0.1)			{$z_0$};
\node[left] at (-4.56, 0.565)					{$\Sigma_{A'}$};
\node[right] at (4.56, 0.565)				{$\Sigma_{B'}$};
\draw [red, fill=red] (2.6749, 3.0764) circle [radius=0.05];
\draw[->,>=stealth]  (2.9749, 3.0764) arc(360:0:0.3);
\draw [red, fill=red] (-2.6749, 3.0764) circle [radius=0.05];
\draw[->,>=stealth]  (-2.3749, 3.0764) arc(360:0:0.3);
\draw [red, fill=red] (-2.6749, -3.0764) circle [radius=0.05];
\draw[->,>=stealth]  (-2.3749, -3.0764) arc(0:360:0.3);
\draw [red, fill=red] (2.6749, -3.0764) circle [radius=0.05];
\draw[->,>=stealth]  (2.9749, -3.0764) arc(0:360:0.3);
\draw[->,photon] ( 4.565,0.565) --  (2.565, 0.565); 
\draw[->,photon] ( -4.565,0.565) --  (-2.565, 0.565); 
\draw[->,photon] ( 1, 1.8) --  (2, 1.131); 
\draw[->,photon] ( -1, 1.8) --  (-2, 1.131); 
\node [above] at  ( 1, 1.75)  {$C_B$};
\node [above] at  ( -1, 1.75)  {$C_A$};
\node [below] at (3.3, 3) {$\gamma_\ell$};
\node [below] at (-3.3, 3) {$-\gamma_\ell^*$};
\node [above] at (-3.3, -3) {$-\gamma_\ell$};
\node [above] at (3.3, -3) {$\gamma_\ell^*$};
\end{tikzpicture}
\label{fig:contour-2}
\end{figure}
In this subsection we construct $m^\RHP_*$ needed in the proof of Proposition \ref{Prop:expo}. To achieve this, we need the solutions of the following three exactly solvable RHPs:
\begin{problem}
\label{prob:mkdv.br}
Find a matrix-valued function $m^{(br)}(z;x,t)$ on $\bbC \setminus\Sigma$ with the following properties:
\begin{enumerate}
\item		$m^{(br)}(z;x,t) \rarr I$ as $|z| \rarr \infty$,
\item		$m^{(br)}(z;x,t)$ is analytic for $z \in  \bbC \setminus ( \pm \gamma_\ell \cup \pm\gamma^*_\ell ) $
			with continuous boundary values
			$m^{(br)}_\pm(z;x,t)$.
\item On $ \pm \gamma_\ell \cup \pm\gamma^*_\ell$, let $\delta(z)$ be the solution to Problem \ref{prob:RH.delta} and we have the following jump conditions
$m^{(br)}_+(z;x,t)=m^{(br)}_-(z;x,t)	
			e^{-i\theta\ad\sigma_3}v^{(br)}(z)$
			where
$$
e^{-i\theta\ad\sigma_3}v^{(br)}(z)= 	\begin{cases}
						\twomat{1}{0}{\dfrac{c_\ell \delta(z_\ell )^{-2} }{z-z_\ell} e^{2i\theta}}{1}	&	z\in \gamma_\ell, \\
						\\
						\twomat{1}{\dfrac{\overline{c_\ell} \delta(\overline{z_\ell })^2 }{z -\overline{z_\ell }} e^{-2i\theta} }{0}{1}
							&	z \in \gamma_\ell^*\\
							\\
							\twomat{1}{\dfrac{-c_\ell \, \delta(-{z_\ell })^{2} }{z + z_\ell } e^{-2i\theta}  }{0}{1}	&	z\in -\gamma_\ell, \\
						\\
						\twomat{1}{0}{\dfrac{-\overline{c_\ell} \, \delta(-\overline{z_\ell })^{-2}e^{2i\theta} }{z +\overline{z_\ell }}}{1}
							&	z \in -\gamma_\ell^* .
					\end{cases}
$$					
\end{enumerate}
\end{problem}

\begin{problem}
\label{prob:mkdv.A}
Find a matrix-valued function $m^{A'}(z;x,t)$ on $\bbC \setminus\Sigma_A'$ with the following properties:
\begin{enumerate}
\item		$m^{A'}(z;x,t) \rarr I$ as $ z \rarr \infty$.
\item		$m^{A' }(z;x,t)$ is analytic for $z \in  \bbC \setminus \Sigma_A' $
			with continuous boundary values
			$m^{A'}_\pm(z;x,t)$.
\item On $ \Sigma_A'$ we have the following jump conditions
$$m^{A'}_+(z;x,t)=m^{A'}_-(z;x,t)	
			e^{-i\theta\ad\sigma_3}v^{A'}(z)$$
			where $v^{A'}=v^{(2)}\restriction _{\Sigma_A' }$.
\end{enumerate}
\end{problem}			

\begin{problem}
\label{prob:mkdv.B}
Find a matrix-valued function $m^{B'}(z;x,t)$ on $\bbC \setminus\Sigma_B'$ with the following properties:
\begin{enumerate}
\item		$m^{B'}(z;x,t) \rarr I$ as $ z \rarr \infty$.
\item		$m^{B'}(z;x,t)$ is analytic for $z \in  \bbC \setminus \Sigma_B' $
			with continuous boundary values
			$m^{B’}_\pm(z;x,t)$.
\item On $ \Sigma_B'$ we have the following jump conditions
$$m^{B'}_+(z;x,t)=m^{B’}_-(z;x,t)	
			e^{-i\theta\ad\sigma_3}v^{B'}(z)$$
			where $v^{B’}=v^{(2)}\restriction_{\Sigma_B' }$.
\end{enumerate}
\end{problem}		
We first study the solution to Problem \ref{prob:mkdv.br}. Since this problem consists of only discrete data, \eqref{BC-int} reduces to a linear system. More explicitly, we have a closed system:
\begin{align}
\label{BC-int-br}
\twomat{ \mu_{11}(\overline{z_l} ) }{\mu_{12}( {z_l} )}{\mu_{21}(\overline{z_l} )}{\mu_{22}( {z_l} )} &= I +\Twomat { \dfrac{\mu_{12}(z_l )c_l \delta(z_\ell )^{-2} e^{2i\theta(z_l )} }{\overline{z_l} -z_l }  }
        { -\dfrac{\mu_{11}( \overline{z_l }) {\overline{c_l }}  \delta(\overline{z_\ell })^2 e^{-2i\theta(   \overline{z_l }  )} }{z_l- \overline{z_l }}   }
        {\dfrac{\mu_{22}(z_l ) c_l \delta(z_\ell )^{-2} e^{2i\theta(z_l )} }{ \overline{z_l} -z_l }  }
        {- \dfrac{\mu_{21}( \overline{z_l }) {\overline{c_l }}  \delta(\overline{z_\ell })^2 e^{-2i\theta(   \overline{z_l }  )} }{z_l- \overline{z_l }} }\\
         \nonumber
         &\qquad +\Twomat
        { -\dfrac {\mu_{12}( -\overline{z_l }) {\overline{c_l }} \delta(-\overline{z_\ell })^{-2} e^{2i\theta(  - \overline{z_l }  )} }{\overline{z_l} + \overline{z_l }}   }
        {\dfrac{\mu_{11}(-z_l )c_l \delta(-{z_\ell })^{2}  e^{-2i\theta(-z_j)} }{z_l+z_l }  }
       {- \dfrac{\mu_{22}( -\overline{z_l }) {\overline{c_l }}  \delta(-\overline{z_\ell })^{-2} e^{2i\theta(  - \overline{z_l }  )} }{\overline{z_l}  + \overline{z_l }} }
        {\dfrac{\mu_{21}(-z_l )c_l  \delta(-{z_\ell })^{2}  e^{-2i\theta(-z_l )} }{z_l+z_l }  },
\end{align}
\begin{align}
\label{BC-int-br-}
\twomat{ \mu_{11}( -{z_l} ) }{\mu_{12}( -\overline{z_l} )}{\mu_{21}(-{z_l} )}{\mu_{22}(-\overline {z_l} )} &= I +\Twomat { \dfrac{\mu_{12}(z_l )c_l  \delta(z_\ell )^{-2} e^{2i\theta(z_l )} }{-{z_l} -z_l }  }
        { -\dfrac{\mu_{11}( \overline{z_l }) {\overline{c_l }}  \delta(\overline{z_\ell })^2 e^{-2i\theta(   \overline{z_l }  )} }{- \overline{z_l } - \overline{z_l } }   }
        {\dfrac{\mu_{22}(z_l )c_l  \delta(z_\ell )^{-2} e^{2i\theta(z_l )} }{ -{z_l} -z_l }  }
        {- \dfrac{\mu_{21}( \overline{z_l }) {\overline{c_l }}  \delta(\overline{z_\ell })^2 e^{-2i\theta(   \overline{z_l }  )} }{- \overline{z_l } - \overline{z_l }} }\\
         \nonumber
         &\qquad +\Twomat
        { -\dfrac {\mu_{12}( -\overline{z_l }) {\overline{c_l }}  \delta(-\overline{z_\ell })^{-2} e^{2i\theta(  - \overline{z_l }  )} }{-{z_l} + \overline{z_l }}   }
        {\dfrac{\mu_{11}(-z_l )c_l \delta(-{z_\ell })^{2} e^{-2i\theta(-z_j)} }{-\overline{z_l} +z_l }  }
       {- \dfrac{\mu_{22}( -\overline{z_l }) {\overline{c_l }}  \delta(-\overline{z_\ell })^{-2} e^{2i\theta(  - \overline{z_l }  )} }{-{z_l}  + \overline{z_l }} }
        {\dfrac{\mu_{21}(-z_l )c_l \delta(-{z_\ell })^{2} e^{-2i\theta(-z_l )} }{-\overline{z_l} +z_l }  }.
\end{align}

Given that 
$$\delta(z)=\left(  \overline{ \delta (\overline{z}) } \right)^{-1},  $$
the Schwarz invariant condition of the jump matrices $ e^{-i\theta\ad\sigma_3}v^{(br)}(z) $ is satisfied and the solvability of this linear system \eqref{BC-int-br}-\eqref{BC-int-br-} follows. Moreover, we find the single breather solution:
\begin{align}
\nonumber
u^{(br)}(x,t) &=2 z \lim_{z\to \infty} m^{(br)}_{12}\\
\label{u-breather}
                   &=-4\dfrac{\eta_\ell}{\xi_\ell}\dfrac{ \xi_\ell \cosh(\nu_2+\tomega_2 ) \sin(\nu_1+\tomega_1 )+(\eta_\ell ) \sinh(\nu_2+\tomega_2  ) \cos(\nu_1+\tomega_1 )}{\cosh^2(\nu_2+\tomega_2  ) + (\eta_\ell/\xi_\ell)^2 \cos^2(\nu_1+\tomega_1 )}
\end{align}
with
\begin{align*}
\nu_1 &=2\xi_\ell(x+4(\xi^2_\ell -3\eta^2_\ell )t )\\
\nu_2 &=2\eta_\ell (x -4(\eta^2_\ell-3\xi^2_\ell )t ).
\end{align*}
And
\begin{align}
\label{tomega-1}
\tan \tomega_1&= \dfrac{\tB\xi_\ell-\tA\eta_\ell}{\tA\xi_\ell+\tB_\eta}\\
\label{tomega-2}
e^{-\tomega_2} &= \left\vert \dfrac{\xi_\ell }{ 2\eta_\ell } \right\vert \sqrt{\dfrac{\tA^2+\tB^2}{\xi^2_\ell+\eta^2_\ell}  }
\end{align}
where we set 
$$\tilde{c}_\ell=c_\ell \delta(z_\ell )^{-2}=\tA+i\tB .$$
We then study the solution to Problem \ref{prob:mkdv.A} and Problem \ref{prob:mkdv.B}. Extend the contours $\Sigma_{A'}$ and $\Sigma_{B'}$ to
\begin{subequations}
\begin{equation}
\widehat{\Sigma }_{A'}=\lbrace z=-z_0+z_0 u e^{\pm i\pi /4}: -\infty<u<\infty \rbrace,
\end{equation}
\begin{equation}
\widehat{\Sigma }_{B'}=\lbrace z=z_0+z_0 u e^{\pm i3\pi /4}: -\infty<u<\infty \rbrace
\end{equation}
\end{subequations}
respectively and define $\hat{v}^{A'}$, $\hat{v}^{B'}$ on $\widehat{\Sigma }_{A'}$, $\widehat{\Sigma }_{B'}$ through
\begin{subequations}
\begin{equation}
\hat{v}^{A‘} =\begin{cases}
v^{A'}(z), &\quad z\in \Sigma_{A'}\subset \widehat{\Sigma}_{A'}, \\
       0,   &\quad z\in \widehat{\Sigma}_{A'}\setminus {\Sigma}_{A'},
\end{cases}
\end{equation}
\begin{equation}
\hat{v}^{B'} =\begin{cases}
v^{B'}(z), &\quad z\in \Sigma_{B'}\subset \widehat{\Sigma}_{B'} \\
       0,   &\quad z\in \widehat{\Sigma}_{B'}\setminus {\Sigma}_{B'}.
\end{cases}
\end{equation}
\end{subequations}
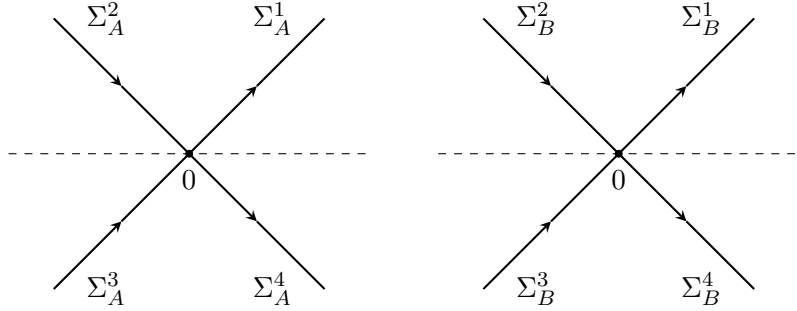
\begin{figure}[H]
\caption{$\Sigma_A,\Sigma_B$}
\vskip 15pt
\begin{tikzpicture}[scale=0.6]
\draw[dashed] 				(-4,0) -- (4,0);							
\draw [->,thick,>=stealth] 	(0,0) -- (1.5,1.5);						
\draw  [thick]  (3,3) -- (1.5, 1.5);
\draw 	[->,thick,>=stealth]	  (-3,3) -- (-1.5,1.5);					
\draw	 [thick]	(0,0)--(-1.5,1.5);
\draw[->,thick,>=stealth]	(-3,-3) -- (-1.5,-1.5);					
\draw[thick]					(-1.5,-1.5) -- (0,0);
\draw[->,thick,>=stealth]	(0,0) -- (1.5,-1.5);					
\draw[thick]					(1.5,-1.5) -- (3,-3);
\draw[fill]						(0,0)	circle[radius=0.075];		
\node [below] at  			(0,-0.15)		{$0$};
\node[left] at					(2.5,3)		{$\Sigma_A^1$};
\node[right] at					(-2.5,3)		{$\Sigma_A^2$};
\node[right] at					(-2.5,-3)		{$\Sigma_A^3$};
\node[left] at					(2.5,-3)		{$\Sigma_A^4$};
\end{tikzpicture}
\qquad
\begin{tikzpicture}[scale=0.6]
\draw[dashed] 				(-4,0) -- (4,0);							
\draw [->,thick,>=stealth] 	(0,0) -- (1.5,1.5);						
\draw  [thick]  (3,3) -- (1.5, 1.5);
\draw 	[->,thick,>=stealth]	  (-3,3) -- (-1.5,1.5);					
\draw	 [thick]	(0,0)--(-1.5,1.5);
\draw[->,thick,>=stealth]	(-3,-3) -- (-1.5,-1.5);					
\draw[thick]					(-1.5,-1.5) -- (0,0);
\draw[->,thick,>=stealth]	(0,0) -- (1.5,-1.5);					
\draw[thick]					(1.5,-1.5) -- (3,-3);
\draw[fill]						(0,0)	circle[radius=0.075];		
\node [below] at  			(0,-0.15)		{$0$};
\node[left] at					(2.5,3)		{$\Sigma_B^1$};
\node[right] at					(-2.5,3)		{$\Sigma_B^2$};
\node[right] at					(-2.5,-3)		{$\Sigma_B^3$};
\node[left] at					(2.5,-3)		{$\Sigma_B^4$};
\end{tikzpicture}
\label{fig:jumps-A-B}
\end{figure}
 Let $\Sigma_A$ and $\Sigma_B$ denote the contours
\begin{equation*}
\lbrace z=z_0 u e^{\pm i\pi/4} : -\infty<u<\infty \rbrace
\end{equation*}
with the same orientation as those of $\Sigma_{A'}$ and $\Sigma_{B'}$ respectively.  On 
$\widehat{\Sigma}_{A'}$ ($\widehat{\Sigma}_{B'}$) we carry out the following change of variable 
$$z\mapsto \zeta=\sqrt{48z_0 t}(z \pm z_0) $$
and introduce the scaling operators 
\begin{subequations}
\begin{equation}
\begin{cases}
N_A:  L^{2} ( \widehat{\Sigma}_{A'} ) \rightarrow L^2(\Sigma_A)\\
f(z)\mapsto (N_A f)(z)=f\left(\dfrac{\zeta}{\sqrt{48z_0 t}} -z_0  \right),
\end{cases}
\end{equation}
\begin{equation}
\begin{cases}
N_B:  L^{2} ( \widehat{\Sigma}_{B'} ) \rightarrow L^2(\Sigma_B)\\
f(z)\mapsto (N_B f)(z)=f\left(\dfrac{\zeta}{\sqrt{48z_0 t}} + z_0  \right).
\end{cases}
\end{equation}
\end{subequations}
We also define
\begin{equation}
\label{operator a-b}
1_A=1\restriction_{\Sigma_A}, \quad 1_B=1\restriction_{\Sigma_B}
\end{equation}
We first consider the case $\Sigma_B$. The rescaling gives 
\begin{equation*}
N_B \left( e^{\chi(z_0)} \eta(z; z_0)  e^{-it\theta}\right)=\delta^0_B\delta^1_B(\zeta)
\end{equation*}
with 
\begin{align*}
\delta^0_B &= (192\tau)^{-i\kappa/2} e^{8i\tau} e^{\chi(z_0)} \eta_0(z_0)\\
\delta^1_B(\zeta) &= \zeta^{i\kappa} \left( \dfrac{2 z_0}{\zeta /\sqrt{48t z_0} +2z_0}\right)^{i\kappa} e^{(-i\zeta^2/4)(1+\zeta(432\tau)^{-1/2} )}.
\end{align*}
Note that $\delta^0_B(z)$ is independent of $z$ and that $|\delta^0_B(z)|=1$. 
Set 
\begin{align*}
\Delta^0_B &=(\delta_B^0)^{\sigma_3}\\
w^B(\zeta)     &=(\Delta^0_B)^{-1}  (N_B \hat{w}^{B'})\Delta^0_B
\end{align*}
and define the operator $B: L^2(\Sigma_B) \to  L^2(\Sigma_B)$
\begin{align*}
B &=C_{ w^B}\\
  &= C^+ \left( \cdot (\Delta ^0_B)^{-1} (N_B \hat{w}^{B'}_- )  \Delta ^0_B \right)+ C^- \left( \cdot (\Delta ^0_B)^{-1} (N_B \hat{w}^{B'}_+)  \Delta ^0_B \right).
\end{align*}
On 
\begin{align*}
L_B\cup \overline{L}_B & =\lbrace z=u z_0 \sqrt{48t z_0}e^{i\pi/4}: -\eps<u< \eps \rbrace\\
       &\quad \cup \lbrace z=u z_0 \sqrt{48t z_0}e^{-i\pi/4}: -\eps<u<\eps \rbrace
\end{align*}
From the list of entries stated in \eqref{R1}, \eqref{R4}, \eqref{R7+} and \eqref{R8+}, we have
\begin{align}
\label{wB-1}
\left(  (\Delta_B^0)^{-1} \left(N_B \hat{w}^{B'}_- \right)  \Delta_B^0 \right)(\zeta) &= \twomat{0}{0}{r(z_0) \delta^1_B( \zeta )^{-2} }{0},\\
\label{wB-2}
\left(  (\Delta_B^0)^{-1} \left(N_B \hat{w}^{B'}_- \right)  \Delta_B^0 \right)(\zeta ) &= \twomat{0}{0}{
\dfrac{r(z_0)}{1 +  |r(z_0)|^2} \delta^1_B( \zeta )^{-2} }{0},\\
\label{wB-3}
\left(  (\Delta_B^0)^{-1} \left(N_B \hat{w}^{B'}_+ \right)  \Delta_B^0 \right)(\zeta) &= \twomat{0}{\overline{ r(z_0)} \delta^1_B(\zeta  )^{2} } {0}{0},\\
\label{wB-4}
\left(  (\Delta_B^0)^{-1} \left(N_B \hat{w}^{B'}_+ \right)  \Delta_B^0 \right)(\zeta) &= \twomat{0}
{\dfrac{ \overline{r(z_0)}}{1 +  |r(z_0)|^2} \delta^1_B(\zeta)^{2} }{0}{0}.
\end{align}
\begin{lemma}
\label{delta-B }
Let $\gamma$ be a small but fixed positive number with $0<2\gamma<1$. Then 
$$\left\vert \delta^1_B(\zeta)^{\pm 2}-\zeta^{\pm 2i\kappa} e^{\mp i \zeta^2/2} \right\vert \leq c |e^{\mp i\gamma \zeta^2/2} |\tau^{-1/2} $$
and as a consequence
\begin{equation}
\norm{\delta^1_B(\zeta)^{\pm 2}-\zeta^{\pm 2i\kappa} e^{\mp i \zeta^2/2} }{L^1\cap L^2 \cap L^\infty} \leq c \tau^{-1/2} 
\end{equation}
where the $\pm$ sign corresponds to $\zeta \in L_B$ and $\zeta\in \overline{L}_B$ respectively. Moreover, 
\begin{equation}
\label{zeta-tau}
\left\vert \zeta^{\pm 2i\kappa} e^{\mp i \zeta^2/2}\right\vert \lesssim \left\vert e^{\mp i\gamma z^2/2} \right\vert e^{-\eps^2(1-\gamma)24\tau}\lesssim  \left\vert e^{\mp i\gamma z^2/2} \right\vert  \tau^{-1/2}
\end{equation}
where the $\pm$ sign corresponds to $\zeta \in  (\Sigma_B^1\cup \Sigma_B^3)\setminus L_B$ and $ \zeta\in (\Sigma_B^2\cup \Sigma_B^4)\setminus \overline{L}_B$ respectively. 
\end{lemma}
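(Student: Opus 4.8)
The plan is to prove Lemma \ref{delta-B } by a direct estimate on the explicit formula for $\delta^1_B(\zeta)$, extracting the leading term $\zeta^{\pm 2i\kappa} e^{\mp i\zeta^2/2}$ and controlling the correction. Recall
\[
\delta^1_B(\zeta) = \zeta^{i\kappa} \left( \dfrac{2 z_0}{\zeta /\sqrt{48t z_0} +2z_0}\right)^{i\kappa} e^{(-i\zeta^2/4)(1+\zeta(432\tau)^{-1/2} )},
\]
so that squaring (the $+2$ case; the $-2$ case is the complex conjugate reflection valid on $\overline{L}_B$) gives
\[
\delta^1_B(\zeta)^{2} = \zeta^{2i\kappa} e^{-i\zeta^2/2}\cdot \left( \dfrac{2 z_0}{\zeta /\sqrt{48t z_0} +2z_0}\right)^{2i\kappa} e^{-i\zeta^3 (432\tau)^{-1/2}/2}.
\]
First I would write the difference as $\zeta^{2i\kappa}e^{-i\zeta^2/2}\bigl(G(\zeta)-1\bigr)$, where $G(\zeta)$ is the product of the last two factors, and then estimate $G-1$ in the $L^\infty$ sense along $\Sigma_B$ by splitting $G-1 = (G_1-1) + G_1(G_2-1)$ with $G_1$ the power factor and $G_2$ the cubic exponential; each of $G_1-1$ and $G_2-1$ is $\mathcal{O}(|\zeta|/\sqrt{\tau})$ on the relevant range (the power term because $\zeta/\sqrt{48tz_0}$ is small, expanding $(1+w)^{2i\kappa}-1 = \mathcal{O}(w)$; the cubic term because $|\zeta^3(432\tau)^{-1/2}| = |\zeta|^3/\sqrt{432\tau}$ and on the line $\zeta = uz_0\sqrt{48tz_0}e^{i\pi/4}$ the factor $e^{-i\zeta^2/2} = e^{-12\tau u^2 \cdot i \cdot i} = e^{-12\tau u^2}$ supplies Gaussian decay absorbing polynomial growth in $u$). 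The factor $|\zeta| e^{-i\zeta^2/2}$ then contributes a bounded-times-$\tau^{-1/2}$ quantity: indeed $|\zeta|\,|e^{-i\zeta^2/2}| \le |e^{\mp i\gamma\zeta^2/2}|\cdot |\zeta| e^{-(1-\gamma)\cdot 12\tau u^2}$ and $|\zeta| e^{-(1-\gamma)12\tau u^2} = u z_0\sqrt{48tz_0}\,e^{-(1-\gamma)12\tau u^2} \lesssim \tau^{-1/2}$ uniformly in $u$ by elementary calculus (maximize $v\mapsto v e^{-cv^2\tau}$ after substituting $v = uz_0\sqrt{48tz_0}$ appropriately, or directly note $\tau^{1/2} |\zeta| e^{-c\tau u^2}$ is bounded since $|\zeta|^2\simeq \tau u^2$).

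Next I would derive the $L^1 \cap L^2 \cap L^\infty$ consequence. The pointwise bound $|\delta^1_B(\zeta)^{\pm 2}-\zeta^{\pm 2i\kappa}e^{\mp i\zeta^2/2}| \le c|e^{\mp i\gamma\zeta^2/2}|\tau^{-1/2}$ has the Gaussian $|e^{\mp i\gamma\zeta^2/2}|$, which on $\Sigma_B$ parametrized by $u$ equals $e^{-12\gamma\tau u^2}$ (up to the constant factors from $z_0\sqrt{48tz_0}$), integrable in $u$ against $d\zeta = z_0\sqrt{48tz_0}\,du$ with $L^1_u$ norm $\mathcal{O}(\tau^{-1/2}\cdot \tau^{1/2}) = \mathcal{O}(1)$ after the scaling; combining with the prefactor $\tau^{-1/2}$ gives the $L^1$ bound $c\tau^{-1/2}$, and similarly for $L^2$ and $L^\infty$ since a Gaussian lies in all $L^p$ with comparable (scaled) norms. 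For \eqref{zeta-tau}, on $(\Sigma_B^1\cup\Sigma_B^3)\setminus L_B$ we have $|u|\ge \eps$, so $|\zeta^{2i\kappa}e^{-i\zeta^2/2}| = |\zeta|^{-2\kappa\cdot 0}\cdots$ — more precisely $|\zeta^{2i\kappa}| \le e^{\pi|\kappa|}$ is bounded and $|e^{-i\zeta^2/2}| = e^{-12\tau u^2} = e^{-12\gamma\tau u^2}e^{-12(1-\gamma)\tau u^2} \le |e^{\mp i\gamma z^2/2}| e^{-12(1-\gamma)\eps^2 \tau} \cdot e^{-12(1-\gamma)\tau(u^2-\eps^2)}$, and bounding the last factor by $1$ and the exponential $e^{-12(1-\gamma)\eps^2\tau}$ by $\lesssim \tau^{-1/2}$ (any exponential decay dominates a power) yields the claim; I would just need to match the numerical constant $24$ appearing in the statement, which comes from $12$ times the factor $2$ in $\eps^2$ once one tracks the $\sqrt{48tz_0}$ scaling (this is bookkeeping).

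The computations here are essentially routine once the algebra of the scaling is set up, so the "main obstacle" is really just careful bookkeeping: keeping straight the change of variables $\zeta = \sqrt{48z_0t}(z - z_0)$, the relation $\tau = z_0^3 t$, and consequently that on the ray $\zeta = uz_0\sqrt{48tz_0}e^{\pm i\pi/4}$ one has $\zeta^2 = \pm i \cdot 48 z_0^3 t \, u^2 = \pm 48 i\tau u^2$ so that $\mathrm{Re}(-i\zeta^2/2) = \mp 24\tau u^2$ — this is where the constant $24$ enters — together with getting the correct sign correspondence between $L_B$ (resp.\ $\Sigma_B^1\cup\Sigma_B^3$) and the $+$ sign versus $\overline{L}_B$ (resp.\ $\Sigma_B^2\cup\Sigma_B^4$) and the $-$ sign. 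The only genuinely non-mechanical input is the elementary inequality $\sup_{v\ge 0} v\, e^{-cv^2\tau} \lesssim (c\tau)^{-1/2}$, which upgrades a factor of $|\zeta|$ into a factor of $\tau^{-1/2}$ at the cost of a slightly weaker Gaussian (hence the role of the auxiliary parameter $\gamma$ with $0 < 2\gamma < 1$), and the standard estimate $|(1+w)^{i\kappa} - 1| \le C_\kappa |w|$ for $|w|$ bounded. I would present the $+2$/$L_B$ case in full and remark that the $-2$/$\overline{L}_B$ case follows by the Schwarz symmetry $\delta(z) = (\overline{\delta(\bar z)})^{-1}$ from Lemma \ref{lemma:delta}(ii), which is already in hand.
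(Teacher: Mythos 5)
Your overall strategy is the same as the paper's: factor out the model term $\zeta^{\pm 2i\kappa}e^{\mp i\zeta^2/2}$, telescope the remaining correction into the power-factor piece and the cubic-exponential piece, control each via $\left|(1+w)^{i\kappa}-1\right|\lesssim |w|$ and the elementary bound $\sup_{v\ge 0}v\,e^{-cv^2\tau}\lesssim\tau^{-1/2}$, and reserve a $\gamma$-fraction of the Gaussian for the right-hand side (the paper compresses the two sub-estimates into citations of Deift--Zhou; your two displayed bounds are exactly those).

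However, there is a concrete sign error at the heart of your computation. On $L_B=\{\zeta=uz_0\sqrt{48tz_0}\,e^{i\pi/4}\}$ one has $\zeta^2=48i\tau u^2$, hence $\Real(-i\zeta^2/2)=+24\tau u^2$: the factor $e^{-i\zeta^2/2}$ \emph{grows} like $e^{24\tau u^2}$ on $L_B$ and supplies no Gaussian decay, and your claimed identity $e^{-i\zeta^2/2}=e^{-12\tau u^2}$ is wrong in both sign and constant (your own intermediate expression $e^{-12\tau u^2\cdot i\cdot i}$ already equals $e^{+12\tau u^2}$). The combination that decays on $L_B$ is $\zeta^{-2i\kappa}e^{+i\zeta^2/2}$, i.e.\ the \emph{lower} sign --- this is visible from \eqref{w-B-0}, where the entry on $\Sigma_B^1$ is $r(z_0)\zeta^{-2i\kappa}e^{i\zeta^2/2}$, and from the paper's own proof, which treats the $-$ sign and factors out the decaying $e^{i\gamma\zeta^2/2}$. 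With your pairing ($+2$ on $L_B$) the absorption step $|\zeta|\,|e^{-i\zeta^2/2}|\lesssim|e^{-i\gamma\zeta^2/2}|\tau^{-1/2}$ fails outright: near $|u|=\eps$ the left side is of order $\sqrt{\tau}\,e^{24\tau u^2}$ while the right side is only $e^{24\gamma\tau u^2}\tau^{-1/2}$ with $2\gamma<1$. The same mispairing infects your treatment of \eqref{zeta-tau}. To be fair, the lemma's word ``respectively'' is itself inconsistent with the paper's proof and likely misled you, but your argument then silently ``fixes'' the statement by miscomputing $\Real(-i\zeta^2/2)$ as $\mp 24\tau u^2$ when it is $\pm 24\tau u^2$. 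Once the correspondence is swapped ($-$ sign on $L_B$ and $\Sigma_B^1\cup\Sigma_B^3$, $+$ sign on $\overline{L}_B$ and $\Sigma_B^2\cup\Sigma_B^4$), every step of your argument goes through and coincides with the paper's.
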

\begin{proof}
We only deal with the $-$ sign. One can write
\begin{align*}
&\delta^1_B(\zeta)^{- 2}-\zeta^{- 2i\kappa} e^{ i \zeta^2/2}\\
 & \quad = e^{i\gamma \zeta^2/2}\left( e^{i\gamma \zeta^2/2} \left[  \left( \dfrac{2 z_0}{ \zeta /\sqrt{48t z_0}+2z_0} \right)^{-2i\kappa} \zeta^{-2i\kappa} e^{i(1-2\gamma)(\zeta^2/2)(1+\zeta/[(1-2\gamma)(432\tau)^{1/2}] )} \right. \right. \\
&\qquad \qquad \qquad \qquad \qquad \left. \left.  -\zeta^{-2i\kappa }  e^{i(1-2\gamma)\zeta^2/2}\right]   \right).
 \end{align*}
Each of the terms in the expression above is uniformly bounded for $x<0$ and $t>0$ ( \cite[p 334]{DZ93}). 
Following the proof of \cite[Lemma 3.35]{DZ93}, we estimate 
$$\left\vert e^{i\gamma \zeta^2/2} \left(  \left( \dfrac{2 z_0}{\zeta/\sqrt{48t z_0}+2z_0} \right)^{-2i\kappa} -1  \right) \right\vert \leq c |e^{i\gamma \zeta^2/2} |\tau^{-1/2}$$
and 
\begin{align*}
& \left\vert e^{i\gamma \zeta^2/2} \zeta^{-2i\kappa} \left( e^{i(1-2\gamma)(\zeta^2/2)(1+\zeta/[(1-2\gamma)(432\tau)^{1/2}] )}  -  e^{i(1-2\gamma)\zeta^2/2} \right) \right\vert \\
 &\quad \leq c |e^{i\gamma \zeta^2/2} |\tau^{-1/2} 
\end{align*} as desired. And the inequality in \eqref{zeta-tau} is an easy consequence of \eqref{R1-decay}-\eqref{R8-decay}.
\end{proof}
We then consider the case $\Sigma_A$. Again the rescaling gives 
\begin{equation*}
N_A \left( e^{\chi(z_0)} \eta(z; z_0)  e^{-it\theta}\right)=\delta^0_A \delta^1_A(\zeta)
\end{equation*}
with 
\begin{align*}
\delta^0_A &= (192\tau)^{i\kappa/2} e^{-8i\tau} e^{\chi(-z_0)}\eta_0(-z_0)\\
\delta^1_A(\zeta) &= (-\zeta)^{-i\kappa} \left( \dfrac{-2 z_0}{\zeta /\sqrt{48t z_0} -2z_0}\right)^{-i\kappa} e^{(i\zeta^2/4)(1-\zeta (432\tau)^{-1/2} )}.
\end{align*}
Note that $\delta^0_A$ is independent of $\zeta$ and that $|\delta^0_A|=1$.
Set
\begin{align*}
\Delta^0_A &=(\delta_A^0)^{\sigma_3}\\
w^A(\zeta)     &=(\Delta^0_A)^{-1}  (N_A \hat{w}^{A'})\Delta^0_A
\end{align*}
and define the operator $A: L^2(\Sigma_A) \to  L^2(\Sigma_A)$
\begin{align*}
A &=C_{ (\Delta^0_A)^{-1}  (N_A \hat{w}^{A'})\Delta^0_A }\\
  &= C^+ \left( \cdot (\Delta ^0_A)^{-1} (N_A \hat{w}^{A'}_- )  \Delta ^0_A \right)+ C^- \left( \cdot (\Delta ^0_A)^{-1} (N_A \hat{w}^{A'}_+)  \Delta ^0_A \right).
\end{align*}
On 
\begin{align*}
L_A\cup \overline{L}_A & =\lbrace z=u z_0 \sqrt{48t z_0}e^{-i3\pi/4}: -\eps<u<\eps \rbrace\\
       &\quad \cup \lbrace z=u z_0 \sqrt{48t z_0}e^{-i\pi/4}: -\eps<u<\eps \rbrace
\end{align*}
we have from the list of entries stated in \eqref{R1}, \eqref{R4}, \eqref{R7+} and \eqref{R8+}
\begin{align}
\left(  (\Delta_A^0)^{-1} \left(N_A \hat{w}^{A'}_- \right)  \Delta_A^0 \right)(z) &= \twomat{0}{0}{r(-z_0) \delta^1_A(z)^{-2} }{0},\\
\left(  (\Delta_A^0)^{-1} \left(N_A \hat{w}^{A'}_- \right)  \Delta_A^0 \right)(z ) &= \twomat{0}{0}{
\dfrac{r(-z_0)}{1 + |r(z_0)|^2} \delta^1_A(z)^{-2} }{0},\\
\left(  (\Delta_A^0)^{-1} \left(N_A \hat{w}^{A'}_+ \right)  \Delta_A^0 \right)(z) &= \twomat{0}{\overline{ r(-z_0)} \delta^1_A(z)^{2} } {0}{0},\\
\left(  (\Delta_A^0)^{-1} \left(N_A \hat{w}^{A'}_+ \right)  \Delta_A^0 \right)(z) &= \twomat{0}
{\dfrac{ \overline{r(-z_0)}}{1+ |r(z_0)|^2} \delta^1_A(z)^{2} }{0}{0}.
\end{align}
\begin{lemma}
\label{delta-A}
Let $\gamma$ be a small but fixed positive number with $0<2\gamma<1$. Then 
$$ \left\vert  \delta^1_A(\zeta)^{\pm 2}-(-\zeta)^{\mp 2i\kappa} e^{\pm i \zeta^2/2} \right\vert\leq  c |e^{\pm i\gamma \zeta^2/2} |\tau^{-1/2}  $$
and as a consequence,
\begin{equation}
\norm{ \delta^1_A(\zeta)^{\pm 2}-(-\zeta)^{\mp 2i\kappa} e^{\pm i\zeta^2/2}}{L^1\cap L^2 \cap L^\infty}  \leq c \tau^{-1/2} 
\end{equation}
where the $\pm$ sign corresponds to $\zeta\in L_A$ and $\zeta\in \overline{L}_A$ respectively. Moreover,
\begin{equation}
\label{zeta-tau'}
\left\vert (-\zeta)^{\pm 2i\kappa} e^{\mp i \zeta^2/2}\right\vert \lesssim \left\vert e^{\mp i\gamma z^2/2} \right\vert e^{-\eps^2(1-\gamma)24\tau} \lesssim \left\vert e^{\mp i\gamma z^2/2} \right\vert  \tau^{-1/2}
\end{equation}
where the $\pm$ sign corresponds to $\zeta \in  (\Sigma_A^2\cup \Sigma_A^3)\setminus L_A$ and $ \zeta\in (\Sigma_A^1\cup \Sigma_A^4)\setminus \overline{L}_A$ respectively. 
\end{lemma}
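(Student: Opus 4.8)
The plan is to reproduce the proof of the preceding lemma (Lemma~\ref{delta-B }) under the reflection centered at $-z_0$ rather than $z_0$, highlighting only the places where the branch of the power function and the sign of the quadratic phase enter. As in that proof it suffices to treat the $-$ sign, the $+$ case being the Schwarz conjugate. From the definition of $\delta^1_A$ one has
\[
\delta^1_A(\zeta)^{-2}-(-\zeta)^{2i\kappa}e^{-i\zeta^2/2}=(-\zeta)^{2i\kappa}\left[\left(\frac{-2z_0}{\zeta/\sqrt{48tz_0}-2z_0}\right)^{2i\kappa}e^{-(i\zeta^2/2)(1-\zeta(432\tau)^{-1/2})}-e^{-i\zeta^2/2}\right].
\]
I would then perform on the bracket the same algebraic manipulation as in the $\Sigma_B$ case: factor out the Gaussian weight $e^{i\gamma\zeta^2/2}$, leaving behind a residual exponential $e^{i(1-2\gamma)\zeta^2/2}$ which is still decaying since $0<2\gamma<1$, and split what remains into a term proportional to $\bigl(\frac{-2z_0}{\zeta/\sqrt{48tz_0}-2z_0}\bigr)^{2i\kappa}-1$ and a term proportional to the difference of the two quadratic exponentials.

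For the first term the ratio $\frac{-2z_0}{\zeta/\sqrt{48tz_0}-2z_0}$ plays near $-z_0$ exactly the role its counterpart plays near $z_0$, so $\bigl|e^{i\gamma\zeta^2/2}((\,\cdot\,)^{2i\kappa}-1)\bigr|\le c\,|e^{i\gamma\zeta^2/2}|\,\tau^{-1/2}$ follows as in \cite[Lemma 3.35]{DZ93}; for the second term the extra factor $\zeta(432\tau)^{-1/2}$ in the exponent is $O(\tau^{-1/2})$ once the weight absorbs the polynomial growth in $\zeta$, and each of the individual factors is uniformly bounded for $x<0$, $t>0$ by \cite[p 334]{DZ93}. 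Since $|(-\zeta)^{\pm2i\kappa}|\le e^{\pi\kappa}$, this yields the asserted pointwise bound on $L_A\cup\overline{L}_A$. On the complementary part $(\Sigma_A^2\cup\Sigma_A^3)\setminus L_A$ (resp.\ $(\Sigma_A^1\cup\Sigma_A^4)\setminus\overline{L}_A$) I would not compare the two functions but bound each of $\delta^1_A(\zeta)^{\pm2}$ and the model factor directly: there $|\zeta|$ is bounded below, so \eqref{R1-decay}--\eqref{R8-decay} give $\bigl|(-\zeta)^{\pm2i\kappa}e^{\mp i\zeta^2/2}\bigr|\lesssim|e^{\mp i\gamma z^2/2}|\,e^{-24(1-\gamma)\eps^2\tau}\lesssim|e^{\mp i\gamma z^2/2}|\,\tau^{-1/2}$, which is the last displayed inequality of the statement. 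The $L^1\cap L^2\cap L^\infty$ bound is then obtained by integrating the pointwise estimate: along each ray of $\Sigma_A$, $\zeta^2$ is purely imaginary and the $\pm$ signs in the statement are arranged so that $|e^{\pm i\gamma\zeta^2/2}|$ is a genuine Gaussian in $|\zeta|$ whose $L^1$, $L^2$ and $L^\infty$ norms over $\Sigma_A$ are bounded by constants independent of $\tau$ and of $z_0$; hence each of the three norms of the difference is $\lesssim\tau^{-1/2}$, the contribution away from $-z_0$ being in fact exponentially small.

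The main obstacle is bookkeeping rather than analysis: one must fix the branch of $(-\zeta)^{-i\kappa}$ consistently with the cut $-\pi<\arg(\,\cdot\,)<\pi$ used for $\delta$ in Lemma~\ref{lemma:delta}, track the sign flip $i\zeta^2/4\mapsto-i\zeta^2/4$ of the quadratic phase that accompanies the replacement $z_0\mapsto-z_0$ in \eqref{theta}, and make sure the elementary estimate for the first term is not vacuous, i.e.\ that on each of the four rays of $\Sigma_A$ the Gaussian weight genuinely decays and therefore dominates the polynomial growth $|\zeta|/\sqrt{\tau}$ coming from the correction term. Once these are in place every inequality is identical to one already established for $\Sigma_B$ and in \cite[Lemmas 3.35--3.37]{DZ93}.
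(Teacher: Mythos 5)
Your proposal is correct and is essentially the paper's own argument: the paper states this lemma without proof precisely because it follows from the proof of Lemma \ref{delta-B } by the reflection $z_0\mapsto -z_0$, and what you write out is exactly that parallel argument (same decomposition of the bracket, same appeal to \cite[Lemma 3.35]{DZ93} and to the uniform boundedness on \cite[p.~334]{DZ93}, same use of the exponential decay estimates away from the stationary point, and the same Gaussian integration for the $L^1\cap L^2\cap L^\infty$ bound). One bookkeeping point you should fix when writing it up: for the $-2$ power of $\delta_A^1$ the Gaussian weight to factor out is $e^{-i\gamma\zeta^2/2}$ (leaving $e^{-i(1-2\gamma)\zeta^2/2}$), not $e^{+i\gamma\zeta^2/2}$ as in the $\Sigma_B$ case — this is exactly the sign flip of the quadratic phase that you yourself flag at the end.
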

We now define
\begin{align*}
w^{A^0}(\zeta) &=\lim_{\tau\to\infty} (\Delta^0_A)^{-1}  (N_A \hat{w}^{A'})\Delta^0_A(\zeta),\\
w^{B^0}(\zeta) &=\lim_{\tau\to\infty} (\Delta^0_B)^{-1}  (N_B \hat{w}^{B'})\Delta^0_B(\zeta),\\
A^0 &=C^+(\cdot w^{A^0}_-)+C^-(\cdot w^{A^0}_+),\\
B^0 &=C^+(\cdot w^{B^0}_-)+C^-(\cdot w^{B^0}_+).
\end{align*}
\begin{proposition}
\label{prop:resolvent}
\begin{equation}
\norm{(1_A-A)^{-1} }{L^2(\Sigma_A)} , \, \norm{(1_B-B)^{-1}}{L^2(\Sigma_B)} \leq c 
\end{equation}
as $\tau\to\infty$.
\end{proposition}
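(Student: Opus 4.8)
The plan is to obtain the uniform bound by a small-norm comparison of $A$ and $B$ with the $\tau$-independent limiting operators $A^0$ and $B^0$. First I would record the basic functional-analytic input: after the rescaling $z\mapsto\zeta$ around $\mp z_0$ the contours $\Sigma_A$ and $\Sigma_B$ are fixed (four rays through the origin meeting at right angles, not depending on $\tau$), so the Cauchy projections $C^\pm$ on $L^2(\Sigma_A)$ and $L^2(\Sigma_B)$ are bounded with absolute operator norms; consequently, for any jump factorization $w=(w_-,w_+)$ one has the crude estimate $\norm{C_w}{L^2\to L^2}\leq c\,(\norm{w_-}{L^\infty}+\norm{w_+}{L^\infty})$ with $c$ independent of $\tau$. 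In particular $A=C_{w^A}$ and $B=C_{w^B}$ are bounded operators for every $\tau>0$, and the whole point is to control the inverse of $1_A-A$ and $1_B-B$ uniformly as $\tau\to\infty$.

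The key middle step is to show that $1_A-A^0$ and $1_B-B^0$ are invertible on $L^2$ with inverses of norm bounded by a single constant $c_0<\infty$. Since the off-diagonal entries of $w^{A^0}$ and $w^{B^0}$ are exactly $r(\mp z_0)(-\zeta)^{\pm 2i\kappa}e^{\mp i\zeta^2/2}$ together with their $1+|r(z_0)|^2$-normalized counterparts (this is where the removal of the circles $\pm\gamma_\ell$ from $\Sigma_A,\Sigma_B$ matters: no breather data enters, so $A^0,B^0$ are the Cauchy operators of the \emph{standard} parabolic-cylinder local model), these operators do not depend on $\tau$ at all. The factored jump matrix $v^{A^0}=(1-w^{A^0}_-)^{-1}(1+w^{A^0}_+)$ is Schwarz-reflection invariant along $\Sigma_A$, so Zhou's vanishing lemma \cite[Theorem 9.3]{Zhou89} applies and yields triviality of the kernel of $1_A-A^0$; combined with the fact that the rapid Gaussian decay of $w^{A^0}$ makes $1_A-A^0$ Fredholm of index zero, this gives invertibility, hence $\norm{(1_A-A^0)^{-1}}{}=:c_0<\infty$ is a fixed number. (Equivalently, one may simply quote the explicit parabolic-cylinder solution of the model RHP, constructed later in this section.) The same applies to $1_B-B^0$.

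Next I would estimate $\norm{A-A^0}{}$ and $\norm{B-B^0}{}$. Using \eqref{wB-1}–\eqref{wB-4} and the analogous formulas near $-z_0$, on the compact cores $L_A\cup\overline{L}_A$ and $L_B\cup\overline{L}_B$ the difference of the symbols is controlled by $\norm{\delta^1_A(\zeta)^{\pm2}-(-\zeta)^{\mp2i\kappa}e^{\pm i\zeta^2/2}}{L^\infty}\lesssim\tau^{-1/2}$ and $\norm{\delta^1_B(\zeta)^{\pm2}-\zeta^{\pm2i\kappa}e^{\mp i\zeta^2/2}}{L^\infty}\lesssim\tau^{-1/2}$ from Lemmas \ref{delta-A} and \ref{delta-B}; on the remaining portions of the rays $w^A$ and $w^B$ vanish (they arise from the truncation $\hat v^{A'}=\hat v^{B'}=0$), while $w^{A^0},w^{B^0}$ are bounded there by $\tau^{-1/2}$ via \eqref{zeta-tau} and \eqref{zeta-tau'}. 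Hence $\norm{w^A-w^{A^0}}{L^\infty(\Sigma_A)}+\norm{w^B-w^{B^0}}{L^\infty(\Sigma_B)}\lesssim\tau^{-1/2}$, so by the first step $\norm{A-A^0}{L^2\to L^2}+\norm{B-B^0}{L^2\to L^2}\lesssim\tau^{-1/2}$. Finally I would close via the second resolvent identity: writing $1_A-A=(1_A-A^0)\bigl(1_A-(1_A-A^0)^{-1}(A-A^0)\bigr)$, for $\tau$ sufficiently large $\norm{(1_A-A^0)^{-1}(A-A^0)}{}\leq c_0 c\,\tau^{-1/2}<1/2$, so the second factor is invertible by Neumann series and $\norm{(1_A-A)^{-1}}{}\leq 2c_0$; the same argument gives $\norm{(1_B-B)^{-1}}{}\leq 2c_0$. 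The main obstacle is the middle step — the uniform invertibility of the limiting model operators — which rests on correctly identifying the symmetry of the parabolic-cylinder jump matrix so that the vanishing lemma applies (equivalently, on the explicit solvability of that model problem); once Lemmas \ref{delta-A}–\ref{delta-B} are available the perturbation argument is routine.
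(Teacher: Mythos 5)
Your argument is correct and follows essentially the same route as the paper: compare $A,B$ with the $\tau$-independent parabolic-cylinder model operators $A^0,B^0$ using Lemmas \ref{delta-A} and \ref{delta-B } to get $\norm{A-A^0}{},\norm{B-B^0}{}\lesssim\tau^{-1/2}$, invoke the Schwarz-invariance of $v^{A^0},v^{B^0}$ together with the vanishing lemma (equivalently the explicit solvability of the model RHP) to bound $(1-A^0)^{-1},(1-B^0)^{-1}$, and conclude by the second resolvent identity. The extra details you supply (boundedness of the Cauchy projections on the fixed rescaled contours, the explicit Neumann-series closing step) are consistent with, and slightly more explicit than, the paper's proof.
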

\begin{proof}
From Lemma \ref{delta-B } and Lemma \ref{delta-A},  it is easily seen that 
\begin{equation}
\label{fix}
\norm{A-A^0}{L^2(\Sigma_A)}, \, \norm{B-B^0}{L^2(\Sigma_B)} \leq c \tau^{-1/2}.
\end{equation}
We will only establish the boundedness of  $(1_B-B)^{-1}$ since the case for  $(1_A-A)^{-1}$ is similar. From Lemma \ref{delta-B } we deduce that  on $\Sigma_B$
\begin{align}
\label{w-B-0}
w^{B^0}(\zeta) =\begin{cases}
\twomat{0}{0}{r(z_0) \zeta^{- 2i\kappa} e^{ i \zeta^2/2}}{0}, \quad  \zeta\in \Sigma_B^1,\\
\\
\twomat{0}{\dfrac{ \overline{r(z_0)}}{1 +  |r(z_0)|^2} \zeta^{ 2i\kappa} e^{ -i \zeta^2/2} }{0}{0}, \quad \zeta\in \Sigma_B^2, \\
\\
\twomat{0}{0}{
\dfrac{r(z_0)}{1 +  |r(z_0)|^2} \zeta^{- 2i\kappa} e^{ i \zeta^2/2} }{0}, \quad  \zeta\in \Sigma_B^3,\\
\\
\twomat{0}{\overline{ r(z_0)} \zeta^{ 2i\kappa} e^{ -i \zeta^2/2}  } {0}{0}, \quad  \zeta\in \Sigma_B^4.
\end{cases}
\end{align}
Setting 
$$v^{B^0}(\zeta) =I+w^{B^0}(\zeta)  $$
we first notice that
$v^{B^0}(\zeta) $ is precisely the jumps of the exactly solvable parabolic cylinder problem.  The solution of this problem is standard and can be found in \cite[Appendix A]{BJM16}. More importantly,  $v^{B^0}(\zeta) $ satisfies the 
Schwarz invariant condition:
$$v^{B^0}(\zeta)= v^{B^0}(\overline{\zeta})^\dagger$$
which will guarantee the uniqueness of the solution. By standard arguments in \cite{Zhou89} and \cite[Sec 7.5]{Deift99}, this implies the existence and boundedness of the resolvent operator $(1_B-B^0)^{-1}$. And the boundedness of $(1_B-B)^{-1}$ is a consequence of \eqref{fix} and the second resolvent identity.
\end{proof}
Indeed, for $\zeta\in\Sigma_B$ we let
\begin{equation}
m^{B^0}(\zeta)=I+\dfrac{1}{2\pi i}\int_{\Sigma_B}\dfrac{\left( (1_B-B^0)^{-1} I\right)(s) w^{B^0}(s)}{s-\zeta} ds
\end{equation}
then $m^{B^0}(\zeta)$ solves the following Riemann-Hilbert problem
\begin{align}
\label{RHP-B0}
\begin{cases}
m^{B^0}_+(\zeta) &=m^{B^0}_-(\zeta)v^{B^0}(\zeta), \quad \zeta\in\Sigma_B\\
m^{B^0}(\zeta) &\to I , \quad \zeta\to \infty
\end{cases}
\end{align}
In the large $\zeta$ expansion, 
$$m^{B^0}(\zeta) =I-\dfrac{m^{B^0}_1}{\zeta}+O(\zeta^{-2}), \quad \zeta\to\infty$$
thus
$$m^{B^0}_1=\dfrac{1}{2\pi i}\int_{\Sigma_B}{\left( (1_B-B)^{-1} I\right)(s) w^{B^0}(s)} ds.$$
Similarly, setting 
\begin{equation}
m^{B}(\zeta)=I+\dfrac{1}{2\pi i}\int_{\Sigma_B}\dfrac{\left( (1_B-B)^{-1} I\right)(s) w^{B}(s)}{s-\zeta} ds
\end{equation}
then $m^{B}(\zeta)$ solves the following Riemann-Hilbert problem
\begin{align}
\label{RHP-B}
\begin{cases}
m^{B}_+(\zeta) &=m^{B}_-(\zeta)v^{B}(\zeta), \quad \zeta\in\Sigma_B\\
m^{B}(\zeta) &\to I , \quad \zeta\to \infty
\end{cases}
\end{align}
Here $v^{B}(\zeta)=I+w^{B}(\zeta)$ where $w^B(\zeta)$ is given by \eqref{wB-1}-\eqref{wB-4}.
In the large $\zeta$ expansion, 
$$m^{B}(\zeta) =I-\dfrac{m^{B}_1}{\zeta}+O(\zeta^{-2}), \quad \zeta\to\infty$$
thus
$$m^{B}_1=\dfrac{1}{2\pi i}\int_{\Sigma_B}{\left( (1_B-B)^{-1} I\right)(s) w^{B}(s)} ds.$$
Setting $w^d=w^{B}-w^{B^0}$, a simple computation shows that
\begin{align*}
\int_{\Sigma_B }  \left( ( 1_B-B)^{-1}I \right)w^{B}  - \int_{\Sigma_B   } { \left( (1_B-B^0 )^{-1}I \right)w^{B^0} } &= \int_{\Sigma^{B} }w^d+  \int_{\Sigma_B  } \left( ( 1_B-B^0   )^{-1} (C_{w^d} I)  \right) w^{B}\\
&\quad+ \int_{\Sigma_B} \left( ( 1_B-B^0 )^{-1} (B^0 I)  \right)w^d \\
&\quad+\int_{\Sigma_B  } \left( (1_B-B^0 )^{-1} C_{w^d}  (1_B-B )^{-1}   \right) \left( B ( I) \right) { w^{B} } \\
&=\text{I} + \text{II}  + \text{III} + \text{IV}. 
\end{align*}
From Lemma \ref{delta-B } and Proposition \ref{prop:resolvent}, it is clear that
\begin{align*}
\left\vert \text{I}\right\vert  &\lesssim \tau^{-1/2},\\
\left\vert \text{II}  \right\vert   & \leq   \norm{( 1_B-B^0 )^{-1} }{L^2(\Sigma_B )}\norm{C_{w^d }I }{L^2(\Sigma_B) } \norm{ w^{B} }{L^2(\Sigma_B ) } \\
                                    & \lesssim \tau^{-1/2},\\                    
\left\vert \text{III}  \right\vert &\leq \norm{( 1_B-B^0 )^{-1} }{L^2(\Sigma_B )}\norm{B^0 I }{L^2(\Sigma_B) } \norm{ w^{d} }{L^2(\Sigma_B ) }\\
                                    & \lesssim \tau^{-1/2}.
\end{align*}
For the last term
\begin{align*}
\left\vert \text{IV} \right\vert & \leq  \norm{ (1_B-B^0 )^{-1} }{L^2(\Sigma_B )} \norm{( 1_B-B )^{-1} }{L^2(\Sigma_B )} \norm{C_{w^d}}{L^2(\Sigma_B )} \\
     &\quad \times  \norm{B( I ) }{L^2(\Sigma_B )} \norm{ w^B  }{L^2(\Sigma_B )} \\
     &\leq c \norm{ w^d}{L^\infty (\Sigma_B ) }  \norm{w^B }{L^2(\Sigma^{(3 )} )}^2\\
     & \lesssim \tau^{-1/2}.
\end{align*}
So we conclude that
\begin{equation}
\label{differ-B}
\left\vert \int_{\Sigma_B }  \left( ( 1_B-B)^{-1}I \right)w^{B}  - \int_{\Sigma_B   } { \left( (1_B-B^0 )^{-1}I \right)w^{B^0} }   \right\vert \lesssim \tau^{-1/2}.
\end{equation}
Clearly there is a parallel case for $\Sigma_A$:
\begin{equation}
\label{differ-A}
\left\vert \int_{\Sigma_A }  \left( ( 1_A-A)^{-1}I \right)w^{A}  - \int_{\Sigma_A  } { \left( (1_A-A^0 )^{-1}I \right)w^{A^0} }   \right\vert \lesssim \tau^{-1/2}.
\end{equation}
The explicit form of $m^{B^0}_1$ is given as follows (see \cite[Appendix A]{BJM16}) :
\begin{equation}
\label{explicit-B0}
m^{B^0}_1=\twomat{0}{-i\beta_{12}}{i\beta_{21}}{0}
\end{equation}
where
$$\beta_{12}=\dfrac{\sqrt{2\pi } e^{i\pi/4} e^{-\pi \kappa } }{r(z_0) \Gamma(-i\kappa)}, \qquad \beta_{21}=\dfrac{-\sqrt{2\pi } e^{-i\pi/4} e^{-\pi \kappa } }{ \overline{ r(z_0) } \Gamma(i\kappa)}$$
and $\Gamma(z)$ is the \textit{Gamma} function.
Recall that on $\Sigma_B$, $\zeta=\sqrt{48z_0 t}(z-z_0)$, thus by \eqref{differ-B}, we have
\begin{equation}
\label{differ-m-B}
\left\vert   \dfrac{m_1^B}{\zeta}-\dfrac{m_1^{B^0}}{\zeta}   \right\vert \lesssim \dfrac{1}{t(z-z_0)}.
\end{equation}
Using the explicit form of $w^{B^0}$ given by \eqref{w-B-0}, symmetry reduction given by \eqref{minus} and their analogue for $w^{A^0}$, we verify that
\begin{equation}
v^{A^0}(z)=  \sigma_3 \overline{ v^{B^0}( - \overline{z} )  } \sigma_3  
\end{equation}
which  in turn implies by uniqueness that
\begin{equation}
m^{A^0}(z)= \sigma_3 \overline{ m^{B^0}( - \overline{z} )  } \sigma_3  
\end{equation}
and from this we deduce that
\begin{align}
\label{mA-mB}
m^{A^0}_1 &=-\sigma_3 \overline{ m^{B^0}_1  } \sigma_3  \\
\nonumber
&=\twomat{0}{i \overline{\beta}_{12}}{-i \overline{\beta}_{21} }{0}.
\end{align}
We also have an analogue of \eqref{differ-m-B} for $m_1^{A^0}$: 
\begin{equation}
\label{differ-m-A}
\left\vert   \dfrac{m_1^A}{\zeta}-\dfrac{m_1^{A^0}}{\zeta}   \right\vert \lesssim \dfrac{1}{t(z+z_0)}.
\end{equation}
Collecting all the computations above, we write down the asymptotic expansions of solutions to Problem \ref{prob:mkdv.A} and Problem \ref{prob:mkdv.B} respectively.
\begin{proposition}
\label{solution-A-B}
Setting $\zeta=\sqrt{48z_0 t}(z + z_0) $, the solution to RHP Problem  \ref{prob:mkdv.A}  $m^{A'}$ admits the following expansion:
\begin{equation}
\label{expansion-A}
m^{A'}(z(\zeta) ;x,t)=I +\dfrac{1}{\zeta}\twomat{0}{i ( \delta^0_A)^2 \overline{\beta}_{12}}{-i ( \delta^0_A)^{-2}\overline{\beta}_{21} }{0} +\mathcal{O}(t^{-1}).
\end{equation}
Similarly, setting $\zeta=\sqrt{48z_0 t}(z - z_0) $, the solution to RHP Problem  \ref{prob:mkdv.B}  $m^{B'}$ admits the following expansion:
\begin{equation}
\label{expansion-B}
m^{B'}(z(\zeta) ;x,t)=I +\dfrac{1}{\zeta}\twomat{0}{-i ( \delta^0_B)^2 {\beta}_{12}}{i ( \delta^0_B)^{-2}{\beta}_{21} }{0} +\mathcal{O}(t^{-1}).
\end{equation}
\end{proposition}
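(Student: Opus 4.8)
The plan is to read off both expansions from the exactly solvable parabolic cylinder model, and to cut the work in half by exploiting the $z\mapsto-\overline z$ symmetry that links Problem~\ref{prob:mkdv.A} to Problem~\ref{prob:mkdv.B}. First I would argue that it suffices to prove \eqref{expansion-B}. The explicit entries \eqref{R1}, \eqref{R4}, \eqref{R7+}, \eqref{R8+} together with \eqref{minus} show that $v^{A'}(z)=\sigma_3\,\overline{v^{B'}(-\overline z)}\,\sigma_3$; since both local problems are normalized at infinity and carry Schwarz-invariant jumps, uniqueness via the vanishing lemma (\cite[Theorem~9.3]{Zhou89}) forces $m^{A'}(z;x,t)=\sigma_3\,\overline{m^{B'}(-\overline z;x,t)}\,\sigma_3$. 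Because $\delta^0_A$ is exactly the ``$-z_0$ version'' of $\delta^0_B$, and $\overline{\beta_{12}},\overline{\beta_{21}}$ appear on the $A$ side (compare \eqref{mA-mB}), \eqref{expansion-A} then follows from \eqref{expansion-B} by taking complex conjugates. So from now on I would work only with the $\Sigma_{B'}$ problem.

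Second, I would pass to the scaled variable. Extending $v^{B'}$ by the identity from $\Sigma_{B'}$ to the full cross $\widehat{\Sigma}_{B'}$ leaves the solution unchanged, so $m^{B'}$ solves the RHP on $\widehat{\Sigma}_{B'}$ with jump $I+\hat w^{B'}$. Applying the change of variable $z\mapsto\zeta=\sqrt{48z_0t}\,(z-z_0)$ through $N_B$ and conjugating by the constant unimodular matrix $\Delta^0_B=(\delta^0_B)^{\sigma_3}$, the matrix $(\Delta^0_B)^{-1}\big(N_B m^{B'}\big)\Delta^0_B$ is analytic off $\Sigma_B$, tends to $I$ at infinity, and carries the jump $I+w^B$. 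Since $v^B=I+w^B$ is again Schwarz invariant, this RHP is uniquely solvable, so that matrix must coincide with the function $m^{B}(\zeta)$ of \eqref{RHP-B}. Hence $m^{B'}(z(\zeta);x,t)=\Delta^0_B\,m^{B}(\zeta)\,(\Delta^0_B)^{-1}$ with $z(\zeta)=\zeta/\sqrt{48z_0t}+z_0$, and the analogous identity on the $A$ side.

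Third, I would extract the large-$\zeta$ behaviour of $m^{B}$. Its integral representation gives $m^{B}(\zeta)=I-m^{B}_1/\zeta+\mathcal{O}(\zeta^{-2})$ with $m^{B}_1=\frac{1}{2\pi i}\int_{\Sigma_B}\big((1_B-B)^{-1}I\big)(s)\,w^B(s)\,ds$, the remainder being uniform thanks to the resolvent bound of Proposition~\ref{prop:resolvent} and the fact that $w^B\in L^1\cap L^2\cap L^\infty(\Sigma_B)$ (a consequence of the $\delta^1_B$-estimates established above). Comparing with the model problem, \eqref{differ-B} gives $|m^{B}_1-m^{B^0}_1|\lesssim\tau^{-1/2}$, while \eqref{explicit-B0} supplies $m^{B^0}_1=\twomat{0}{-i\beta_{12}}{i\beta_{21}}{0}$; conjugating by $\Delta^0_B$ produces precisely the matrix written as the coefficient of $1/\zeta$ in \eqref{expansion-B}. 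The error of replacing $m^{B}_1$ by $m^{B^0}_1$ is $\lesssim\tau^{-1/2}/|\zeta|\lesssim 1/\big(t\,|z-z_0|\big)$, exactly as recorded in \eqref{differ-m-B}, and the $\mathcal{O}(\zeta^{-2})$ tail is $\lesssim 1/\big(t z_0\,|z-z_0|^2\big)$. On the set where these parametrices are actually matched to $m^{(br)}$ — outside fixed discs about $\pm z_0$, e.g. on the circles $C_A,C_B$ of Figure~\ref{fig:contour-2}, where $|z\mp z_0|\gtrsim1$ and $z_0$ is bounded below throughout Region~I — both error terms are $\mathcal{O}(t^{-1})$, which yields \eqref{expansion-B} and hence Proposition~\ref{solution-A-B}.

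The \emph{main obstacle} is not conceptual but organizational: carefully tracking how the rescaling $N_B$, the conjugation by $\Delta^0_B$, and the truncation of the contour interact, making sure the $\Delta^0$-prefactors land in the correct matrix entries with the correct powers $\pm2$, and confirming that the $\tau^{-1/2}$ discrepancy from \eqref{differ-B} together with the $\mathcal{O}(\zeta^{-2})$ remainder genuinely recombine into a \emph{uniform} $\mathcal{O}(t^{-1})$ over the range of $z$ on which the parametrices are used. All the genuinely analytic input — the resolvent estimate of Proposition~\ref{prop:resolvent}, the parabolic cylinder solution of \cite[Appendix~A]{BJM16}, the decay estimates for $\delta^1_A,\delta^1_B$, and the identities \eqref{differ-B}, \eqref{differ-m-B} — is already in hand, so the proof is essentially an assembly of these pieces.
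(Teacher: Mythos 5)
Your proposal is correct and follows essentially the same route as the paper: the statement is obtained by assembling the rescaling $N_B$ and conjugation by $\Delta^0_B$, the resolvent bound of Proposition \ref{prop:resolvent}, the comparison \eqref{differ-B} with the parabolic cylinder model, the explicit coefficient \eqref{explicit-B0}, and the Schwarz symmetry to transfer the result to the $A$ side. The only (harmless) difference is that you invoke the symmetry at the level of the full local problems $v^{A'},v^{B'}$, whereas the paper runs the rescaling for $A$ in parallel and uses the symmetry only for the limiting models $v^{A^0},v^{B^0}$ via \eqref{mA-mB}.
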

Now we construct $m^\RHP_1$ needed in the proof of Proposition \ref{Prop:expo}. 
In Figure \ref{fig:contour-2}, we let $\rho$ be the radius of the circle $C_A$ ($C_B$) centered at $z_0$ ($-z_0$). We seek a solution of the form
\begin{equation}
\label{parametrix}
m^\RHP_*(z)=\begin{cases}
E_2(z)m^{(br)}(z) \quad  &\left\vert z\pm z_0 \right\vert>\rho \\
E_2(z)m^{(br)}(z)  m^{A'}(z) \quad &\left\vert z + z_0 \right\vert\leq\rho \\
E_2(z)m^{(br)}(z)  m^{B'}(z) \quad & \left\vert z - z_0 \right\vert\leq \rho 
\end{cases}
\end{equation}
Since $m^{(br)}$, $ m^{A'}$ and $ m^{B'}$ solve Problem \ref{prob:mkdv.br}, Problem \ref{prob:mkdv.A} and Problem \ref{prob:mkdv.B} respectively, we can construct the solution $m^\RHP_*(z)$ if we find $E_2(z)$. Indeed,  $E_2$ solves the following Riemann-Hilbert problem:
\begin{problem}
\label{prob: E-2}
Find a matrix-valued function $E_2(z)$ on $\bbC \setminus \left( C_A\cup C_B \right)$ with the following properties:
\begin{enumerate}
\item		$E_2(z) \rarr I$ as $ z \rarr \infty$,
\item		$E_2(z) $ is analytic for $z \in  \bbC \setminus \left( C_A\cup C_B \right)$
			with continuous boundary values
			$E_{2\pm}(z)$.
\item On $ C_A\cup C_B $we have the following jump conditions
$$E_{2+}(z)=E_{2-}(z) v^{ (E) }(z)$$
			where 
			\begin{equation}
			\label{v-E}
			 v^{ (E) }(z)=\begin{cases}
			m^{(br)} (z)m^{A'}(z(\zeta) )m^{(br)} (z)^{-1}, \quad z\in C_A\\
			m^{(br)} (z)m^{B'}(z(\zeta) )m^{(br)} (z)^{-1}, \quad z\in C_B
			\end{cases}
			\end{equation}
\end{enumerate}
\end{problem}
Setting 
$$\eta(z)=E_{2-}(z)$$
then by standard theory, we have the following singular integral equation
$$  \eta=I+ C_{v^{(E)}}\eta $$
where the singular integral operator is defined by:
$$  C_{v^{(E)}}\eta =C^-\left( \eta \left( v^{(E)}-I  \right)  \right). $$
We first deduce from \eqref{expansion-A}-\eqref{expansion-B} that 
\begin{equation}
\norm{v^{(E)}-I}{L^\infty} \lesssim t^{-1/2}
\end{equation}
hence the operator norm of $C_{v^{(E)}} $ 
\begin{equation}
\label{norm-vE}
\norm{C_{v^{(E)}}f }{ L^2} \leq \norm{f}{L^2} \norm{v^{(E)}-I}{L^\infty} \lesssim t^{-1/2}.
\end{equation}
Then the resolvent operator $(1-C_{v^{(E)}})^{-1}$ can be obtained through \textit{Neumann} series and we obtain the unique solution to Problem \ref{prob: E-2}:
\begin{equation}
E_2(z)=I+ \dfrac{1}{2\pi i}\int_{C_A\cup C_B} \dfrac{ (1+\eta(s))(v^{(E)}(s)-I )   }{s-z}ds
\end{equation}
which admits the following asymptotic expansion in $z$:
\begin{equation}
\label{E2-expan}
E_2(z)=I+ \dfrac{E_{2,1}}{z} +\mathcal{O}\left( \dfrac{1}{z^2} \right).
\end{equation}
Using the bound on the operator norm \eqref{norm-vE}, we obtain
\begin{align}
\label{bound-E-1}
E_{2,1}(z) &=-\dfrac{1}{2\pi i}\int_{C_A\cup C_B} { (1+\eta(s))(v^{(E)}(s)-I )   }ds  \\
              &=-\dfrac{1}{2\pi i}\int_{C_A\cup C_B} { (v^{(E)}(s)-I )   }ds +\mathcal{O}(t^{-1}).
\end{align}
Given the form of $v^{(E)}$ in \eqref{v-E} and the asymptotic expansions \eqref{expansion-A}-\eqref{expansion-B}, an application of Cauchy's integral formula leads to
\begin{align}
\label{E-1-2 cauchy}
E_{2,1}&=\dfrac{1}{\sqrt{48 z_0 t}} m^{(br)}(z_0) \twomat{0}{-i ( \delta^0_B)^2 {\beta}_{12}}{i ( \delta^0_B)^{-2}{\beta}_{21} }{0}  m^{(br)}(z_0)^{-1}\\
\nonumber
    & \quad + \dfrac{1}{\sqrt{48 z_0 t}} m^{(br)}(-z_0) \twomat{0}{ i ( \delta^0_A)^2 \overline{\beta}_{12} }{-i ( \delta^0_A)^{-2}\overline{\beta}_{21} }{0} m^{(br)}(-z_0)^{-1}\\
    \nonumber
     & \quad + \mathcal{O}(t^{-1}).
\end{align}
We now completed the construction of the matrix-valued function $E_2 (z)$ hence $m^\RHP_*(x,t; z)$. Combining this with Proposition \ref{expo}, we obtain $m^\RHP(z)$ in \eqref{factor-LC}.


\section{The $\dbar$-Problem}
\label{sec:dbar}
 
From \eqref{factor-LC} we have matrix-valued function
\begin{equation}
\label{N3}
m^{(3)}(z;x,t) = m^{(2)}(z;x,t) m^\RHP(z; x,t)^{-1}. 
\end{equation}
The goal of this section is to show that $m^{(3)}$ only results in an error term $E$ with higher order decay rate than the leading order term of the asymptotic formula. The computations and proofs are standard. We follow \cite[ Section 5 ]{ CL19 } with slight modifications. 

Since $m^\RHP(z; x, t)$ is analytic in $\bbC \setminus \left( \Sigma^{(3)} \cup \Gamma  \right)$, we may compute
\begin{align*}
\dbar m^{(3)}(z;x,t) 	&=	\dbar m^{(2)}(z;x,t) m^\RHP(z; x,t)^{-1}\\	
								&=	m^{(2)}(z;x,t) \, \dbar \calR^{(2)}(z) m^\RHP(z; x,t)^{-1}	&\text{(by \eqref{N2.dbar})}\\
								&=	m^{(3)}(z;x,t) m^{\RHP}(z; x, t) \, \dbar \calR^{(2)}(z) m^\RHP(z; x,t)^{-1}	& \text{(by \eqref{N3})}\\
								&=	m^{(3)}(z;x,t)  W(z;x,t)
\end{align*}
where
 \begin{equation}
 \label{W-bound}
 W(z;x,t) = m^{\RHP}(z;x,t ) \, \dbar \calR^{(2)}(z)  m^\RHP(z;x,t )^{-1}. 
 \end{equation}
We thus arrive at the following pure $\dbar$-problem:

\begin{problem}
\label{prob:DNLS.dbar}
Give $r \in H^{1}(\bbR)$, find a continuous matrix-valued function
$m^{(3)}(z;x,t)$ on $\bbC$ with the following properties:
\begin{enumerate}
\item		$m^{(3)}(z;x,t) \rarr I$ as $|z| \rarr \infty$.
\medskip
\item		$\dbar m^{(3)}(z;x,t) = m^{(3)}(z;x,t) W(z;x,t)$.
\end{enumerate}
\end{problem}

It is well understood (see for example \cite[Chapter 7]{AF}) that the solution to this $\dbar$ problem is equivalent to the solution of a Fredholm-type integral equation involving the solid Cauchy transform
$$ (Pf)(z) = \frac{1}{\pi} \int_\bbC \frac{1}{\zeta-z} f(\zeta) \, d\zeta $$
where $d$ denotes Lebesgue measure on $\bbC$.  Also throughout this section, $\zeta$ refers to complex numbers, not to be confused with $\zeta=\sqrt{48z_0 t} (z \pm z_0)$ in the previous section.
\begin{lemma}
A bounded and continuous matrix-valued function $m^{(3)}(z;x,t)$ solves Problem \eqref{prob:DNLS.dbar} if and only if
\begin{equation}
\label{DNLS.dbar.int}
m^{(3)}(z;x,t) =I+ \frac{1}{\pi} \int_\bbC \frac{1}{\zeta-z} m^{(3)}(\zeta;x,t) W(\zeta;x,t) \, d\zeta.
\end{equation}
\end{lemma}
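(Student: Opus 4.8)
The plan is to establish the equivalence by the elementary mapping properties of the solid Cauchy transform $P$ together with a Liouville argument, exactly as in the standard $\dbar$-theory (\cite[Chapter 7]{AF}). The one piece of genuine input is that the weight $W(\,\cdot\,;x,t)$ in \eqref{W-bound} lies in $L^1(\bbC)\cap L^\infty(\bbC)$: since all jump matrices here have determinant $1$ one has $\det m^{\RHP}\equiv 1$, hence $m^{\RHP}$ and $(m^{\RHP})^{-1}$ are bounded on the (bounded, away-from-$\bbR$-and-from-poles) set $\mathrm{supp}\,\dbar\calR^{(2)}$ by the parametrix construction of Section \ref{sec:local} and Proposition \ref{Prop:expo}, so $|W|\lesssim|\dbar\calR^{(2)}|$ there. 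Lemma \ref{lemma:dbar.Ri} then bounds $|\dbar\calR^{(2)}|$ by $\big(|p_i'(\Real z)|+|z\mp z_0|^{-1/2}+|\dbar\Xi_\calZ|\big)\,|e^{\pm 2i\theta}|$ on each sector $\Omega_i$, which is integrable because $r\in H^1(\bbR)$ and $e^{\pm 2i\theta}$ decays in the relevant half-sectors. Granted this, $Pg$ is a well-defined, bounded, continuous function vanishing as $|z|\to\infty$ for $g=m^{(3)}W$ whenever $m^{(3)}$ is bounded, and $\dbar P g=g$ distributionally.

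For the forward implication I would assume $m^{(3)}$ is bounded, continuous, with $\dbar m^{(3)}=m^{(3)}W$ and $m^{(3)}\to I$, and set $N=m^{(3)}-I-P(m^{(3)}W)$. Then $\dbar N=m^{(3)}W-m^{(3)}W=0$, so $N$ is entire; it is bounded and $o(1)$ at infinity, hence $N\equiv 0$ by Liouville, which is precisely \eqref{DNLS.dbar.int}. For the converse I would assume \eqref{DNLS.dbar.int}, i.e. $m^{(3)}=I+P(m^{(3)}W)$, apply $\dbar$ to get $\dbar m^{(3)}=\dbar P(m^{(3)}W)=m^{(3)}W$ (condition (2)), and let $|z|\to\infty$ to recover $m^{(3)}\to I$ (condition (1)), so $m^{(3)}$ solves Problem \ref{prob:DNLS.dbar}.

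The only real obstacle is the bookkeeping behind $W\in L^1(\bbC)\cap L^\infty(\bbC)$ and the attendant continuity/decay of $P(m^{(3)}W)$; this reduces to the boundedness of $m^{\RHP},(m^{\RHP})^{-1}$ on $\mathrm{supp}\,\dbar\calR^{(2)}$ and the estimates of Lemma \ref{lemma:dbar.Ri}, and everything else is formal manipulation with the Cauchy transform plus Liouville's theorem. I would not attempt to prove solvability or uniqueness of \eqref{DNLS.dbar.int} inside this lemma — that belongs to the subsequent small-norm / Neumann-series analysis of the operator $f\mapsto P(fW)$.
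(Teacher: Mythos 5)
Your argument is correct and is exactly the standard $\dbar$-theory argument that the paper itself defers to (\cite[Chapter 7]{AF}): Liouville's theorem applied to $m^{(3)}-I-P(m^{(3)}W)$ for the forward direction, and $\dbar P g=g$ for the converse. One small inaccuracy: $W$ is \emph{not} in $L^\infty(\bbC)$, since Lemma \ref{lemma:dbar.Ri} only controls $\dbar\calR^{(2)}$ by $|p_i'(\Real z)|+|z\mp z_0|^{-1/2}+|\dbar\Xi_\calZ|$, and both the $|z\mp z_0|^{-1/2}$ singularity and the fact that $p_i'$ is merely $L^2$ rule out a pointwise bound; what the argument actually needs (and what Lemma \ref{lemma:KW} and the $L^1$ estimate in Lemma \ref{lemma:N3.exp} supply) is the uniform bound $\sup_z\int_\bbC|W(\zeta)|\,|\zeta-z|^{-1}\,d\zeta<\infty$ together with $W\in L^1(\bbC)$, which suffices for $Pg$ to be bounded, continuous, and $o(1)$ at infinity. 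With that correction the proof goes through as written.
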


 Using the integral equation formulation \eqref{DNLS.dbar.int}, we will prove:

\begin{proposition}
\label{prop:N3.est}
Suppose that $r \in H^{1}(\bbR)$.
Then, for $t\gg 1$, there exists a unique solution $m^{(3)}(z;x,t)$ for Problem \ref{prob:DNLS.dbar} with the property that 
\begin{equation}
\label{N3.exp}
m^{(3)}(z;x,t) = I + \frac{1}{z}  m^{(3)}_1(x,t) + o\left( \frac{1}{z} \right) 
\end{equation}
for $z=i\sigma$ with $\sigma \rarr +\infty$. Here
\begin{equation}
\label{N31.est}
\left| m^{(3)}_1(x,t) \right| \lesssim (z_0 t)^{-3/4}
\end{equation}
 where the implicit constant in \eqref{N31.est} is uniform for 
$r$ in a bounded subset of $H^{1}(\bbR)$ .
\end{proposition}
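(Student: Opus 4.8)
The plan is to solve the integral equation \eqref{DNLS.dbar.int} by a Neumann series in $L^\infty(\bbC)$ and then read off $m^{(3)}_1$ from the large-$z$ expansion; the argument is the standard $\dbar$-steepest-descent estimate, following \cite[Section~5]{CL19} (see also \cite{DM08,BJM16,JLPS18}), with the only new features being the two stationary points $\pm z_0$ and the extra discrete data, both of which are harmless here. First I would recast \eqref{DNLS.dbar.int} as $(I-K_W)m^{(3)}=I$, where
\[
(K_W f)(z) = \frac{1}{\pi}\int_\bbC \frac{f(\zeta)\,W(\zeta;x,t)}{\zeta-z}\,dA(\zeta).
\]
The construction of $m^\RHP$ in Section \ref{sec:local} shows that $m^\RHP$ and $(m^\RHP)^{-1}$ are bounded on $\bbC\setminus(\Sigma^{(3)}\cup\Gamma)$ uniformly for $t\gg1$, since they are assembled from $E_2$, $m^{(br)}$, $m^{A'}$, $m^{B'}$, each of which is bounded with bounded inverse; hence by \eqref{W-bound} one has the pointwise bound $|W(z;x,t)|\lesssim|\dbar\calR^{(2)}(z)|$. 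I would then prove $\norm{K_W}{L^\infty\to L^\infty}\lesssim (z_0t)^{-1/4}$, so that for $t$ large $I-K_W$ is invertible on $L^\infty$ and produces the unique bounded continuous solution $m^{(3)}$ with $\norm{m^{(3)}}{L^\infty}\lesssim1$.

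The core estimate is the bound on $\sup_{z\in\bbC}\int_\bbC|\zeta-z|^{-1}|\dbar\calR^{(2)}(\zeta)|\,dA(\zeta)$. I would argue sector by sector; by Schwarz symmetry it suffices to treat, say, $\Omega_1$, where $\zeta=z_0+\rho e^{i\phi}$ with $0<\phi<\pi/4$. Expanding the phase about the stationary point, $\theta(\zeta)-\theta(z_0)=12tz_0(\zeta-z_0)^2+O(t\rho^3)$, so $|e^{2i\theta(\zeta)}|\lesssim e^{-c\,tz_0\rho^2\sin2\phi}$ on $\Omega_1$, the decay only improving away from $z_0$; the analogous statements hold in the other sectors, with $e^{\pm2i\theta}$ the relevant exponential and $\xi=\pm z_0$, all consistent with the signature table in Figure \ref{sig-table}. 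Using Lemma \ref{lemma:dbar.Ri},
\[
\left|\dbar R_1(\zeta)\,e^{2i\theta}\right|\lesssim\Big(|r'(\Real\zeta)|+\rho^{-1/2}+|\dbar\Xi_\calZ(\zeta)|\Big)\,e^{-c\,tz_0\rho^2\sin2\phi}.
\]
The term $|\dbar\Xi_\calZ|$ is supported in fixed compact annuli around $\calZ\cup\calZ^*$, where $|e^{\pm2i\theta}|\lesssim e^{-ct}$, and so contributes $O(e^{-ct})$. For the remaining two terms I would combine Cauchy--Schwarz in $\rho$ with $\norm{r'}{L^2(\bbR)}<\infty$ and with the local $L^p$-integrability of $|\zeta-z|^{-1}$ for $p<2$, then perform the angular integral, which converges because the Gaussian weight absorbs the $\phi^{-3/4}$-type singularity at $\phi=0$; the rescaling $\rho\mapsto\rho\sqrt{z_0t}$ produces the negative power of $z_0t$. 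Summing over the finitely many sectors $\Omega_i$ (with $\Omega_2,\Omega_5$ contributing nothing since $\dbar\calR^{(2)}=0$ there) gives the operator bound.

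Finally, from \eqref{DNLS.dbar.int} together with $(\zeta-z)^{-1}=-z^{-1}+O(z^{-2})$ along $z=i\sigma$ as $\sigma\to+\infty$, one obtains \eqref{N3.exp} with
\[
m^{(3)}_1(x,t)=-\frac{1}{\pi}\int_\bbC m^{(3)}(\zeta;x,t)\,W(\zeta;x,t)\,dA(\zeta).
\]
Since $\norm{m^{(3)}}{L^\infty}\lesssim1$, it remains to show $\int_\bbC|W(\zeta)|\,dA(\zeta)\lesssim(z_0t)^{-3/4}$, which is the same sectorial computation as above but without the factor $|\zeta-z|^{-1}$, hence cleaner: on $\Omega_1$, a Cauchy--Schwarz estimate followed by the substitution $u=\rho\sqrt{z_0t}$ yields $\int_0^{\pi/4}\!\int_0^\infty(|r'(\Real\zeta)|+\rho^{-1/2})\,\rho\,e^{-c\,tz_0\rho^2\sin2\phi}\,d\rho\,d\phi\lesssim(z_0t)^{-3/4}$, the angular integral $\int_0^{\pi/4}(\sin2\phi)^{-3/4}\,d\phi$ being finite, and likewise on each of the other sectors. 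I expect the main obstacle to be the careful bookkeeping of the decay rates across the various sectors --- verifying that the chosen rays keep $\Real(i\theta)$ of the correct sign throughout each $\Omega_i$ near \emph{both} stationary points, and that the interplay between the $\rho^{-1/2}$ singularity of $\dbar R_i$, the $H^1$-regularity of $r$, and the Gaussian decay integrates precisely to the exponent $-3/4$ rather than something weaker --- together with confirming that the cut-off $\Xi_\calZ$ and the circles in $\Gamma$ contribute only exponentially small terms.
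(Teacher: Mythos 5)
Your proposal is correct and follows essentially the same route as the paper: recast \eqref{DNLS.dbar.int} as $(I-K_W)m^{(3)}=I$, prove $\norm{K_W}{L^\infty\to L^\infty}\lesssim (z_0t)^{-1/4}$ from the bounds on $m^{\RHP}$, $(m^{\RHP})^{-1}$ and $\dbar\calR^{(2)}e^{\pm 2i\theta}$ (with the cut-off $\Xi_\calZ$ and the circles in $\Gamma$ contributing only $O(e^{-ct})$), and then bound $|m^{(3)}_1|$ by $\int_\bbC|W|\,dA\lesssim(z_0t)^{-3/4}$. The only difference is cosmetic: you organize the sectorial integrals in polar coordinates about $\pm z_0$ with the angular factor $(\sin 2\phi)^{-3/4}$, whereas the paper works in Cartesian coordinates with the bound $e^{-z_0t|u||v|}$ and quotes the iterated-integral estimates of \cite[Propositions D.1--D.2]{BJM16}.
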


\begin{proof} Given Lemmas \ref{lemma:dbar.R.bd}--\ref{lemma:N31.est},
 as in \cite{LPS}, we   first show that, for large $t$, the integral operator $K_W$ 
defined by
\begin{equation*}
\left( K_W f \right)(z) = \frac{1}{\pi} \int_\bbC \frac{1}{\zeta-z} f(\zeta) W(\zeta) \, d \zeta
\end{equation*}
is bounded by
\begin{equation}
\label{dbar.int.est1}
\norm{K_W}{L^\infty \rarr L^\infty} \lesssim (z_0 t)^{-1/4}
\end{equation}
where the implied constants depend only on $\norm{r}{H^{1}}$.  This is the goal of Lemma \ref{lemma:KW}.
It implies that 
\begin{equation}
\label{N3.sol}
m^{(3)} = (I-K_W)^{-1}I
\end{equation}
exists as an $L^\infty$ solution of \eqref{DNLS.dbar.int}.

We  then show in Lemma \ref{lemma:N3.exp} that the solution $m^{(3)}(z;x,t)$ has a large-$z$ asymptotic expansion of the form \eqref{N3.exp}
where $z \rarr \infty$ along the \emph{positive imaginary axis}. Note that, for such $z$, we can bound $|z-\zeta|$ below by a constant times $|z|+|\zeta|$.
Finally,  in Lemma \ref{lemma:N31.est} we  prove  estimate \eqref{N31.est}
where the constants are uniform in $r$ belonging to a bounded subset of $H^{1}(\bbR)$.
Estimates \eqref{N3.exp}, \eqref{N31.est}, and \eqref{dbar.int.est1} result from the  bounds obtained in the next four lemmas.
\end{proof}

\begin{lemma}
\label{lemma:dbar.R.bd}
Set $z=(u \mp \xi)+iv$. We have 
\begin{equation}
\label{dbar.R2.bd}
\left| \dbar \calR^{(2)} e^{\pm 2i\theta}  \right| 	\lesssim		
		\left( |p_i'(\Real (z))| + |z \mp \xi|^{-1/2} + \left\vert \Xi_\calZ(z)  \right\vert \right) e^{-z_0t|u||v|}.		
			\end{equation}
\end{lemma}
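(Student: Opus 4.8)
The plan is to reduce \eqref{dbar.R2.bd} to two independent pieces: the already–established pointwise control of $\dbar R_i$, and the size of the oscillatory factor $e^{\pm 2i\theta}$ carried by the corresponding entry of $\dbar\calR^{(2)}$. First I would recall that $\dbar\calR^{(2)}\equiv 0$ on $\Omega_2\cup\Omega_5$ and that on each remaining sector $\Omega_i$ the matrix $\dbar\calR^{(2)}$ is off-diagonal with single nonzero entry $(\dbar R_i)\,e^{\pm 2i\theta}$, the sign in the exponent being the one fixed by the construction \eqref{R1}--\eqref{R8-} of $\calR^{(2)}$ (equivalently, the sign for which $e^{\pm 2i\theta}$ is the \emph{decaying} exponential on $\Omega_i$, per the signature table of Figure~\ref{sig-table}). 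Hence $\left|\dbar\calR^{(2)}\,e^{\pm 2i\theta}\right| = |\dbar R_i|\,\left|e^{\pm 2i\theta}\right|$ on $\Omega_i$, and Lemma~\ref{lemma:dbar.Ri} bounds the first factor by $|p_i'(\Real z)| + |z-\xi|^{-1/2} + |\dbar\Xi_\calZ(z)|$ with constants uniform over bounded subsets of $H^{1}(\bbR)$, where $\xi\in\{z_0,-z_0\}$ is the stationary point attached to $\Omega_i$; since $\Xi_\calZ$ is smooth with compact support, $|\dbar\Xi_\calZ(z)|\lesssim|\Xi_\calZ(z)|$, which already matches the right side of \eqref{dbar.R2.bd}. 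It remains to show $\left|e^{\pm 2i\theta(z)}\right|\lesssim e^{-z_0 t|u||v|}$ on $\Omega_i$, where $u=\Real z-\xi$ and $v=\Imag z$.

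For the phase bound I would use the exact Taylor expansion of $\theta$ about $\pm z_0$ recorded just before \eqref{R1-decay}; with $w:=z-\xi=u+iv$ this gives $\Real(\pm 2i\theta)=\mp 8t\,\Imag\!\big(w^3+3\xi w^2\big)=\mp 8t\,v\,(6\xi u+3u^2-v^2)$. On each sector one has $|v|\le|u|$ and $|w|$ of bounded size (the radii of the circles in $\Gamma$ and the relevant segments of $\Sigma^{(2)}$ having been chosen small), so that $|6\xi u+3u^2-v^2|\ge c\,|\xi|\,|u|=c\,z_0|u|$, and — this is the key point — on $\Omega_i$ the sign of $v\,(6\xi u+3u^2-v^2)$ together with the sign selected in $\pm 2i\theta$ force $\Real(\pm 2i\theta)\le -c\,z_0 t|u||v|$. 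This is exactly the content of the ray estimates \eqref{R1-decay}--\eqref{R8-decay}, transferred from the rays $\Sigma_i$ to the full sectors $\Omega_i$ by the same computation. Consequently $\left|e^{\pm 2i\theta}\right|=e^{\Real(\pm 2i\theta)}\le e^{-c\,z_0 t|u||v|}\le e^{-z_0 t|u||v|}$ (the constant $c$ being absorbed into the implied constant of \eqref{dbar.R2.bd}). The four sectors adjacent to $-z_0$ are handled either by repeating the computation with $\xi=-z_0$ or by the reflection symmetry $z\mapsto-\overline z$ already exploited in Section~\ref{sec:local}; assembling the product bound sector by sector and using $\dbar\calR^{(2)}\equiv 0$ on $\Omega_2\cup\Omega_5$ yields \eqref{dbar.R2.bd}.

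The only genuinely delicate part is the sign bookkeeping: for each of the eight sectors one must check that the triangularity of $\calR^{(2)}$ there (hence the $\pm$ attached to $e^{2i\theta}$ in $\dbar\calR^{(2)}$) is paired with the half-plane direction in which that exponential decays, so that the product is exponentially small rather than exponentially large — this pairing is precisely why the deformed contour is taken along $\pm z_0+e^{i\phi}\bbR^{+}$ with $\phi\in\{\pi/4,3\pi/4,5\pi/4,7\pi/4\}$. Once the pairing is verified on $\Omega_1$ the remaining sectors follow identically modulo the $z\mapsto-\overline z$ symmetry, so I do not expect any real obstruction beyond this routine case analysis; the uniformity in $r$ is inherited entirely from Lemma~\ref{lemma:dbar.Ri} (via Lemma~\ref{lemma:delta}(iv)), since the exponential factor does not involve $r$.
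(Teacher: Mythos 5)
Your proof is correct and follows essentially the same route as the paper's: factor $|\dbar\calR^{(2)}e^{\pm2i\theta}|=|\dbar R_i|\,|e^{\pm2i\theta}|$, invoke Lemma \ref{lemma:dbar.Ri} for the first factor, and Taylor-expand $\theta$ about $\pm z_0$ to compute $\Real(\pm2i\theta)=\mp 8tv(3u^2-v^2+6\xi u)$, which the sector constraints ($|v|\le|u|$, plus $|u|\le z_0$ in the bounded sectors $\Omega_7^\pm,\Omega_8^\pm$) force to be $\le -8z_0t|u||v|$. One small caveat: the pointwise inequality $|\dbar\Xi_\calZ(z)|\lesssim|\Xi_\calZ(z)|$ you use to match the displayed right-hand side is false on the transition annulus of the cutoff; the last term in \eqref{dbar.R2.bd} should simply be read as $|\dbar\Xi_\calZ(z)|$, exactly as Lemma \ref{lemma:dbar.Ri} supplies it (and as the downstream estimates in Section \ref{sec:dbar} actually use), so no such inequality is needed.
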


\begin{proof}
We only show the inequalities above in $\Omega_1$ and $\Omega_7^+$. Recall that near $z_0$
$$i\theta(z; x, t)=4it \left( (z - z_0)^3 + 3z_0 (z - z_0)^2 - 2z_0^3  \right).$$
In $\Omega_1$, we use the facts that $u\geq 0$, $v\geq 0$ and $|u|\geq |v|$ to deduce
\begin{align*}
\text{Re}(2i\theta) &=8it(3iu^2v-iv^3+6iuvz_0)\\
                              &=8t(-3u^2v+v^3-6uvz_0)\\
                              &\leq 8t(-3u^2v +u^2v-6uvz_0)\\
                              &\leq 8t (-2u^2v-6uvz_0)\\
                              &\leq -8|u| |v| z_0 t.
\end{align*}
Similarly, in $\Omega_7^+$, we have $u\leq 0$, $v\geq 0$ and $|u|\geq |v|$, hence
\begin{align*}
\text{Re}(-2i\theta) &=-8it(3iu^2v-iv^3+6iuvz_0)\\
                              &=8t(3u^2v+6uvz_0)\\
                              &\leq 8t(-3u z_0v +6uvz_0)\\
                            &\leq -8|u| |v| z_0 t.
\end{align*}
Estimate \eqref{dbar.R2.bd} then follows from Lemma \ref{lemma:dbar.Ri}. The quantities $p_i'(\Real z)$ are all bounded uniformly for  $r$ in a bounded subset of $H^{1}(\bbR)$.  

\end{proof}

\begin{lemma}
\label{lemma:RHP.bd}For the localized Riemann-Hilbert problem from Problem \ref{MKDV.RHP.local}, we have
\begin{align}
\label{RHP.bd1}
\norm{m^\RHP(\dotarg; x, t)}{\infty}	&	\lesssim		1,\\[5pt]
\label{RHP.bd2}
\norm{m^\RHP(\dotarg; x,t )^{-1}}{\infty}	&	\lesssim	1.
\end{align}
All implied constants are uniform  for  $r$ in a bounded subset of $H^{1}(\bbR)$.
\end{lemma}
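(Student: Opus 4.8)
The plan is to propagate the boundedness through the explicit factorizations assembled in Sections \ref{sec:local}. By Proposition \ref{Prop:expo} we have $m^{\RHP} = E_1\, m^{\RHP}_*$ with $E_1 = I + \mathcal{O}(e^{-ct}/z)$, and by \eqref{parametrix} the parametrix $m^{\RHP}_*$ is, in each of the three regions $|z\pm z_0|>\rho$, $|z+z_0|\le\rho$, $|z-z_0|\le\rho$, a finite product of the matrices $E_2$, $m^{(br)}$, and (possibly) $m^{A'}$ or $m^{B'}$. Hence it suffices to bound each of these four factors and their inverses in $L^\infty$, uniformly for $r$ in a bounded subset of $H^1(\bbR)$ and for $t\gg1$; since all these matrices have unit determinant, $m^{-1}$ is the adjugate of $m$ and obeys the same bound, so only the bounds on the matrices themselves need be established.

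First, $E_1$ and $E_2$ are handled by small-norm Riemann–Hilbert theory. The jump of $E_1$ on $(\Sigma^{(3)}\cup\Gamma)\setminus\Sigma'$ is $I+\mathcal{O}(e^{-ct})$ by \eqref{expo}, and the jump $v^{(E)}$ of $E_2$ on $C_A\cup C_B$ satisfies $\|v^{(E)}-I\|_\infty\lesssim t^{-1/2}$ by the expansions \eqref{expansion-A}--\eqref{expansion-B}; in both cases the Neumann-series argument already used to produce $E_2$ (see \eqref{norm-vE}) gives $\|E_j\|_\infty,\ \|E_j^{-1}\|_\infty\lesssim 1$ for $t$ large, with constants depending only on $\|r\|_{H^1}$. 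Next, $m^{(br)}$ solves the purely discrete Problem \ref{prob:mkdv.br}; its Schwarz invariance, guaranteed by $\delta(z)=(\overline{\delta(\zbar)})^{-1}$ from Lemma \ref{lemma:delta}(ii), makes the linear system \eqref{BC-int-br}--\eqref{BC-int-br-} uniquely solvable and yields the explicit one-breather formula \eqref{u-breather}. Since $\kappa$ and $\chi(z_\ell)$ are bounded in terms of $\|r\|_{H^1}$ (Sobolev embedding together with the modulus-of-continuity estimate), $|\delta(z_\ell)|^{\pm2}$ is bounded, so $\tilde c_\ell=c_\ell\delta(z_\ell)^{-2}$ and hence $\tilde\omega_1,\tilde\omega_2$ remain in a fixed compact set; the denominator $\cosh^2(\nu_2+\tilde\omega_2)+(\eta_\ell/\xi_\ell)^2\cos^2(\nu_1+\tilde\omega_1)\ge 1$ is bounded below, so every entry of $m^{(br)}$, both off and on the small circles $\pm\gamma_\ell\cup\pm\gamma_\ell^*$, is bounded uniformly.

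Finally, $m^{A'}$ and $m^{B'}$: by Proposition \ref{prop:resolvent} the resolvents $(1_A-A)^{-1}$ and $(1_B-B)^{-1}$ are bounded as $\tau\to\infty$, so the Cauchy-integral representations define $m^{A'},m^{B'}$ with $\|m^{A'}\|_\infty,\ \|m^{B'}\|_\infty\lesssim 1$; near $\zeta=0$ these agree with the parabolic-cylinder parametrix, which is bounded since it is built from entire parabolic-cylinder functions, while \eqref{expansion-A}--\eqref{expansion-B} give the bound at infinity, so the estimate is uniform on the disks $|z\pm z_0|\le\rho$ where these factors occur. Multiplying the uniformly bounded factors appearing in \eqref{parametrix} and in $m^{\RHP}=E_1m^{\RHP}_*$ then yields \eqref{RHP.bd1}--\eqref{RHP.bd2}. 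The one genuinely delicate point is the uniformity, simultaneously in $r$ and in the large parameter $t$ (equivalently $\tau$ and $z_0$): for the model problems this is exactly the content of Proposition \ref{prop:resolvent}, and for the breather factor it is the above control of $\delta(z_\ell)$ by $\|r\|_{H^1}$; the remaining verifications (in particular that the parabolic-cylinder constants $\beta_{12},\beta_{21}$ do not blow up when $r(z_0)$ is small, which follows from the explicit Gamma-function identities) are routine.
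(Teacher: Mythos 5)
Your proposal is correct and follows exactly the route the paper intends: the paper's own proof is the one-line remark that the lemma ``is a consequence of the previous section,'' i.e.\ of the factorization $m^{\RHP}=E_1E_2\,m^{(br)}\,(m^{A'}\text{ or }m^{B'})$ and the bounds on each factor, which is precisely what you spell out. Your added details (small-norm bounds for $E_1,E_2$, explicit solvability of the discrete breather system with $\delta(z_\ell)$ controlled by $\|r\|_{H^1}$, resolvent bounds for the parabolic-cylinder models, and the unit-determinant/adjugate argument for the inverse) are all consistent with the constructions in Section \ref{sec:local}.
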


The proof of this lemma is a consequence of the previous section.

\begin{lemma}
\label{lemma:KW}
Suppose that $r\in H^{1}(\bbR)$.
Then, the estimate \eqref{dbar.int.est1}
holds, where the implied constants depend on $\norm{r}{H^{1}}$.
\end{lemma}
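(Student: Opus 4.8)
The plan is to estimate the operator norm of $K_W$ directly from the pointwise size of its kernel. Since
$$\left(K_W f\right)(z)=\frac{1}{\pi}\int_\bbC\frac{f(\zeta)\,W(\zeta)}{\zeta-z}\,d\zeta,$$
we have $\norm{K_W f}{\infty}\le\norm{f}{\infty}\,\sup_{z\in\bbC}\frac{1}{\pi}\int_\bbC|\zeta-z|^{-1}|W(\zeta)|\,d\zeta$, so it suffices to show that this supremum is $\lesssim(z_0t)^{-1/4}$. By \eqref{W-bound} together with the uniform bounds of Lemma \ref{lemma:RHP.bd} on $m^\RHP$ and $(m^\RHP)^{-1}$, one has $|W(\zeta)|\lesssim|\dbar\calR^{(2)}(\zeta)|$ with implied constant depending only on $\norm{r}{H^1}$, so the whole problem reduces to bounding $\int_\bbC|\zeta-z|^{-1}|\dbar\calR^{(2)}(\zeta)|\,d\zeta$ uniformly in $z$.

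First I would split the integral over the finitely many sectors $\Omega_i$ on which $\dbar\calR^{(2)}$ is supported (it vanishes on $\bbR$ and on $\Omega_2\cup\Omega_5$), and, using the Schwarz symmetry built into $\calR^{(2)}$ together with the reflection $z\mapsto-z$, reduce to one representative sector abutting $z_0$, say $\Omega_1$. On $\Omega_1$ write $\zeta-z_0=u+iv$ with $0\le v\le u$; Lemma \ref{lemma:dbar.R.bd} then gives
$$|\dbar\calR^{(2)}(\zeta)|\lesssim\left(|p_1'(\Real\zeta)|+|\zeta-z_0|^{-1/2}+|\dbar\Xi_\calZ(\zeta)|\right)e^{-z_0t\,|u||v|}.$$
The $\dbar\Xi_\calZ$ term is disposed of first: by \eqref{lambda} its support is a fixed annulus around each point of $\calZ\cup\calZ^*$ which stays a fixed positive distance from both $\bbR$ and $\pm z_0$, so there $\Real(i\theta)\le-ct$ and this piece contributes only $O(e^{-ct})$. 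What remains over $\Omega_1$ is
$$\iint_{0\le v\le u}\left(|p_1'(u+z_0)|+(u^2+v^2)^{-1/4}\right)\frac{e^{-z_0t\,uv}}{|\zeta-z|}\,du\,dv.$$

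For the first piece I would apply the Cauchy--Schwarz inequality in $u$ with $v$ fixed: since $r'\in L^2(\bbR)$ and, on the sector, $e^{-2z_0tuv}\le e^{-2z_0tv^2}$, one gets the bound $\norm{p_1'}{L^2}\,e^{-z_0tv^2}\,\norm{|\zeta-z|^{-1}}{L^2_u}\lesssim\norm{r}{H^1}\,e^{-z_0tv^2}|v-\Imag z|^{-1/2}$, uniformly in $\Real z$; the remaining $v$--integral $\int_0^\infty e^{-z_0tv^2}|v-\Imag z|^{-1/2}\,dv$ is then $\lesssim(z_0t)^{-1/4}$ uniformly in $\Imag z$, because $|v-\Imag z|^{-1/2}$ is locally integrable and the Gaussian confines the integral to the scale $v\sim(z_0t)^{-1/2}$ (substitute $v\mapsto(z_0t)^{-1/2}v$). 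The second piece is treated the same way, now using $(u^2+v^2)^{-1/4}\lesssim u^{-1/2}$ on $\Omega_1$; the resulting $u$--integral produces at worst an extra logarithmic factor which, after the same rescaling, is still absorbed into an $O((z_0t)^{-1/4})$ bound. Summing over the sectors and adding the exponentially small $\dbar\Xi_\calZ$ remainders yields \eqref{dbar.int.est1}, with all constants controlled by $\norm{r}{H^1}$ (through $\norm{r'}{L^2}$ and $\norm{r}{L^\infty}\lesssim\norm{r}{H^1}$).

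I expect the only delicate point to be arranging the $u$-- and $v$--integrations so that the Cauchy kernel $|\zeta-z|^{-1}$ is controlled \emph{uniformly in $z\in\bbC$} while still extracting the full $(z_0t)^{-1/4}$ gain from the Gaussian, and in particular treating the tail at infinity and the $|\zeta-z_0|^{-1/2}$ singularity at the stationary point on the same footing; this is essentially the computation carried out in \cite[Section 5]{CL19} (see also \cite[Proposition 2.1]{DM08}), which I would follow with only the modifications forced by the present signature table.
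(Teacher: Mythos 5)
Your proposal follows essentially the same route as the paper: reduce the operator norm to $\sup_z\int_\bbC|\zeta-z|^{-1}|W(\zeta)|\,d\zeta$, use \eqref{W-bound} and Lemma \ref{lemma:RHP.bd} to replace $|W|$ by $|\dbar\calR^{(2)}|$, restrict to a single sector, and split according to the three terms of Lemma \ref{lemma:dbar.Ri}; the paper simply cites \cite[proof of Proposition D.1]{BJM16} for the resulting integrals $I_1,I_2,I_3$, which you instead write out. Your treatment of the $|p_1'|$ piece (Cauchy--Schwarz in $u$, the bound $\norm{|\zeta-z|^{-1}}{L^2_u(v,\infty)}\lesssim|v-\Imag z|^{-1/2}$, then the rescaling $v\mapsto(z_0t)^{-1/2}v$) is exactly the standard computation, and your observation that the $\dbar\Xi_\calZ$ term is in fact exponentially small (its support keeps $u,v\gtrsim\lambda$) is correct and slightly stronger than needed. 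The one loose step is the $|z\mp\xi|^{-1/2}$ piece: if you discard the exponential in $u$ via $e^{-z_0tuv}\le e^{-z_0tv^2}$ ``the same way'' as for the first piece, then $\norm{u^{-1/2}}{L^2_u(v,\infty)}$ diverges at $u=\infty$ and plain Cauchy--Schwarz fails outright, not merely by a log. To repair it you must either keep $e^{-z_0tuv}$ attached to $u^{-1/2}$ inside the $L^2_u$ norm, which produces an integrable logarithmic singularity in $v$ that the rescaling does absorb into $(z_0t)^{-1/4}$, or, as in \cite{BJM16} and in the paper's own Region II computation, use H\"older with exponents $p>2$, $q<2$ so that $\norm{u^{-1/2}}{L^p_u(v,\infty)}\lesssim v^{1/p-1/2}$ is finite. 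With that one adjustment the argument is complete and matches the paper's.
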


\begin{proof}
To prove \eqref{dbar.int.est1}, first note that
\begin{align}
 \norm{K_W f}{\infty} &\leq \norm{f}{\infty} \int_\bbC \frac{1}{|z-\zeta|}|W(\zeta)| \, dm(\zeta) 
                                \end{align}
so that we need only estimate the right-hand integral. We will prove the estimate in the region $ z\in\Omega_1$ since estimates for the remaining regions are identical. From \eqref{W-bound}, it follows
$$ |W(\zeta)| \leq \norm{m^{\RHP}}{\infty} \norm{(m^{\RHP})^{-1}}{\infty} \left| \dbar R_1\right| |e^{2i\theta}|.$$
Setting $z=\alpha+i\beta$ and $\zeta=(u+z_0)+iv$, the region $\Omega_1$ corresponds to $u\geq v \geq 0 $. We then have from \eqref{dbar.R2.bd} \eqref{RHP.bd1}, and \eqref{RHP.bd2} that
$$
 \int_{\Omega_1}  \frac{1}{|z-\zeta|} |W(\zeta)| \, d\zeta  \lesssim  I_1 + I_2 +I_3
$$
where
\begin{align*}
I_1 	&=	\int_0^\infty \int_v^\infty \frac{1}{|z-\zeta|} |p_1'(u)| e^{-tz_0uv} \, du \, dv, \\[5pt]
I_2	&=	\int_0^\infty \int_v^\infty \frac{1}{|z-\zeta|} \left| u+iv \right|^{-1/2} e^{-t z_0 uv} \, du \, dv,\\
I_3	&=	\int_0^\infty \int_v^\infty \frac{1}{|z-\zeta|} \left|  \dbar (\Xi_\calZ(\zeta  ) ) \right| e^{-t z_0 uv} \, du \, dv.
\end{align*}
It now follows from \cite[proof of Proposition D.1]{BJM16} that
$$
|I_1|, \, |I_2|, \, |I_3| \lesssim (z_0t)^{-1/4}.
$$
It then follows that
$$ \int_{\Omega_1} \frac{1}{|z-z_0|} |W(\zeta)| \, d\zeta \lesssim (z_0t)^{-1/4} $$
which, together with similar estimates for the integrations over the remaining $\Omega_i$s,  proves \eqref{dbar.int.est1}.
\end{proof}

\begin{lemma}
\label{lemma:N3.exp}
For $z=i\sigma$ with $\sigma \rarr +\infty$, the expansion \eqref{N3.exp} holds with 
\begin{equation}
\label{N3.1}
m^{(3)}_1(x,t) = \frac{1}{\pi} \int_{\bbC} m^{(3)}(\zeta;x,t) W(\zeta;x,t) \, d\zeta . 
\end{equation}
\end{lemma}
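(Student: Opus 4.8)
The plan is to extract the coefficient of $z^{-1}$ directly from the integral equation \eqref{DNLS.dbar.int}. Subtracting $I$ and multiplying by $z$ gives
\[
z\bigl(m^{(3)}(z;x,t)-I\bigr)=\frac{1}{\pi}\int_\bbC \frac{z}{\zeta-z}\,m^{(3)}(\zeta;x,t)\,W(\zeta;x,t)\,d\zeta ,
\]
so that establishing \eqref{N3.exp} with $m^{(3)}_1$ given by \eqref{N3.1} amounts to passing to the limit $z=i\sigma$, $\sigma\to+\infty$, inside this integral, i.e. to a dominated-convergence argument applied to the integrand $\tfrac{z}{\zeta-z}\,m^{(3)}(\zeta)W(\zeta)$ and isolating the $z^{-1}$ term of the resulting kernel expansion about $z=\infty$.

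The two inputs are a uniform integrable majorant and a pointwise limit for the kernel. For the kernel: for $z=i\sigma$ with $\sigma>0$ one has $|\zeta-z|\geq c\,(|\zeta|+|z|)$ for every $\zeta\in\bbC$ with $c$ an absolute constant — this is precisely the elementary geometric fact already noted in the proof of Proposition \ref{prop:N3.est}, and it is where the restriction of $z$ to the positive imaginary axis enters. Consequently $\bigl|\tfrac{z}{\zeta-z}\bigr|\leq c^{-1}$ uniformly in $\sigma$, while for each fixed $\zeta$ the quantity $\tfrac{z}{\zeta-z}$ tends to its limiting value as $\sigma\to\infty$. For the majorant: $\norm{m^{(3)}(\dotarg;x,t)}{\infty}\lesssim 1$ follows from the Neumann-series solvability $m^{(3)}=(I-K_W)^{-1}I$ of Proposition \ref{prop:N3.est} (using $\norm{K_W}{L^\infty\to L^\infty}<1$ for $t$ large), and $W(\dotarg;x,t)\in L^1(\bbC)$ follows from the pointwise bound
\[
|W(\zeta;x,t)|\lesssim \bigl(|p_i'(\Real \zeta)|+|\zeta\mp z_0|^{-1/2}+|\dbar\Xi_{\calZ}(\zeta)|\bigr)\,e^{-z_0 t|u||v|}
\]
of Lemma \ref{lemma:dbar.R.bd} together with $\norm{m^{\RHP}}{\infty},\norm{(m^{\RHP})^{-1}}{\infty}\lesssim 1$ from Lemma \ref{lemma:RHP.bd}, integrated over each $\Omega_i$ exactly as the kernel-free versions of the integrals $I_1,I_2,I_3$ in the proof of Lemma \ref{lemma:KW}. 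With these two facts, dominated convergence yields \eqref{N3.exp} with $m^{(3)}_1$ as in \eqref{N3.1}; the sharper quantitative bound $|m^{(3)}_1|\lesssim(z_0t)^{-3/4}$ in \eqref{N31.est} is then the separate content of Lemma \ref{lemma:N31.est}.

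There is no deep obstacle here: the statement is a bookkeeping step that repackages estimates already assembled for Proposition \ref{prop:N3.est}. The only points requiring care are (a) making the integrable majorant honest — checking that the $|\zeta\mp z_0|^{-1/2}$ singularities and the compactly supported $\dbar\Xi_{\calZ}$ term are locally integrable and that the Gaussian-type factor $e^{-z_0 t|u||v|}$ dominates the tails along the rays of $\Sigma^{(2)}$, so that $\int_\bbC|W|\,d\zeta<\infty$ with a bound uniform for $r$ in a bounded subset of $H^1(\bbR)$ (this is where one reuses the estimates behind Lemma \ref{lemma:KW}); and (b) noting that the expansion is asserted only along $z=i\sigma$, since the majorant argument relies on $|\zeta-z|\gtrsim|\zeta|+|z|$, which can fail if $z$ is allowed to approach $\infty$ tangentially to the support of $W$, so the directional restriction is essential to this method.
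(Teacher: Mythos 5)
Your argument is essentially the paper's own proof: both rewrite the integral equation \eqref{DNLS.dbar.int} to peel off the $z^{-1}$ coefficient and then apply dominated convergence to the remainder, using exactly the three inputs you name --- the bound $|\zeta-z|\gtrsim |\zeta|+|z|$ valid for $z=i\sigma$, the uniform bound $\|m^{(3)}\|_\infty\lesssim 1$ from the Neumann series, and the finiteness of $\int_{\bbC}|W|\,d\zeta$ obtained by combining Lemmas \ref{lemma:dbar.R.bd} and \ref{lemma:RHP.bd} and estimating region by region as in Lemma \ref{lemma:KW}. The only cosmetic difference is that you take the limit of $z\bigl(m^{(3)}-I\bigr)$ directly (note the pointwise limit of $z/(\zeta-z)$ is $-1$, so the formula for $m^{(3)}_1$ acquires a sign that the paper itself is loose about, which is immaterial since only $|m^{(3)}_1|$ is used), whereas the paper first splits off $\tfrac{1}{z}m^{(3)}_1$ and shows the remaining integral is $o(1/z)$.
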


\begin{proof}
We write   \eqref{DNLS.dbar.int} as 
$$
m^{(3)}(z;x,t) = I+ \frac{1}{z} m^{(3)}_1(x,t) + \frac{1}{\pi z} \int_{\bbC} \frac{\zeta}{z-\zeta} m^{(3)}(\zeta;x,t) W(\zeta;x,t) \, dm(\zeta)
$$
where $m^{(3)}_1$is given by \eqref{N3.1}. If $z=i\sigma$, it is easy to see that $|\zeta|/|z-\zeta|$ is bounded above by a fixed constant independent of $z$, while $|m^{(3)}(\zeta;x,t)| \lesssim 1$ by the remarks following \eqref{N3.sol}. If we can show that $\int_\bbC |W(\zeta;x,t)| \, d\zeta$ is finite, it will follow from the Dominated Convergence Theorem that 
$$
\lim_{\sigma \rarr \infty} \int_\bbC \frac{\zeta}{i\sigma-\zeta} m^{(3)}(\zeta;x,t) W(\zeta;x,t) \, d\zeta = 0 
$$ 
which implies the required asymptotic estimate. We will estimate $\dint_{\hspace{-1.25mm} \Omega_1} |W(\zeta)| \, dm(\zeta)$ since the other estimates are identical. One can write
$$\Omega_1= \left\{ (u+z_0,v): v \geq 0, \, v \leq u < \infty\right\}.$$ 
Using \eqref{dbar.R2.bd}, \eqref{RHP.bd1}, and \eqref{RHP.bd2}, we may then estimate
$$
\int_{\Omega_1} |W(\zeta;x,t)| \, d \zeta	\lesssim  I_1+I_2 +I_3
$$
where
\begin{align*}
I_1	&=	\int_0^\infty \, \int_v^\infty \left| p_1'(u+z_0) \right| e^{-tz_0uv} \, du \, dv\\
I_2	&=	\int_0^\infty \int_v^\infty \left| u^2 + v^2 \right|^{-1/2} e^{-tz_0 uv} \, du \, dv \\
I_3	&=	\int_0^\infty \int_v^\infty \left| \dbar \left( \Xi_\calZ(\zeta) \right)\right| e^{-tz_0 uv} \, du \, dv
\end{align*}
It now follows from \cite[Proposition D.2]{BJM16} that
$$I_1, \, I_2, \, I_3\lesssim (z_0t)^{-3/4}.$$
These estimates together show that
\begin{equation}
\label{W.L1.est}
\int_{\Omega_1} |W(\zeta;x,t)| \, d \zeta	\lesssim( z_0t)^{-3/4}
\end{equation}
and that the implied constant depends only on $\norm{r}{H^{1}}$.  In particular, the integral \eqref{W.L1.est} is bounded uniformly as $t \rarr \infty$.
\end{proof}

\begin{lemma}
\label{lemma:N31.est}
The estimate   \eqref{N31.est} 
 holds with constants uniform in $r$ in a bounded subset of $H^{1}(\bbR)$ .
\end{lemma}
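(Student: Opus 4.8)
The plan is to obtain \eqref{N31.est} directly from the integral representation \eqref{N3.1}, combined with the $L^\infty$-boundedness of $m^{(3)}$ and the $L^1(\bbC)$-bound on $W$ that was already produced inside the proof of Lemma \ref{lemma:N3.exp}; no new analytic input is required.

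First I would record that the solution $m^{(3)} = (I - K_W)^{-1} I$ of \eqref{N3.sol} satisfies $\norm{m^{(3)}(\dotarg;x,t)}{\infty} \lesssim 1$ for $t \gg 1$, uniformly for $r$ in a bounded subset of $H^{1}(\bbR)$: this follows by expanding $(I-K_W)^{-1}$ in a Neumann series and invoking the operator-norm estimate \eqref{dbar.int.est1} of Lemma \ref{lemma:KW}. Feeding this into \eqref{N3.1} gives
\[
\left| m^{(3)}_1(x,t) \right| \leq \frac{1}{\pi} \norm{m^{(3)}}{\infty} \int_\bbC \left| W(\zeta;x,t) \right| \, d\zeta \lesssim \int_\bbC \left| W(\zeta;x,t) \right| \, d\zeta,
\]
so the lemma reduces to bounding $\norm{W}{L^1(\bbC)}$ by $(z_0 t)^{-3/4}$.

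Next I would decompose $\bbC$ into the sectors $\Omega_i$ of Figure \ref{fig:contour-2}, discarding $\Omega_2 \cup \Omega_5$ where $W \equiv 0$ by part (4) of Problem \ref{prob:DNLS.RHP.dbar}. On each remaining sector $W = m^{\RHP} (\dbar \calR^{(2)}) (m^{\RHP})^{-1}$, and I would estimate it exactly as in the proof of Lemma \ref{lemma:N3.exp}: Lemma \ref{lemma:RHP.bd} controls the conjugating factors, while Lemma \ref{lemma:dbar.R.bd} gives, with $\zeta = (u \pm z_0) + iv$,
\[
\left| W(\zeta;x,t) \right| \lesssim \left( \left| p_i'(\Real \zeta) \right| + \left| \zeta \mp z_0 \right|^{-1/2} + \left| \dbar \Xi_\calZ(\zeta) \right| \right) e^{-z_0 t |u||v|}.
\]
Integrating the three terms over the sector produces the integrals $I_1, I_2, I_3$ from the proof of Lemma \ref{lemma:N3.exp}, each of which is $\lesssim (z_0 t)^{-3/4}$ by \cite[Proposition D.2]{BJM16}; this is precisely \eqref{W.L1.est} for $\Omega_1$. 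Repeating the identical argument in the finitely many remaining sectors ($\Omega_3, \Omega_4, \Omega_6, \Omega_7^\pm, \Omega_8^\pm$) and summing yields $\int_\bbC |W| \, d\zeta \lesssim (z_0 t)^{-3/4}$, hence \eqref{N31.est}. Throughout, every implied constant depends on $r$ only through $\norm{r}{H^{1}}$, since that is the only way $r$ enters the bounds of Lemmas \ref{lemma:dbar.R.bd}--\ref{lemma:KW}.

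Since the hard work — the exponential decay of $\dbar \calR^{(2)}$ off the critical rays, the resolvent bound, and the oscillatory-integral estimates imported from \cite{BJM16} — has already been done, there is essentially no obstacle here; the statement is a bookkeeping corollary. The one point that warrants a line of care is the cutoff contribution $\dbar \Xi_\calZ$: it is supported in the fixed annuli of width $\sim \lambda$ about $\calZ \cup \calZ^*$ (Remark \ref{circles}), hence bounded away from $\pm z_0$, so on its support $|u||v|$ is bounded below and the factor $e^{-z_0 t |u||v|}$ renders this term exponentially small in $t$, comfortably absorbed into the $(z_0 t)^{-3/4}$ bound; one need only note that, along the fixed frame $x = \mathrm{v}t$, these supports sit inside the sectors $\Omega_i$, which is exactly how the radii were chosen in Remark \ref{circles}.
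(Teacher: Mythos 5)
Your proposal is correct and follows essentially the same route as the paper: reduce via the representation formula \eqref{N3.1} and the Neumann-series bound $\norm{m^{(3)}}{\infty}\lesssim 1$ to the $L^1(\bbC)$ estimate on $W$, which is exactly the bound \eqref{W.L1.est} already established sector by sector in the proof of Lemma \ref{lemma:N3.exp}. The paper states this in two lines; you have merely written out the same bookkeeping (including the harmless $\dbar\Xi_\calZ$ term) in full.
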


\begin{proof}
From the representation formula \eqref{N3.1}, Lemma \ref{lemma:KW}, and the remarks following, we have
$$ \left|m^{(3)}_1(x,t) \right| \lesssim \int_\bbC |W(\zeta;x,t)| \, d\zeta. $$
In the proof of Lemma \ref{lemma:N3.exp}, we bounded this integral by $( z_0t)^{-3/4}$ modulo constants with the required uniformities.
\end{proof}
\section{Long-Time Asymptotics}
\label{sec:large-time}

We now put together our previous results and formulate the long-time asymptotics of $u(x,t)$  in Region I.  Undoing all transformations we carried out previously,  we get back $m$:
\begin{equation}
\label{N3.to.N}
m(z;x,t) = m^{(3)}(z;x,t) m^\RHP(z; z_0) \calR^{(2)}(z)^{-1} \delta(z)^{\sigma_3}.
\end{equation}
By stand inverse scattering theory, the coefficient of $z^{-1}$ in the large-$z$ expansion for $m(z;x,t)$ will be the solution to the mKdV. 
\begin{lemma}
\label{lemma:N.to.NRHP.asy}
For $z=i\sigma$ and $\sigma \rarr +\infty$, the asymptotic relations
\begin{align}
\label{N.asy}
m(z;x,t) 				&=	I + \frac{1}{z} m_1(x,t) + o\left(\frac{1}{z}\right)\\
\label{N.RHP.asy}
m^\RHP(z;x,t)		&=	I + \frac{1}{z} m^\RHP_1(x,t) + o\left(\frac{1}{z}\right)
\end{align}
hold. Moreover,
\begin{equation}
\label{N.to.NRHP.asy} 
\left(m_1(x,t)\right)_{12} = \left(m^\RHP_1(x,t)\right)_{12} + \bigO{(z_0 t)^{-3/4}}.
\end{equation}
\end{lemma}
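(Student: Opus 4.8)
\textbf{Proof proposal for Lemma \ref{lemma:N.to.NRHP.asy}.}

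The plan is to read off all three asymptotic relations directly from the explicit chain of transformations recorded in \eqref{N3.to.N}, and to extract \eqref{N.to.NRHP.asy} by matching the coefficient of $z^{-1}$ entry by entry. First I would fix the direction $z=i\sigma$, $\sigma\to+\infty$, and note that for $\sigma$ larger than $\max_k \zeta_k$ the point $z$ lies in the sector $\Omega_2$ and avoids every contour in $\Sigma^{(3)}\cup\Gamma$ as well as every pole; in particular, by construction $\calR^{(2)}(z)\equiv I$ on $\Omega_2$ (recall $\dbar\calR^{(2)}=0$ on $\Omega_2\cup\Omega_5$, and $\calR^{(2)}$ is taken to be the identity there). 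Hence along this ray \eqref{N3.to.N} simplifies to $m(z;x,t)=m^{(3)}(z;x,t)\,m^\RHP(z;x,t)\,\delta(z)^{\sigma_3}$, so all three factors are legitimate to expand term by term.

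Each factor has a known large-$z$ expansion. By Lemma \ref{lemma:delta}(iii), $\delta(z)=1+\delta_1 z^{-1}+\mathcal{O}(z^{-2})$, hence $\delta(z)^{\sigma_3}=I+\delta_1\sigma_3 z^{-1}+\mathcal{O}(z^{-2})$, with $\delta_1\sigma_3$ \emph{diagonal}. By Proposition \ref{prop:N3.est}, $m^{(3)}(z;x,t)=I+m^{(3)}_1(x,t)z^{-1}+o(z^{-1})$ along this ray, with $|m^{(3)}_1(x,t)|\lesssim (z_0 t)^{-3/4}$ uniformly for $r$ in a bounded subset of $H^1(\bbR)$. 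Finally $m^\RHP(z;x,t)=I+m^\RHP_1(x,t)z^{-1}+o(z^{-1})$, which is \eqref{N.RHP.asy}: it follows from the explicit parametrix of Section \ref{sec:local}, since outside the disks $m^\RHP_*=E_2 m^{(br)}$, where $E_2$ has the expansion \eqref{E2-expan} and $m^{(br)}$ has a large-$z$ expansion coming from the finite linear system \eqref{BC-int-br}--\eqref{BC-int-br-}, all modulo the exponentially small correction $E_1$ of Proposition \ref{Prop:expo}.

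Multiplying the three expansions and collecting the $z^{-1}$ coefficient gives $m_1(x,t)=m^{(3)}_1(x,t)+m^\RHP_1(x,t)+\delta_1\sigma_3$, which in particular establishes \eqref{N.asy}. Taking the $(1,2)$ entry, the diagonal term $\delta_1\sigma_3$ contributes nothing, so $(m_1(x,t))_{12}=(m^{(3)}_1(x,t))_{12}+(m^\RHP_1(x,t))_{12}$; since $|(m^{(3)}_1(x,t))_{12}|\le |m^{(3)}_1(x,t)|\lesssim (z_0 t)^{-3/4}$, the estimate \eqref{N.to.NRHP.asy} follows.

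I do not anticipate a serious obstacle: the substantive input, the $(z_0 t)^{-3/4}$ bound on $m^{(3)}_1$, has already been proved in Proposition \ref{prop:N3.est}, and the remaining points are bookkeeping — verifying that the ray $z=i\sigma$ genuinely escapes all poles and contours (so the factored expansions make sense term by term), that $\calR^{(2)}$ is actually the identity (not merely $\dbar$-closed) in the sector containing the positive imaginary axis, and that products of the $o(z^{-1})$ remainders with the bounded leading factors remain $o(z^{-1})$, all of which are immediate from the constructions in Sections \ref{sec:prep}--\ref{sec:dbar}.
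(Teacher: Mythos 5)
Your proposal is correct and follows essentially the same route as the paper: restrict to the ray $z=i\sigma$ where $\calR^{(2)}\equiv I$, multiply the three expansions from Lemma \ref{lemma:delta}(iii), Proposition \ref{prop:N3.est} and the parametrix construction, and observe that the diagonal contribution of $\delta(z)^{\sigma_3}$ drops out of the $(1,2)$ entry. Your bookkeeping is in fact slightly cleaner than the paper's (writing the $z^{-1}$ coefficient of $\delta(z)^{\sigma_3}$ as $\delta_1\sigma_3$, where the paper's displayed matrix \eqref{delta.sigma.asy} contains a harmless typo in its $(2,2)$ entry), but the argument is the same.
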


\begin{proof}
By Lemma \ref{lemma:delta} (iii), 
the expansion
\begin{equation}
\label{delta.sigma.asy} 
\delta(z)^{\sigma_3} = \twomat{1}{0}{0}{1} + \frac{1}{z} \twomat{\delta_1}{0}{0}{\delta_1^{-1}} + \bigO{z^{-2}} 
\end{equation}
holds, with the remainder in \eqref{delta.sigma.asy}
 uniform in $r$ in a bounded subset of $H^{1}$. 
\eqref{N.asy} follows from \eqref{N3.to.N},  \eqref{N.RHP.asy}, the fact that $\calR^{(2)} \equiv I$ in $\Omega_2$,
and \eqref{delta.sigma.asy}.
Notice the fact that the 
diagonal matrix in \eqref{delta.sigma.asy} does not affect the $12$-component of $m$. Hence, for 
$z=i\sigma$,
$$ 
\left(m(z;x,t)\right)_{12} = 
		\frac{1}{z}\left(m^{(3)}_1(x,t)\right)_{12} + 
		\frac{1}{z}\left(m^\RHP_1(x,t)\right)_{12} + o\left(\frac{1}{z}\right)
$$
and result now follows from \eqref{N31.est}. 
\end{proof}
From previous results (see Proposition \ref{Prop:expo}, Problem \ref{prob:mkdv.br} and Problem \ref{prob: E-2} ) we have:
\begin{align}
\label{expan-mLC}
m^\RHP (z) &=E_1(z)m^\RHP_*(z)\\
\nonumber
              &= E_1(z)E_2(z)m^{(br)}(z)\\
 \nonumber
             &= \left(  I+\dfrac{E_{1,1} }{z}+... \right)  \left(   I+\dfrac{E_{2,1} }{z} +...\right)  \left(   I+\dfrac{m^{(br)}_1 }{z} +...\right)                  
\end{align}
as $z\to\infty.$

Together with Lemma \ref{lemma:N.to.NRHP.asy},  we arrive at the asymptotic formula in Region I:

\begin{proposition}
\label{lemma:N.RHP.asy}
The function 
\begin{equation}
\label{q.recon.bis}
u(x,t) = 2 \lim_{z \rarr \infty} z\, m_{12}(z;x,t)
\end{equation}
takes the form 
$$ u(x,t) = u^{(br)} (x,t)+  u_{as}(x,t) +\mathcal{O}\left( (z_0 t)^{-3/4}\right) $$
where  $ u^{(br)} (x,t)$ is given by \eqref{u-breather} and
\begin{align}
\label{u-asym}
u_{as}(x,t) &= \dfrac{1}{\sqrt{48tz_0}}\left( m_{11}^{(br)}( -z_0)^2 \left( i ( \delta^0_A)^2 \overline{\beta}_{12} \right) + m_{12}^{(br)} (-z_0)^2 \left(  i ( \delta^0_A)^{-2}\overline{\beta}_{21} \right)   \right)\\
\nonumber
&\quad + \dfrac{1}{\sqrt{48tz_0}}\left( m_{11}^{(br)} (z_0)^2 \left( -i ( \delta^0_B)^2 {\beta}_{12} \right)    -m_{12}^{(br)}(z_0)^2 \left( i ( \delta^0_B)^{-2}{\beta}_{21} \right) \right).
\end{align}
\end{proposition}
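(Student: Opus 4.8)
The plan is to assemble Proposition~\ref{lemma:N.RHP.asy} purely as a bookkeeping exercise, chaining together the expansions proved in Sections~\ref{sec:local} and~\ref{sec:dbar}. First I would invoke the reconstruction formula \eqref{q.recon.bis} together with Lemma~\ref{lemma:N.to.NRHP.asy}, which already tells us that
$$
u(x,t) = 2\left(m_1(x,t)\right)_{12} = 2\left(m^\RHP_1(x,t)\right)_{12} + \mathcal{O}\left((z_0 t)^{-3/4}\right).
$$
So the entire problem reduces to extracting the $12$-entry of the coefficient of $z^{-1}$ in the large-$z$ expansion of $m^\RHP(z)$. That expansion is \eqref{expan-mLC}: $m^\RHP(z) = (I + E_{1,1}/z + \cdots)(I + E_{2,1}/z + \cdots)(I + m^{(br)}_1/z + \cdots)$, whence
$$
m^\RHP_1 = E_{1,1} + E_{2,1} + m^{(br)}_1.
$$

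Next I would dispose of the $E_{1,1}$ term: by Proposition~\ref{Prop:expo}, $E_1(z) = I + \mathcal{O}(e^{-ct}/z)$, so $E_{1,1} = \mathcal{O}(e^{-ct})$, which is absorbed into the $\mathcal{O}((z_0 t)^{-3/4})$ error. The surviving contributions are therefore the $12$-entries of $m^{(br)}_1$ and $E_{2,1}$. The first of these is, by definition of $u^{(br)}$ in \eqref{u-breather}, exactly $\tfrac12 u^{(br)}(x,t)$ (recall $u^{(br)} = 2z\lim m^{(br)}_{12} = 2(m^{(br)}_1)_{12}$), so $2(m^{(br)}_1)_{12} = u^{(br)}(x,t)$. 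For $E_{2,1}$ I would use the explicit Cauchy-integral evaluation \eqref{E-1-2 cauchy}, which expresses $E_{2,1}$ as a sum of two conjugations of the rank-one matrices coming from $m^{A'}_1$ and $m^{B'}_1$ by the breather matrix evaluated at $\mp z_0$, plus $\mathcal{O}(t^{-1})$. Taking the $12$-entry of the conjugation $m^{(br)}(z_0)\,\mathrm{diag\text{-}free\ matrix}\,m^{(br)}(z_0)^{-1}$ and using $\det m^{(br)} = 1$ so that $(m^{(br)})^{-1} = \twomat{m^{(br)}_{22}}{-m^{(br)}_{12}}{-m^{(br)}_{21}}{m^{(br)}_{11}}$, a direct $2\times 2$ computation gives
$$
\left(m^{(br)}(\pm z_0)\twomat{0}{p}{q}{0}m^{(br)}(\pm z_0)^{-1}\right)_{12} = m^{(br)}_{11}(\pm z_0)^2\,p \;-\; m^{(br)}_{12}(\pm z_0)^2\,q.
$$
Substituting $p,q$ from \eqref{E-1-2 cauchy} (with the factor $1/\sqrt{48z_0 t}$ and the $\delta^0_A,\delta^0_B,\beta_{12},\beta_{21}$ constants) and multiplying by $2$ produces precisely the expression $u_{as}(x,t)$ displayed in \eqref{u-asym}; the sign bookkeeping between the $A$-contribution (at $-z_0$, with $\overline{\beta}$'s and $+i\delta_A^2\overline\beta_{12}$, $-i\delta_A^{-2}\overline\beta_{21}$) and the $B$-contribution (at $z_0$, with $-i\delta_B^2\beta_{12}$, $+i\delta_B^{-2}\beta_{21}$) is read off directly from the two lines of \eqref{E-1-2 cauchy}.

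Finally I would collect the error terms: $E_{1,1}$ contributes $\mathcal{O}(e^{-ct})$, the tail of \eqref{E-1-2 cauchy} contributes $\mathcal{O}(t^{-1})$, and Lemma~\ref{lemma:N.to.NRHP.asy} contributes $\mathcal{O}((z_0 t)^{-3/4})$; since $(z_0 t)^{-3/4}$ dominates $t^{-1}$ and $e^{-ct}$ in Region~I (where $z_0 \simeq \sqrt{-x/t} = \mathcal{O}(1)$ along the relevant frames and $z_0 t \to \infty$), the total error is $\mathcal{O}((z_0 t)^{-3/4})$, giving
$$
u(x,t) = u^{(br)}(x,t) + u_{as}(x,t) + \mathcal{O}\left((z_0 t)^{-3/4}\right)
$$
as claimed. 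I do not expect any genuine obstacle here: the analytic content was already established in the small-norm RHP estimates (Proposition~\ref{Prop:expo}, Proposition~\ref{prop:resolvent}) and the $\dbar$-estimates (Proposition~\ref{prop:N3.est}); the only care needed is the algebra of the $2\times 2$ conjugation and getting the four signs in $u_{as}$ to match \eqref{E-1-2 cauchy}, which is where I would be most careful to cross-check against the explicit forms \eqref{explicit-B0} and \eqref{mA-mB}.
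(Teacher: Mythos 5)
Your proposal is correct and follows exactly the paper's route: Lemma \ref{lemma:N.to.NRHP.asy} reduces everything to $(m^\RHP_1)_{12}$, the factorization \eqref{expan-mLC} splits this into $E_{1,1}+E_{2,1}+m^{(br)}_1$ with $E_{1,1}$ exponentially small, and the $12$-entry of the conjugation in \eqref{E-1-2 cauchy} (your identity $(M\,\mathrm{offdiag}(p,q)\,M^{-1})_{12}=m_{11}^2p-m_{12}^2q$ for $\det M=1$) yields precisely the two lines of \eqref{u-asym}. The only caveat is the overall factor of $2$ from the reconstruction formula $u=2(m_1)_{12}$, which your write-up (like the paper's own passage from \eqref{E-1-2 cauchy} to \eqref{u-asym}) does not fully reconcile with the $1/\sqrt{48tz_0}$ prefactor displayed in $u_{as}$; this is a constant-factor bookkeeping issue already present in the source and not a gap in your argument.
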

\begin{proposition} If we choose the frame  $x=\mathrm{v} t$ with $\mathrm{v} <0$ and $\mathrm{v} \neq 4\eta^2_j-12\xi^2_j $ for all $1\leq j \leq N_2$, then
$$ u(x,t) =  u_{as}(x,t) +\mathcal{O}\left( (z_0 t)^{-3/4}\right) $$
where
\begin{equation}
\label{as-solitonless}
u_{as}(x,t)=\left( \dfrac{\kappa}{3tz_0}\right)^{1/2}\cos \left(16tz_0^3-\kappa\log(192tz_0^3)+\phi(z_0) \right)
\end{equation}
with
\begin{align*}
\phi(z_0)&=\arg \Gamma(i\kappa)-\dfrac{\pi}{4}-\arg r(z_0)+\dfrac{1}{\pi}\int_{-z_0}^{z_0}\log\left( \dfrac{1+|r(\zeta)|^2}{1+|r(z_0)|^2} \right)\dfrac{d\zeta}{\zeta-z_0}\\
             & \quad -4 \left( \sum_{k=1}^{N_1} \arg(z_0-z_k)  +  \sum_{z_j \in \mathcal{B}_\ell } \arg(z_0- {z_j}) +  \sum_{z_j\in \mathcal{B}_\ell } \arg(z_0 + \overline{z_j})   \right)
\end{align*}
\end{proposition}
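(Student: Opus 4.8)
The plan is to re-run the entire chain of transformations of Sections~\ref{sec:prep}--\ref{sec:dbar} in the frame $x=\mathrm{v}t$, exploiting the genericity of $\mathrm{v}$. The decisive observation is that, since $\mathrm{v}\neq 4\eta_j^2-12\xi_j^2$ for every $1\le j\le N_2$ and $\mathrm{v}<0<4\zeta_k^2$ automatically for every soliton, \emph{no} discrete eigenvalue lies on the critical hyperbola $4\eta^2-12\xi^2=\mathrm{v}$ attached to this frame. Hence in Problem~\ref{prob:RH.delta} we may take as pole set all the $z_k$ together with $\{z_j,-\overline{z_j}:\ 4\eta_j^2-12\xi_j^2>\mathrm{v}\}$ (the analogue of $\calB_\ell$, which we still call $\calB$), and the signature table of Figure~\ref{sig-table} --- with the hyperbola now missing all marked points --- shows that after the conjugation $m\mapsto m^{(1)}$ of Section~\ref{sec:prep} \emph{every} circle $\gamma_k,\gamma_k^*,\pm\gamma_j,\pm\gamma_j^*$ carries a jump bounded by $e^{-ct}$ (cf.\ Remark~\ref{circles}, now with no exceptional circle). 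In particular the contour $\pm\gamma_\ell\cup\pm\gamma_\ell^*$ of Problem~\ref{prob:mkdv.br} is empty, so $m^{(br)}\equiv I$.

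With $m^{(br)}\equiv I$ the construction of Section~\ref{sec:local} simplifies: the outer parametrix $m^\RHP_*$ in \eqref{parametrix} is built only from the two parabolic-cylinder models $m^{A'},m^{B'}$ near $\mp z_0$, glued by the small-norm problem for $E_2$ (Problem~\ref{prob: E-2}), while the exponentially small residual jumps $w^e$ are absorbed into $E_1$ exactly as in Proposition~\ref{Prop:expo}. The $\dbar$-analysis of Section~\ref{sec:dbar} applies verbatim --- Lemmas~\ref{lemma:dbar.Ri}, \ref{lemma:dbar.R.bd}, \ref{lemma:RHP.bd}, \ref{lemma:KW}, \ref{lemma:N3.exp}, \ref{lemma:N31.est} and Proposition~\ref{prop:N3.est} use only $r\in H^1(\bbR)$ and the boundedness of $m^\RHP$ --- so $m^{(3)}$ contributes $\mathcal{O}((z_0t)^{-3/4})$. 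Undoing the transformations as in \eqref{N3.to.N} and invoking Lemma~\ref{lemma:N.to.NRHP.asy}, we obtain $u(x,t)=2\lim_{z\to\infty}z\,m_{12}=2(E_{2,1})_{12}+\mathcal{O}((z_0t)^{-3/4})$. Equivalently, this is Proposition~\ref{lemma:N.RHP.asy} specialized to $m^{(br)}(\pm z_0)=I$: then $u^{(br)}\equiv 0$, the $m_{12}^{(br)}(\pm z_0)$-terms vanish, and \eqref{u-asym} collapses to
\[
u_{as}(x,t)=\frac{1}{\sqrt{48tz_0}}\,i\,(\delta_A^0)^2\,\overline{\beta}_{12}-\frac{1}{\sqrt{48tz_0}}\,i\,(\delta_B^0)^2\,\beta_{12}.
\]

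It remains to identify this with \eqref{as-solitonless}. Using the substitution $\zeta\mapsto-\zeta$ and $|r(-\zeta)|=|r(\zeta)|$ one checks $\eta_0(-z_0)=\overline{\eta_0(z_0)}$, $\chi(-z_0)=\overline{\chi(z_0)}$ with $\chi(z_0)$ purely imaginary and $|\eta_0(\pm z_0)|=1$; hence $\delta_A^0=\overline{\delta_B^0}$, $|\delta_A^0|=|\delta_B^0|=1$, and the two summands above are complex conjugates, so $u_{as}=\tfrac{2}{\sqrt{48tz_0}}\,\Imag\!\big((\delta_B^0)^2\beta_{12}\big)=\tfrac{2}{\sqrt{48tz_0}}|\beta_{12}|\cos\!\big(\arg((\delta_B^0)^2\beta_{12})-\tfrac\pi2\big)$. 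Expanding the phase with $\tau=z_0^3t$ and $(192\tau)^{-i\kappa}=e^{-i\kappa\log(192tz_0^3)}$, together with $\arg\Gamma(-i\kappa)=-\arg\Gamma(i\kappa)$ and the evaluation $2\arg\eta_0(z_0)=-4\big(\sum_k\arg(z_0-z_k)+\sum_{z_j\in\calB}\arg(z_0-z_j)+\sum_{z_j\in\calB}\arg(z_0+\overline{z_j})\big)$ (each factor of $\eta_0$ contributes twice after squaring), reproduces the phase $16tz_0^3-\kappa\log(192tz_0^3)+\phi(z_0)$; and $|\beta_{12}|$ is computed from the reflection identity $|\Gamma(i\kappa)|^2=\pi/(\kappa\sinh(\pi\kappa))$ and $e^{-2\pi\kappa}=1+|r(z_0)|^2$, yielding the amplitude $(\kappa/(3tz_0))^{1/2}$.

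The analytic content is entirely inherited from Sections~\ref{sec:prep}--\ref{sec:dbar}; the only genuinely new steps are (i) the structural remark that the frame hypothesis forces $m^{(br)}\equiv I$, which is immediate once the signature table is drawn, and (ii) the trigonometric/Gamma-function reduction of the last paragraph. I expect (ii) --- in particular the careful bookkeeping of the constant phases ($\pm i\pi/4$, $e^{-\pi\kappa}$, the sign of $\Imag\chi(z_0)$) so that they assemble precisely into $\phi(z_0)$, and the extraction of the real amplitude from $|\beta_{12}|$ --- to be the main (and only) place where care is needed. Conceptually this regime is just the defocusing-type oscillatory asymptotics of \cite{DZ93,CL19}, with the solitons and breathers entering solely through the scalar factor $\delta(z)$, i.e.\ only through the phase $\phi(z_0)$.
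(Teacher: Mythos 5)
Your proposal is correct and follows essentially the same route as the paper: the paper's own proof is a two-line sketch observing that, since $\mathrm{v}$ avoids all breather velocities, every circle in $\Gamma$ carries an exponentially small jump after the conjugation by $\delta$, so the localized RHP reduces to the two parabolic-cylinder problems (Problems \ref{prob:mkdv.A}--\ref{prob:mkdv.B}), and it then simply cites \cite{DZ93} and \cite{CL19} for the explicit formula. You supply exactly that reduction (via $m^{(br)}\equiv I$ collapsing \eqref{u-asym}) plus the conjugation-symmetry and Gamma-function bookkeeping that the paper leaves to the references, so your write-up is, if anything, more complete than the paper's.
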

\begin{proof}
Indeed, if we choose $v$ such that
$$4\eta_1^2-12\xi_1^2<...<4\eta_{\ell-1}^2-12\xi_{\ell-1}^2<\mathrm{v} <4\eta_{\ell }^2-12\xi_{ \ell }^2<...<4\eta_{\footnotesize{ N_2}}^2-12\xi_{N_2}^2$$
and define the same  $\delta$ function as \eqref{RH.delta.sol}. We follow the same procedure as in Section 3 and arrive at the following set of deformed contours and conclude that on the red portion of the contour all jump matrices decay exponentially as $t\to\infty$. Thus the localized RHP reduces to Problem \ref{prob:mkdv.A} and Problem \ref{prob:mkdv.B}. We then follow \cite{CL19} and  \cite[Section 4 ]{DZ93} to derive the explicit formula of $u_{as}$ in \eqref{as-solitonless}.
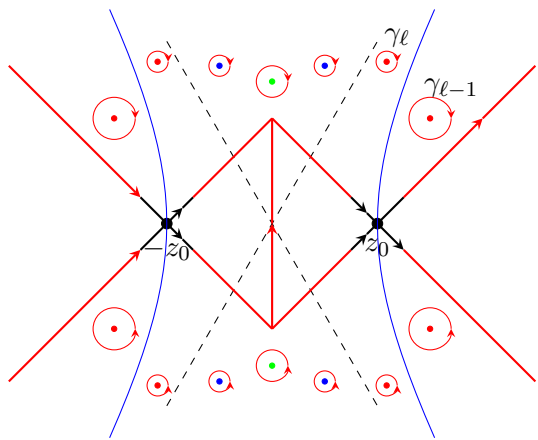
\begin{figure}[h!]
\caption{ $\Sigma^{(3)}\cup\Gamma$}
\vskip 15pt
\begin{tikzpicture}[scale=0.7]
 \draw[dashed] (-2 , -3.464)--(2 , 3.464);
   \draw[dashed] (-2 , 3.464)--(2 , -3.464);
\draw[thick]		(2,0) -- (2.5, 0.5);								
\draw[->,thick,>=stealth] [red]	 (2.5, 0.5)--(4, 2) ;
\draw [thick] [red]	 (4, 2)--(5,3) ;
\draw[thick] 		(-2,0) -- (-2.5, 0.5);					
\draw[->,thick,>=stealth]  [red]	  (-5,3) --  (-2.5, 0.5);	
\draw[thick] 		(-2,0) -- (-2.5, -0.5);					
\draw[->,thick,>=stealth]  [red]	  (-5,-3) --  (-2.5, -0.5);	
\draw[thick,->,>=stealth]		(2,0) -- (2.5,-0.5);								
\draw[thick]	[red]					(2.5,-0.5) -- (5,-3);
\draw [thick]	(-1.7, 0.3) --(-1.5, 0.5); 
\draw[thick,->,>=stealth] 	(-2, 0)--(-1.7, 0.3);						
\draw[thick] [red]	 (0,2) -- (-1.5, 0.5);
\draw[thick] 	(-1.5,- 0.5)--(-1.7, -0.3) ; 
\draw[thick,->,>=stealth] 	(-2, 0)--(-1.7, -0.3);						
\draw[thick] [red]	 (0, -2) -- (-1.5, -0.5);
\draw[thick]	[red]			(0,2) -- (1.5, 0.5);			
\draw	[thick]   (2,0) -- (1.8, 0.2);
\draw [thick,->,>=stealth]	(1.5, 0.5)-- (1.8, 0.2);
\draw[thick][red]		(0,-2) -- (1.5,-0.5);				
\draw[thick,->,>=stealth] 					(1.5, -0.5) -- (1.8, -0.2);
\draw[thick] (1.8, -0.2)--(2, 0);
\draw	[fill]							(-2,0)		circle[radius=0.1];	
\draw	[fill]							(2,0)		circle[radius=0.1];
\draw[->,thick,>=stealth] [red]		(0, -2) -- (0,0);
\draw[thick]	[red]		(0,0) -- (0,2);		
\node[below] at (-2,-0.1)			{$-z_0$};
\node[below] at (2,-0.1)			{$z_0$};
 \pgfmathsetmacro{\c}{2}
    \pgfmathsetmacro{\d}{3.464} 
   \draw [blue] plot[domain=-1:1] ({\c*cosh(\x)},{\d*sinh(\x)});
    \draw  [blue]  plot[domain=-1:1] ({-\c*cosh(\x)},{\d*sinh(\x)});
    \draw [blue, fill=blue] (1,3) circle [radius=0.05];
\draw[->,>=stealth] [red] (1.2, 3) arc(360:0:0.2);
\draw [blue, fill=blue] (-1,3) circle [radius=0.05];
\draw[->,>=stealth] [red] (-0.8, 3) arc(360:0:0.2);
\draw [blue, fill=blue] (1,-3) circle [radius=0.05];
\draw[->,>=stealth] [red] (1.2, -3) arc(0:360:0.2);
\draw [blue, fill=blue] (-1,-3) circle [radius=0.05];
\draw[->,>=stealth] [red] (-0.8, -3) arc(0:360:0.2);
\draw [green, fill=green]  (0, 2.7) circle [radius=0.05];
\draw[->,>=stealth] [red] (0.3, 2.7) arc(360:0:0.3);
\draw [green, fill=green] (0, -2.7) circle [radius=0.05];
\draw[->,>=stealth] [red]  (0.3, -2.7) arc(0:360:0.3);
\draw[->,>=stealth] [red](-2.6,2) arc(360:0:0.4);
\draw[->,>=stealth] [red] (3.4,2) arc(360:0:0.4);
\draw[->,>=stealth] [red](-2.6,-2) arc(0:360:0.4);
\draw[->,>=stealth] [red] (3.4,-2) arc(0:360:0.4);
\draw [red, fill=red] (-3,2) circle [radius=0.05];
\draw [red, fill=red] (3,2) circle [radius=0.05];
\draw [red, fill=red] (-3,-2) circle [radius=0.05];
\draw [red, fill=red] (3,-2) circle [radius=0.05];
\draw [red, fill=red] (2.1749, 3.0764) circle [radius=0.05];
\draw[->,>=stealth] [red]  (2.3749, 3.0764) arc(360:0:0.2);
\draw [red, fill=red] (-2.1749, 3.0764) circle [radius=0.05];
\draw[->,>=stealth] [red] (-1.9749, 3.0764) arc(360:0:0.2);
\draw [red, fill=red] (-2.1749, -3.0764) circle [radius=0.05];
\draw[->,>=stealth] [red] (-1.9749, -3.0764) arc(0:360:0.2);
\draw [red, fill=red] (2.1749, -3.0764) circle [radius=0.05];
\draw[->,>=stealth] [red] (2.3749, -3.0764) arc(0:360:0.2);
\node [above] at (2.3749, 3.1764) {$\gamma_\ell$} ;
\node [above] at (3.4, 2.2) {$\gamma_{\ell-1}$} ;
\end{tikzpicture}
\label{fig:contour-d}
\end{figure}
\end{proof}

\section{Regions II-III}
We now turn to the study of the Regions II-III. We first study Region II. Our starting point is RHP Problem \ref{RHP-1} and the strategy of the proof is as follows:
\begin{itemize}
\item[1.] We conjugate the jump matrices of Problem \ref{RHP-1} of by a scalar function $\psi$.
\item[2.] We scale the conjugated jump matrices by a factor determined by the region.
\item[3.] We use $\dbar$-steepest descent to study the scaled RHP and obtain both leading term and error term.
\item[4.] We multiply by the scaling factor to get the asymptotic formula.
\end{itemize}
We then study Region III. We mention that in both regions the application of $\dbar$ steepest descent method is analogous to that in Section 2-6. For the purpose of brevity we are only going to display the calculations directly related to the leading order term and error terms.
\subsection{Region II}
In this region, $|x/t^{-1/3}|=\mathcal{O}(1)$ as $t\to \infty$.
We first mention that for $x>0$, we have the stationary points 
$$\pm z_0=\pm \sqrt{\dfrac{-x }{ 12 t }}=\pm i  \sqrt{\dfrac{ |x| }{ 12 t }}$$
stay on the imaginary axis. So we are only going to study the case for $x<0$ since for $x>0$ the asymptotic formula will follow from a similar (and simpler) computation. 
For $x<0$, we first notice that
$$z_0=\sqrt{\dfrac{-x}{12t}}= \sqrt{\dfrac{-x}{12t^{1/3}}}t^{-1/3} \to 0 \quad \text{as}\,\, t \to \infty$$
By Remark \ref{painleve-soliton}, for the phase function $e^{i\theta(z; x,t)}$ we have the following signature table:
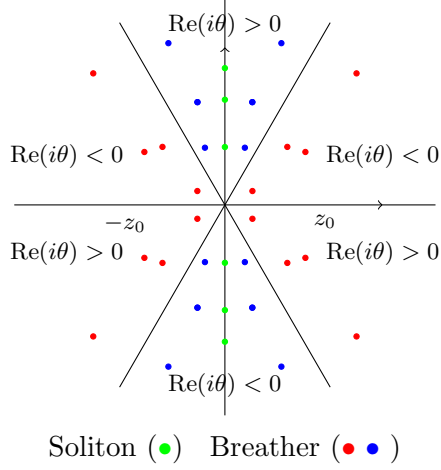
\begin{figure}[H]
\label{sig-table-p}
\caption{signature table-Painleve}
\begin{tikzpicture}[scale=0.7]
\draw [->] (-4,0)--(3,0);
\draw (4,0)--(3,0);
\draw [->] (0,-4)--(0,3);
\draw (0,3)--(0,4);
 \pgfmathsetmacro{\a}{1}
    \pgfmathsetmacro{\b}{1.7320} 
     \draw(-2 , -3.464)--(2 , 3.464);
   \draw (-2 , 3.464)--(2 , -3.464);
    \pgfmathsetmacro{\c}{1.414}
    \pgfmathsetmacro{\d}{2.449} 
    \pgfmathsetmacro{\e}{0.5}
    \pgfmathsetmacro{\f}{0.866} 
 \draw	[fill, green]  (0, 1.1)		circle[radius=0.05];	    
 \draw	[fill, green]  (0, 2)		circle[radius=0.05];	    
 \draw	[fill, green]  (0, 2.6)		circle[radius=0.05];	    
  \draw	[fill, green]  (0, -1.1)		circle[radius=0.05];	    
 \draw	[fill, green]  (0, -2)		circle[radius=0.05];	    
 \draw	[fill, green]  (0, -2.6)		circle[radius=0.05];	    
  \draw	[fill, blue]  (0.5211, 1.953)		circle[radius=0.05];	  
   \draw	[fill, blue]  (-0.5211, 1.953)		circle[radius=0.05];	   
    \draw[fill, blue]  (0.5211, 1.953)		circle[radius=0.05];	  
   \draw	[fill, blue]  (-0.5211, 1.953)		circle[radius=0.05];	   
    \draw[fill, blue]  (0.379, 1.087)		circle[radius=0.05];	  
   \draw	[fill, blue]  (-0.379, 1.087)		circle[radius=0.05];	   
    \draw[fill, blue]  (1.073, 3.074)		circle[radius=0.05];	  
   \draw	[fill, blue]  (-1.073, 3.074)		circle[radius=0.05];	   
   \draw	[fill, blue]  (0.5211, -1.953)		circle[radius=0.05];	  
   \draw	[fill, blue]  (-0.5211, -1.953)		circle[radius=0.05];	   
    \draw[fill, blue]  (0.5211, -1.953)		circle[radius=0.05];	  
   \draw	[fill, blue]  (-0.5211, -1.953)		circle[radius=0.05];	   
    \draw[fill, blue]  (0.379, -1.087)		circle[radius=0.05];	  
   \draw	[fill, blue]  (-0.379, -1.087)		circle[radius=0.05];	   
    \draw[fill, blue]  (1.073, -3.074)		circle[radius=0.05];	  
   \draw	[fill, blue]  (-1.073, -3.074)		circle[radius=0.05];	   
   \draw	[fill, red]  (1.1855, 1.10268)		circle[radius=0.05];	  
   \draw	[fill, red]  (-1.1855, 1.10268)	circle[radius=0.05];	   
    \draw	[fill, red]  (1.529, 1.006)		circle[radius=0.05];	  
   \draw	[fill, red]  (-1.529, 1.006)	circle[radius=0.05];	   
   \draw	[fill, red]  (0.523, 0.2637)		circle[radius=0.05];	  
   \draw	[fill, red]  (-0.523, 0.2637)	circle[radius=0.05];	   
    \draw	[fill, red]  (1.1855, -1.10268)		circle[radius=0.05];	  
   \draw	[fill, red]  (-1.1855, -1.10268)	circle[radius=0.05];	   
    \draw	[fill, red]  (1.529, -1.006)		circle[radius=0.05];	  
   \draw	[fill, red]  (-1.529, -1.006)	circle[radius=0.05];	   
   \draw	[fill, red]  (0.523, -0.2637)		circle[radius=0.05];	  
   \draw	[fill, red]  (-0.523, -0.2637)	circle[radius=0.05];	  
     \draw	[fill, red]  (2.5, 2.5)		circle[radius=0.05];	  
    \draw	[fill, red]  (2.5, -2.5)		circle[radius=0.05];	  
     \draw	[fill, red]  (-2.5, 2.5)		circle[radius=0.05];	  
      \draw	[fill, red]  (-2.5, -2.5)		circle[radius=0.05];	  
   \node [below] at (1.9,0) {\footnotesize $z_0$};
    \node [below] at (-1.9,0) {\footnotesize $-z_0$};
    \node[above]  at (-3, 0.5) {\footnotesize $\text{Re}(i\theta)<0$};
     \node[above]  at (3, 0.5) {\footnotesize $\text{Re}(i\theta)<0$};
      \node[below]  at (-3, -0.5) {\footnotesize $\text{Re}(i\theta)>0$};
     \node[below]  at (3, -0.5) {\footnotesize $\text{Re}(i\theta)>0$};
     \node[above]  at (0, 3) {\footnotesize $\text{Re}(i\theta)>0$};
     \node[below]  at (0, -3) {\footnotesize $\text{Re}(i\theta)<0$};
    \end{tikzpicture}
 \begin{center}
  \begin{tabular}{ccc}
Soliton ({\color{green} $\bullet$})	&	
Breather ({\color{red} $\bullet$} {\color{blue} $\bullet$} ) 
\end{tabular}
 \end{center}
\end{figure}
So we only need the  following upper/lower factorization on $\bbR$:
\begin{equation}
\label{v-ul}
e^{-i\theta\ad\sigma_3}v(z)	=\Twomat{1}{\overline{r(z)}   e^{-2i\theta}}{0}{1} \Twomat{1}{0}{r(z)  e^{2i\theta}}{1}.
						\quad z \in\bbR 
\end{equation}
 Define the following set:
\begin{equation}
\label{B-0-set}
\mathcal{B}_0=\lbrace z_j = \xi_j+i\eta_j: ~  4\eta^2_j-12\xi^2_j  >0 \rbrace 
\end{equation}
and the scalar function:
\begin{equation}
\label{psi-0}
\psi(z) =\left( \prod_{k=1}^{N_1} \dfrac{z- \overline {z_k }}{ z-z_k}\right) \left(\prod_{z_j\in B_0} \dfrac{z-\overline{z_j}}{z-z_j}\right) \left(\prod_{z_j\in B_0} \dfrac{z+z_j}{z+\overline{z_j}}\right) .
\end{equation}
It is straightforward to check that if $m(z;x,t)$ solves Problem \ref{RHP-1}, then the new matrix-valued function $m^{(1)}(z;x,t)=m(z;x,t)\psi(z)^{\sigma_3}$ has the following jump matrices:
\begin{equation}
e^{-i\theta\ad\sigma_3}v^{(1)}(z) 	=\Twomat{1}{\overline{r(z)} \psi^{2} e^{-2i\theta}}{0}{1} \Twomat{1}{0}{r(z)\psi^{-2} e^{2i\theta}}{1},
						\quad z \in\bbR 
\end{equation}
\begin{align}
\label{v-soliton-p}
e^{-i\theta\ad\sigma_3}v^{(1)}(z) = 	\begin{cases}
						\twomat{1}{\dfrac{(1/\psi)'(z_k)^{-2} }{c_k  (z-z_k)}e^{-2i\theta} }{0}{1}	&	z\in \gamma_k, \\
						\\
						\twomat{1}{0}{\dfrac{\psi'( \overline{z_k } )^{-2}}{\overline{c_k}  (z -\overline{z_k })} e^{2i\theta}}{1}
							&	z \in \gamma_k^*,
					\end{cases}
\end{align}
and for $z_j\in \mathcal{B}_0 $
\begin{align}
\label{v-br+}
e^{-i\theta\ad\sigma_3}v^{(1)}(z)= 	\begin{cases}
						\twomat{1}{\dfrac{(1/\psi )'(z_j)^{-2} }{c_j  (z-z_j)}e^{-2i\theta} }{0}{1}	&	z\in \gamma_j, \\
						\\
						\twomat{1}{0}{\dfrac{\psi'( \overline{z_j } )^{-2}}{\overline{c_j}  (z -\overline{z_j })} e^{2i\theta}}{1}
							&	z \in \gamma_j^*,\\
							\\
							\twomat{1}{0}{-\dfrac{\psi'( -{z_j } )^{-2}}{ c_j  (z + z_j ) } e^{2i\theta}}{1}	&	z\in -\gamma_j, \\
						\\
						\twomat{1} {-\dfrac{  (1/\psi )'(  -\overline{z_j } )^{-2} }{ \overline{c_j}  (z +\overline{z_j } ) }  e^{-2i\theta} }{0}{1}
							&	z \in -\gamma_j^*
					\end{cases}
\end{align}						
and for $z_j \in \lbrace z_j \rbrace_{j=1}^{N_2} \setminus \mathcal{B}_0$	
\begin{align}
\label{v-br-}
e^{-i\theta\ad\sigma_3}v^{(1)}(z)= 	\begin{cases}
						\twomat{1}{0}{\dfrac{c_j \psi (z_j)^{-2} }{z-z_j} e^{2i\theta}}{1}	&	z\in \gamma_j, \\
						\\
						\twomat{1}{\dfrac{\overline{c_j} \psi(\overline{z_j})^2 }{z -\overline{z_j}} e^{-2i\theta} }{0}{1}
							&	z \in \gamma_j^*,\\
							\\
							\twomat{1}{\dfrac{-c_j \, \psi(-{z_j})^{2} }{z + z_j} e^{-2i\theta}  }{0}{1}	&	z\in -\gamma_j, \\
						\\
						\twomat{1}{0}{\dfrac{-\overline{c_j} \, \psi(-\overline{z_j})^{-2}e^{2i\theta} }{z +\overline{z_j }}}{1}
							&	z \in -\gamma_j^*.
					\end{cases}
\end{align}		
By the signature table Figure \ref{sig-table-p}, we see that all entries in \eqref{v-br+}-\eqref{v-br-} decay exponentially as $t\to\infty$, so we are allowed to reduce the RHP to a problem on $\bbR$ following the same argument in the proof of Proposition \ref{Prop:expo}.
Now we carry out the following scaling:
\begin{equation}
\label{scale}
z\to \zeta t^{-1/3}
\end{equation}
and \eqref{v-ul} becomes
\begin{equation}
\label{v-scale1}
\Twomat{1}{\overline{r(\zeta t^{-1/3 } ) } \psi^2(\zeta t^{-1/3}) e^{-2i\theta (\zeta t^{-1/3 } ) } }{0}{1} \Twomat{1}{0}{r(\zeta t^{-1/3 } )   \psi^{-2}(\zeta t^{-1/3}) e^{2i\theta (\zeta t^{-1/3 } )  } }{1},
						\quad z \in\bbR 
\end{equation}
where
						$$ \theta (\zeta t^{-1/3 } )=4 \zeta^3+x \zeta t^{-1/3}=4( \zeta^3-3\tau^{2/3} \zeta).$$
Note that the stationary points now become $\pm z_0 t^{1/3}$.

We then study the scaled Riemann-Hilbert problem with jump matrix \eqref{v-scale1}. We will again perform contour deformation and write the solution as a product of solution to a $\dbar$-problem and a "localized" Riemann-Hilbert problem.

\begin{figure}[H]
\caption{$\Sigma^{(1)}-\text{scale}$}
\vskip 15pt
\begin{tikzpicture}[scale=0.7]
\draw[thick]		(5, 3) -- (4,2);						
\draw[->,thick,>=stealth] 		(2,0) -- (4,2);		
\draw[thick] 			(-2,0) -- (-4,2); 				
\draw[->,thick,>=stealth]  	(-5,3) -- (-4,2);	
\draw[->,thick,>=stealth]		(-5,-3) -- (-4,-2);							
\draw[thick]						(-4,-2) -- (-2,0);
\draw[thick,->,>=stealth]		(2,0) -- (4,-2);								
\draw[thick]						(4,-2) -- (5,-3);
\draw[thick]		(0,0)--(2,0);
\draw[thick,->,>=stealth] (-2,0) -- (0, 0);
\draw	[fill]							(-2,0)		circle[radius=0.1];	
\draw	[fill]							(2,0)		circle[radius=0.1];
\draw [dashed] (2,0)--(6,0);
\draw [dashed] (-6,0)--(-2,0);
\node[below] at (-2,-0.35)			{$-z_0 t^{1/3}$};
\node[below] at (2,-0.35)			{$z_0 t^{1/3}$};
\node[right] at (5,3)					{$\Sigma^{(1)}_1$};
\node[left] at (-5,3)					{$\Sigma^{(1)}_2$};
\node[left] at (-5,-3)					{$\Sigma^{(1)}_3$};
\node[right] at (5,-3)				{$\Sigma^{(1)}_4$};
\node[above] at (4.5,0)           {$\Omega_1$};
\node[above] at (-4.5,0)           {$\Omega_2$};
\node[below] at (-4.5,0)           {$\Omega_3$};
\node[below] at (4.5,0)           {$\Omega_4$};
\end{tikzpicture}
\label{fig:contour-scale-1}
\end{figure}
For brevity, we only discuss the $\dbar$-problem in $\Omega_1$. In $\Omega_1$, we write 
$$\zeta= u+z_0 t^{1/3} +iv $$ 
then
\begin{align*}
\text{Re} (2i\theta (\zeta t^{-1/3})) &=8\left( -3(u+z_0 t^{1/3})^2 v +v^3 +3\tau^{2/3}v\right) \\
      & \leq 8 \left( -3u^2v -6 uv z_0 t^{1/3}  +v^3 \right)\\
      &\leq -16u^2 v
\end{align*}
	\begin{align*}
	R_1	&=	\begin{cases}
						\twomat{0}{0}{r(\zeta t^{-1/3} )  \psi^{-2}(\zeta t^{-1/3}) e^{2i\theta (\zeta t^{-1/3 } )  }  }{0}		
								&	\zeta \in (z_0t^{1/3},\infty)\\[10pt]
								\\
						\twomat{0}{0}{r( z_0 )  \psi^{-2}(\zeta t^{-1/3})   (1-\Xi_\calZ)e^{2i\theta (\zeta t^{-1/3 } )  }  }{0}	
								&	\zeta	\in \Sigma_1
					\end{cases}
	\end{align*}
and the interpolation is given by 
$$ \left( r(z_0)+ \left( r\left( \text{Re}\zeta t^{-1/3} \right) -r(z_0)  \right) \cos 2\phi  \right) \psi^{-2}(\zeta t^{-1/3})   (1-\Xi_\calZ)$$
So we arrive at the $\dbar$-derivative in $\Omega_1$ in the $\zeta$ variable:
\begin{align}
\dbar R_1	&= \left( {t^{-1/3}} r'\left( u t^{-1/3} \right) \cos 2\phi- 2\dfrac{ r(ut^{-1/3} )  -r(z_0)  }{  \left\vert \zeta-z_0 t^{1/3} \right\vert   } e^{i\phi} \sin 2\phi  \right)\psi^{-2}(\zeta t^{-1/3})   e^{2i\theta}\\
    &\quad \times  (1-\Xi_\calZ)\\
    &\quad- \left( r(z_0)+ \left( r\left( \text{Re}\zeta t^{-1/3} \right) -r(z_0)  \right) \cos 2\phi  \right) t^{-1/3} \dbar (\Xi_\calZ(\zeta t^{-1/3}) ) \psi^{-2}(\zeta t^{-1/3})  e^{2i\theta}
\end{align}
\begin{equation}
\label{R1.bd1}
\left| \dbar R_1 e^{ 2i\theta}  \right| 	\lesssim\left( |  t^{-1/3} r'\left( u t^{-1/3} \right) | +\dfrac{\norm{r'}{L^2} }{ t^{1/3} |\zeta t^{-1/3}-z_0  |^{1/2} } + t^{-1/3} \dbar (\Xi_\calZ(\zeta t^{-1/3}) )  \right) e^{-16u^2 v}.
\end{equation}
We proceed as in the previous section and study the integral equation related to the $\dbar$ problem. Setting $z=\alpha+i\beta$ and $\zeta=(u+z_0t^{1/3})+iv$, the region $\Omega_1$ corresponds to $u\geq v \geq 0 $. We decompose the integral operator into three parts:
$$
 \int_{\Omega_1}  \dfrac{1}{|z-\zeta|} |W(\zeta)| \, d\zeta  \lesssim  I_1 + I_2 +I_3
$$
where
\begin{align*}
I_1 	&=	\int_0^\infty \int_v^\infty \dfrac{1}{|z-\zeta|} \left\vert t^{-1/3} r'\left( u t^{-1/3} \right)   \right\vert e^{-16u^2v} \, du \, dv, \\[5pt]
I_2	&=	\int_0^\infty \int_v^\infty \frac{1}{|z-\zeta|}  \dfrac {1} { t^{1/3} \left| u t^{-1/3} +ivt^{-1/3}  \right|^{1/2} } e^{-16u^2v} \, du \, dv,\\[5pt]
I_3 	&=	\int_0^\infty \int_v^\infty \dfrac{1}{|z-\zeta|} \left\vert t^{-1/3} \dbar (\Xi_\calZ(\zeta t^{-1/3}) )   \right\vert e^{-16u^2v} \, du \, dv. 
\end{align*}
We first note that 
$$ \left( \int_\bbR \left\vert t^{-1/3} r'\left( u t^{-1/3} \right)   \right\vert^2 du \right)^{1/2} = t^{-1/6}\norm{r'}{L^2}$$
Using this and the following estimate from \cite[proof of Proposition D.1]{BJM16} 
\begin{equation}
\label{BJM16.bd1}
\norm{\frac{1}{|z-\zeta|}}{L^2(v,\infty)} \leq \frac{\pi^{1/2}}{|v-\beta|^{1/2}}.
\end{equation}
 and Schwarz's inequality on the $u$-integration we may bound $I_1$  by constants times
$$
 t^{-1/6} \norm{r'}{2} \int_0^\infty \frac{1}{|v-\beta|^{1/2}} e^{-v^3} \, dv \lesssim t^{-1/6}.
$$
For $I_2$, taking $p>4$ and $q$ with $1/p+1/q=1$, we estimate
\begin{align*}
\norm{ \dfrac {1} { t^{1/3} \left| u t^{-1/3} +ivt^{-1/3}  \right|^{1/2} } }{ L^p (v, \infty) } &\leq  \left( \int_v ^\infty t^{-p/3} \left( \dfrac{  1  }{  (u t^{-1/3} )^2 + (v t^{-1/3})^2  } \right)^{p/4} du \right)^{1/p}\\ 
   &= t^{(3-p)/(3p)} \left(  \int_v ^\infty \left( \dfrac{  1  }{  (u t^{-1/3} )^2 + (v t^{-1/3})^2  } \right)^{p/4} d (ut^{-1/3}) \right)^{1/p}\\
   &=t^{(3-p)/(3p)} \left(  \int_{v' }^\infty \left( \dfrac{  1  }{  (u')^2 + (v')^2  } \right)^{p/4} du' \right)^{1/p}\\
   &\leq c t^{(3-p)/(3p)} v'^{(1/p-1/2)}\\
   &= c t ^{(2/(3p)-1/6 )} v^{1/p-1/2}. 
\end{align*}
Now by \eqref{BJM16.bd1} and an application of the H\"older inequality we get
\begin{align*}
|I_2|  &\leq  \int_0^\infty \norm{ \dfrac {1} { t^{1/3} \left| u t^{-1/3} +ivt^{-1/3}  \right|^{1/2} } }{ L^p (v, \infty) }  \norm{ \dfrac{1}{|z-\zeta|}}{L^q (v, \infty)} e^{-16v^3} dv\\
   &\leq c \int_0^\infty t ^{(2/(3p)-1/6 )} v^{1/p-1/2} \left\vert v-\beta \right\vert^{1/q-1} e^{-16v^3} dv\\
   &\leq c t ^{(2/(3p)-1/6 )}.
\end{align*}
The estimate on $I_3$ is similar to that of $I_1$ and
$$|I_3|\leq c t^{-1/6} .$$
This proves that 
$$
 \int_{\Omega_1}  \dfrac{1}{|z-\zeta|} |W(\zeta)| \, d\zeta  \lesssim  t ^{(2/(3p)-1/6 )} 
$$
for all $4 <p<\infty$.  We now show that 
$$
 \int_{\Omega_1} |W(\zeta)| \, d\zeta  \lesssim  t ^{(2/(3p)-1/6 )} 
.$$
Again we decompose the integral above into three parts
\begin{align*}
I_1 	&=	\int_0^\infty \int_v^\infty  \left\vert t^{-1/3} r'\left( u t^{-1/3} \right)   \right\vert e^{-16u^2v} \, du \, dv \\[5pt]
I_2	&=	\int_0^\infty \int_v^\infty \dfrac {1} { t^{1/3} \left| u t^{-1/3} +ivt^{-1/3}  \right|^{1/2} } e^{-16u^2v} \, du \, dv.\\[5pt]
I_3 	&=	\int_0^\infty \int_v^\infty  \left\vert t^{-1/3} \dbar \Xi_{\calZ} \left( \zeta t^{-1/3} \right)   \right\vert e^{-16u^2v} \, du \, dv 
\end{align*}
By Cauchy-Schwarz inequality:
\begin{align*}
I_1 &\leq \int_0^\infty t^{-1/6} \norm{r'}{2} \left(  \int_v^\infty e^{-16u^2 v} du\right)^{1/2} dv\\
 &\leq c t^{-1/6} \int_0^\infty \dfrac{e^{-16 v^3} }{\sqrt[4]{v}}dv\\
 &\leq c t^{-1/6}.
\end{align*}
By H\"older's  inequality:
\begin{align*}
I_2 &\leq c t^{(2/(3p)-1/6 )} \int_0^\infty  v^{1/p-1/2} \left( \int_v^\infty e^{-16qu^2 v} du \right)^{1/q} dv\\
     &\leq c t^{(2/(3p)-1/6 )} \int_0^\infty v^{3/(2p)-1} e^{-16v^3}dv\\
     &\leq c t^{(2/(3p)-1/6 )}. 
\end{align*}
Again the estimate on $I_3$ is similar to that of $I_1$ and
$$I_3\leq c t^{-1/6} .$$
We can apply the fundamental theorem of calculus to get
$$r(\zeta t^{-1/3}) \psi(\zeta t^{-1/3})^{-2} e^{2i\theta}-r(0)e^{2i\theta} \leq \left\vert  \dfrac{\zeta}{t^{1/6}} e^{8i (\zeta^3-3 \tau^{2/3}\zeta)}\right\vert.$$
Given the fact that $z_0 t^{1/3}=\tau^{1/3}\leq (M')^{1/3}$, we have that
$$\norm{  \dfrac{\zeta}{t^{1/6}} e^{8i (\zeta^3-3 \tau^{2/3}\zeta)}}{L^1\cap L^2\cap L^\infty} \lesssim t^{-1/6}.$$
Also notice that $\psi(0)^{\pm}=1$ so we have reduce the problem to a problem on the following contour
\begin{figure}[H]
\caption{$\Sigma^{(2)}$-Scale }
\vskip 15pt
\begin{tikzpicture}[scale=0.7]
\draw[thick]		(3, 3) -- (2,2);						
\draw[->,thick,>=stealth] 		(0,0) -- (2,2);		
\draw[thick] 			(0,0) -- (-2,2); 				
\draw[->,thick,>=stealth]  	(-3,3) -- (-2,2);	
\draw[->,thick,>=stealth]		(-3,-3) -- (-2,-2);							
\draw[thick]						(-2,-2) -- (0,0);
\draw[thick,->,>=stealth]		(0,0) -- (2,-2);								
\draw[thick]						(2,-2) -- (3,-3);
\draw	[fill]							(0,0)		circle[radius=0.1];	
\node [below] at (0,0) {0};
\draw [dashed] (0,0)--(4,0);
\draw [dashed] (-4,0)--(0,0);
\node[right] at (3,3)					{$\Sigma^{(2)}_1$};
\node[left] at (-3,3)					{$\Sigma^{(2)}_2$};
\node[left] at (-3,-3)					{$\Sigma^{(2)}_3$};
\node[right] at (3,-3)				{$\Sigma^{(2)}_4$};
\end{tikzpicture}
\label{fig:Painleve}
\end{figure}
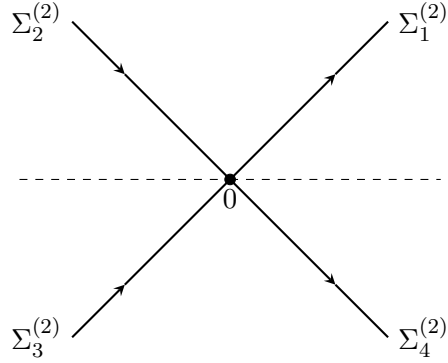
with jump matrices:
\begin{align}
\label{matrices-Painleve}
e^{-i\theta\ad\sigma_3 } v^{(2)}(\zeta) &=e^{-4i\left(\zeta^3+\left( x/(4 t^{1/3}) \right)  \zeta \right) \ad\sigma_3 }  \twomat{1} {0} { {r(0)} }{1} , \quad \zeta\in \Sigma^{(2)}_1\cup \Sigma^{(2)}_2 \\
\nonumber
                                                                                                                   &=e^{-4i\left(\zeta^3+\left( x/(4 t^{1/3}) \right)  \zeta \right) \ad\sigma_3 } \twomat{1}{\overline{r(0)}}{0}{1}, \quad \zeta\in \Sigma^{(2)}_3 \cup \Sigma^{(2)}_4
\end{align}
In \cite{GPR} and \cite{HN1} the leading order term of the focusing mKdV is RHP is given by the solution to the Painlev\'e II equation:
\begin{equation}
\label{Painleve-1}
P''(s)-sP(s)+2P^3(s)=0.
\end{equation}
In fact, by changing $P(s)\mapsto  -iP(s)$, we have the solution to the following Painlev\'e II equation:
\begin{equation}
\label{Painleve-2}
P''(s)-sP(s)-2P^3(s)=0.
\end{equation}
The RHP on $\Sigma^{(2)}$ is related to Equation \eqref{Painleve-2}. We now follows the argument of \cite[Section 5]{DZ93} and \cite{DZ95 } to obtain the long-time asymptotic formula in Region II ($x<0$).
\begin{figure}[H]
\caption{Painlev\'e six ray}
\label{fig:six-ray}
\begin{tikzpicture}[scale=0.7]
\draw [->] (0,0)--(3,0);
\draw [->] (0,0)--(-3,0);
\draw (-3,0)--(-4,0);
\draw (4,0)--(3,0);
\draw [->](0,0)--(-1 , -1.732);
     \draw (-1 , -1.732)--(-2, -3.464);
    \draw [->](0,0)--(1 , 1.732);
     \draw (1 , 1.732)--(2, 3.464);
       \draw [->](0,0)--(-1 , 1.732);
     \draw (-1 , 1.732)--(-2, 3.464);
       \draw [->](0,0)--(1 , -1.732);
     \draw (1 , -1.732)--(2, -3.464);
\node[above]  at (-1, 0.2) {\footnotesize $\Omega^P_3$};
     \node[above]  at (1, 0.2) {\footnotesize $\Omega^P_1$};
      \node[below]  at (-1, -0.1) {\footnotesize $ \Omega^P_4 $};
     \node[below]  at (1, -0.1) {\footnotesize $\Omega^P_6$};
     \node[above]  at (0,0.5) {\footnotesize $\Omega^P_2$};
     \node[below]  at (0, -0.5) {\footnotesize $\Omega^P_5$};
     \node[right] at (2, 3.464) {$\Sigma^P_1$};
      \node[left] at (-2, 3.464) {$\Sigma^P_2$};
       \node[left] at (-4, 0) {$\Sigma^P_3$};
       \node[left] at (-2, -3.464) {$\Sigma^P_4$};
        \node[right] at (2, -3.464) {$\Sigma^P_5$};
         \node[right] at (4, 0) {$\Sigma^P_6$};
    \end{tikzpicture}
\end{figure}
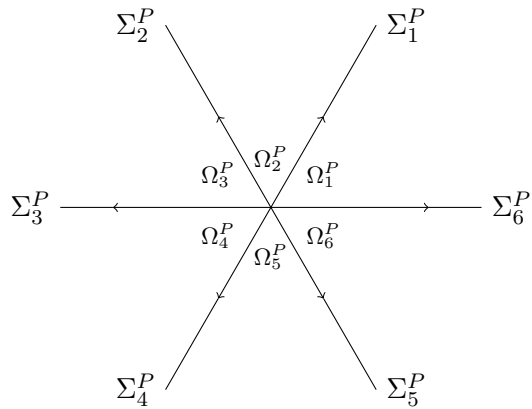
Associate to each ray $\Sigma_i^P$, $i=1,2,...,6$, a jump matrix independent of $\zeta$
\begin{align}
\label{jump-six}
\begin{cases}
& S_1 =\twomat{1}{0}{\tq}{1}, \quad S_2=\twomat{1}{\tr}{0}{1}, \quad  S_3=\twomat{1}{0}{\tp}{1}\\
\\
& S_4=\twomat{1}{\tq}{0}{1}, \quad S_5=\twomat{1}{0}{\tr}{1}, \quad S_6=\twomat{1}{\tp}{0}{1}
\end{cases}
\end{align}
where $\tp,\tq,\tr$ satisfies the constraint
\begin{equation}
\label{constraint}
\tp+\tq+\tr+\tp\tq\tr=0.
\end{equation}
To construct \eqref{jump-six}, we let $m^{(2)}(\zeta)$ denote the solution to the RHP on Figure \ref{fig:Painleve} and set
\begin{align}
m^{(3)}(\zeta) &=m^{(2)}(\zeta),\quad & \zeta \in  \Omega^P_1\cup \Omega^P_2\cup\Omega^P_4\cup \Omega^P_4 \\
                      &=m^{(2)}(\zeta)e^{-i\theta\ad\sigma_3 } v^{(2)}(\zeta)  ,\quad  &\zeta \in \Omega_3^P\\
                      &=m^{(2)}(\zeta) \left( e^{-i\theta\ad\sigma_3 } v^{(2)}(\zeta)\right)^{-1}  ,\quad  &\zeta \in \Omega_6^P.
\end{align}
From \eqref{minus} we deduce that $r(0)=- \overline{r(0)}$, so $r(0)$ is purely imaginary.
Setting
$$\tp=r(0),\quad \tq=\overline{r(0)}$$
from \eqref{constraint} we deduce
$$\tr=-(\tp+\tq)/(1+\tp\tq)=0.$$
We then observe that $\Psi =m^{(3)}(\zeta/ 3^{1/3}) e^{ -\left( (4i/3) \zeta^3   +is\zeta   \right)   \sigma}$ satisfies the jump \eqref{jump-six} on $\Sigma^P$ given by Figure \ref{fig:six-ray} with
$$\Psi_{i+1}(s, \zeta)= \Psi_{i}(s, \zeta)S_i,  \quad 1\leq i\leq 6$$
with $s=x/t^{1/3}$. 
From \cite{DZ95} we know that $\Psi$ can be uniquely obtained and 
$\Psi$ solves the following linear problem
$$\dfrac{d\Psi}{d\zeta}=\twomat{-4i\zeta^2-is-2iP^2}{4iP\zeta-2P'}{-4iP\zeta-2P'}{4i\zeta^2+is+2iP^2} \Psi$$
where $P(s)$ is a \emph{ purely imaginary} solution to \eqref{Painleve-2}. Indeed, we have
$$P=P(x/t^{1/3}, r(0))=\lim_{\zeta\to \infty} 2i \zeta \left( \Psi e^{ \left( (4i/3) \zeta^3   +is\zeta   \right)   \sigma} -I\right)_{12}.$$
Finally, by sending $P\mapsto -iP$ and recalling the scaling \eqref{scale} and  combining the error term resulting from the $\dbar$-extension, we arrive at the long time asymptotics in Region II:
\begin{equation}
\label{asym-II}
u(x,t)=\dfrac{1}{(3t)^{1/3}}P\left( \dfrac{x}{ (3t)^{1/3} } \right)+\mathcal{O} \left(  t^{ 2/(3p)-1/2 } \right)
\end{equation}
where $4<p<\infty$ and $P$ is a \emph{real} solution of the Painlev\'e II equation 
$$P''(s)-sP(s)+2P^3(s)=0.$$

\subsection{Region III} 
In this region, $|x/t|=\mathcal{O}(1)$ as $t\to \infty$ and $x>0$. We have the stationary points 
$$\pm z_0=\pm \sqrt{\dfrac{-x }{ 12 t }}=\pm i  \sqrt{\dfrac{ |x| }{ 12 t }}$$
which is purely imaginary and have a fixed distance from the real axis.  The signature table of the phase function $i\theta$ is as follows:
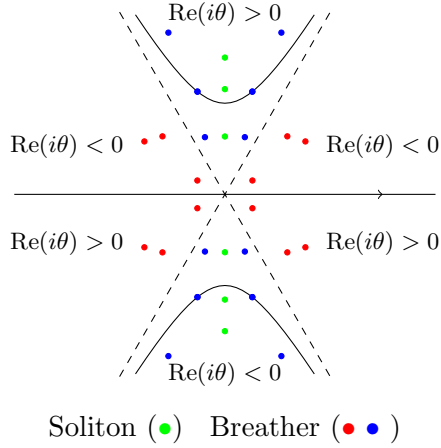
\begin{figure}[H]
\caption{Signature-solitons}
\begin{tikzpicture}[scale=0.7]
\draw [->] (-4,0)--(3,0);
\draw (4,0)--(3,0);
 \pgfmathsetmacro{\a}{1}
    \pgfmathsetmacro{\b}{1.7320} 
     \draw[dashed] (-2 , -3.464)--(2 , 3.464);
   \draw[dashed] (-2 , 3.464)--(2 , -3.464);
    \pgfmathsetmacro{\c}{1.414}
    \pgfmathsetmacro{\d}{2.449} 
    \pgfmathsetmacro{\e}{0.5}
    \pgfmathsetmacro{\f}{0.866} 
    \draw plot[domain=-1.3:1.3] ( {\a*sinh(\x)}, {\b*cosh(\x)} );
   \draw plot[domain=-1.3:1.3] ( {-\a*sinh(\x)}, {-\b*cosh(\x)} );
 \draw	[fill, green]  (0, 1.1)		circle[radius=0.05];	    
 \draw	[fill, green]  (0, 2)		circle[radius=0.05];	    
 \draw	[fill, green]  (0, 2.6)		circle[radius=0.05];	    
  \draw	[fill, green]  (0, -1.1)		circle[radius=0.05];	    
 \draw	[fill, green]  (0, -2)		circle[radius=0.05];	    
 \draw	[fill, green]  (0, -2.6)		circle[radius=0.05];	    
  \draw	[fill, blue]  (0.5211, 1.953)		circle[radius=0.05];	  
   \draw	[fill, blue]  (-0.5211, 1.953)		circle[radius=0.05];	   
    \draw[fill, blue]  (0.5211, 1.953)		circle[radius=0.05];	  
   \draw	[fill, blue]  (-0.5211, 1.953)		circle[radius=0.05];	   
    \draw[fill, blue]  (0.379, 1.087)		circle[radius=0.05];	  
   \draw	[fill, blue]  (-0.379, 1.087)		circle[radius=0.05];	   
    \draw[fill, blue]  (1.073, 3.074)		circle[radius=0.05];	  
   \draw	[fill, blue]  (-1.073, 3.074)		circle[radius=0.05];	   
   \draw	[fill, blue]  (0.5211, -1.953)		circle[radius=0.05];	  
   \draw	[fill, blue]  (-0.5211, -1.953)		circle[radius=0.05];	   
    \draw[fill, blue]  (0.5211, -1.953)		circle[radius=0.05];	  
   \draw	[fill, blue]  (-0.5211, -1.953)		circle[radius=0.05];	   
    \draw[fill, blue]  (0.379, -1.087)		circle[radius=0.05];	  
   \draw	[fill, blue]  (-0.379, -1.087)		circle[radius=0.05];	   
    \draw[fill, blue]  (1.073, -3.074)		circle[radius=0.05];	  
   \draw	[fill, blue]  (-1.073, -3.074)		circle[radius=0.05];	   
   \draw	[fill, red]  (1.1855, 1.10268)		circle[radius=0.05];	  
   \draw	[fill, red]  (-1.1855, 1.10268)	circle[radius=0.05];	   
    \draw	[fill, red]  (1.529, 1.006)		circle[radius=0.05];	  
   \draw	[fill, red]  (-1.529, 1.006)	circle[radius=0.05];	   
   \draw	[fill, red]  (0.523, 0.2637)		circle[radius=0.05];	  
   \draw	[fill, red]  (-0.523, 0.2637)	circle[radius=0.05];	   
    \draw	[fill, red]  (1.1855, -1.10268)		circle[radius=0.05];	  
   \draw	[fill, red]  (-1.1855, -1.10268)	circle[radius=0.05];	   
    \draw	[fill, red]  (1.529, -1.006)		circle[radius=0.05];	  
   \draw	[fill, red]  (-1.529, -1.006)	circle[radius=0.05];	   
   \draw	[fill, red]  (0.523, -0.2637)		circle[radius=0.05];	  
   \draw	[fill, red]  (-0.523, -0.2637)	circle[radius=0.05];	   
   \node[above]  at (-3, 0.5) {\footnotesize $\text{Re}(i\theta)<0$};
     \node[above]  at (3, 0.5) {\footnotesize $\text{Re}(i\theta)<0$};
      \node[below]  at (-3, -0.5) {\footnotesize $\text{Re}(i\theta)>0$};
     \node[below]  at (3, -0.5) {\footnotesize $\text{Re}(i\theta)>0$};
      \node[above]  at (0, 3) {\footnotesize $\text{Re}(i\theta)>0$};
       \node[below]  at (0, -3) {\footnotesize $\text{Re}(i\theta)<0$};
    \end{tikzpicture}
 \begin{center}
  \begin{tabular}{ccc}
Soliton ({\color{green} $\bullet$})	&	
Breather ({\color{red} $\bullet$} {\color{blue} $\bullet$} ) 
\end{tabular}
 \end{center}
\end{figure}
We write 
$$ \text{Re} i\theta(x,t; z)=t\left( 4(-3u^2v+v^3)-\dfrac{x}{t} v  \right) $$ 
then it is  clear that if we set $x/t=v_{b_j}=4\eta_j^2-12\xi_j^2$, then $ \text{Re} i\theta(x,t; z_j)=0$. We again choose the frame of a single breather/soliton:
$$x/t=4\eta_\ell^2-12\xi_\ell^2.$$
 Define the following set
\begin{equation}
\label{B-set+}
\mathcal{B}_\ell=\lbrace z_j = \xi_j+i\eta_j: ~  4\eta^2_j-12\xi^2_j  >4\eta^2_\ell-12\xi^2_\ell\rbrace \cup \lbrace z_k = i\zeta_k: ~  4\zeta_k^2 >4\eta^2_\ell-12\xi^2_\ell\rbrace .
\end{equation}
and scalar function
\begin{equation}
\label{psi-l}
\psi(z) =\left( \prod_{z_k\in \mathcal{B}_\ell } \dfrac{z- \overline {z_k }}{ z-z_k}\right) \left(\prod_{z_j\in \mathcal{B}_\ell } \dfrac{z-\overline{z_j}}{z-z_j}\right) \left(\prod_{z_j\in \mathcal{B}_\ell} \dfrac{z+z_j}{z+\overline{z_j}}\right) .
\end{equation}

We follow the strategy of the previous subsection. If $m(z;x,t)$ solves Problem \ref{RHP-1}, then the new matrix-valued function $m^{(1)}(z;x,t)=m(z;x,t)\psi(z)^{\sigma_3}$ has exponentially decaying jumps across all small circles except  $\pm \gamma_\ell\cup \pm \gamma_\ell^*$. Also we can deform $\bbR$ as follows:
\begin{figure}[H]
\caption{$\Sigma^{(4)}-solitons$}
\vskip 15pt
\begin{tikzpicture}[scale=0.7]
\draw[thick]		(2, 3) -- (1,2);						
\draw[->,thick,>=stealth] 		(0,1) -- (1,2);		
\draw[thick] 			(-1,2) -- (0, 1); 				
\draw[->,thick,>=stealth]  	(-2,3) -- (-1,2);	
\draw[->,thick,>=stealth]		(-2,-3) -- (-1,-2);							
\draw[thick]						(-1,-2) -- (0, -1);
\draw[thick,->,>=stealth]		(2, -3) -- (1,-2);								
\draw[thick]						(1,-2) -- (0,-1);
\draw[thick]		(0,0)--(4,0);
\draw[thick,->,>=stealth] (-4,0) -- (0, 0);
\draw [dashed] (0,1)--(2,1);
\draw [dashed] (0,1)--(-2,1);
\node[right] at (2,3)					{$\Sigma^{(4)}_1$};
\node[right] at (4,0)					{$\Sigma^{(4)}_2$};
\node[right] at (2,-3)				{$\Sigma^{(4)}_3$};
\node[below] at (0,3)           {$\Omega_1$};
\node[above] at (2.5,0)           {$\Omega_2$};
\node[below] at (2.5,0)           {$\Omega_3$};
\node[above] at (0,-3)           {$\Omega_4$};
\node[below] at (0,1.1)      {$ih $};
\node[above] at (0,-1)      {$-ih$};
\node[above] at (-2, 0)  {(1)};
\node[above] at (1, 1)  {(2)};
\node[above] at (-1, 1)  {(3)};
 \pgfmathsetmacro{\a}{1}
    \pgfmathsetmacro{\b}{1.7320} 
 \draw plot[domain=-1.3:1.3] ( {\a*sinh(\x)}, {\b*cosh(\x)} );
  \draw plot[domain=-1.3:1.3] ( {-\a*sinh(\x)}, {-\b*cosh(\x)} );
\end{tikzpicture}
\label{fig:contour-scale-IV}
\end{figure}
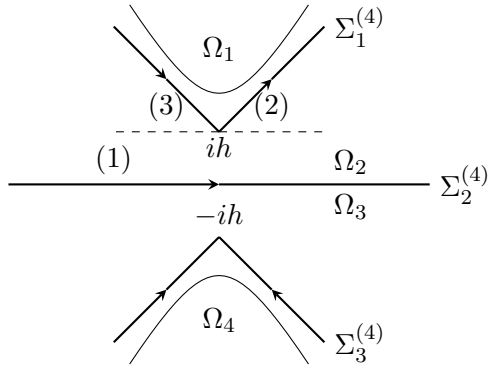	
where $ h$ is chosen such that  $h>0$ and 
\begin{equation}
\label{h-bound}
4\eta_\ell^2-12\xi_\ell^2 -12h^2=c>0.
\end{equation}
On $\Sigma^{(4)}_1$ we set
\begin{align*}
	R_1	&=	\begin{cases}
						\twomat{0}{0}{r( z )  \psi^{-2}( z ) e^{2i\theta  }  }{0}		
								&	z \in \bbR \\[10pt]
								\\
						\twomat{0}{0}{r( 0 )  \psi^{-2}(z)   (1-\Xi_\calZ)e^{2i\theta }  }{0}	
								&	z	\in \Sigma_1^{(4)}
					.\end{cases}
	\end{align*}
We only study $\Omega_2$. In Part $(1)$ of $\Omega_2$, we extend $r( z)=r(\text{Re}z)$. Also in this region, by \eqref{h-bound}
\begin{align*}
\text{Re} (2i\theta (z ) ) &=2t\left( 4(-3u^2v+v^3)-\dfrac{x}{t} v  \right) \\
& \leq  -24u^2v t +  2\left(4 h^2 -\dfrac{x}{t}\right)vt\\
& \leq  -24u^2 v t - 2cvt.
\end{align*}
We now integrate and find that
\begin{align}
\label{error-(1)}
 \int_{(1)} \left\vert  r'(u ) e^{2i\theta(z)} \right\vert dz
 & =\int_0^\eta \int_{-\infty}^{\infty} \left\vert  r'(u) e^{-(24u^2 v  + 2cv)t} \right\vert dudv\\
 \nonumber
 & \lesssim \int_0^\infty \dfrac{e^{- 2cvt}}{\sqrt{vt}} dv\\
 & \lesssim t^{-1}.
\end{align}
In Part $(2)$ of $\Omega_2$, we interpolate
$$ \left( r(0)+ \left( r\left( \text{Re}z  \right) -r(0)  \right) \cos 2\phi  \right) \psi^{-2}(z)   (1-\Xi_\calZ)$$
and calculate 
\begin{align*}
\dbar R_1	&= \left(  r'\left( u  \right) \cos 2\phi- 2\dfrac{ r(u)  -r(0)  }{  \left\vert z\right\vert   } e^{i\phi} \sin 2\phi  \right)\psi^{-2}( z )   e^{2i\theta}\\
    &\quad \times  (1-\Xi_\calZ)\\
    &\quad- \left( r( 0)+ \left( r\left( u \right) -r(0)  \right) \cos 2\phi  \right)  \dbar (\Xi_\calZ(z ) ) \psi^{-2}(z )  e^{2i\theta}
\end{align*}
Also in this region, changing variable $v\mapsto v+h$,  from \eqref{h-bound} and the fact that $u\geq v\geq 0$,
\begin{align*}
\text{Re} (2i\theta (z ) ) &=2t\left( 4(-3u^2(v+h)+(v+h)^3)-\dfrac{x}{t} (v +h) \right) \\
& \leq  2t\left( -8u^2v+ \left(12h^2-\dfrac{x}{t}  \right) v  + \left(4h^2-\dfrac{x}{t} \right ) h \right)\\
& \leq  -16u^2 v t - 2cvt.
\end{align*}
Since
\begin{equation}
\label{R1-bd-soliton}
\left\vert W(z) \right\vert=\left| \dbar R_1 e^{2i\theta}  \right| 	\lesssim\left( |  r'\left( u  \right) | +\dfrac{\norm{r'}{L^2} }{  |z  |^{1/2} } +\dbar (\Xi_\calZ( z ) )  \right) e^{-16u^2 vt-2cvt}
\end{equation}
we still have 
$$\int_{(2)} |W(z)|dz=I_1+I_2+I_3$$
with
\begin{align*}
I_1 	&=	\int_0^\infty \int_{v}^\infty  \left\vert  r'\left( u \right)   \right\vert e^{-16u^2vt-2cvt} \, du \, dv ,\\[5pt]
I_2	&=	\int_0^\infty \int_{v}^\infty \dfrac {1} {  \left| u +i(v+h) \right|^{1/2} } e^{-16u^2vt-2cvt} \, du \, dv ,\\[5pt]
I_3 	&=	\int_0^\infty \int_{v}^\infty  \left\vert \dbar \Xi_{\calZ} \left( u+i(v+h) \right)   \right\vert e^{-16u^2vt-2cvt} \, du \, dv .
\end{align*}
Direct calculation gives
\begin{equation}
\label{error-soliton}
\int_{(2)} |W(z)|dz \lesssim t^{-1}.
\end{equation}
Notice that in Figure \ref{fig:contour-scale-IV}, we have deformed $\bbR$ into $\bbC^\pm$ So the jump matrices across $\Sigma_1^{(4)}\cup \Sigma_3^{(4)}$ enjoy the property of exponential decay as $t\to +\infty$. Thus the reflection coefficient $r$ makes no contribution to the leading order term in Region III. If we choose the frame $x/t=4\eta^2_\ell- 12\xi^2_\ell >0$, then by  solving Problem \ref{prob:mkdv.br} (with $\delta$ replaced by $\psi$ ) we obtain the following asymptotic formula in Region III: 
\begin{equation}
\label{asym-III-1}
u(x, t)=-4\dfrac{\eta_\ell}{\xi_\ell }\dfrac{ \xi_\ell \cosh(\nu_2+\tomega_2 ) \sin(\nu_1+\tomega_1 )+ \eta_\ell \sinh(\nu_2+\tomega_2  ) \cos(\nu_1+\omega_1 )}{\cosh^2(\nu_2+\tomega_2  ) + (\eta_\ell/\xi_\ell )^2 \cos^2(\nu_1+\tomega_1 )} + \mathcal{O }\left( t^{-1}  \right)
\end{equation}
with
\begin{align*}
\nu_1 &=2\xi_\ell (x+4(\xi^2_\ell -3\eta^2_\ell )t )\\
\nu_2 &=2\eta_\ell (x -4(\eta^2_\ell -3\xi^2_\ell )t ).
\end{align*}
And
\begin{align}
\tan \tomega_1&= \dfrac{\tB\xi_\ell-\tA\eta_\ell}{\tA\xi_\ell+\tB\eta_\ell}\\
e^{-\tomega_2} &= \left\vert \dfrac{\xi_\ell }{ 2\eta_\ell } \right\vert \sqrt{\dfrac{\tA^2+\tB^2}{\xi^2_\ell+\eta^2_\ell }  }
\end{align}
where we set 
$$\tilde{c}_\ell=c_\ell \psi(z_\ell )^{-2}=\tA+i\tB .$$
Similarly, if we choose the frame $x/t=4\zeta^2_\ell$, the velocity of the $l-$th soliton, then
\begin{equation}
\label{asym-III-2}
u(x,t)=2\zeta_\ell  \eps_{\pm, \ell}\sech(-2\zeta_\ell(x-4\zeta^2_\ell t)+\omega )+\mathcal{O}(t^{-1})
\end{equation}
where 
$$\omega=\log \left(  \dfrac{ \left\vert c_\ell \right\vert  }{2\zeta_\ell} \right)+2\sum_{z_k\in S_\ell }\log \left\vert \dfrac  { z_k-z_\ell} {z_\ell- \overline {z_k } }\right\vert +2\sum_{z_j\in S_\ell }\log \left\vert \dfrac { z_\ell-z_j} {z_\ell- \overline {z_j }} \right\vert + 2\sum_{z_j \in S_\ell }\log \left\vert \dfrac { z_\ell+ \overline{z_j }}  {z_\ell+ {z_j }}\right\vert . $$ 
Here $S_\ell$ is defined by
$$S_\ell=\lbrace z_j = \xi_j+i\eta_j: ~  4\eta^2_j-12\xi^2_j  >4\zeta^2_\ell \rbrace \cup \lbrace z_k = i\zeta_k: ~  4\zeta_k^2 >4\zeta^2_\ell\rbrace .$$

\section{Global approximations of solutions}

In this section, as in our earlier work, \cite{CL19}, we apply a global
approximation arguments to extend our long-time asymptotics to the
focusing mKdV with rougher data. Again two important spaces are $H^{1}\left(\mathbb{R}\right)$
and $H^{\frac{1}{4}}\left(\mathbb{R}\right)$. In $H^{1}\left(\mathbb{R}\right)$,
the mKdV enjoys the natural conservation. For $H^{\frac{1}{4}}\left(\mathbb{R}\right)$,
this space is the lowest regularity that the solution can be constructed
by iterations, see Theorem $\ref{thm:KVPlocal}$. We will show that
the long-time asymptotics and soliton resolution remain valid in spaces
$H^{s}\left(\mathbb{R}\right)$ for $s\geq\frac{1}{4}$ after we pay
the price of weights. Unlike in \cite{CL19}, here we deal with $H^{s}\left(\mathbb{R}\right)$
with $s\geq\frac{1}{4}$ in a uniform manner using the local well-posedness
due to Kenig-Ponce-Vega \cite{KPV}, see Theorem $\ref{thm:KVPlocal}$,
and the new construction of conservation laws in $H^{s}$ with $s>-\frac{1}{2}$
due to the recent  work of Killip-Visan-Zhang \cite{KiViZh}
and Koch-Tataru \cite{KoTa}.

In contrast to the defocusing problem in our earlier paper, \cite{CL19},
in the focusing problem, the Beals-Coifman solutions are more complicated
and need to take care of solitons and breathers. To implement our
approximation arguments, not only we need to ensure the radiation
terms converge well but also have to make sure the discrete scattering
data including the number and locations of singularities have meaningful
limits. More refined analysis of Jost functions is necessary. See
Appendix \ref{discrete}.

\subsection{Strong solutions }

For the sake of completeness, as in \cite{CL19}, we first sketch the
uniqueness of the strong solution and the local existence of the strong
solution for the focusing mKdV in $H^{s}$ for $s\ge\frac{1}{4}$.
The following discussion will be similar to our earlier work \cite{CL19}
up to the change of the sign of the nonlinearity. We mainly follow
Kenig-Ponce-Vega \cite{KPV} and Linares-Ponce \cite{LP}. 

First of all, we define the solution operator to the linear Airy function
as
\[
W\left(t\right)u_{0}=e^{-t\partial_{xxx}}u_{0}.
\]
In other words, using the Fourier transform, one has
\[
\mathcal{F}_{x}\left[W\left(t\right)u_{0}\right]\left(\xi\right)=e^{it\xi^{3}}\hat{u}_{0}\left(\xi\right).
\]
The\emph{ strong solution} is defined as the following integral sense:
\begin{definition}
	We say the the function $u\left(t,x\right)$ is a \emph{strong solution}
	in $H^{s}\left(\mathbb{R}\right)$ to the focusing mKdV
	\begin{equation}
	\partial_{t}u+\partial_{xxx}u+6u^{2}\partial_{x}u=0,\,u\left(0\right)=u_{0}\in H^{s}\left(\mathbb{R}\right)\label{eq:IVP}
	\end{equation}
	if and only if $u\in C\left(I,H^{s}\left(\mathbb{R}\right)\right)$
	satisfies
	\begin{equation}
	u=W\left(t\right)u_{0}-\int_{0}^{t}W\left(t-s\right)\left(6u^{2}\partial_{x}u\left(s\right)\right)\,ds.\label{eq:mild}
	\end{equation}
	We also define
	\[
	\mathcal{D}_{x}^{s}h\left(x\right)=\mathcal{F}^{-1}\left[\left|\xi\right|^{s}\hat{h}\left(\xi\right)\right]\left(x\right).
	\]
\end{definition}

\begin{theorem}[Kenig-Ponce-Vega]
	\label{thm:KVPlocal}Let $s\ge\frac{1}{4}$. Then
	for any $u_{0}\in H^{s}\left(\mathbb{R}\right)$ there is $T=T\left(\left\Vert \mathcal{D}_{x}^{\frac{1}{4}}u_{0}\right\Vert _{L^{2}}\right)\sim\left\Vert \mathcal{D}_{x}^{\frac{1}{4}}u_{0}\right\Vert _{L^{2}}^{-4}$
	such that there exists a unique strong solution $u\left(t\right)$
	to the initial-value problem
	\[
	\partial_{t}u+\partial_{xxx}u+6u^{2}\partial_{x}u=0,\,u\left(0\right)=u_{0}
	\]
	satisfying
	\begin{equation}
	u\in C\left(\left[-T,T\right]:H^{s}\left(\mathbb{R}\right)\right)\label{eq:E1}
	\end{equation}
	\begin{equation}
	\left\Vert \mathcal{D}_{x}^{s}\partial_{x}u\right\Vert _{L_{x}^{\infty}\left(\mathbb{R}:L_{t}^{2}\left[-T,T\right]\right)}<\infty,\label{eq:E2}
	\end{equation}
	\begin{equation}
	\left\Vert \mathcal{D}_{x}^{s-\frac{1}{4}}\partial_{x}u\right\Vert _{L_{x}^{20}\left(\mathbb{R}:L_{t}^{\frac{5}{2}}\left[-T,T\right]\right)}<\infty,\label{eq:E3}
	\end{equation}
	\begin{equation}
	\left\Vert \mathcal{D}_{x}^{s}u\right\Vert _{L_{x}^{5}\left(\mathbb{R}:L_{t}^{10}\left[-T,T\right]\right)}<\infty,\label{eq:E4}
	\end{equation}
	and
	\begin{equation}
	\left\Vert u\right\Vert _{L_{x}^{4}\left(\mathbb{R}:L_{t}^{\infty}\left[-T,T\right]\right)}<\infty.\label{eq:E5}
	\end{equation}
	Moreover, there exists a neighborhood $\mathcal{N}$ of $u_{0}$ in
	$H^{s}\left(\mathbb{R}\right)$ such that the solution map: $\tilde{u}_{0}\in\mathcal{N}\longmapsto\tilde{u}$
	is smooth with respect to the norms given by \eqref{eq:E1}, \eqref{eq:E2},
	\eqref{eq:E3}, \eqref{eq:E4} and \eqref{eq:E5}.
\end{theorem}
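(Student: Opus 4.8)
The plan is to follow the contraction-mapping scheme of Kenig--Ponce--Vega \cite{KPV}, the only change from their treatment being the sign in front of the cubic term, which enters none of the linear estimates and none of the multilinear bounds. First I would record the relevant linear estimates for the Airy propagator $W(t)$ on a finite interval $[-T,T]$ (with constants independent of $T$, after inserting a smooth temporal cutoff): the sharp Kato smoothing effect $\norm{\partial_x W(t)u_0}{L_x^\infty L_t^2} \lesssim \norm{u_0}{L^2}$ together with its inhomogeneous Duhamel counterpart, in which the operator $\int_0^t W(t-s)\partial_x(\cdot)\,ds$ \emph{gains} one full derivative in the appropriate mixed norms; the maximal-function estimate $\norm{W(t)u_0}{L_x^4 L_t^\infty} \lesssim \norm{\mathcal{D}_{x}^{1/4} u_0}{L^2}$, which is exactly the estimate responsible for the threshold $s=\tfrac14$; and the Strichartz-type bounds controlling $\norm{\mathcal{D}_{x}^{s}u}{L_x^5 L_t^{10}}$ and $\norm{\mathcal{D}_{x}^{s-1/4}\partial_x u}{L_x^{20} L_t^{5/2}}$, all valid for the homogeneous flow and, via the smoothing gain, for the Duhamel term.

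Next I would define, for $s\ge\tfrac14$ and $T>0$, the space $X_T^s$ of functions $u\in C([-T,T];H^s(\mathbb{R}))$ for which the five quantities in \eqref{eq:E1}--\eqref{eq:E5} are finite, normed by their sum, and study the map
\[
\Phi(u)(t) = W(t)u_0 - \int_0^t W(t-s)\left(6u^2\partial_x u(s)\right)\,ds.
\]
The heart of the argument is the trilinear estimate: applying the Duhamel smoothing estimate (which converts the $\partial_x$ in $u^2\partial_x u$ into a gain) together with the Kenig--Ponce--Vega fractional Leibniz rule for $\mathcal{D}_{x}^{s}(u^2\partial_x u)$, one bounds each of the five components of $\norm{\Phi(u)}{X_T^s}$ by $\norm{u_0}{H^s} + c\,T^{\alpha}\norm{u}{X_T^s}^3$ for some $\alpha>0$, by placing two factors of $u$ in the $L_x^4 L_t^\infty$ and $L_x^5 L_t^{10}$-type norms and the derivative factor in the smoothing norm; an analogous estimate for the difference $\Phi(u)-\Phi(v)$ follows from multilinearity. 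A standard argument then shows $\Phi$ maps a ball of radius $\sim\norm{u_0}{H^s}$ in $X_T^s$ into itself and is a contraction there once $T$ is small, and tracking the scaling $u(x,t)\mapsto \lambda u(\lambda x,\lambda^3 t)$ of \eqref{eq:IVP} shows that the admissible $T$ depends only on $\norm{\mathcal{D}_{x}^{1/4}u_0}{L^2}$ and may be taken $\sim\norm{\mathcal{D}_{x}^{1/4}u_0}{L^2}^{-4}$. Uniqueness of the strong solution \eqref{eq:mild} in $C([-T,T];H^s)$ follows from the same trilinear estimate applied to the difference of two solutions, and the smooth (indeed polynomial) dependence of $\Phi$ on $(u,u_0)$ yields, via the implicit function theorem applied to $u-\Phi(u)=0$, the smoothness of the solution map on a neighborhood $\mathcal{N}$ of $u_0$; persistence of regularity and the standard continuation criterion, which I would quote from Linares--Ponce \cite{LP}, complete the picture.

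The main obstacle is closing the trilinear estimate exactly at the endpoint $s=\tfrac14$: one must distribute the three factors and the single derivative so that the maximal-function norm (the only one seeing precisely $\tfrac14$ derivative) is used exactly twice and the smoothing norm exactly once, and one must check that the fractional Leibniz rule never transfers more than $\tfrac14$ derivative onto a single factor. This is precisely the delicate bookkeeping carried out in \cite{KPV}; since the sign of the cubic nonlinearity is irrelevant to every inequality involved, their proof applies verbatim, and for brevity I would indicate only the (empty) list of required modifications rather than reproduce the multilinear computation.
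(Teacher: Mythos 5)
Your proposal is correct and follows essentially the same route as the paper: both set up the Kenig--Ponce--Vega solution space built from the norms \eqref{eq:E1}--\eqref{eq:E5}, run the contraction mapping for the Duhamel operator using the cubic estimate $\norm{\Phi(u)}{X_T^s}\lesssim \norm{u_0}{H^s}+T^{\alpha}\norm{u}{X_T^s}^3$, and defer the multilinear bookkeeping to \cite{KPV} and \cite{LP}, noting that the sign of the nonlinearity is irrelevant. Your added remarks on the scaling argument pinning down $T\sim\norm{\mathcal{D}_x^{1/4}u_0}{L^2}^{-4}$ and on the endpoint $s=\tfrac14$ are consistent with, and slightly more explicit than, the paper's sketch.
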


\begin{proof}
	Given $T$ and $\mathcal{C}$, define the space
	\begin{equation}
	\mathcal{X}_{T}^{s}=\left\{ v\in C\left(\left[-T,T\right]:H^{s}\left(\mathbb{R}\right)\right):\left\Vert \left|v\right|\right\Vert _{\mathcal{X}_{T}^{s}}<\infty\right\} \label{eq:XT}
	\end{equation}
	and
	\begin{equation}
	\mathcal{X}_{T,\mathcal{C}}^{s}=\left\{ v\in C\left(\left[-T,T\right]:H^{s}\left(\mathbb{R}\right)\right):\left\Vert \left|v\right|\right\Vert _{\mathcal{X}_{T}^{s}}\leq\mathcal{C}\right\} \label{eq:XTC}
	\end{equation}
	where
	\begin{align*}
	\left\Vert \left|v\right|\right\Vert _{\mathcal{X}_{T}^{s}} & =\left\Vert \mathcal{D}_{x}^{s}v\right\Vert _{L_{t}^{\infty}\left(\left[-T,T\right]:H^{s}\left(\mathbb{R}\right)\right)}+\left\Vert v\right\Vert _{L_{x}^{4}\left(\mathbb{R}:L_{t}^{\infty}\left[-T,T\right]\right)}\\
	& +\left\Vert \mathcal{D}_{x}^{s}v\right\Vert _{L_{x}^{5}\left(\mathbb{R}:L_{t}^{10}\left[-T,T\right]\right)}+\left\Vert \mathcal{D}_{x}^{s-\frac{1}{4}}\partial_{x}v\right\Vert _{L_{x}^{20}\left(\mathbb{R}:L_{t}^{\frac{5}{2}}\left[-T,T\right]\right)}\\
	& +\left\Vert \mathcal{D}_{x}^{s}\partial_{x}v\right\Vert _{L_{x}^{\infty}\left(\mathbb{R}:L_{t}^{2}\left[-T,T\right]\right)}.
	\end{align*}
	To obtain a strong solution to the initial-value problem we need find
	appropriate $T$ and $\mathcal{C}$ such that the operator
	\[
	\mathcal{S}\left(v,u_{0}\right)=\mathcal{S}\left(v\right)=W\left(t\right)u_{0}-\int_{0}^{t}W\left(t-s\right)\left(6v^{2}\partial_{x}v\left(s\right)\right)\,ds
	\]
	is a contraction map on $\mathcal{X}_{T,\mathcal{C}}^{s}$.
	
	Using linear estimates for $W\left(t\right)$ and the Leibniz rule
	for fractional derivatives one can show that
	\[
	\left\Vert \left|\mathcal{S}\left(v\right)\right|\right\Vert _{\mathcal{X}_{T}^{s}}\leq c\left\Vert u_{0}\right\Vert _{H^{s}}+cT^{\frac{1}{2}}\left\Vert \left|v\right|\right\Vert _{\mathcal{X}_{T}^{s}}^{3}
	\]
	where $c$ is from linear estimates etc independent of the initial
	data. See Kenig-Ponce-Vega \cite{KPV} and Linares-Ponce \cite{LP}
	for details. Then choose $\mathcal{C}=2c\left\Vert u_{0}\right\Vert _{H^{s}}$
	and $T$ such that $c\mathcal{C}^{2}T^{\frac{1}{2}}<\frac{1}{4}$,
	we obtain that
	\[
	\mathcal{S}\left(\cdot,u_{0}\right):\,\mathcal{X}_{T,\mathcal{C}}^{s}\rightarrow\mathcal{X}_{T,\mathcal{C}}^{s}.
	\]
	Similarly, one can also show
	\begin{align*}
	\left\Vert \left|\mathcal{S}\left(v_{1}\right)-\mathcal{S}\left(v_{2}\right)\right|\right\Vert _{\mathcal{X}_{T}^{s}} & \leq cT^{\frac{1}{2}}\left(\left\Vert \left|v_{1}\right|\right\Vert _{\mathcal{X}_{T}}^{2}+\left\Vert \left|v_{2}\right|\right\Vert _{\mathcal{X}_{T}}^{2}\right)\left\Vert \left|v_{1}-v_{2}\right|\right\Vert _{\mathcal{X}_{T}^{s}}\\
	& \leq2cT^{\frac{1}{2}}\mathcal{C}^{2}\left\Vert \left|v_{1}-v_{2}\right|\right\Vert _{\mathcal{X}_{T}^{s}}.
	\end{align*}
	Therefore, with our choice of $T$ and $\mathcal{C}$, $\mathcal{S}\left(\cdot,u_{0}\right)$
	is a contraction on $\mathcal{X}_{T,\mathcal{C}}^{s}$. So there is
	a unique fixed point of this $\mathcal{S}\left(\cdot,u_{0}\right)$
	in $\mathcal{X}_{T,\mathcal{C}}^{s}$ so we obtain the unique strong
	solution:
	\[
	u=\mathcal{S}\left(u\right)=W\left(t\right)u_{0}-\int_{0}^{t}W\left(t-s\right)\left(6u^{2}\partial_{x}u\left(s\right)\right)\,ds.
	\]
	To check the dependence on the initial data, using similar arguments
	as above, one can show that
	\begin{align*}
	\left\Vert \left|\mathcal{S}\left(u_{1},u_{1}\left(0\right)\right)-\mathcal{S}\left(u_{2},u_{2}\left(0\right)\right)\right|\right\Vert _{\mathcal{X}_{T_{1}}^{s}} & \leq c\left\Vert u_{1}\left(0\right)-u_{2}\left(0\right)\right\Vert _{H^{s}}\\
	& +cT_{1}^{\frac{1}{2}}\left(\left\Vert \left|u_{1}\right|\right\Vert _{\mathcal{X}_{T_{1}}^{s}}^{2}+\left\Vert \left|u_{2}\right|\right\Vert _{\mathcal{X}_{T_{1}}^{s}}^{2}\right)\left\Vert \left|u_{1}-u_{2}\right|\right\Vert _{\mathcal{X}_{T_{1}}^{s}}.
	\end{align*}
	This can be used to show that for $T_{1}\in\left(0,T\right)$, the
	solution map from a neighborhood $\mathcal{N}$ of $u_{0}$ depending
	on $T_{1}$ to $\mathcal{X}_{T_{1},\mathcal{C}}^{s}$ is Lipschitz.
	Further work can be used to show actually the solution map is smooth.
	
	Again for more details, see Kenig-Ponce-Vega \cite{KPV} and Linares-Ponce
	\cite{LP}.
\end{proof}
Finally, we notice that if $u_{0}$ is smooth, say, Schwartz, then
the solution $u$ to the initial-value problem is also smooth and
hence is a classical solution. The uniqueness of the classical solution
is well-known, see for example Bona-Smith \cite{BS} and Saut \cite{S79}.

Next, we recall the consequences of low regularity conservation laws
from Killip-Visan-Zhang \cite{KiViZh} and Koch-Tataru \cite{KoTa}.
Here we formulate the Corollary 1.2 from Koch-Tataru \cite{KoTa}.
\begin{theorem}[Koch-Tataru]
	\label{thm:KocTar}Let $s>-\frac{1}{2}$, let $R>0$,
	and let $u_{0}$ be an initial datum for the focusing mKdV
	\begin{equation}
	\partial_{t}u+\partial_{xxx}u+6u^{2}\partial_{x}u=0,\label{eq:IVP-1}
	\end{equation}
	so that
	\[
	\left\Vert u_{0}\right\Vert _{H^{s}}\leq R.
	\]
	Then the corresponding solution $u$ satisfies that global bound
	\[
	\left\Vert u\left(t\right)\right\Vert _{H^{s}}\lesssim F\left(R,s\right)=\begin{cases}
	R+R^{1+2s} & s\geq0\\
	R+R^{\frac{1+4s}{1+2s}} & s<0
	\end{cases}.
	\]
\end{theorem}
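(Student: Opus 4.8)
Since this is exactly Corollary 1.2 of Koch-Tataru \cite{KoTa} (see also Killip-Visan-Zhang \cite{KiViZh}), we only outline the mechanism. The plan is to exploit the complete integrability of \eqref{eq:IVP-1} through the AKNS system \eqref{L}: the transmission coefficient $a(z)$ of the potential $u(\cdot,t)$ is invariant under the mKdV flow, and for $z=i\kappa$ with $\kappa>0$ sufficiently large --- specifically $\kappa$ exceeding a fixed power of $\norm{u_0}{H^s}$ --- one has $a(i\kappa)\to 1$ as $\kappa\to\infty$ and $a(i\kappa)$ stays bounded away from $0$. In particular the bound states (solitons and breathers of the focusing problem) never enter this regime, so $\log a(i\kappa;u)$ is a well-defined conserved quantity there, and the focusing and defocusing analyses differ only through the admissible size of $\kappa$.

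First I would expand $\beta(\kappa;u):=-\log\lvert a(i\kappa;u)\rvert$ as a convergent series of $2\ell$-linear forms $\beta(\kappa;u)=\sum_{\ell\geq 1}\beta_{2\ell}(\kappa;u)$, whose quadratic part $\beta_2(\kappa;u)$ is comparable to $\int_{\bbR}\frac{\lvert\hat u(\xi)\rvert^2}{\kappa^2+\xi^2}\,d\xi$ and whose tail satisfies $\lvert\beta_{2\ell}(\kappa;u)\rvert\lesssim C^{\ell}\kappa^{-\ell}\norm{u}{H^s}^{2\ell}$; this multilinear estimate, carried out by paraproduct decomposition in \cite{KoTa,KiViZh}, is the technical heart. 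Choosing $\kappa$ large depending only on $\norm{u_0}{H^s}$ then makes the tail perturbative relative to $\beta_2$, so that $\beta(\kappa;u(t))$ is comparable to $\kappa^{-1}$ times the portion of $\norm{u(t)}{H^s}^2$ carried by frequencies $\lesssim\kappa$.

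Next I would recombine these conserved quantities over a dyadic range of $\kappa$: an appropriately weighted sum (or integral) of $\kappa^{2s+1}\beta(\kappa;u)$ reconstructs a quantity equivalent to $\norm{u(t)}{H^s}^2$ modulo errors themselves bounded by the same conserved quantities. Since every term is time-independent, this produces the asserted global bound; the explicit profile $F(R,s)$ --- namely $R+R^{1+2s}$ for $s\geq 0$ and $R+R^{(1+4s)/(1+2s)}$ for $s<0$ --- arises from the threshold value of $\kappa$, of order a power of $R$, at which the multilinear tail becomes comparable to the quadratic term. The main obstacle is precisely this uniform multilinear bound on the forms $\beta_{2\ell}$, together with the verification that in the focusing case the zeros of $a$ stay away from the large-$\kappa$ region used to build the conserved quantities; both points are established in \cite{KoTa,KiViZh}, to which we refer for the details.
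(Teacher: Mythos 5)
The paper does not prove this result at all: it is quoted verbatim as Corollary 1.2 of Koch--Tataru \cite{KoTa} and used as a black box, so there is no internal argument to compare against. Your sketch is a correct and faithful outline of the mechanism of the cited proof (conservation of the transmission coefficient, multilinear expansion of $-\log|a(i\kappa)|$ with the quadratic term dominating for $\kappa$ large relative to a power of $\|u_0\|_{H^s}$, absence of eigenvalues in that regime, and dyadic recombination to recover the $H^s$ norm), and you appropriately defer the genuinely technical multilinear estimates to \cite{KoTa,KiViZh}, which is exactly the level of detail appropriate for a theorem the paper itself only cites.
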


We will use the above results for $s\geq\frac{1}{4}$.

\subsection{Approximation}

As before given $u_{0}\in H^{2,1}\left(\mathbb{R}\right)$, one can
solve the focusing mKdV using the inverse scattering transform. By
the Beals-Coifman representation, one can write the solution as
\begin{align*}
u(x,t) & =\lim_{z\rightarrow\infty}2 zm_{12}(x,t,z)\\
& =\left(\int_{\Sigma}\mu(w_{\theta}^{-}+w_{\theta}^{+})\right)_{12}\\
& =\dfrac{1}{\pi}\int_{\mathbb{R}}\mu_{11}(x,t;z)\overline{r}(z)e^{-2i\theta}dz+\sum_{k=1}^{N_{1}}\mu_{11}(\overline{z_{k}})\overline{c_{k}}e^{-2i\theta(\overline{z_{k}})}\\
& -\sum_{j=1}^{N_{2}}\mu_{11}(\overline{z_{j}})\overline{c_{j}}e^{-2i\theta(\overline{z_{j}})}+\sum_{j=1}^{N_{2}}\mu_{11}(-z_{j})c_{j}e^{-2i\theta(-z_{j})}.
\end{align*}
But as we discussed above using PDE techniques, one can construct
solutions with rougher data at least locally. Motivated by Deift-Zhou
\cite{DZ03}, as in our earlier paper \cite{CL19}, we try to understand
the relations between Beals-Coifman solutions and strong solutions.
Again first of all, if $u_{0}$ is Schwartz, one can also show $u$
is Schwartz, see for example Deift-Zhou \cite{DZ93}. So in this case,
the strong solution is surely the same as the Beals-Coifman solution. 

The leading logic of our approximation argument is that Beals-Coifman
solutions gives asymptotic formulae and the strong solutions can be
used to pass to the limit. Using the bijectivity between the initial
data and scattering data due to Zhou \cite{Zhou98}, one can show in low regularity
spaces with weights, one can always find a limit of a sequence of
smooth sequence of Beals-Coifman solutions which has asymptotic formula.
But due to low regularity, one can not make sense of this limiting
function using the inverse scattering mechanism. On the other hand,
due to the uniqueness of smooth solutions, these smooth Beals-Coifman
solutions are also strong solutions. Then one can use strong solutions
to pass to the limit which again is a strong solution to the mKdV.
Combining these together, we conclude that the asymptotic formula
remains valid for low regularity solutions. In the reaming part of
our paper, we make the philosophy rigorous. Compared with our earlier
work \cite{CL19} for the defocusing problem, we have discrete scattering
data associated to solitons and breathers in our current setting.
\begin{theorem}
	\label{thm:Hs}For $u_{0}\in H^{s,1}\left(\mathbb{R}\right)$ with
	$s\geq\frac{1}{4}$, the strong solution given by the Duhamel formulation
	\eqref{eq:mild} with initial data $u_{0}$ has the same asymptotics
	as in our main Theorem  \ref{main1} (up to null sets).
\end{theorem}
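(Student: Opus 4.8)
The plan is to run a density-and-stability argument in the spirit of Deift--Zhou \cite{DZ03} and our earlier work \cite{CL19}, adapted to the present focusing situation in which discrete scattering data are active. First I would fix a generic $u_{0}\in H^{s,1}(\mathbb{R})$ with $s\geq\tfrac14$ and choose Schwartz functions $u_{0,n}\to u_{0}$ in $H^{s,1}(\mathbb{R})$; after deleting finitely many terms one may assume $\|u_{0,n}\|_{H^{s,1}}\leq 2\|u_{0}\|_{H^{s,1}}$ and that each $u_{0,n}$ is generic in the sense of Definition \ref{genericity}. Since each $u_{0,n}$ is Schwartz it belongs to $H^{2,1}(\mathbb{R})$, so Theorem \ref{main1} applies and yields the decomposition $u_{n}(x,t)=\sum_{\ell}u_{n,\ell}^{(br)}+\sum_{\ell}u_{n,\ell}^{(so)}+R_{n}(x,t)$ with the explicit profiles and the region-dependent error bounds. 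The structural point to exploit --- already stressed in the discussion following Theorem \ref{thm:main2} --- is that these error bounds are governed only by the weighted norm of the data, through $\|r_{n}\|_{H^{1}}$ and the discrete data, and are independent of any higher-order regularity of $u_{0,n}$; hence they are uniform in $n$ once $\|u_{0,n}\|_{H^{s,1}}$ is bounded.

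Second, I would establish convergence of the scattering data. Because $u_{0}\in L^{2,1}\subset L^{1}\cap L^{2}$ the direct transform is well defined, and by the refined analysis of Jost functions in Appendix \ref{discrete} (together with Zhou \cite{Zhou98} and the relaxation of the weight to $\langle x\rangle^{s}$, $s>\tfrac12$, explained above) the map $u_{0}\mapsto\mathcal{S}$ is Lipschitz from $H^{s,1}(\mathbb{R})$ into $H^{1}(\mathbb{R})\oplus\mathbb{C}^{2N_{1}}\oplus\mathbb{C}^{2N_{2}}$. Combined with genericity of $u_{0}$ and the count of zeros of $\breve a_{n}$, $a_{n}$, this forces, for $n$ large, the integers $N_{1},N_{2}$ to stabilize, the eigenvalues to remain simple and off $\mathbb{R}$, and $z_{k,n}\to z_{k}$, $c_{k,n}\to c_{k}$, $z_{j,n}\to z_{j}$, $c_{j,n}\to c_{j}$, $r_{n}\to r$ in $H^{1}$. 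Since the closed-form profiles $u_{n,\ell}^{(br)}$, $u_{n,\ell}^{(so)}$ and the leading radiation terms $u_{as,n}$ are continuous functions of $\mathcal{S}_{n}$ and of $(x,t)$, they converge, for each fixed $(x,t)$, to the profiles built from $\mathcal{S}$, i.e.\ to those appearing in Theorem \ref{main1}.

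Third, on the PDE side I would pass to the limit via well-posedness: Theorem \ref{thm:KVPlocal} produces strong solutions on a time interval whose length depends only on $\|\mathcal{D}_{x}^{1/4}u_{0,n}\|_{L^{2}}$, and Theorem \ref{thm:KocTar} keeps $\|u_{n}(t)\|_{H^{s}}$ bounded by $F(\|u_{0,n}\|_{H^{s}},s)$ for all time; iterating the local theory over consecutive intervals of uniform length, $u_{n}$ is global and $\sup_{0\leq t\leq T}\|u_{n}(t)-u(t)\|_{H^{s}}\to 0$ for every $T>0$, where $u$ is the strong solution with data $u_{0}$ given by \eqref{eq:mild}. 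Since each $u_{0,n}$ is Schwartz, uniqueness of classical solutions identifies its Beals--Coifman solution with this strong solution, so the two pictures are consistent. For fixed $t$ this gives $u_{n}(\cdot,t)\to u(\cdot,t)$ in $H^{s}(\mathbb{R})$, hence, along a subsequence, pointwise for a.e.\ $x$.

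Putting the pieces together, I would let $n\to\infty$ along the a.e.-convergent subsequence in $u_{n}(x,t)=\sum_{\ell}u_{n,\ell}^{(br)}+\sum_{\ell}u_{n,\ell}^{(so)}+R_{n}(x,t)$: the profile sums converge to the sums attached to $\mathcal{S}$, and $R_{n}(x,t)$ converges, for a.e.\ $x$, to a limit obeying the same region-dependent estimate (the $t^{-1}$ bound in Region III, the Painlev\'e expansion in Region II, the oscillatory formula in Region I) by the $n$-uniformity from the first step. Thus the strong solution $u$ satisfies, for a.e.\ $x$, precisely the asymptotic formula of Theorem \ref{main1}, which is the assertion ``up to null sets''; the reasoning is identical in Regions II and III and in the overlap regions. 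I expect the main obstacle to be the stability of the \emph{discrete} spectral data --- showing that the number of solitons and breathers does not jump in the limit and that the norming constants converge --- which is exactly what the Jost-function analysis of Appendix \ref{discrete} is designed to supply; once that and the regularity-independence of the $\dbar$/steepest-descent error estimates are in hand, the remaining steps are soft functional analysis.
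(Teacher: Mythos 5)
Your proposal is correct and follows essentially the same route as the paper: approximate by Schwartz (hence $H^{2,1}$) data, identify the Beals--Coifman and Duhamel solutions for smooth data, use the Lipschitz continuity of the scattering map and the stability of the discrete data (Appendix \ref{discrete}) together with the regularity-independence of the $\dbar$-error bounds to pass to the limit, and glue time intervals via Theorem \ref{thm:KVPlocal} and the Koch--Tataru conservation laws. The only cosmetic difference is that you extract a.e.\ pointwise convergence of $u_n(\cdot,t)$ from $H^{s}$ convergence along a subsequence, whereas the paper obtains the slightly stronger $L^{\infty}$ convergence of the Beals--Coifman solutions directly from the resolvent estimates; both suffice for the ``up to null sets'' conclusion.
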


\begin{proof}
	Suppose $u_{0}\in H^{s,1}\left(\mathbb{R}\right)$ with $s\geq\frac{1}{4}$,
	we can find a sequence $\left\{ u_{0,q}\right\} $ of Schwartz functions
	such that it is a Cauchy sequence in $H^{s,1}\left(\mathbb{R}\right)$
	and $u_{0,q}\rightarrow u_{0}$ in $H^{s,1}\left(\mathbb{R}\right)$
	and
	\begin{equation}
	\sup_{q}\left\Vert u_{0,q}-u_{0}\right\Vert _{H^{s,1}}\leq\epsilon\ll1.\label{eq:smalcon1}
	\end{equation}
	Moreover, we assume that for all $q$, there is a uniform bound
	\[
	\left\Vert u_{0,q}\right\Vert _{\dot{H}^{s}\left(\mathbb{R}\right)}\lesssim\left\Vert u_{0,q}\right\Vert _{H^{s}\left(\mathbb{R}\right)}\lesssim\left\Vert u_{0,q}\right\Vert _{H^{s,1}\left(\mathbb{R}\right)}\leq C.
	\]
	Then applying Theorem \ref{thm:KVPlocal} the we can find a strong
	solution $u_{q}$ with initial data $u_{0,q}$ in $\mathcal{X}_{T,\mathcal{C}}^{s}$
	where $T$ and $\mathcal{C}$ are chose are in Theorem \ref{thm:KVPlocal}. 
	
	By Theorem \ref{thm:KVPlocal}, we also have
	\[
	\left\Vert \left|u_{q}-u_{\ell}\right|\right\Vert _{\mathcal{X}_{T,\mathcal{C}}^{s}}\lesssim\left\Vert u_{0,q}-u_{0,\ell}\right\Vert _{H^{s}\left(\mathbb{R}\right)}.
	\]
	So in $\mathcal{X}_{T,C}^{s}$, $u_{q}$ converges to a limit $u_{\infty}$
	which is a strong solution. Using the notation above, we have
	\[
	u_{\infty}=\mathcal{S}\left(u_{\infty},u_{0}\right)\in\mathcal{X}_{T,C}^{s}.
	\]
	From the inverse scattering transform, we can also have the Beals-Coifman
	solutions
	\begin{align*}
	\tilde{u}_{q} & =\dfrac{1}{\pi}\int_{\mathbb{R}}\mu_{q,11}(x,t;z)\overline{r}_{q}(z)e^{-2i\theta}dz+\sum_{k=1}^{N_{1}}\mu_{q,11}(\overline{z_{q,k}})\overline{c_{q,k}}e^{-2i\theta(\overline{z_{q,k}})}\\
	& -\sum_{j=1}^{N_{2}}\mu_{q,11}(\overline{z_{q,j}})\overline{c_{q,j}}e^{-2i\theta(\overline{z_{q,j}})}+\sum_{j=1}^{N_{2}}\mu_{q,11}(-z_{q,j})c_{q,j}e^{-2i\theta(-z_{q,j})}
	\end{align*}
	with initial data $u_{0,q}$. Note that due to Proposition \ref{prop2} and
	the smallness condition \eqref{eq:smalcon1}, the numbers of zeros of
	$\breve{a}_{q}\left(z\right)$ and $\breve{a}\left(z\right)$ are
	the same, in particular, the numbers of the imaginary zeros are $N_{1}$
	and the numbers of zeros of the imaginary axis are $N_{2}$ (up to
	symmetry reduction).
	
	Since $u_{0,q}$ is Schwartz, so $u_{q}$ and $\tilde{u}_{q}$ are
	also Schwartz. Therefore we have $u_{q}=\tilde{u}_{q}.$ 
	
	Using the bijectivity of the direct transformation, see Zhou \cite{Zhou98}, in terms
	of reflection coefficients,
	\[
	r_{q}=\mathcal{R}\left(u_{0,q}\right)\in H^{1,s},
	\]
	we have
	\[
	\left\Vert r_{q}-r_{\ell}\right\Vert _{H^{1,s}\left(\mathbb{R}\right)}\lesssim\left\Vert u_{0,q}-u_{0,\ell}\right\Vert _{H^{s,1}\left(\mathbb{R}\right)}.
	\]
	By the Lipshicitiz continuity of discrete scattering data, it follows
	that
	\[
	\left|c_{q,k}-c_{\ell,k}\right|+\left|z_{q,k}-z_{\ell,k}\right|\lesssim\left\Vert u_{0,q}-u_{0,\ell}\right\Vert _{H^{s,1}\left(\mathbb{R}\right)},\ 1\leq k\leq N_{1},
	\]
	and
	\[
	\left|c_{q,j}-c_{\ell,j}\right|+\left|z_{q,j}-z_{\ell,j}\right|\lesssim\left\Vert u_{0,q}-u_{0,\ell}\right\Vert _{H^{s,1}\left(\mathbb{R}\right)},\ 1\leq j\leq N_{2}.
	\]
	Combining with the resolvent estimates, one also has
	\[
	\left\Vert \tilde{u}_{\ell}-\tilde{u}_{q}\right\Vert _{L^{\infty}\left(\mathbb{R}\right)}\lesssim\left\Vert u_{0,q}-u_{0,\ell}\right\Vert _{H^{s,1}\left(\mathbb{R}\right)}.
	\]
	Hence as $r_{q}$ converges to a function $r_{\infty}$ in $H^{1}\left(\mathbb{R}\right)$
	and with the convergence of discrete scattering data $\left\{ c_{q,k},z_{q,k},c_{q,j},z_{q,j}\right\} $
	to $\left\{ c_{\infty,k},z_{\infty,k},c_{\infty,j},z_{\infty,j}\right\} $,
	the corresponding Beals-Coifman solution converges to a limit
	\[
	\tilde{u}_{\infty}=\lim_{q\rightarrow\infty}\tilde{u}_{q}
	\]
	in the sense of the $L^{\infty}$ norm. Indeed, we can write
	\begin{align*}
	\tilde{u}_{q} & =\dfrac{1}{\pi}\int_{\mathbb{R}}\left(\mu_{q,11}(x,t;z)-I\right)\overline{r}_{q}(z)e^{-2i\theta}dz\\
	& +\dfrac{1}{\pi}\int_{\mathbb{R}}\overline{r}_{q}(z)e^{-2i\theta}dz\\
	& +\sum_{k=1}^{N_{1}}\mu_{q,11}(\overline{z_{q,k}})\overline{c_{q,k}}e^{-2i\theta(\overline{z_{q,k}})}\\
	& -\sum_{j=1}^{N_{2}}\mu_{q,11}(\overline{z_{q,j}})\overline{c_{q,j}}e^{-2i\theta(\overline{z_{q,j}})}+\sum_{j=1}^{N_{2}}\mu_{q,11}(-z_{q,j})c_{q,j}e^{-2i\theta(-z_{q,j})}\\
	& =\text{I}_{q}+\text{II}_{q}+\text{III}_{q}.
	\end{align*}
	where
	\[
	\text{I}_{q}=\dfrac{1}{\pi}\int_{\mathbb{R}}\left(\mu_{q,11}(x,t;z)-I\right)\overline{r}_{q}(z)e^{-2i\theta}dz
	\]
	\[
	\text{II}_{q}=\dfrac{1}{\pi}\int_{\mathbb{R}}\overline{r}_{q}(z)e^{-2i\theta}dz
	\]
	and
	\begin{align*}
	\text{III}_{q} & =\sum_{k=1}^{N_{1}}\mu_{q,11}(\overline{z_{q,k}})\overline{c_{q,k}}e^{-2i\theta(\overline{z_{q,k}})}\\
	& -\sum_{j=1}^{N_{2}}\mu_{q,11}(\overline{z_{q,j}})\overline{c_{q,j}}e^{-2i\theta(\overline{z_{q,j}})}+\sum_{j=1}^{N_{2}}\mu_{q,11}(-z_{q,j})c_{q,j}e^{-2i\theta(-z_{q,j})}.
	\end{align*}
	Then due to the resolvent estimate, $\left(\mu_{q}-I\right)$ has
	the $L^{2}$ estimate and the $L^{2}$ estimate for $\overline{r}_{q}(z)e^{-2i\theta}$
	is straightforward, so $\text{I}_{q}$ can be made sense pointwise.
	For $\text{II}_{q}$, one simply notices that $\int\dfrac{1}{\pi}\int_{\mathbb{R}}\overline{r}_{q}(z)e^{-2i\theta}dz$
	is proportional to $W\left(t\right)\check{r}_{q}=e^{-t\partial_{xxx}}\check{r}_{q}$,
	by the standard stationary phase analysis, for $r_{q}\in H^{1}$,
	$\text{II}_{q}$ is a function in $L^{\infty}\left(\mathbb{R}\right)$
	with the standard pointwise decay estimates for the Airy equation.
	Finally, since $\mu_{q}\left(x,z\right)\in H^{1}$, by Sobolev's embedding,
	we can evaluate $\text{III}_{q}$ pointwise.
	
	Hence as
	\[
	\left|c_{q,k}-c_{\infty,k}\right|+\left|z_{q,k}-z_{\infty,k}\right|\rightarrow0,\ 1\leq k\leq N_{1},
	\]
	\[
	\left|c_{q,j}-c_{\infty,j}\right|+\left|z_{q,j}-z_{\infty,j}\right|\rightarrow0,\ 1\leq j\leq N_{2}
	\]
	and
	\[
	\left\Vert r_{q}-r_{\infty}\right\Vert _{H^{1}}\rightarrow0
	\]
	one has
	\[
	\left\Vert \tilde{u}_{q}-\tilde{u}_{\infty}\right\Vert _{L^{\infty}}\rightarrow0\,\ \text{as}\,\ k\rightarrow\infty.
	\]
	It follows that as $q\rightarrow\infty$, we have
	\[
	\left\Vert \left|u_{q}-u_{\infty}\right|\right\Vert _{\mathcal{X}_{T,C}^{s}}=\left\Vert \left|\tilde{u}_{q}-u_{\infty}\right|\right\Vert _{\mathcal{X}_{T,C}^{s}}\rightarrow0.
	\]
	In particular, as $q\rightarrow\infty$, one has
	\[
	\sup_{t\in\left[-T,T\right]}\left\Vert u_{q}-u_{\infty}\right\Vert _{H^{s}\left(\mathbb{R}\right)}=\sup_{t\in\left[-T,T\right]}\left\Vert \tilde{u}_{q}-u_{\infty}\right\Vert _{H^{s}\left(\mathbb{R}\right)}\rightarrow0.
	\]
	By construction, as $q\rightarrow\infty,$
	\[
	\sup_{t\in\left[-T,T\right]}\left\Vert \tilde{u}_{q}-\tilde{u}_{\infty}\right\Vert _{L^{\infty}\left(\mathbb{R}\right)}\rightarrow0.
	\]
	Hence for given $t\in\left[-T,T\right]$, one has
	\[
	u_{\infty}\left(t\right)=\tilde{u}_{\infty}\left(t\right)
	\]
	up to a measure zero set. By construction, we moreover have
	\[
	u\left(t\right)=u_{\infty}\left(t\right)=\tilde{u}_{\infty}\left(t\right)
	\]
	up to null sets.
	
	Next, applying the consequence of the conservation law for $H^{s}$,
	Theorem \ref{thm:KocTar}, we can repeat the above construct infinity
	many times to extend the interval $\left[-T,T\right]$ to $\mathbb{R}$
	and conclude that for $t\in\mathbb{R}_{+},$
	\[
	u\left(t\right)=u_{\infty}\left(t\right)=\tilde{u}_{\infty}\left(t\right).
	\]
	For fixed $t$, by our resolution formula, one can write the solution
	as a superposition of breathers, solitons, radiation
	\[
	\tilde{u}_{q}\left(t\right)=\sum_{j=1}^{N_{2}}u_{q,j}^{\left(br\right)}\left(x,t\right)+\sum_{k=1}^{N_{1}}u_{q,k}^{\left(so\right)}\left(x,t\right)+R_{q}\left(x,t\right).
	\]
	Moreover, for the radiation, we can write
	\[
	R_{q}\left(x,t\right)=L_{q}\left(x,t\right)+E_{q}\left(x,t\right)
	\]
	where $L_{q}\left(x,t\right)$ gives the leading order behavior and
	$E_{q}\left(x,t\right)$ collects the error term, see Theorem \ref{main1}. By
	the convergence of scattering data, we know
	\[
	\sum_{j=1}^{N_{2}}u_{q,j}^{\left(br\right)}\left(x,t\right)+\sum_{k=1}^{N_{1}}u_{q,k}^{\left(so\right)}\left(x,t\right)+L_{q}\left(x,t\right)\rightarrow\sum_{j=1}^{N_{2}}u_{\infty,j}^{\left(br\right)}\left(x,t\right)+\sum_{k=1}^{N_{1}}u_{\infty,k}^{\left(so\right)}\left(x,t\right)+L_{\infty}\left(x,t\right)
	\]
	pointwise. Also note that by our computations, the error terms estimates
	only depend on the $H^{1}$ norm of $r_{q}$ which by bijectivity
	only depend on the $\left\Vert u_{0,q}\right\Vert _{H^{0,1}\left(\mathbb{R}\right)}\leq C$
	uniformly.
	
	Therefore for an arbitrary fixed $t$, as the pointwise limit of $\tilde{u}_{q}\left(t\right)$,
	one can write
	\[
	\tilde{u}_{\infty}\left(t\right)=\sum_{\ell=1}^{N_{2}}u_{\infty,\ell}^{\left(br\right)}\left(x,t\right)+\sum_{\ell=1}^{N_{1}}u_{\infty,\ell}^{\left(so\right)}\left(x,t\right)+L_{\infty}\left(x,t\right)+E_{\infty}\left(x,t\right)
	\]
	where the decay estimates for $E_{\infty}\left(x,t\right)$ is the
	same as $E_{q}\left(x,t\right)$ due to the uniform error estimates.
	
	Hence up the null sets, one can write
	\[
	u\left(t\right)=\sum_{\ell=1}^{N_{2}}u_{\infty,\ell}^{\left(br\right)}\left(x,t\right)+\sum_{\ell=1}^{N_{1}}u_{\infty,\ell}^{\left(so\right)}\left(x,t\right)+L_{\infty}\left(x,t\right)+E_{\infty}\left(x,t\right)
	\]
	which has the same form as Theorem \ref{main1}.
\end{proof}
\begin{remark}
	Finally, we should point out that one essential step in our approximation
	argument is that the convergence of initial data gives the convergence
	of scattering data due to Zhou's bijectivity results and the leading
	order terms of solutions can be computed precisely using these scattering
	data. All of these computations are independent of $t$. Finally,
	since the error terms have estimates uniformly depending on the weighted
	norms of initial data, one can conclude the asymptotics of the limit.
	All of these use the machinery of the inverse scattering. From the
	view of PDEs, one can also compute the leading order terms using some
	ODE arguments see Hayashi-Naumkin \cite{HN1,HN2} and Germain-Pusateri-Rousset \cite{GPR}. But
	to find these leading order terms, it introduces extra error terms.
	If one try to use approximation argument in this setting, it is not
	clear these extra error terms give decay fast enough.
\end{remark}

\appendix
\section{Continuity of the discrete scattering data}
\label{discrete}

In this Appendix, we recall some results concerning the continuity of Jost functions
with respect to the potentials. The generic condition for initial data will also be briefly discussed.

Recall that we have
\[
a\left(z\right)=1-\int_{\mathbb{R}}iu\left(y\right)m_{21}^{+}\left(y,z\right)\,dy,
\]
\[
\check{a}\left(z\right)=1-\int_{\mathbb{R}}iu\left(y\right)m_{12}^{+}\left(y,z\right)\,dy,
\]
\[
\check{b}\left(z\right)=1-\int_{\mathbb{R}}iu\left(y\right)m_{11}^{+}\left(y,z\right)\,dy,
\]
\[
b\left(z\right)=1-\int_{\mathbb{R}}iu\left(y\right)m_{21}^{+}\left(y,z\right)\,dy.
\]
Note that $a\left(z\right)$ and $\check{a}\left(z\right)$ are independent
of $t$. So actually, we can replace the $u$ in the above formulae
by $u_{0}$ and $m_{21}^{+},\,m_{12}^{+}$ by the corresponding solutions
constructed with respect to $u_{0}$. 

Denote $a\left(z,u_{1}\right)$ and $a\left(z,u_{2}\right)$ as the
$a$ component of the scattering matrix constructed using $u_{1}$
and $u_{2}$ respectively. Using the continuous dependence on the
potential, we obtain that 
\[
\left|a\left(z,u_{1}\right)-a\left(z,u_{2}\right)\right|\lesssim\left\Vert u_{1}-u_{2}\right\Vert _{L^{2,1}}.
\]
Therefore, we notice that if we have a sequence $u_{n}\in L^{2,1}$
converges to a function $u_{0}\in L^{2,1}$, then the zeros of $a\left(z,u_{j}\right)$
will converge pointwise to $z_{\infty,\ell}$. (here we have two options,
one is since $u_{n}$ has a uniform upper bound, then $z_{j,\ell}$
is bounded sequence in the complex plane. Then there is a subsequence
converges to a limit. Secondly, we can use the continuity to get the
convergence and moreover, the limit is unique). 

The same arguments apply to $\breve{a}\left(z\right)$, $b\left(z\right)$
and $\breve{b}\left(z\right)$ associated with the initial data. By
similar arguments, one can also get the initial norming constant for
each singularity also enjoys the similar properties. 

We record the following proposition to show there is a dense subset
of $u\in L^{2,1}\left(\mathbb{R}\right)$ such that $a\left(z;u\right)$
has at most finitely many simple zeros in $\mathbb{C}^{-}$ and no
zeros on $\mathbb{R}$. 
\begin{proposition}\label{prop1}
	Suppose $R>0$ and $u\in C_{0}^{\infty}\left(\left[-R,R\right]\right)$.
	Let $a\left(z;u\right)$ be the $\left(1,1\right)$ entry of the scattering
	matrix for $u$. For $\varphi\in C_{0}^{\infty}\left(\mathbb{R}\right)$,
	denote $a\left(z,\mu\right)$ as the $\left(1,1\right)$ entry of
	the scattering matrix for $u+\mu\varphi$. By construction, $a\left(z,0\right)=a\left(z;u\right)$.
	
	(1) Suppose $S=\left\{ z_{i}\right\} _{i=1}^{N}$ are the isolated
	zeros of $a\left(z;u\right)$ in $\mathbb{C}^{-}\bigcup\mathbb{R}$
	and for some $i$ such that $z_{i}\neq0$ is one of the zeros of $a\left(z;u\right)$
	of multiplicity $\lambda\geq2$, in other words, $a\left(z;u\right)=\left(z-z_{i}\right)^{\lambda}g\left(z\right)$
	for some analytic function $g\left(z\right)$ with $g\left(z_{i}\right)\neq0$.
	Then for some $\varphi\in C_{0}^{\infty}\left(\mathbb{R}\right)$
	and all sufficiently small $\mu\neq0$, $a\left(z,\mu\right)$ has
	$\lambda$ simple zeros in the disc $D_{r_{i}}\left(z_{i}\right)$
	with $r_{i}$ small enough.
	
	(2) Suppose that after the perturbation in (1), for small $j$, $\mathcal{Z}_{j}$
	is a simple zero of $a\left(z,\mu\right)$ on the real axis such that
	$\mathcal{Z}_{j}\neq0$. Then for some $\varphi\in C_{0}^{\infty}\left(\mathbb{R}\right)$
	and all sufficiently small $\mu'\neq0$, $a\left(z,\mu'\right)$ has
	no zeros on the real axis near $\mathcal{Z}_{j}$. 
	
	In each case, one can choose $\varphi$ to have support in $\left(-2R,-R\right)\bigcup\left(R,2R\right)$.
\end{proposition}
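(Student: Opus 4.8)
The plan is to follow the classical density argument of Deift--Trubowitz \cite{DT} and Beals--Coifman \cite{BC84}: since $\varphi$ may be supported outside $\operatorname{supp}u\subset[-R,R]$, there the unperturbed Jost solutions are explicit exponentials, and the motion of the zeros of $a(\,\cdot\,,\mu)$ can be read off from a first-order variation in $\mu$. First I would record the joint analyticity of $(z,\mu)\mapsto a(z,\mu)$ on $(\bbC^-\cup\bbR)\times(-\mu_0,\mu_0)$ for small $\mu_0$: the Volterra equations \eqref{IE-m-pm} for the Jost solutions of $u+\mu\varphi$ have Neumann series that converge uniformly on compacts, depend holomorphically on $z$ in the relevant half-plane and analytically on $\mu$, hence so does $a(z,\mu)=1-i\int(u+\mu\varphi)(y)\,m^{(+)}_{21}(y,z;\mu)\,dy$. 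Differentiating the determinant representation $a(z)=\det\big(\psi^+_1(\,\cdot\,,z),\psi^-_2(\,\cdot\,,z)\big)$ (independent of the base point) through Duhamel's formula for the perturbed ODE yields a variational identity
\[
\dot a(z):=\left.\tfrac{\partial}{\partial\mu}a(z,\mu)\right|_{\mu=0}=\int_\bbR\varphi(y)\,G(y,z)\,dy,
\]
with $G(y,z)$ a fixed bilinear form in the entries of the unperturbed $\psi^\pm(y,z)$. The structural point is that for $y$ in the permitted region, $\psi^+(y,z)$ equals its prescribed exponential asymptotics and $\psi^-(y,z)=e^{-i(y-R)z\sigma_3}\Psi^-(R,z)$ for $y>R$ (and symmetrically for $y<-R$), so that $G(y,z)=c_+(z)e^{2iyz}$ on $(R,2R)$ and $G(y,z)=c_-(z)e^{-2iyz}$ on $(-2R,-R)$, where $c_\pm$ are entire functions built from the Jost data at $\pm R$.

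For part (1), fix the order-$\lambda$ zero $z_i\neq0$ and write $a(z,0)=(z-z_i)^\lambda g(z)$ with $g(z_i)\neq0$. I would choose a real $\varphi\in C_0^\infty\big((-2R,-R)\cup(R,2R)\big)$ with $\dot a(z_i)=\int\varphi\,G(\,\cdot\,,z_i)\neq0$; this is possible once one knows $c_+(z_i)\neq0$ or $c_-(z_i)\neq0$, since a real bump pairs non-trivially against the nowhere-vanishing exponential $e^{\pm2iyz_i}$. Then $a(z,\mu)=(z-z_i)^\lambda g(z)+\mu\,b(z,\mu)$ with $b(z_i,0)=\dot a(z_i)\neq0$, and Weierstrass preparation together with a Puiseux expansion near $(z_i,0)$ identifies the $\lambda$ zeros of $a(\,\cdot\,,\mu)$ in a small disc $D_{r_i}(z_i)$ — exactly $\lambda$ of them by Rouch\'e, once $r_i$ and $|\mu|$ are small — as $z_i+\zeta_k\,\mu^{1/\lambda}\big(1+o(1)\big)$, $k=1,\dots,\lambda$, with $\zeta_1,\dots,\zeta_\lambda$ the distinct $\lambda$-th roots of $-\dot a(z_i)/g(z_i)$. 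Hence they are pairwise distinct, so simple, for all sufficiently small $\mu\neq0$.

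For part (2), after the perturbation of (1) let $\mathcal Z_j\neq0$ be a simple real zero of the new $a(\,\cdot\,,0)$, so $\partial_z a(\mathcal Z_j,0)\neq0$. By the implicit function theorem there is a unique nearby zero $z(\mu')$, analytic in $\mu'$, with $z(0)=\mathcal Z_j$ and $z'(0)=-\dot a(\mathcal Z_j)/\partial_z a(\mathcal Z_j,0)$. Since on the permitted region $G(y,\mathcal Z_j)$ is a nonzero constant times the oscillatory exponential $e^{\pm2i\mathcal Z_j y}$, and $\cos 2\mathcal Z_j y$, $\sin 2\mathcal Z_j y$ are linearly independent on any interval, I can choose a real $\varphi$ so that $\dot a(\mathcal Z_j)/\partial_z a(\mathcal Z_j,0)\notin\bbR$, i.e.\ $\Imag z'(0)\neq0$. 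Then $\Imag z(\mu')=\mu'\,\Imag z'(0)+O(\mu'^2)\neq0$ for all small $\mu'\neq0$, and by uniqueness of $z(\mu')$ in the disc $a(\,\cdot\,,\mu')$ has no real zero near $\mathcal Z_j$.

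The hard part will be the non-vanishing of the amplitudes $c_\pm(z_i)$ (and of $G(\,\cdot\,,\mathcal Z_j)$ on the permitted region): one must exclude the degenerate possibility that an outside-support perturbation is ``invisible'' at the distinguished spectral point. I would handle this using the determinant identity $a\ba-b\bb=1$ together with the Wronskian and symmetry relations \eqref{symmetry-1}--\eqref{symmetry}: at a zero of $a$ the complementary Jost data cannot all degenerate, which forces $c_+(z_i)$ or $c_-(z_i)$ to be nonzero. The remaining ingredients — the Duhamel computation of $G$, the branch-cut bookkeeping in the Puiseux step, and the analyticity estimates — are routine, and the final sentence of the proposition, that $\varphi$ may be taken supported in $(-2R,-R)\cup(R,2R)$, is built into the construction above.
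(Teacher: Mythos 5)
The paper does not actually prove Proposition \ref{prop1}: it is ``recorded'' in the appendix as a genericity fact in the spirit of Beals--Coifman \cite{BC84} (cf.\ Remark \ref{remark-generic}), and the surrounding text only uses the continuity of $a(z;u)$ in $u$ to conclude openness. So there is no in-paper argument to compare against; your proposal supplies the missing proof, and it follows the standard perturbative route, which is the right one. The skeleton is sound: joint analyticity of $a(z,\mu)$ from the Volterra series, the first-order variation $\dot a(z)=\int\varphi\,G(\cdot,z)$, Rouch\'e plus a Puiseux expansion to split the order-$\lambda$ zero into $\lambda$ distinct simple zeros at mutual distance $\sim|\mu|^{1/\lambda}$, and the implicit function theorem with $\Imag z'(0)\neq0$ to push a simple real zero off the axis.

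Two points in your sketch should be tightened. First, the bilinear form $G$ is not a generic combination of exponentials on the outer intervals: writing $a=\det(\psi^+_1,\psi^-_2)$ and using tracelessness of the coefficient matrix, one finds $\dot a(z)=i\int\varphi\,(\psi^+_{11}\psi^-_{12}-\psi^+_{21}\psi^-_{22})\,dy$, and since $\psi^+_1(y,z)$ is exactly the free exponential with vanishing second component for $y>R$, only the single term $G(y,z)=i\,e^{-2iyz}e^{iRz}\psi^-_{12}(R,z)$ survives on $(R,2R)$. This matters because the \emph{other} natural pairing $\psi^+_{11}\psi^-_{22}$ would vanish identically at a zero of $a$, so you must pin down which pairing occurs. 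Second, for the non-vanishing of the amplitude the cleanest argument is not the determinant identity but the linear dependence \eqref{b_i}: at a zero $z_i$ one has $\psi^-_2(\cdot,z_i)=b_i\psi^+_1(\cdot,z_i)$ with $b_i\neq0$ (else $\psi^-_2$ would be the trivial solution, contradicting its nontrivial asymptotics at $-\infty$), whence $\psi^-_{12}(R,z_i)=b_ie^{-iRz_i}\neq0$ and $G(\cdot,z_i)$ is a nonzero multiple of $e^{-2iyz_i}$ on $(R,2R)$. (Your appeal to $a\ba-b\bb=1$ does also work here, but only because compact support of the potential makes $b,\bb$ entire; it is worth saying so.) With $G(\cdot,z_i)$ a nonvanishing exponential, a real bump $\varphi$ with $\dot a(z_i)\neq0$ exists for part (1), and for part (2) the linear independence of $\cos(2\mathcal Z_jy)$ and $\sin(2\mathcal Z_jy)$ yields a real $\varphi$ with $\dot a(\mathcal Z_j)/\partial_z a(\mathcal Z_j,0)\notin\bbR$, exactly as you say. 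With these two clarifications the proof is complete.
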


Since as we discussed above, $a\left(z;u\right)$ is continuous with
respect to $u$ and $a\left(z;u\right)$ is analytic in $\mathbb{C}^{-}$,
this set is also open.

Note that due to symmetry, a breather corresponds two simple zeros
off the imaginary axis. A simple zero on the imaginary axis results
a soliton. These structure is fairly stable since the zero of the
soliton is simple. It will not bifurcate into two simple zeros under
small perturbations.

More precisely, we focus on $\breve {a}\left(z;u_{0}\right)$. This
is analytic in the $\mathbb{C}^{+}$. By our assumption, $\breve{a}\left(z;u_{0}\right)$
has exactly $N_{1}$ simple zeros on the imaginary axis , $2N_{2}$
simple zeros off the imaginary axis and no zeros on $\mathbb{R}$.
Focusing on one zero on the imaginary axis, say, $z_{i}\left(u_{0}\right)$,
we consider the integral of $\frac{\breve{a}'\left(z\right)}{\breve{a}\left(z\right)}$
over a small circle $\mathcal{C}_{i}$ with radius small enough centered
at $z_{i}\left(u_{0}\right)$, then by Cauchy's argument principle,
\[
\frac{1}{2\pi}\int_{\mathcal{C}_{i}}\frac{\breve{a}'\left(z;u_{0}\right)}{\breve{a}\left(z;u_{0}\right)}\,dz=1
\]
due to the analyticity of $\breve{a}\left(z;u_{0}\right)$ and the
simplicity of $z_{i}\left(u_{0}\right)$. Under a sufficiently small
perturbation, the zeros of $\breve{a}\left(z;u\right)$ coressponding
to $z_{i}\left(u_{0}\right)$ where $\left\Vert u-u_{0}\right\Vert _{L^{2,1}}$
is sufficiently small will be located in the disc $\mathcal{D}_{i}$
surrounded by $\mathcal{C}_{i}$. By the continuity of $\breve{a}\left(z;u_{0}\right)$,
again we have
\[
\frac{1}{2\pi}\int_{\mathcal{C}_{i}}\frac{\breve{a}'\left(z;u\right)}{\breve{a}\left(z;u\right)}\,dz=1.
\]
Therefore the number of zeros of $\breve{a}\left(z;u\right)$ in $\mathcal{D}_{i}$
should be one and located on the imaginary axis. Otherwise, if the
zero is off the imaginary axis, it should come with a pair due to
the symmetry of the mKdV and will give
\[
\frac{1}{2\pi}\int_{\mathcal{C}_{i}}\frac{\breve{a}'\left(z;u\right)}{\breve{a}\left(z;u\right)}\,dz=2
\]
which is a contradiction. 

Similar analysis can be applied to those simple zeros located near
the imaginary axis. They will not degenerate to zeros on the imaginary
axis.

Therefore, under the the simplicity assumption, any sufficiently small
perturbation will not destroy the structures of breathers and solitons.
\begin{proposition}\label{prop2}
	Suppose that $u_{0}\in L^{2,1}\left(\mathbb{R}\right)$ and $a\left(z;u_{0}\right)$
	has exactly $N_{1}$ simple zeros on the imaginary axis, $N_{2}$
	simple zeros off the imaginary axis and no zeros on $\mathbb{R}$.
	There is a neighborhood $\mathcal{N}$ of $u_{0}$ in $L^{2,1}\left(\mathbb{R}\right)$
	so that all $u\in\mathcal{N}$ have these same properties.
	
	Suppose that $u_{0}$ is a generic potential with $n$ simple zeros
	of $\breve{a}\left(z,u_{0}\right)$ in $\mathbb{C}^{+}$. (Here we
	do not distinguish breathers and solitons). Let $S_{1}=\left\{ z_{i}\right\} _{i=1}^{N_{1}}$
	and $S_{2}=\left\{ z_{j}\right\} _{j=1}^{N_{2}}$ be a list of the
	zeros of $\breve{a}\left(z,u_{0}\right)$ in $\mathbb{C}^{+}$. Set
	\[
	d_{1}\left(u_{0}\right)=\min\left(\min_{1\leq j\neq k\leq N_{1}}\left|z_{j}\left(u_{0}\right)-z_{k}\left(u_{0}\right)\right|,\min_{1\leq j\neq k\leq N_{2}}\left|z_{j}\left(u_{0}\right)-z_{k}\left(u_{0}\right)\right|\right)
	\]
	\[
	d_{2}\left(u_{0}\right)=\min\left(d\left(S_{1}\left(u_{0}\right),S_{2}\left(u_{0}\right)\right),\min_{1\leq j\leq N_{1}}\left(\Im z_{j}\right),\min_{1\leq j\leq N_{1}}\left(\Im z_{j}\right)\right)
	\]
	and
	\[
	d_{S}\left(u_{0}\right)=\min\left(d_{1}\left(u_{0}\right),d_{2}\left(u_{0}\right)\right).
	\]
	There is a neighborhood $\mathcal{N}$ of $u_{0}$ in $L^{2,1}$ so
	that
	
	(1) For any $u\in\mathcal{N}$, $\breve{a}\left(z,u\right)$ has exactly
	$N_{1}+N_{2}$ zeros in $\mathbb{C}^{+}$, no zeros in $\mathbb{R}$,
	and
	\[
	\left|z_{i}\left(u\right)-z_{i}\left(u_{0}\right)\right|\leq\frac{1}{2}d_{S}\left(u_{0}\right)
	\]
	
	\[
	\left|z_{j}\left(u\right)-z_{j}\left(u_{0}\right)\right|\leq\frac{1}{2}d_{S}\left(u_{0}\right).
	\]
	
	(2) We also have
	\[
	\left|z_{i}\left(u\right)-z_{i}\left(u_{0}\right)\right|,\ z_{j}\left(u\right)-z_{j}\left(u_{0}\right)\leq C\left\Vert u-u_{0}\right\Vert _{L^{2,1}}
	\]
	hold for $C$ uniform in $u\in\mathcal{N}$.
	
	(3)
	\[
	\left|b_{i}\left(u\right)-b_{i}\left(u_{0}\right)\right|,\:\left|b_{j}\left(u\right)-b_{j}\left(u_{0}\right)\right|\leq\leq C\left\Vert u-u_{0}\right\Vert _{L^{2,1}}
	\]
	hold for $C$ uniform in $u\in\mathcal{N}$.
	
	(4)
	\[
	\left|C_{i}\left(u\right)-C_{i}\left(u_{0}\right)\right|,\:\left|C_{j}\left(u\right)-C_{j}\left(u_{0}\right)\right|\leq\leq C\left\Vert u-u_{0}\right\Vert _{L^{2,1}}
	\]
	hold for $C$ uniform in $u\in\mathcal{N}$.
\end{proposition}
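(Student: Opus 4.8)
The plan is to upgrade the qualitative statements already sketched above (via the representation formulas for $\breve a,b,\breve b$, continuous dependence of Jost functions on the potential, and Cauchy's argument principle) to the quantitative Lipschitz bounds asserted in the proposition. \textbf{Step 1 (continuity of $\breve a$ and $\breve a'$ in $u$).} I would first record that, from the Volterra equations \eqref{IE-m-pm}, the Jost functions $m^{\pm}(y,z;u)$ are produced by a Neumann series whose terms are controlled by $\|u\|_{L^{1}}\lesssim\|u\|_{L^{2,1}}$; consequently $m^{\pm}$ are bounded on $\mathbb{R}\times\overline{\mathbb{C}^{\pm}}$, depend Lipschitz-continuously on $u\in L^{2,1}(\mathbb{R})$ uniformly in $(y,z)$, and $m^{+}_{12}(y,z)\to 0$ as $|z|\to\infty$ uniformly in $y$ and in $u$ from a bounded set. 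Inserting this into $\breve a(z;u)=1-\int_{\mathbb{R}} iu(y)m^{+}_{12}(y,z;u)\,dy$ gives
\begin{equation*}
|\breve a(z;u)-\breve a(z;u_{0})|\lesssim \|u-u_{0}\|_{L^{2,1}},\qquad z\in\overline{\mathbb{C}^{+}},
\end{equation*}
with implied constant uniform for $u$ near $u_0$, together with $\breve a(z;u)\to 1$ as $|z|\to\infty$ uniformly in such $u$. Writing $\breve a'(z;u)=\frac{1}{2\pi i}\oint_{|\zeta-z|=\rho}\breve a(\zeta;u)(\zeta-z)^{-2}\,d\zeta$ transfers the same Lipschitz estimate to $\breve a'$ on compact subsets of $\mathbb{C}^{+}$. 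By the symmetry relations \eqref{symmetry-2}--\eqref{symmetry} the identical statements hold for $a$.

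\textbf{Step 2 (counting zeros and locating them: parts (1)--(2)).} Since $\breve a(\cdot;u_0)\to 1$ at infinity, all of its zeros lie in a fixed disc $D_{\rho}=\{|z|\le\rho\}$, and on the compact set $\big(D_{\rho}\cap\overline{\mathbb{C}^{+}}\big)\setminus\bigcup_{i}D_{r}(z_{i}(u_0))$ with $r=\tfrac14 d_{S}(u_0)$ the quantity $|\breve a(\cdot;u_0)|$ is bounded below by some $\epsilon_{0}>0$; enlarging $\rho$ we may also assume $|\breve a(\cdot;u_0)|\ge\epsilon_{0}$ on $\mathbb{R}$. Choosing the neighbourhood $\mathcal{N}$ so that $\|u-u_{0}\|_{L^{2,1}}$ is small enough that $|\breve a(z;u)-\breve a(z;u_0)|<\epsilon_{0}$ on all these sets and on each circle $\partial D_{r}(z_{i}(u_0))$, Rouch\'e's theorem yields that $\breve a(\cdot;u)$ has no zeros on $\mathbb{R}$, no zeros in $\mathbb{C}^{+}$ outside $\bigcup_{i}D_{r}(z_{i}(u_0))$, and exactly one (hence simple) zero in each $D_{r}(z_{i}(u_0))$; combined with the argument-principle computation $\tfrac{1}{2\pi}\oint\breve a'/\breve a$ already displayed above, this keeps solitons and breathers distinct and gives part (1). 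For the Lipschitz bound in part (2), represent the zero as $z_{i}(u)=\frac{1}{2\pi i}\oint_{\partial D_{r}(z_{i}(u_0))} z\,\breve a'(z;u)/\breve a(z;u)\,dz$ over the \emph{fixed} contour $\partial D_{r}(z_{i}(u_0))$: on that contour $\breve a(\cdot;u)$ and $\breve a'(\cdot;u)$ are Lipschitz in $u$ (Step 1) and $|\breve a(\cdot;u)|\ge\epsilon_0/2$, so the integrand is Lipschitz in $u$ uniformly along the contour and $|z_{i}(u)-z_{i}(u_0)|\lesssim\|u-u_{0}\|_{L^{2,1}}$. The same argument handles $z_{j}(u)$.

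\textbf{Step 3 (the coefficients $b$ and the norming constants $C$: parts (3)--(4)).} The coefficient $b_{i}$ is read off from \eqref{b_0}: it is (up to the explicit exponential factor $e^{-2ixz_{i}}$) the ratio of a component of $m^{-}_{1}(x,z;u)$ to a component of $m^{+}_{2}(x,z;u)$ evaluated at $z=z_{i}(u)$, computed at any $x$ where the denominator does not vanish (equivalently, in the limit $x\to\pm\infty$). Since $m^{\pm}(\cdot,\cdot;u)$ are jointly Lipschitz in $u$ and analytic (hence locally Lipschitz) in $z$, while $u\mapsto z_{i}(u)$ is Lipschitz by Step 2, the composition gives $|b_{i}(u)-b_{i}(u_0)|\lesssim\|u-u_{0}\|_{L^{2,1}}$, and likewise for $b_{j}$. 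Finally, with $C_{i}(u)=b_{i}(u)/\breve a'(z_{i}(u);u)$, the denominator $\breve a'(z_{i}(u);u)$ is Lipschitz in $u$ (composition of the Lipschitz map $u\mapsto\breve a'(\cdot;u)$ on the fixed disc with $u\mapsto z_{i}(u)$) and, after possibly shrinking $r$ and $\mathcal{N}$, is bounded below by $\tfrac12|\breve a'(z_{i}(u_0);u_0)|>0$; hence $C_{i}$ is Lipschitz in $u$, and similarly $C_{j}$. This gives parts (3)--(4).

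\textbf{Main obstacle.} The technical heart is Step 1: obtaining the Lipschitz dependence of $m^{\pm}$, $\breve a$ and $\breve a'$ on $u$ \emph{uniformly in $z$ up to $|z|=\infty$}. This uniformity is exactly what confines all zeros to a fixed disc, prevents zeros from escaping to infinity or migrating onto $\mathbb{R}$, and makes Rouch\'e applicable with a single $\epsilon_{0}$; it rests on the Volterra/Neumann-series estimates for \eqref{IE-m-pm} in the $L^{2,1}$ topology, as developed in Zhou \cite{Zhou98}. Once this input is in place, the remainder is a quantitative packaging of Rouch\'e's theorem together with the chain rule for Lipschitz maps, exactly as outlined above.
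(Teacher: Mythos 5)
Your proposal is correct and follows the same overall decomposition as the paper (continuity of the Jost functions and of $\breve a$ in $u$; zero counting and localization; then $b$ and the norming constants by composition of Lipschitz maps), but it differs in the one genuinely substantive step, namely how the Lipschitz dependence $|z_{i}(u)-z_{i}(u_{0})|\lesssim\|u-u_{0}\|_{L^{2,1}}$ is obtained. The paper applies the implicit function theorem to $(z,u)\mapsto \breve a(z;u)$ on $\mathbb{C}^{+}\times L^{2,1}$, using analyticity in $z$, differentiability in $u$ through the representation $\breve a(z;u)=1-\int iu\,m^{+}_{12}\,dy$, and the nondegeneracy $\breve a'(z_{i}(u_{0});u_{0})\neq 0$ coming from simplicity; this yields $C^{1}$ (hence Lipschitz) dependence of the zeros, in the spirit of P\"oschel--Trubowitz. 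You instead represent the unique simple zero inside a fixed small circle by the residue formula $z_{i}(u)=\frac{1}{2\pi i}\oint z\,\breve a'(z;u)/\breve a(z;u)\,dz$ and read off the Lipschitz bound from the Lipschitz continuity of $\breve a$ and $\breve a'$ on the fixed contour together with the lower bound $|\breve a|\ge\epsilon_{0}/2$ there. Your route is somewhat more elementary and self-contained: it needs only Lipschitz (not differentiable) dependence of $\breve a$ on $u$ and unifies the counting (Rouch\'e/argument principle) and the quantitative location estimate in a single contour-integral framework; the paper's IFT route buys the slightly stronger conclusion of $C^{1}$ dependence of the discrete data on the potential. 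Your Steps 1 and 3, including the uniform-in-$z$ control needed to confine all zeros to a fixed disc and the treatment of $b_{i}$ via the linear-dependence relation and of $C_{i}=b_{i}/\breve a'(z_{i})$ via a lower bound on the denominator, match the paper's argument in substance.
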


\begin{proof}
	We can find the neighborhood $\mathcal{N}$ of $u_{0}$ by our general
	argument. By construction, for any $u\in\mathcal{N}$, $\breve{a}\left(z,u\right)$
	has exactly $N_{1}+N_{2}$ zeros in $\mathbb{C}^{+}$, no zeros in
	$\mathbb{R}$, and
	\[
	\left|z_{i}\left(u\right)-z_{i}\left(u_{0}\right)\right|\leq\frac{1}{2}d_{S}\left(u_{0}\right),
	\]	
	\[
	\left|z_{j}\left(u\right)-z_{j}\left(u_{0}\right)\right|\leq\frac{1}{2}d_{S}\left(u_{0}\right).
	\]
	To establish other claims, we need to use the simplicity of zeros
	and the implicit function theorem. We will study the equation $a\left(z_{j}\left(u\right);u\right)=0$
	and regard it as a function on $\mathbb{C}^{-}\times L^{2,1}$. First
	of all, we know this function is analytic in $z\in\mathbb{C}^{-}$
	and is differnetiable in $u$ from
	\[
	a\left(z;u\right)=1-\int iu\left(y\right)m_{21}^{+}\left(y,z\right)\,dy
	\]
	where $m_{21}^{+}\left(z,y\right)$ is analytic in $\mathbb{C}^{-}$
	and depends on $u$ smoothly. Since $z_{j}\left(u_{0}\right)$'s and
	$z_{i}\left(u_{0}\right)$'s are simply zeros, we also have
	\[
	a'\left(z_{j}\left(u_{0}\right);u_{0}\right)\neq0
	\]
	which is the condition for the implicit function theorem to be applied.
	
	(2) The implicit function theorem also guarantees that the function
	$z_{i}\left(q\right)$ and $z_{j}\left(q\right)$ will be $C^{1}$
	as a function of $u$, and hence surely Lipschitz continuous. See
	P\"oschel-Trubowitz \cite{PT}.
	
	(3) The estimates for $b\left(z\right)$ can be obtained similarly
	as $a\left(z\right)$.
	
	(4) Finally, for the norming constants, $a'\left(z_{i}\right)$ and
	$a'\left(z_{j}\right)$ can be expressed in terms of $a$ via the
	Cauchy integral over a small circle around $z_{i}$ and $z_{j}$ respectively
	since $a$ is analytic in $\mathbb{C}^{-}$. Since $a\left(z_{i}\left(u\right);u\right)$,
	$a\left(z_{j}\left(u\right);u\right)$, $b\left(z_{i}\left(u\right);u\right)$
	and $b\left(z_{j}\left(u\right);u\right)$ are Lipschtiz continuous
	with respect to $u$. Then we can extend the Lipschitz continuity
	to the norming constants.
\end{proof}

\bigskip\noindent

\end{document}